\newtheorem{theorem}{Theorem}[section]
\newtheorem{proposition}[theorem]{Proposition}
\newtheorem{lemma}[theorem]{Lemma}
\newtheorem{corollary}{Corollary}[section]
\newtheorem{open}{Open Problem}[section]
\newtheorem{definition}[theorem]{Definition}
\newtheorem{remark}{Remark}
\def\neweq#1{\begin{equation}\label{#1}}
\def\endeq{\end{equation}}
\newcommand{\R}{\mathbb{R}}
\newcommand{\N}{\mathbb{N}}
\newcommand{\eps}{\varepsilon}
\newcommand{\EC}{\overset{E}{\longrightarrow}}
\newcommand{\EEC}{\overset{EE}{\longrightarrow}}
\newcommand{\CC}{\overset{C}{\longrightarrow}}
\newcommand{\ECDUAL}{\overset{E^*}{\longrightarrow}}
\newcommand{\EECDUAL}{\overset{EE^*}{\longrightarrow}}
\newcommand{\CCDUAL}{\overset{C^*}{\longrightarrow}}
\begin{document}

\title{Spectral stability for a class of fourth order Steklov problems under domain perturbations}

\author{Alberto Ferrero, Pier Domenico Lamberti}

\address{\hbox{\parbox{5.7in}{\medskip\noindent{Alberto Ferrero, \\
Universit\`a del Piemonte Orientale, \\
        Dipartimento di Scienze e Innovazione Tecnologica, \\
        Viale Teresa Michel 11, 15121 Alessandria, Italy. \\[5pt]
Pier Domenico Lamberti, \\
Universit\`a di Padova, \\
Dipartimento di Matematica `Tullio Levi-Civita', \\
Via Trieste 63, 35121 Padova, Italy. \\[3pt]
        \em{E-mail addresses: }{\tt alberto.ferrero@uniupo.it, lamberti@math.unipd.it}}}}}

\date{\today}

\maketitle

\begin{abstract} We study the spectral stability of two
fourth order Steklov problems upon domain perturbation.
One of the two problems is  the classical DBS - Dirichlet Biharmonic Steklov - problem, the other one is  a
variant.  Under a comparatively weak condition on the convergence of the domains, we prove the stability of
the resolvent operators  for both problems, which implies the stability of
eigenvalues and    eigenfunctions. The stability estimates for the eigenfunctions are expressed in terms
of the strong $H^2$-norms. The analysis is carried out without assuming that the domains are star-shaped.
Our condition turns out to be sharp at least for the variant of the DBS problem. In the case of the   DBS problem,
we prove stability of a suitable Dirichlet-to-Neumann type map under very weak conditions on the convergence of the domains and we formulate an open problem. As bypass product of our analysis, we provide some stability and instability results for Navier and Navier-type boundary value problems for the
biharmonic operator.
\end{abstract}

\vspace{11pt}

\noindent
{\bf Keywords:}  Biharmonic operators, Steklov boundary conditions,  spectral stability, domain perturbation, Dirichlet-to-Neumann maps.

\vspace{6pt}
\noindent
{\bf 2000 Mathematics Subject Classification:}  35J40, 35B20, 35P15

\section{Introduction} \label{s:introduction}
In this paper we consider a class of {\it Steklov problems} and
related spectral properties. By Steklov problems we mean a class
of elliptic problems with parameters in the boundary conditions.
This denomination is motivated by \cite{Steklov} where the classical Steklov problem
\begin{equation} \label{eq:classicalSteklov}
\begin{cases}
\Delta u=0, & \qquad \text{in } \Omega \, , \\
u_{\nu }=\lambda u, & \qquad \text{on } \partial\Omega \, , \\
\end{cases}
\end{equation}
was introduced.  We refer to the recent paper \cite{girpol} for a survey on this subject. Here we are
interested in the study of a class of elliptic equations with the
biharmonic operator subject to Steklov type boundary conditions. Such
kind of conditions for the biharmonic operator can be already
found in the following works \cite{Fichera, Kuttler1,Kuttler2,KuSi,Payne}.
In those papers the attention is mainly focused on the
first eigenvalue of the fourth order Steklov problem and its
dependence on the domain. To be more clear, let us introduce the
classical DBS - Dirichlet Biharmonic Steklov - problem.
Let $\Omega\subset \R^N$ ($N\ge 2$) be a bounded domain (i.e., connected open set) with sufficiently regular boundary. Consider the fourth order problem
\begin{equation} \label{eq:Steklov}
\begin{cases}
\Delta^2 u=0, & \qquad \text{in } \Omega \, , \\
u=0, & \qquad \text{on } \partial\Omega \, , \\
\Delta u-d u_\nu=0, & \qquad \text{on } \partial\Omega \, ,
\end{cases}
\end{equation}
where $u_\nu$ denotes the normal derivative on the boundary and $d$ is a real parameter.
By a solution of \eqref{eq:Steklov} we mean a function $u\in H^2(\Omega)\cap H^1_0(\Omega)$ such that
\begin{equation} \label{eq:Steklov-var}
\int_\Omega \Delta u \Delta v \, dx=d\int_{\partial\Omega} u_\nu v_\nu \, dS \, ,
\qquad \text{for any } v\in H^2(\Omega)\cap H^1_0(\Omega) \, .
\end{equation}
Clearly \eqref{eq:Steklov} is a linear homogeneous problem and
hence it always admits the trivial solution. Therefore, it appears
reasonable to interpret \eqref{eq:Steklov} as  an eigenvalue
problem with respect to the parameter $d$: we say that $d$ is an
eigenvalue for \eqref{eq:Steklov} if \eqref{eq:Steklov} admits a
nontrivial solution in the sense given in \eqref{eq:Steklov-var}.
Such a nontrivial solution is called eigenfunction for $d$. In the
sequel, we will refer to \eqref{eq:Steklov} as the {\it Steklov
problem}. It is well-known that the eigenvalues of
\eqref{eq:Steklov} are strictly positive and that the first
eigenvalue $d_1$ admits the following variational characterization
\begin{equation} \label{eq:d1}
d_1=\inf_{v\in (H^2(\Omega)\cap H^1_0(\Omega))\setminus H^2_0(\Omega )} \frac{\int_\Omega |\Delta v|^2 \, dx}{\int_{\partial\Omega}
v_\nu^2 \, dS} \, .
\end{equation}
In \cite{KuSi,Payne} the authors study the isoperimetric
properties of $d_1$ and in \cite{Fichera}  it is proved   that $d_1$ is the sharp constant for $L^2$ a priori
estimates for solutions of the (second order) Laplace equation
under nonhomogeneous Dirichlet boundary conditions, see also \cite{Kuttler1,Kuttler2}.

From a physical point of view, it is known that in the two
dimensional case $N=2$, Steklov boundary conditions for the
biharmonic operator have a natural interpretation in modeling the
vibrations of a hinged plate. For more details see for example
\cite{BuGa, BuPr, FeGaWe,GaGrSw,GaSw, LaPr}.

In the last years, the Steklov boundary conditions for the
biharmonic operator were studied in \cite{BeGaMi,GaSw} from the
point of view of the positivity preserving property for
nonhomogeneous biharmonic equations. Other recent results were
focused on isoperimetric properties of $d_1$ and on related shape
optimization problems, see \cite{AnGa,BuFeGa,BuGa,FeGaWe}. More
precisely from these papers, among many other results, we know
that, among all convex domains of fixed volume, there exists an
optimal domain which minimizes $d_1$ and that this domain is not
the ball: for example in dimension $N=2$ the square is better than
the disk. It is still an open question, even in dimension $N=2$,
which is the optimal shape among convex domains of fixed volume.
We also quote the recent paper \cite{Au} where the eigenvalues of \eqref{eq:Steklov}
are used to provide the singular value decomposition for the Poisson kernel of the Laplace operator on bounded domains.

For its physical interpretation and for its relevance in the study
of positivity preserving properties, in our paper we also consider
the following Steklov type problem
\begin{equation} \label{eq:Steklov-modificato}
\begin{cases}
\Delta^2 u=0, & \qquad \text{in } \Omega \, , \\
u=0,  & \qquad \text{on } \partial\Omega \, , \\
\Delta u-K(x)u_\nu-\delta u_\nu=0,  & \qquad \text{on } \partial\Omega \, ,
\end{cases}
\end{equation}
where $K$ denotes the mean curvature of the boundary, i.e. the sum
of the principal curvatures and $\delta$ is a real parameter. As
one can see for example from \cite{GaSw}, when $N=2$, the
curvature of the boundary naturally comes out when we consider the
model of a hinged plate. Problem \eqref{eq:Steklov-modificato}
admits the following variational formulation: we say that $u\in H^2(\Omega)\cap H^1_0(\Omega)$ is a weak solution
of \eqref{eq:Steklov-modificato} if
\begin{equation} \label{eq:Steklov-modificato-var}
\int_\Omega D^2 u: D^2 v\, dx=\delta \int_{\partial\Omega} u_\nu
v_\nu \, dS \qquad \text{for any } v\in H^2(\Omega)\cap
H^1_0(\Omega)
\end{equation}
where $D^2 u$, $D^2 v$ denote the Hessian matrices
of $u,v$ respectively and $D^2 u\!:\!D^2v\!:=\!\sum_{i,j=1}^N \!\frac{\partial^2
u}{\partial x_i \partial x_j} \frac{\partial^2 v}{\partial x_i
\partial x_j}$. In the sequel we also write $|D^2 u|^2:=\sum_{i,j=1}^N \left(\frac{\partial^2
u}{\partial x_i \partial x_j}\right)^2$. For more details on the
correlation between \eqref{eq:Steklov-modificato} and
\eqref{eq:Steklov-modificato-var} see the proof of
\cite[Proposition 5]{Chasman} and combine it with the identity
$\Delta u=u_{\nu\nu}+K(x)u_\nu$ on $\partial\Omega$ (see \cite[(4.68), p. 62]{Sperb})
valid for any smooth enough function $u$ vanishing on
$\partial\Omega$. Here $u_{\nu\nu}$ denotes  the second order normal
derivative, i.e. $u_{\nu\nu}=\sum_{i,j=1}^N \tfrac{\partial^2
u}{\partial x_i
\partial x_j}\nu_i \nu_j$ with $\nu=(\nu_1,\dots,\nu_N)$.

Similarly to problem \eqref{eq:Steklov}, also problem
\eqref{eq:Steklov-modificato} may be interpreted as an eigenvalue
problem with respect to the parameter $\delta$: we say that
$\delta$ is an eigenvalue of \eqref{eq:Steklov-modificato} if
\eqref{eq:Steklov-modificato} admits a nontrivial solution in the
sense given in \eqref{eq:Steklov-modificato-var}. In the sequel,
we refer to \eqref{eq:Steklov-modificato} as the {\it modified
Steklov problem}.

It is known that the set of the eigenvalues of \eqref{eq:Steklov}
is a countable set of isolated positive numbers which may be ordered in an
increasing sequence diverging to $+\infty$. We choose the usual notation of
repeating each eigenvalue as many times as its multiplicity:
\begin{equation} \label{eq:sequence}
0<d_1<d_2\le \dots \le d_n \le \dots
\end{equation}
As one can see from \eqref{eq:sequence} we wrote $d_1<d_2$ in
order to emphasize the fact that the first eigenvalue $d_1$ is
simple. We quote the papers \cite{BuFeGa,FeGaWe} for more details
on these and further properties of the eigenvalues of
\eqref{eq:Steklov}.

In a completely similar way, it can be proved that also the set of
the eigenvalues of \eqref{eq:Steklov-modificato} is a countable
set of isolated numbers which may be ordered in an increasing
sequence diverging to $+\infty$ and moreover the least eigenvalue
is strictly positive. For the eigenvalues of
\eqref{eq:Steklov-modificato} we use a notation analogous to
\eqref{eq:sequence}:
\begin{equation} \label{eq:sequence-modificato}
0<\delta_1\le \delta_2\le \dots \le \delta_n \le \dots
\end{equation}

Here we are interested in the study of the stability of the
eigenvalues and eigenfunctions of \eqref{eq:Steklov} and
\eqref{eq:Steklov-modificato} with respect to domain perturbation.
We note that the stability of the eigenvalues of  the classical second order problem \eqref{eq:classicalSteklov} has been discussed in
\cite{Bo} as  well as in \cite{terelst} which is actually concerned with   the stability of the so-called Dirichlet-to-Neumann maps.

More precisely if $\{\Omega_\eps\}_{0<\eps\le \eps_0}$ is a family
of domains which converges to a domain $\Omega$ in a suitable
sense, denoting  by $\{d_n^\eps\}_{n=1}^\infty$,
$\{d_n\}_{n=1}^\infty$ the eigenvalues of the Steklov problem in
$\Omega_\eps$ and $\Omega$ respectively, and by
$\{\delta_n^\eps\}_{n=1}^\infty$ $\{\delta_n\}_{n=1}^\infty$ the
eigenvalues of the modified Steklov problem in $\Omega_\eps$ and
$\Omega$ respectively, we say that we have stability for the eigenvalues of each of the two
problems, if for any $n\ge 1$, $d_n^\eps\to d_n$ and
$\delta_n^\eps \to \delta_n$ as $\eps\to 0$. We are also
interested in what we will call spectral convergence of
$S_{\Delta,\eps}$ to $S_\Delta$ and of $S_{D^2,\eps}$ to
$S_{D^2}$, where $S_{\Delta,\eps}$, $S_\Delta$ denote the linear
operators associated to \eqref{eq:Steklov} respectively in
$\Omega_\eps$ and $\Omega$, and $S_{D^2,\eps}$, $S_{D^2}$ denote
the linear operators associated to \eqref{eq:Steklov-modificato}
respectively in $\Omega_\eps$ and $\Omega$.
See Section \ref{s:functional-setting-B} for the precise
definitions of these operators.

Basically, these operators are the resolvent operators associated with problems  \eqref{eq:Steklov}, \eqref{eq:Steklov-modificato} and their spectral convergence is obtained
by proving their compact convergence which is a well-known
notion of convergence  implying the convergence
of eigenvalues and eigenfunctions. Unfortunately, since the underlying function spaces depend on $\eps $, we cannot directly use
the standard notion of compact convergence, but we need to use suitable `connecting systems'
which allow to pass from the varying Hilbert spaces defined on $\Omega_{\eps}$ to the limiting fixed Hilbert space defined on $\Omega$.  Namely, we use  a number of notions and results which go back to the works of F. Stummel~\cite{Stu} and G. Vainniko~\cite{Vainikko} and which have been further implemented in \cite{ArCaLo, CarPisk}. See also the recent paper \cite{Bogli}. In particular we use the notion of $E$-compact convergence. These notions are discussed in detail in Section~ \ref{ss:convergenceint}.

In order to prove the $E$-compact convergence of the operators under consideration, we consider domains $\Omega_{\eps}$, $\Omega $ belonging to
 uniform classes of domains with $C^{1,1}$ boundaries - see Definition~\ref{d:atlas-B} - and we  require that the boundaries of $\Omega_{\eps}$  converge to the boundary of $\Omega$
in the sense of  \eqref{eq:assumptions}. Condition \eqref{eq:assumptions} has been introduced in \cite{ArLa2} and turns out to be a sharp condition in the analysis of domain perturbation problems for fourth order operators. Note that if the boundaries of $\Omega_{\eps}$ have uniformly bounded $C^{1,1}$-norms and  converge to the boundary  of $\Omega$
in $C^1$ then condition \eqref{eq:assumptions}  is satisfied. However, it is important to note that condition \eqref{eq:assumptions} allows to deal also with boundaries whose $C^{1,1}$-norms are not uniformly bounded with respect to $\eps$.

Condition \eqref{eq:assumptions} allows to  construct in Section~\ref{subsecoperatorsE} a linear continuous  operator $E_{\eps}:H^2(\Omega)\cap H^{1}_0(\Omega )\to H^2(\Omega_{\eps})\cap H^{1}_0(\Omega_{\eps } )$ and to define the notion of $E$-convergence for a sequence $u_{\eps }\in H^2(\Omega_{\eps})\cap H^{1}(\Omega_{\eps} )$ to a function $u\in H^2 (\Omega )\cap H^{1}_0(\Omega )$, that is
$$
\|  u_{\eps }-E_{\eps}u\|_{H^2(\Omega _{\eps })}\to 0, \ \ {\rm as}\ \eps \to 0.
$$

The operators $E_{\eps}$  are  constructed by pasting together suitable pull-back operators defined by means of  appropriate local diffeomorphisms. Importantly,
the operators $E_{\eps}$ enjoy the following property: for any
$\eps >0$ there exists an open set  $K_{\eps }\subset \Omega\cap
\Omega _{\eps }$ such that $(E_{\eps}u)(x)=u(x)$ for all $x\in
K_{\eps}$ and such that $|(\Omega_{\eps}\cup \Omega ) \setminus
K_\eps |\to 0$ as $\eps \to 0$. Here and in the sequel $|A|$ denotes the Lebesgue measure of any measurable set $A\subset \R^N$.
This implies the more familiar strong convergence
\begin{equation}
\label{intersection}
\|  u_{\eps }-u\|_{H^2(\Omega _{\eps } \cap \Omega )  }\to 0, \ \ {\rm as}\ \eps \to 0 .
\end{equation}
Moreover, it also implies
\begin{equation}
\label{intesection1}
\|  \nabla u_{\eps }-E_{\eps}\nabla u\|_{L^2(\partial \Omega _{\eps })}\to 0, \ \ {\rm as}\ \eps \to 0,
\end{equation}
and we note that $E_{\eps }\nabla u_{|\partial \Omega_{\eps} }$ is nothing but a natural transplantation of $u$ from $\partial \Omega $ to
$\partial \Omega_{\eps }$
(which differs from an exact pull-back by a minor deformation at the intersections of different local charts describing the boundaries, a deformation which vanishes as $\eps \to 0$). For more details on \eqref{intersection}-\eqref{intesection1} see Proposition \ref{intersec}.

One of the main results of the paper is Theorem~\ref{t:main-1} where it is proved that condition \eqref{eq:assumptions} implies the $E$-compact convergence of  $S_{\Delta,\eps}$ to $S_\Delta$ and of $S_{D^2,\eps}$ to
$S_{D^2}$ as $\eps\to 0$. On the base of the general Theorem~\ref{vaithm}, this implies the spectral convergence of   $S_{\Delta,\eps}$ to $S_\Delta$ and of $S_{D^2,\eps}$ to
$S_{D^2}$, hence the convergence of the eigenvalues and the $E$-convergence of the eigenfunctions in the sense of Theorem~\ref{vaithm}. In particular the eigenfunctions converge in the sense of \eqref{intersection} and \eqref{intesection1}.

We note that our domains are not required to be star-shaped as in
\cite{terelst} (which is concerned with  second order operators)
and that removing such an assumption is not straightforward. In
fact, here we only assume that the domains under consideration
belong to a uniform class of  domains defined by means of the same
atlas and  this leads to a number of technical difficulties  that
are overcome in Section 3. We believe that Section 3 gives a
partial answer (in the case of fourth order operators)  to the
question raised in the introduction of \cite{terelst}  where the
authors suggest the problem of studying the convergence of the
resolvent operators of second order Steklov problems on arbitrary
Lipschitz domains.

In Section~\ref{s:optimality} we analyze the optimality of condition \eqref{eq:assumptions}. In the spirit  of \cite{ArLa2}, we consider the case of a domain
of the form $\Omega=W\times (-1,0)$ where $W$ is a cuboid or a bounded domain in $\R^{N-1}$ of class $C^{1,1}$ and we assume that the perturbed domain
$\Omega_\eps$ is given by
\begin{equation*}
\Omega_\eps=\{(x',x_N):x'\in W\, ,
-1<x_N<g_\eps(x')\}
\end{equation*}
where $g_\eps(x')=\eps^\alpha b(x'/\eps)$ for any $x'\in W$ and
$b:\R^{N-1}\to [0,+\infty)$ is a nonconstant $Y$-periodic
function where $Y=\left(-\frac 12,\frac 12\right)^{N-1}$ is the unit cell in $\R^{N-1}$. We denote by $\Gamma_\eps$ and $\Gamma$ the sets
\begin{equation*}
\Gamma_\eps:=\{(x',g_\eps(x')):x'\in W\} \quad \text{and} \quad
\Gamma:=W\times \{0\} \, .
\end{equation*}

It appears that for $\alpha >3/2$ condition \eqref{eq:assumptions} is satisfied hence we have spectral convergence  of   $S_{\Delta,\eps}$ to $S_\Delta$ and of $S_{D^2,\eps}$ to
$S_{D^2}$, see Corollary~\ref{t:main-2ante}.  For $\alpha <3/2 $  condition  \eqref{eq:assumptions} is not satisfied and we have a degeneration phenomenon. Namely, for $1\le \alpha <3/2$ we have spectral convergence of the operator $S_{D^2,\eps}$  to  $S_{D^2,\Gamma }$ where $S_{D^2,\Gamma }$ is an operator encoding   Dirichlet boundary conditions on $\Gamma$, that is $u=u_{\nu}=0$ on $\partial \Gamma$, see Theorem~\ref{t:main-2}.  For $\alpha =3/2$, we have spectral convergence to another operator  which encodes the appearance of a strange term in the boundary conditions, see Theorem~\ref{thmcritstek}. We note that in order to simplify our analysis in the case $\alpha =3/2$,  we impose Dirichlet boundary conditions on $\partial \Omega_{\eps}\setminus \Gamma_{\eps}$ and $\partial\Omega\setminus \Gamma$.

We also note that imposing Dirichlet boundary conditions on $\partial \Omega_{\eps}\setminus \Gamma_{\eps}$ and $\partial\Omega\setminus \Gamma$ allows to view the  trichotomy discussed above in a rather transparent way. Namely, under Dirichlet boundary conditions on $\partial \Omega_{\eps}\setminus \Gamma_{\eps}$ and $\partial\Omega\setminus \Gamma$ we have that:
\begin{itemize}
\item[(i)] If $\alpha >3/2$ then  $\delta_n(\eps )\to \delta _n(0
)$, as $\eps \to 0$.
\item[(ii)] If $\alpha =3/2$ then
$\delta_n(\eps )\to  \delta_n(0 )+\gamma $ as $\eps \to 0$,
where $\gamma >0$ is the strange term defined by  \eqref{strangecurvature}.
 \item[(iii)] If
$1\le \alpha < 3/2$ then $\delta_n(\eps)\to \infty $, as $\eps
\to 0$.
\end{itemize}

The strange term $\gamma$ is the same term  found in \cite{ArLa2} in the analysis of the Babuska Paradox and is referred to as `the strange curvature' in \cite{ArLa2}.

The analysis of the cases $\alpha \le 3/2$ is carried out in Section~\ref{s:optimality} only for problem \eqref{eq:Steklov-modificato} since
problem \eqref{eq:Steklov} presents severe difficulties in these cases.  In order to overcome these difficulties, in Section~\ref{navtoneusection} we change our point of view
and we recast the eigenvalue problem \eqref{eq:Steklov} as an eigenvalue problem for a suitable Dirichlet-to-Neumann-type map. Considering
our boundary conditions, we find it natural to call such a map Neumann-to-Navier map, and  actually we find it simpler to consider its inverse which we call Navier-to-Neumann map and we denote by ${\mathcal{N}}_{\Omega }$. Since we aim at relaxing the boundary regularity assumptions, we are forced to work with the space $H(\Delta, \Omega)\cap H^1_0(\Omega ) $ and we find it convenient  to  consider
the normal derivatives of the functions in such a space as elements of $H^{-1/2}(\partial\Omega)$  although they belong to $L^2(\partial \Omega)$ see \cite[Thm.~3.2]{Au} (recall that the space $H(\Delta, \Omega) $ is the space of functions in $L^2(\Omega )$ with distributional Laplacian in $L^2(\Omega)$).  In this way, our   Navier-to-Neumann map ${\mathcal{N}}_{\Omega }$
can be  considered as a map from $H^1(\Omega)/H^1_0(\Omega )$ to its dual and problem \eqref{eq:Steklov} can be recast in the form
${\mathcal{N}}_{\Omega }f =d^{-1} J_0   f$ where $J_0$ is defined by $
\langle J_0 f,v\rangle:=\int_{\partial\Omega} fv \, dS$  for all $f,v\in H^1(\Omega)/H^1_0(\Omega )$. It is interesting to note that this weak  formulation of problem  \eqref{eq:Steklov} makes sense under the sole assumption that $\Omega$ is a bounded domain of class $C^{0,1}$.
We note that, in order to avoid some technicalities which would not add much to the results of Section~\ref{navtoneusection}, we shall actually avoid using the quotient spaces and we shall  consider the Navier-to-Neumann  ${\mathcal{N}}_{\Omega }$ directly as map from  $H^1(\Omega)$ to its dual, which has also its own interest,  see Remark~\ref{veryweakrem}.

Thus we study the behaviour of the maps ${\mathcal{N}}_{\Omega_{\eps} }$ as $\eps \to 0$ and
in Theorem~\ref{l:EE*} we prove their $EE^*$-convergence to ${\mathcal{N}}_{\Omega  }$ under weak conditions on the convergence of domains. Here the  $EE^*$-convergence  is a natural type of convergence for operators acting from a family of spaces to the corresponding duals and is based on two connecting systems $E_{\eps}$ and $E_{\eps}^*$ ($E_{\eps}$ is now defined  by means of a standard extension operator for the space $H^1(\Omega)$), see Definition~\ref{connectingdual}.  In Theorem~\ref{t:dual} we also prove a compactness result for the convergence of  ${\mathcal{N}}_{\Omega_{\eps} }$ under an additional assumption on the convergence of $\Omega_{\eps}$.

Unfortunately, the compact convergence result obtained in Theorem
\ref{t:dual} is not sufficient to prove a stability result for
the spectrum of the Steklov problem \eqref{eq:Steklov} under such very weak assumptions on the convergence of $\Omega_{\eps}$.
Indeed, in order to apply \cite[Theorem 6.1]{Vainikko}, the regular
convergence condition 1) in \cite[Section 6]{Vainikko} has to be
satisfied by the family of operators $\{\mathcal
N_{\Omega_\eps}-\lambda (J_0)_\eps\}_{\eps}$ for any $\lambda\in
\Lambda$, where $\Lambda$ is a suitable bounded set in the complex
plane, and this does not seem the case, not even if we consider these operators on  the quotient spaces and adjust the corresponding connecting systems.
Taking into account this issue we suggest the Open Problem \ref{o:1}.

As a bypass product of the analysis carried out in Section~~\ref{navtoneusection}, we prove
stability and instability results for Navier and Navier-type problems, see Theorems~\ref{stabnav} and \ref{trico}.

The paper is organized as follows. In Section~\ref{preliminaries} we discuss some preliminary results, in particular we discuss the notion of $E$-convergence and  our functional setting. In  Section~\ref{strongsec} we prove the main stability result of the paper under the assumption that condition \eqref{eq:assumptions} holds. In Section~\ref{s:optimality} we discuss the optimality of condition \eqref{eq:assumptions}. In Section~\ref{navtoneusection} we relax the assumptions on the convergence of the domains and we prove a stability result for the above mentioned Navier-to-Neumann map; moreover, we discuss the stability and instability of Navier and Navier-type problems.



\section{Preliminaries and notation} \label{preliminaries}

\subsection{A general approach to spectral stabilty} \label{ss:convergenceint}

The study of all spectral problems considered in this paper will
be reduced to the study of convenient non-negative  compact
self-adjoint operators in Hilbert spaces.

A natural way to do so is to consider appropriate modified versions
of the classical Dirichlet-to-Neumann map  densely defined in $L^2(\partial \Omega)$ and to consider their resolvents which are defined on the whole of  $L^2(\partial \Omega)$. For our fourth order Steklov problems, it is natural to talk about Neumann-to-Navier map and its inverse will be  called here  Navier-to-Neuman map.  Then one note that the  eigenvalues of those Navier-to-Neumann maps are in fact the reciprocals of the eigenvalues of our differential problems. One drawback of using Navier-to-Neumann maps consists in the fact that eventually they provide information concerning the convergence of the
(normal derivatives of the) eigenfunctions considered as elements of $L^2(\partial \Omega)$. In order to provide global information
about the behaviour of the eigenfunctions in $\Omega$,  in the regular cases  we shall actually consider appropriate realisations
of the resolvents in $H^2(\Omega)$ which will allow  to get stronger stability  results  involving  the norm of
$H^2(\Omega)$.

Thus,  our spectral problems will be considered from two different
points of view depending on the regularity of the domain
perturbations. In any case, both methods lead to the study of
suitable families  $\{B_\eps\}_{0<\eps\le \eps_0}$ of non-negative
compact self-adjoint operators defined in  Hilbert spaces
${\mathcal{H}}_{\eps}$ associated with the domains $\Omega_{\eps
}$. It is then important to study the convergence of those
operators $B_{\eps}$ as $\eps \to 0$.  To do so, we shall use the
notion of $E$-convergence  which we now recall.  We follow the
approach of \cite{Vainikko}, further developed in  \cite{ArCaLo}, \cite{CarPisk}.

In the spirit of  \cite{ArCaLo} we denote by
${\mathcal{H}}_\eps$ a family of Hilbert spaces for $\eps\in [0,\eps_0]$ and we
assume that there exists a family of linear operators
$E_\eps:{\mathcal{H}}_0\to {\mathcal{H}}_\eps$ such that
\begin{equation} \label{eq:norm-convergence}
\|E_\eps u\|_{{\mathcal{H}}_\eps} \overset{\eps\to 0}{\longrightarrow}
\|u\|_{{\mathcal{H}}_0} \, , \qquad \text{for all } u\in {\mathcal{H}}_0 \, .
\end{equation}

\begin{definition} \label{d:E-convergence-ArCaLo} We say that a family
$\{u_\eps\}_{0<\eps\le \eps_0}$, with $u_\eps\in {\mathcal{H}}_\eps$,
$E$-converges to $u\in {\mathcal{H}}_0$ if $\|u_\eps-E_\eps u\|_{{\mathcal{H}}_\eps}\to 0$
as $\eps\to 0$. We write this as $u_\eps \EC u$.
\end{definition}

\begin{definition} \label{d:EE-convergence-ArCaLo}
Let $\{B_\eps\in\mathcal L({\mathcal{H}}_\eps):\eps \in (0,\eps_0]\}$ be a
family of linear and continuous operators. We say that
$\{B_\eps\}_{0<\eps\le \eps_0}$ converges to $B_0\in \mathcal L({\mathcal{H}}_0)$ as $\eps\to 0$
if $B_\eps u_\eps \EC B_0 u$ whenever $u_\eps \EC u$. We write
this as $B_\eps \EEC B_0$.
\end{definition}

\begin{definition} \label{d:precompact-ArCaLo}
Let $\{u_\eps\}_{0<\eps\le \eps_0}$ be a family such that
$u_\eps\in {\mathcal{H}}_\eps$. We say that $\{u_\eps\}_{0<\eps\le
\eps_0}$ is precompact if for any sequence $\eps_n\to 0$ there
exist a subsequence $\{\eps_{n_k}\}$ and $u\in {\mathcal{H}}_0$
such that $u_{\eps_{n_k}} \EC u$.
\end{definition}

\begin{definition} \label{d:CC-convergence-ArCaLo}
We say that $\{B_\eps\}_{0<\eps\le \eps_0}$ with $B_\eps\in
\mathcal L({\mathcal{H}}_\eps)$ and $B_\eps$ compact, converges
compactly to a compact operator $B_0\in \mathcal
L({\mathcal{H}}_0)$ if $B_\eps \EEC B_0$ and for any family
$\{u_\eps\}_{0<\eps\le \eps_0}$ such that $u_\eps\in
{\mathcal{H}}_\eps$, $\|u_\eps\|_{\mathcal H_\eps}=1$, we have
that $\{B_\eps u_\eps\}_{0<\eps\le \eps_0}$ is precompact in the
sense of Definition \ref{d:precompact-ArCaLo}. We write this as
$B_\eps \CC B_0$.
\end{definition}

Recall that given a non-negative compact self-adjoint operator $B$ in a infinite dimensional Hilbert space ${\mathcal{H}}$, the spectrum is a finite or countably infinite  set and all non-zero elements of the spectrum are positive eigenvalues of finite multiplicity. In case the spectrum is a countably infinite set, the eigenvalues can be represented by a non-increasing  sequence $\mu_n$, $n\in \N$, such that $\lim_{n\to \infty }\mu_n=0$. As customary, we agree to repeat each eigenvalue $\mu_n$ in the sequence as many times as its multiplicity. Given a finite set of $m$ eigenvalues $\mu_n,  \dots , \mu_{n+m-1}$ with
$\mu_n\ne \mu_{n-1}$ and $\mu_{n+m-1}\ne \mu_{n+m}$, we call generalized eigenfunction (associated with $\mu_n,  \dots , \mu_{n+m-1}$) any linear combination of eigenfunctions associated with the eigenvalues $\mu_n,  \dots , \mu_{n+m-1}$.
Then we have the following theorem  which is a simplified rephrased version of \cite[Theorem~6.3]{Vainikko}, see also  \cite[Theorem~4.10]{ArCaLo}, \cite[Theorem~5.1]{ArrZua}, \cite[Theorem~3.3]{CarPisk}.

 \begin{theorem}
 \label{vaithm}
 Let  $\{B_\eps\}_{0<\eps\le \eps_0}$ and $B_0$ be non-negative compact self-adjoint operators in the Hilbert
 spaces ${\mathcal{H}}_{\eps }$, ${\mathcal{H}}_0$ respectively. Assume that their  eigenvalues are given
 by the sequences $\mu_{n}(\eps )$ and $\mu_{n}(0)$, $n\in \N$, respectively.  If  $B_\eps \CC B_0$ then we have
 spectral convergence of $B_{\eps}$ to $B_0$ as $\eps \to 0$ in the sense that the following statements hold:
 \begin{itemize}
 \item[(i)] For every $n\in \N$ we have $\mu_n(\eps )\to \mu_n(0)$ as $\eps \to 0$.
 \item[(ii)] If $u_n(\eps)$, $n\in \N$, is an orthonormal sequence of eigenfunctions associated with the eigenvalues  $\mu_n(\eps )$ then
 there exists a sequence $\eps_k$, $k\in \N$,  converging to zero and orthonormal sequence of eigenfunctions  $u_n(0)$, $n\in \N$ associated with  $\mu_n(0 )$, $n\in \N$ such that $u_n(\eps_k)\EC u_n(0)$.
 \item[(iii)]  Given  $m$ eigenvalues $\mu_n(0),  \dots , \mu_{n+m-1}(0)$ with
$\mu_n(0)\ne \mu_{n-1}(0)$ and $\mu_{n+m-1}(0)\ne \mu_{n+m}(0)$
 and corresponding orthonormal eigenfunctions $u_n(0),  \dots , u_{n+m-1}(0)$,
 there exist $m$ orthonormal   generalized eigenfunctions   $v_n(\eps ),  \dots , v_{n+m-1}(\eps )$  associated with  $\mu_n(\eps ),  \dots ,
  \mu_{n+m-1}(\eps )$   such that $v_{n+i}(\eps )\EC u_{n+i}(0)$  for all $i=0, 1,\dots , m-1$.
 \end{itemize}
\end{theorem}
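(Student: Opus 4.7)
The plan is to deduce all three statements from $B_\eps\CC B_0$ by combining the min--max characterisation of eigenvalues of compact self-adjoint non-negative operators with a Riesz spectral projection argument for the cluster statement. The general principle that will be used throughout is that $CC$-convergence has two effects beyond plain $EE$-convergence: first, whenever $u_\eps \EC u$ with $\|u_\eps\|_{\mathcal H_\eps}\le C$, then $B_\eps u_\eps \EC B_0u$; second, every bounded family $\{u_\eps\}_\eps$ has the property that $\{B_\eps u_\eps\}_\eps$ is precompact in the sense of Definition~\ref{d:precompact-ArCaLo}. Moreover, by polarisation and \eqref{eq:norm-convergence}, if $u_\eps\EC u$ and $v_\eps \EC v$ then $\langle u_\eps,v_\eps\rangle_{\mathcal H_\eps}\to \langle u,v\rangle_{\mathcal H_0}$, so orthonormality and eigenvalue equations pass to the $E$-limit.

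For (i) I would establish both inequalities in
$$
\mu_n(\eps)=\max_{\substack{V\subset \mathcal H_\eps\\ \dim V=n}}\min_{\substack{v\in V\\ \|v\|_{\mathcal H_\eps}=1}}\langle B_\eps v,v\rangle_{\mathcal H_\eps}.
$$
To prove $\liminf_{\eps\to 0}\mu_n(\eps)\ge \mu_n(0)$, I test this min--max against $V_\eps:=E_\eps\,\mathrm{span}\{u_1(0),\dots,u_n(0)\}$; the Gram matrix of $E_\eps u_i(0)$ tends to the identity by the polarised form of \eqref{eq:norm-convergence}, so $\dim V_\eps=n$ for small $\eps$, and for any unit $v_\eps=\sum c_i^\eps E_\eps u_i(0)\in V_\eps$ one has $\langle B_\eps v_\eps,v_\eps\rangle_{\mathcal H_\eps}\to \sum_i \mu_i(0)(c_i^\eps)^2\ge \mu_n(0)(1-o(1))$. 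To prove $\limsup_{\eps\to 0}\mu_n(\eps)\le \mu_n(0)$, I assume by contradiction a subsequence with $\mu_n(\eps_k)\to \mu^*>\mu_n(0)$, fix orthonormal eigenfunctions $u_1(\eps_k),\dots,u_n(\eps_k)$ and, since $\mu_i(\eps_k)\ge\mu_n(\eps_k)\to \mu^*>0$, use the identity $u_i(\eps_k)=\mu_i(\eps_k)^{-1}B_{\eps_k}u_i(\eps_k)$ together with the precompactness consequence of $CC$-convergence to extract, via a diagonal argument, $u_i(\eps_k)\EC u_i^*$ in $\mathcal H_0$. Passing to the limit in $B_{\eps_k}u_i(\eps_k)=\mu_i(\eps_k)u_i(\eps_k)$ and in $\langle u_i(\eps_k),u_j(\eps_k)\rangle_{\mathcal H_{\eps_k}}=\delta_{ij}$ produces $n$ orthonormal eigenfunctions of $B_0$ with eigenvalues $\ge\mu^*$, whence $\mu_n(0)\ge \mu^*$ by min--max, a contradiction.

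For (ii), the diagonal extraction in the proof of (i) applied to the whole orthonormal sequence gives at once a single $\eps_k\to 0$ along which $u_n(\eps_k)\EC u_n(0)$ for every $n\in\N$, with the $u_n(0)$ orthonormal and $B_0u_n(0)=\mu_n(0)u_n(0)$. For (iii), I would argue via Riesz projectors. Fix a Jordan contour $\gamma\subset\mathbb C$ enclosing the isolated cluster $\mu_n(0)\ge\dots\ge\mu_{n+m-1}(0)$ and no other point of $\sigma(B_0)\cup\{0\}$; by (i), $\gamma$ encloses exactly $\mu_n(\eps),\dots,\mu_{n+m-1}(\eps)$ for small $\eps$, and the Riesz projectors
$$
P_\eps=\frac{1}{2\pi i}\oint_\gamma (\zeta-B_\eps)^{-1}\,d\zeta,\qquad P_0=\frac{1}{2\pi i}\oint_\gamma (\zeta-B_0)^{-1}\,d\zeta
$$
are well defined. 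The key technical step, discussed below, is that $(\zeta-B_\eps)^{-1}E_\eps u\EC (\zeta-B_0)^{-1}u$ uniformly in $\zeta\in\gamma$, whence $P_\eps E_\eps u\EC P_0u$ for every $u\in\mathcal H_0$. Applied to $u_{n+i}(0)$ this yields $P_\eps E_\eps u_{n+i}(0)\EC u_{n+i}(0)$; the vectors $P_\eps E_\eps u_{n+i}(0)$ lie in the $m$-dimensional eigenspace of $B_\eps$ corresponding to the cluster and their Gram matrix tends to the identity, so a Gram--Schmidt inside that eigenspace produces the required orthonormal generalised eigenfunctions $v_{n+i}(\eps)$ with $v_{n+i}(\eps)\EC u_{n+i}(0)$.

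The main obstacle is the uniform resolvent bound on $\gamma$ used in (iii); this is precisely where compact convergence is essential, and I would prove it by contradiction. If, for some $\zeta\in\gamma$ and $\eps_k\to 0$, there existed normalised $w_k\in\mathcal H_{\eps_k}$ with $\|(\zeta-B_{\eps_k})w_k\|_{\mathcal H_{\eps_k}}\to 0$, then from $w_k=\zeta^{-1}(B_{\eps_k}w_k+o(1))$ together with the precompactness of $\{B_{\eps_k}w_k\}$ granted by Definition~\ref{d:CC-convergence-ArCaLo}, a subsequence satisfies $w_k\EC w$ with $\|w\|_{\mathcal H_0}=1$ and $(\zeta-B_0)w=0$ (after using $B_{\eps_k}w_k\EC B_0w$), contradicting $\zeta\notin\sigma(B_0)$. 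This gives uniform boundedness of $(\zeta-B_\eps)^{-1}$ on the compact contour $\gamma$, and a short equicontinuity argument in $\zeta$ upgrades pointwise to uniform convergence of the resolvent along $\gamma$.
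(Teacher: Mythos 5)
The paper does not prove Theorem~\ref{vaithm} at all: it is stated as ``a simplified rephrased version of [Vainikko, Theorem~6.3]'' and the reader is referred to \cite{Vainikko}, \cite{ArCaLo}, \cite{ArrZua}, \cite{CarPisk}. So there is no internal proof to compare against; what you have written is a self-contained reconstruction, and it follows essentially the classical route of those references (min--max plus the precompactness clause of compact convergence for the eigenvalue convergence, spectral projections for the cluster statement), rather than Vainikko's original formulation in terms of regular convergence of $B_\eps-\lambda I$. Your argument is correct in substance, and I would only flag three small points. First, in (iii) the Riesz integral requires complexifying the spaces ${\mathcal H}_\eps$, since the paper works with real Hilbert spaces; this is routine but should be said, and once the operators are self-adjoint the uniform bound $\|(\zeta-B_\eps)^{-1}\|\le \operatorname{dist}(\zeta,\sigma(B_\eps))^{-1}$ makes your contradiction argument for the resolvent bound unnecessary --- statement (i) together with the ordering of the eigenvalues already keeps $\sigma(B_\eps)$ at a fixed positive distance from $\gamma$. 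Second, your extraction in (ii) uses $u_i(\eps_k)=\mu_i(\eps_k)^{-1}B_{\eps_k}u_i(\eps_k)$ and therefore needs $\mu_i(0)>0$ for every index $i$; this is harmless when the sequence enumerates the positive eigenvalues (as in the paper's applications, where the $\mu_n$ are the reciprocals of the Steklov eigenvalues), but should be stated as a hypothesis or handled separately if some $\mu_n(0)$ vanishes. Third, in (iii) you should record explicitly that $\dim\operatorname{ran}P_\eps=m$ for small $\eps$, which follows from (i) and the monotone ordering of the eigenvalues; this is what guarantees that the Gram--Schmidt step stays inside the span of generalized eigenfunctions of the cluster. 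With these additions your proof is a complete and legitimate substitute for the citation.
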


\begin{remark} We mention that an alternative approach to the spectral analysis of operators defined on variable Hilbert spaces has been recently proposed in the book by O. Post \cite{Post}. In particular, that method allows to obtain stability estimates which can be expressed in terms of suitable estimates for the variation of the resolvent operators. It would be interesting to implement that approach in our setting in particular in the cases where we do not have degeneration phenomena but this would require heavier computations going beyond the scopes of the present paper.
\end{remark}

\subsection{Classes of domains} \label{opensets}

In order study the  spectral convergence for
 Steklov eigenvalue problems on varying domains, we shall consider uniform families of domains
for which a number of parameters are prescribed.  This is done by using the notion of atlas
from
\cite{BuLa}, see
also \cite[Section 5]{ArLa2}. According to \cite{ArLa2,BuLa},
given
a set $V\subset \R^N$ and a number $\delta>0$ we write
\begin{equation} \label{eq:def-Vdelta-B}
V_\delta:=\{x\in V:d(x,\partial V)>\delta\} \, .
\end{equation}

\begin{definition} \label{d:atlas-B}
\ \cite[Definition 2.4]{BuLa} Let $\rho>0$, $s,s'\in \N$ with
$s'<s$. Let $\{V_j\}_{j=1}^s$ be a family of open cuboids (i.e.
rotations of rectangle parallelepipeds in $\R^N$) and
$\{r_j\}_{j=1}^s$ be a family of rotations in $\R^N$. We say that
$\mathcal A=(\rho,s,s',\{V_j\}_{j=1}^s,\{r_j\}_{j=1}^s)$ is an
atlas in $\R^N$ with parameters
$\rho,s,s',\{V_j\}_{j=1}^s,\{r_j\}_{j=1}^s$ briefly an atlas in
$\R^N$. Moreover, we say that a bounded domain $\Omega$ in $\R^N$ belongs to the class
$C^{k,\gamma}(\mathcal A)$ with $k\in \N$ and $\gamma\in [0,1]$ if the following conditions are satisfied:
\begin{itemize}
\item[(i)] $\Omega\subset \cup_{j=1}^s (V_j)_\rho$ \ and \
$(V_j)_\rho \cap \Omega\neq \emptyset$ where $(V_j)_\rho$ is meant
in the sense given in \eqref{eq:def-Vdelta-B} ;

\item[(ii)] $V_j\cap \partial\Omega \neq \emptyset$ \ for \
$j=1,\dots,s'$ \ and \ $V_j\cap \partial\Omega=\emptyset$ \ for \
$s'+1\le j\le s$;

\item[(iii)] for $j=1,\dots,s$ \ we have
\begin{align*}
& r_j(V_j)=\{x\in \R^N:a_{ij}<x_i<b_{ij}\, , i=1,\dots,N\}, \quad
j=1,\dots,s ; \\
& r_j(V_j\cap \Omega)=\{x=(x',x_N)\in\R^N:x'\in W_j,
a_{Nj}<x_N<g_j(x')\}, \quad j=1,\dots, s'
\end{align*}
where $x'=(x_1,\dots,x_{N-1})$, $W_j=\{x'\in
\R^{N-1}:a_{ij}<x_i<b_{ij}, i=1,\dots,N-1\}$ and the functions
$g_j\in C^{k,\gamma}(\overline{W_j})$ for any $j\in 1,\dots,s'$ with $k\in
\N\cup \{0\}$ and $0\le \gamma\le 1$. Moreover, for $j=1,\dots,s'$
we assume that $a_{Nj}+\rho\le g_j(x')\le b_{Nj}-\rho$, for all
$x'\in \overline{W_j}$.
\end{itemize}
\noindent Finally we say that $\Omega$ if of class $C^{k,\gamma}$
if it is of class $C^{k,\gamma}(\mathcal A)$ for some atlas
$\mathcal A$. In the sequel $C^{k,0}$ will be simply denoted by $C^k$.
\end{definition}

\subsection{Function spaces}

Let $\Omega$ be a domain  in ${\mathbb{R}}^N$.   In this paper,
we shall often use the standard Sobolev spaces $H^k(\Omega)$ and
$H^k_0(\Omega)$ for $k=1,2$. Recall that $H^k(\Omega)$ is the
space of those functions in $L^2(\Omega)$ with weak derivatives up
to order $k$ in $L^2(\Omega)$, and that $H^k_0(\Omega)$ is the
closure in $H^k(\Omega)$ of $C^{\infty }_c(\Omega)$.  Note that in
this paper we assume that $L^2(\Omega)$ is made of real-valued
functions.

Besides $H^2(\Omega)$ we shall also consider the  larger space $H(\Delta, \Omega )$ defined by
\begin{equation}
 H(\Delta, \Omega)=\{u\in L^2(\Omega):\ \Delta u\in L^2(\Omega )
\},
\end{equation}
where $\Delta u $ denotes  the distributional Laplacian of $u$.
Thus, by definition  $\int_{\Omega }u\Delta \varphi
dx=\int_{\Omega }\Delta u \varphi dx$ for all $\varphi \in
C^{\infty }_c(\Omega)$.  The space $ H(\Delta ,\Omega )$ is a
natural space associated with the Dirichlet Laplacian which we
temporarily denote  here by $\Delta_{D}$. Recall that the operator
$\Delta_{D}$ is the  non-negative self-adjoint operator in
$L^2(\Omega)$ canonically associated with the quadratic form
$\int_{\Omega}\nabla u\nabla \varphi dx$ defined for all
$u,\varphi \in H^1_0(\Omega)$. This means that a function $u\in
L^2(\Omega)$ belongs to the domain ${\rm Dom }(\Delta_{D})$ of
$\Delta_{D}$ if and only if $u\in H^1_0(\Omega)$ and there exists
$f\in L^2(\Omega)$ such that $\int_{\Omega}\nabla u\nabla \varphi
dx=\int_{\Omega }f\varphi dx$ for all $\varphi \in H^1_0(\Omega)
$, in which case $-\Delta_{D}u=f$. It follows by these definitions
that ${\rm Dom }(\Delta_{D})= H(\Delta, \Omega )\cap H^1_0(\Omega
). $

We have the following lemma where by Poincar\'{e} inequality we
mean the classical inequality $\| u\|_{L^2(\Omega )}\le c \|\nabla
u\|_{L^2(\Omega )}$ for all $u\in H^1_0(\Omega )$ and by
Poincar\'{e} constant we mean the best constant $c$ in that
inequality. Recall  that the Poincar\'{e} inequality holds under
general assumptions on $\Omega$, for example if $\Omega$  has
finite measure it holds with $c= c_N |\Omega |^{\frac{1}{N}}$,  or
if $\Omega $ is bounded in one direction it holds with $c= c_N d$
where $d$ is the diameter in that direction (here the symbol $c_N$
is used to denote constants depending only on $N$). Moreover, the definition
of uniform exterior ball
condition  is standard, see e.g., \cite{Adolf}.

\begin{lemma}
\label{complete}Let $\Omega $ be a domain  in  ${\mathbb{R}}^N$
such that the Poincar\'{e} inequality holds. Then the following
statements hold:
\begin{itemize}
\item[(i)]  For all $u\in  H(\Delta, \Omega )\cap H^1_0(\Omega )$
we have
\begin{equation}
\label{poin} \|  u\|_{L^2(\Omega )}\le  c_{{\mathcal{P}}}^2
\|\Delta u\|_{L^2(\Omega)}, \ \ {\rm and}\ \ \| \nabla
u\|_{L^2(\Omega )}\le   c_{{\mathcal{P}}}  \|\Delta
u\|_{L^2(\Omega)} ,
\end{equation}
where $ c_{{\mathcal{P}}}$ denotes the Poincar\'{e} constant.
\item[(ii)] The space $ H(\Delta, \Omega )\cap H^1_0(\Omega ) $
endowed with the scalar product defined by $\int_{\Omega }\Delta u
\Delta vdx$ for all $u, v\in H(\Delta, \Omega )\cap H^1_0(\Omega )
$  is a Hilbert space. \item[(iii)] If $\Omega $ is a bounded open
set of class $C^{0,1}$ which satisfies the uniform exterior ball
condition then $ H(\Delta, \Omega )\cap H^1_0(\Omega ) =
H^2(\Omega)\cap H^1_0(\Omega ) $.
\end{itemize}
\end{lemma}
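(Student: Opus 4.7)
My plan is to treat the three items in order, the first two being essentially soft arguments based on duality, while the third rests on a nontrivial regularity result that I would quote.

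For (i), the key is the integration-by-parts identity
\[
\int_\Omega \nabla u \cdot \nabla \varphi\, dx = -\int_\Omega \Delta u \cdot \varphi\, dx
\]
for all $u\in H(\Delta,\Omega)\cap H^1_0(\Omega)$ and all $\varphi\in H^1_0(\Omega)$. The plan is to first establish it for $\varphi\in C^\infty_c(\Omega)$, where it is just the definition of the distributional Laplacian combined with the usual $H^1$-formula $\int u\Delta\varphi\,dx=-\int\nabla u\cdot\nabla\varphi\,dx$; both sides are continuous linear functionals of $\varphi$ in the $H^1_0$-norm (use Cauchy--Schwarz and the Poincar\'e inequality to bound the right-hand side), so density of $C^\infty_c(\Omega)$ in $H^1_0(\Omega)$ extends it. Then I would take $\varphi=u$, obtaining $\|\nabla u\|_{L^2}^2\le\|u\|_{L^2}\|\Delta u\|_{L^2}\le c_{\mathcal P}\|\nabla u\|_{L^2}\|\Delta u\|_{L^2}$, whence $\|\nabla u\|_{L^2}\le c_{\mathcal P}\|\Delta u\|_{L^2}$ after dividing, and finally $\|u\|_{L^2}\le c_{\mathcal P}\|\nabla u\|_{L^2}\le c_{\mathcal P}^2\|\Delta u\|_{L^2}$ by one more application of Poincar\'e.

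For (ii), I would verify that $(u,v)\mapsto\int_\Omega\Delta u\Delta v\,dx$ is bilinear, symmetric, and non-negative by inspection; definiteness follows immediately from (i), since $\int|\Delta u|^2\,dx=0$ together with $\|u\|_{L^2}\le c_{\mathcal P}^2\|\Delta u\|_{L^2}$ forces $u=0$. For completeness, let $\{u_n\}$ be Cauchy with respect to $\|\Delta\cdot\|_{L^2}$. By (i), $\{u_n\}$ is also Cauchy in $H^1_0(\Omega)$, so $u_n\to u$ in $H^1_0(\Omega)$ for some $u\in H^1_0(\Omega)$, while $\Delta u_n\to v$ in $L^2(\Omega)$. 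Passing to the limit in $\int u_n\Delta\varphi\,dx=\int\Delta u_n\,\varphi\,dx$ for $\varphi\in C^\infty_c(\Omega)$ identifies $v$ as the distributional Laplacian of $u$, so $u\in H(\Delta,\Omega)\cap H^1_0(\Omega)$ and $\|\Delta(u_n-u)\|_{L^2}\to 0$.

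For (iii), the inclusion $H^2(\Omega)\cap H^1_0(\Omega)\subset H(\Delta,\Omega)\cap H^1_0(\Omega)$ is trivial. The reverse inclusion is the content of the classical $H^2$-regularity theorem for the Dirichlet problem $-\Delta u=f\in L^2(\Omega)$, $u=0$ on $\partial\Omega$: under the assumption that $\Omega$ is a bounded Lipschitz domain satisfying the uniform exterior ball condition, this is precisely the result established in \cite{Adolf}, which I would simply invoke. This is the main (and really only) obstacle in the lemma, as the $H^2$-regularity on such low-regularity domains is well beyond what one can obtain by elementary difference-quotient arguments and genuinely requires the exterior ball hypothesis; once it is quoted, the equality of the two spaces, together with the closed graph theorem and (ii), ensures that the $\|\Delta\cdot\|_{L^2}$-norm and the full $H^2$-norm are equivalent on $H^2(\Omega)\cap H^1_0(\Omega)$.
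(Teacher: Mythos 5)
Your proposal is correct and follows essentially the same route as the paper: part (i) via the integration-by-parts identity $\int_\Omega|\nabla u|^2dx=-\int_\Omega u\,\Delta u\,dx$ (which you justify by density, making explicit what the paper leaves implicit) followed by Cauchy--Schwarz and Poincar\'e; part (ii) by the identical Cauchy-sequence argument identifying the distributional Laplacian of the limit; and part (iii) by quoting the $H^2$-regularity result of \cite{Adolf} for Lipschitz domains with the uniform exterior ball condition. No gaps.
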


\begin{proof} The second inequality in  (i) easily follows by the Poincar\'{e} inequality because
 if $u\in H(\Delta, \Omega )\cap H^1_0(\Omega )$ then
\[
\int_{\Omega }|\nabla u|^2dx=-\int_{\Omega }u\Delta u dx\le
\|u\|_{L^2(\Omega)} \|\Delta u\|_{L^2(\Omega)}\le
c_{{\mathcal{P}}} \|\nabla u \|_{L^2(\Omega )} \|\Delta
u\|_{L^2(\Omega)}.
\]
The first inequality in (i) follows by the second inequality
combined again with the Poincar\'{e} inequality. We now prove
statement (ii). It clearly suffices to prove the completeness of
the space. Let $u_n$, $n\in \N$ be a sequence in  $
H(\Delta , \Omega)\cap H^1_0(\Omega ) $ such that $f_n:=\Delta u_n  $ is
a Cauchy sequence in $L^2(\Omega )$. By $(i)$, also $u_n$, $n\in
\N$ is a Cauchy sequence in $H^1_0(\Omega)$. Thus, there exists
$f\in L^2(\Omega )$ and $u\in H^1_0(\Omega )$ such that $\Delta
u_n\to f $ in $L^2(\Omega )$ and $u_n\to u$ in $H^1_0(\Omega )$ as
$n\to \infty$. Passing to the limit in the equality
$\int_{\Omega}\nabla u_n\nabla \varphi dx=-\int_{\Omega
}f_n\varphi dx$ as $n\to \infty $, we get $\int_{\Omega}\nabla
u\nabla \varphi dx=-\int_{\Omega }f\varphi dx$ for all $\varphi\in
H^1_0(\Omega )$ which means that $-\Delta u =f$ hence $u\in
H(\Delta, \Omega)$. This completes the proof of (ii). Statement (iii) is
a consequence of regularity results for the solutions of the
Dirichlet problem, see \cite{Adolf}.
\end{proof}

\begin{remark}\label{remcom} It follows by Lemma~\ref{complete} that if $\Omega$ is a bounded domain of class $C^{1,1}$ then
$H(\Delta, \Omega )\cap H^1_0(\Omega )=H^2(\Omega)\cap H^1_0(\Omega)$ and the two norms are equivalent. Indeed, it is well known that $C^{1,1}$ regularity
implies the validity of the uniform outer ball condition.
\end{remark}

\section{Strong spectral convergence for Steklov and Steklov-type problems}
\label{strongsec}

\subsection{The functional setting} \label{s:functional-setting-B}

Assume that  $\Omega\subset \R^N$ ($N\ge 2$) is a bounded domain of class $C^{0,1}$ satisfying the uniform exterior ball condition. To shorten our notation we set
$$
V(\Omega):=H^2(\Omega)\cap H^1_0(\Omega).
$$

Proceeding as in \cite{BuFeGa}, we relate the spectrum of the
Steklov problem to the spectrum of a suitable self-adjoint compact
operator acting on the space $V(\Omega)$. The space $V(\Omega)$
inherits the scalar product of $H^2(\Omega)$ and being closed in
$H^2(\Omega)$ it becomes a Hilbert space. The space $V(\Omega)$
may be endowed with the two alternative scalar products
 \begin{align} \label{eq:s-p}
& (u,v)\mapsto \int_\Omega \Delta u \Delta v \, dx \qquad
\text{for any } u,v \in V(\Omega) \, , \\[5pt]
\label{eq:s-pbis}
 & (u,v)\mapsto \int_\Omega D^2 u: D^2 v \, dx \qquad
\text{for any } u,v \in V(\Omega) \, .
 \end{align}
Recall  that by Lemma~\ref{complete} (iii) the two scalar products defined in \eqref{eq:s-p}  and \eqref{eq:s-pbis} are equivalent to the one induced by the $H^2(\Omega)$-norm.
 In the sequel we denote by $V'(\Omega)$ the dual space of $V(\Omega)$.

 Consider now the eigenvalue problem \eqref{eq:Steklov} and let us
 introduce a suitable characterization of its spectrum.

We introduce the operator $T_\Delta:V(\Omega)\mapsto V'(\Omega)$
defined by
 $$
_{V'(\Omega)} \langle T_\Delta u,v\rangle_{V(\Omega)}:=\int_\Omega \Delta u \Delta v \, dx \qquad \text{for any } u,v\in V(\Omega) \, .
 $$
The operator $T_\Delta$ is clearly well-defined and continuous.
Moreover by Riesz Theorem we also deduce that $T_\Delta$
is invertible and $T_\Delta^{-1}$ is continuous. Then we introduce
the operator $J:V(\Omega)\mapsto V'(\Omega)$ defined by
\begin{equation*}
_{V'(\Omega)} \langle Ju,v \rangle_{V(\Omega)}:= \int_{\partial\Omega} u_\nu v_\nu \, dS \qquad \text{for any } u,v\in V(\Omega) \, .
\end{equation*}
Since the linear map
\begin{align} \label{eq:trace}
  V(\Omega) & \mapsto (L^2(\partial\Omega))^N \\
 \notag  u & \mapsto \nabla u_{|\partial\Omega}
\end{align}
is well-defined and compact being $\partial\Omega$ Lipschitzian  (see \cite[Theorem 6.2, Chap. 2]{Necas} for more details), then the operator $J$ is also
well-defined and compact.

Next we introduce the operator $S_\Delta:V(\Omega)\mapsto
V(\Omega)$ defined by $S_\Delta:=T^{-1}_\Delta\circ J$. Clearly
$S_\Delta$ is a linear compact operator and moreover it is easy to
see that it is also self-adjoint. Moreover one can show that
$\mu\neq 0$ is an eigenvalue of $S_\Delta$ if and only if
$d:=\mu^{-1}$ is an eigenvalue of \eqref{eq:Steklov}. See the
proof of Lemma 2 in \cite{BuFeGa} for more details.

With a completely similar procedure we can treat problem
\eqref{eq:Steklov-modificato} by introducing the operator
$T_{D^2}:V(\Omega)\to V'(\Omega)$ defined as
\begin{equation*}
 _{V'(\Omega)}\langle T_{D^2}\, u,v\rangle_{V(\Omega)}:=\int_\Omega D^2 u:D^2 v \, dx \qquad \text{for any } u,v\in
 V(\Omega)\, .
\end{equation*}
Applying Riesz Theorem, we see that also the operator
$T_{D^2}$ is invertible and its inverse function is continuous.
Then one can define $S_{D^2}:V(\Omega)\to V(\Omega)$ as
$S_{D^2}:=T_{D^2}^{-1}\circ J$ and prove that $S_{D^2}$ is a
compact self-adjoint operator. Moreover any $\mu\neq 0$ is an
eigenvalue of $S_{D^2}$ if and only if $\delta=\mu^{-1}$ is an
eigenvalue of \eqref{eq:Steklov-modificato}. This also proves the
validity of \eqref{eq:sequence-modificato} and show that the first
eigenvalue $\delta_1$ admits the following characterization
\begin{equation} \label{eq:char-delta-1}
\delta_1=\inf_{v\in V(\Omega)\setminus H^2_0(\Omega )} \frac{\int_\Omega
|D^2 v|^2 \, dx}{\int_{\partial\Omega} v_\nu^2 \, dS} \, .
\end{equation}

\subsection{Domain perturbations and definition of the operators $E_{\eps}$}
\label{subsecoperatorsE}

Let $\mathcal A$ be an atlas. We shall consider a family
 $\{\Omega_\eps\}_{0<\eps\le \eps_0}$ of domains of
class $C^{1,1}(\mathcal A)$ converging in a suitable sense to a fixed domain $\Omega$ of
class $C^{1,1}(\mathcal A)$.
For any $0<\eps\le \eps_0$ we introduce the Steklov operators
\begin{equation*}
S_{\Delta,\eps}:V(\Omega_\eps) \to V(\Omega_\eps) \quad \text{and}
\quad S_{D^2,\eps}:V(\Omega_\eps) \to V(\Omega_\eps)
\end{equation*}
according to  the definition given in Section
\ref{s:functional-setting-B},
and we  study the convergence  of $S_{\Delta,\eps}$ to $S_{\Delta}$ and of  $S_{D^2,\eps}$ to  $S_{D^2}$.
To do so, we shall  apply the general method described in
Subsection \ref{ss:convergenceint} where the Hilbert  spaces
${\mathcal{H}}_{\eps}$ and ${\mathcal{H}}$ under consideration
here are exactly $V(\Omega_{\eps})$ and $V(\Omega)$ endowed either
with the scalar product \eqref{eq:s-p} or with the scalar product
\eqref{eq:s-pbis}.

In particular, we need to define a family of operators $E_{\eps}$
as required in Subsection~ \ref{ss:convergenceint}, which satisfy
condition \eqref{eq:norm-convergence}. In the specific case under
consideration, this means that we have to introduce
 linear operators
$E_\eps:V(\Omega)\to V(\Omega_\eps)$ such that
\begin{equation} \label{eq:conv-norm-Delta}
\int_{\Omega_\eps} |\Delta(E_\eps u)|^2 dx\to \int_\Omega |\Delta
u|^2 dx \qquad \text{as } \eps\to 0\, , \quad \text{for any } u\in
V(\Omega)
\end{equation}
and
\begin{equation}  \label{eq:conv-norm-D^2}
\int_{\Omega_\eps} |D^2(E_\eps u)|^2 dx\to \int_\Omega |D^2 u|^2
dx \qquad \text{as } \eps\to 0\, , \quad \text{for any } u\in
V(\Omega) \, .
\end{equation}

To do so, we proceed as in \cite{ArLa2}. Suppose that the following assumptions hold true: for any $\eps\in (0,\eps_0]$ there exists $\kappa_\eps>0$ such that for any $j=1,\dots ,s'$
\begin{align} \label{eq:assumptions}
\|g_{\eps,j}-
g_j\|_{L^\infty(W_j)}<\kappa_\eps\, , \quad \lim_{\eps \to 0} \kappa_\eps=0\, ,
\qquad \lim_{\eps\to 0} \frac{\|D^\beta(g_{\eps,j}-g_j)\|_{L^\infty(W_j)}}{\kappa_\eps^{3/2-|\beta|}}=0 \quad \forall \beta\in \N^N \ \text{with}
\ |\beta|\le 2 \, .
\end{align}
Here we denoted by $g_j, g_{\eps,j} \in C^{1,1}(\overline{W_j})$ the functions
corresponding respectively to $\Omega$ and $\Omega_\eps$ according
to  Definition \ref{d:atlas-B}.

\begin{remark}\label{remweakconv} One can prove that if
\begin{equation}\label{remweakconv0}
D^{\alpha }g_{\eps,j}(x')\to D^{\alpha }g_{j}(x'),\ \ {\rm uniformly\ on}\ W_j
\end{equation}
 for all $|\alpha |\le 1$ and $j=1,\dots , s'$ and if there exists $M>0$ such that
\begin{equation}\label{remweakconv1}
| D^{\alpha }g_{\eps,j}(x')|\le M,\ \ {\rm a.e.\ on }\ W_j
\end{equation}
 for all $|\alpha |=2$ and $j=1,\dots , s'$, then condition \eqref{eq:assumptions} is satisfied, see \cite[Prop.~6.17]{ArLa2}. We note that condition
  \eqref{eq:assumptions} is much less restrictive than conditions \eqref{remweakconv0}, \eqref{remweakconv1}, since it does not require the $L^{\infty}$-norms
  of the second order weak derivatives of functions $g_{\eps,j}$ to be uniformly bounded. For example,  in Corollary~\ref{t:main-2ante} if  $3/2 < \alpha <2$ then condition  \eqref{eq:assumptions} is satisfied but the $L^{\infty}$-norms of the second order weak derivatives of functions $g_{\eps}$ are not uniformly bounded.
\end{remark}

We are going to define the operator $E_\eps$ by using a partition of unity and pasting together suitable pull-back operators associated with local diffeomorphisms defined on each cuboid of the atlas $\mathcal A$. Note that in the simplified setting of one single cuboid, partition of unity would not be required and the operator $E_\eps$ would be simply defined as in Remark \ref{r:cuboid} below.

We denote by $\hat k$ a fixed constant satisfying $\hat k>6$ whose meaning will be clear below. We also put
$k_\eps:= \hat k \kappa_\eps$, $\tilde g_{\eps,j}:=g_{\eps,j}-k_\eps$ and $K_{\eps,j}:=\{(x',x_N)\in W_j\times (a_{Nj},b_{Nj}):a_{Nj}<x_N<\tilde g_{\eps,j}(x')\}$
for any $j=1,\dots,s'$.

For any $1\le j\le s'$ we define the map $h_{\eps,j}:r_j(\overline{\Omega_\eps\cap V_j}) \to
\R$
\begin{equation} \label{eq:def-h}
h_{\eps,j}(x',x_N):=
\begin{cases} 0, & \qquad \text{if } a_{jN} \le x_N\le \tilde g_{\eps,j}(x'), \\
\left(g_{\eps,j}(x')-g_j(x')\right)\left(\frac{x_N-\tilde g_{\eps,j}(x')}{g_{\eps,j}(x')-\tilde g_{\eps,j}(x')}\right)^3,
& \qquad \text{if } \tilde g_{\eps,j}(x')< x_N\le g_{\eps,j}(x') \, .
\end{cases}
\end{equation}
We observe that $h_{\eps,j}\in C^{1,1}(r_j(\overline{\Omega_\eps
\cap V_j}))$ and that the map
$\Phi_{\eps,j}:r_j(\overline{\Omega_\eps \cap V_j}) \to
r_j(\overline{\Omega \cap V_j})$ defined by
$\Phi_{\eps,j}(x',x_N):=(x',x_N-h_{\eps,j}(x',x_N))$ is a
diffeomorphism of class $C^{1,1}$. When $s'+1\le j\le s$ we define
$\Phi_{\eps,j}:r_j(\overline{V_j})\to r_j(\overline{V_j})$ as the
identity map.

Exploiting the definition of $\Phi_{\eps,j}$ we may define the
above mentioned linear map $E_\eps$. Consider a partition of unity
$\{\psi_j\}_{1\le j\le s}$ subordinate to the open cover
$\{V_j\}_{1\le j\le s}$ of the compact set $\overline{\Omega\cup
\bigcup_{\eps\in (0,\eps_0]} \Omega_\eps}$, i.e.,
a set of functions  $\{\psi_j\}_{1\le j \le s}$ such that
$\text{supp}(\psi_j)\subset V_j$ for any $j\in \{1,\dots,s\}$ and
$\sum_{j=1}^s \psi_j\equiv 1$ in $\overline{\Omega\cup
\bigcup_{\eps\in (0,\eps_0]}\Omega_\eps}$. Indeed one can apply
for example \cite[Lemma IX.3]{Brezis} with
$\Gamma=\overline{\Omega\cup \bigcup_{\eps\in (0,\eps_0]}
\Omega_\eps}$. Note that this partition of unity is independent of $\eps$. We also define the deformation
$\Psi_{\eps,j}:\overline{\Omega_\eps \cap V_j}\to \overline{\Omega
\cap V_j}$ by $\Psi_{\eps,j}:=r_j^{-1}\circ \Phi_{\eps,j}\circ
r_j$. In this way $\Psi_{\eps,j}$ becomes a $C^{1,1}$
diffeomorphism from $\overline{\Omega_\eps \cap V_j}$ onto
$\overline{\Omega \cap V_j}$ for any $j\in \{1,\dots,s\}$.

From the definition of $h_{\eps,j}$ and the restriction $\hat k>6$, we deduce that
\begin{equation} \label{eq:stima-Jacob}
\frac 12 \le {\rm det}(D\Psi_{\eps,j}(x))\le \frac 32 \qquad \text{for any } x\in \Omega_\eps\cap V_j \, .
\end{equation}
In order to show this, we observe that ${\rm det}(D\Psi_{\eps,j})={\rm det}(D\Phi_{\eps,j})=1-\frac{\partial h_{\eps,j}}{\partial x_N}(x',x_N)$.

Given $u\in V(\Omega)$ we put $u_j=\psi_j u$ for any $j\in \{1,\dots,s\}$ in such a way that $u=\sum_{j=1}^s u_j$. Then we define
\begin{equation} \label{eq:E-eps}
E_\eps u:=\sum_{j=1}^{s'} \tilde u_{\eps,j}+\sum_{j=s'+1}^s u_j \in V(\Omega_\eps) \,
\end{equation}
where
\begin{equation} \label{eq:uj-tilde}
\tilde u_{\eps,j}(x)=
\begin{cases} u_j(\Psi_{\eps,j}(x)) & \qquad \text{if } x\in \Omega_\eps\cap V_j \, , \\[8pt]
 0 & \qquad \text{if } x\in \Omega_\eps\setminus V_j \, .
\end{cases}
\end{equation}
for any $j\in \{1,\dots,s'\}$.

\begin{remark} \label{r:cuboid} If $\Omega$ and $\Omega_\eps$ are in the form
\begin{align*}
& \Omega=\{(x',x_N)\in \R^N:x'\in W \ \text{and} \ a_N<x_N<g(x')\} \, , \\
& \Omega_\eps=\{(x',x_N)\in \R^N:x'\in W \ \text{and} \ a_N<x_N<g_\eps(x')\} \, ,
\end{align*}
where $W$ is a cuboid or a bounded domain in $\R^{N-1}$ of class $C^{1,1}$ then the operator $E_\eps$ can be defined in the following simple way
\begin{equation*}
E_\eps u(x)=u(\Phi_\eps(x)) \qquad \text{for any } x\in \Omega_\eps
\end{equation*}
where $\Phi_\eps(x',x_N)=(x',x_N-h_\eps(x',x_N))$ and $h_\eps$ is defined by \eqref{eq:def-h} with $g_\eps$ and $g$ in place of $g_{\eps,j}$ and $g_j$ respectively.
\end{remark}

\begin{lemma} \label{l:1} Let $\mathcal A$ be an atlas. Let
$\{\Omega_\eps\}_{0<\eps\le\eps_0}$ be a family of domains of
class $C^{1,1}(\mathcal A)$, $\Omega$ a domain of class
$C^{1,1}(\mathcal A)$ and for any $\eps\in (0,\eps_0]$ let $E_\eps$ be the map defined in \eqref{eq:E-eps}. Assume  the validity of  condition  \eqref{eq:assumptions}.

Then the following assertions hold true:
\begin{itemize}
\item [(i)] the map $E_\eps:V(\Omega)\to V(\Omega_\eps)$ is continuous for any $\eps\in (0,\eps_0]$;

\item [(ii)] the family of operators $\{E_\eps\}_{0<\eps\le
\eps_0}$ satisfies \eqref{eq:conv-norm-Delta} and
\eqref{eq:conv-norm-D^2}.
\end{itemize}
\end{lemma}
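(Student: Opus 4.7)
The plan is to prove (i) by the standard chain rule for Sobolev compositions with $C^{1,1}$-diffeomorphisms, and (ii) by carefully analyzing the small transition region where $\Phi_{\eps,j}\neq\mathrm{id}$, using assumption \eqref{eq:assumptions} and the cubic structure of $h_{\eps,j}$ to offset the blow-up of $D^2\Phi_{\eps,j}$.

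For (i), for each fixed $\eps$, the map $\Psi_{\eps,j}:\overline{\Omega_\eps\cap V_j}\to\overline{\Omega\cap V_j}$ is a $C^{1,1}$-diffeomorphism with Jacobian in $[1/2,3/2]$ by \eqref{eq:stima-Jacob}, so the pull-back $v\mapsto v\circ\Psi_{\eps,j}$ defines a bounded operator from $H^2(\Omega\cap V_j)$ to $H^2(\Omega_\eps\cap V_j)$. Multiplication by the smooth cutoff $\psi_j$ and extension by zero outside $V_j$ preserve $H^2$, and the finite sum remains continuous. That $E_\eps u\in H^1_0(\Omega_\eps)$ follows from the crucial property that $\Phi_{\eps,j}(x',g_{\eps,j}(x'))=(x',g_j(x'))$ by construction of $h_{\eps,j}$, so $\Psi_{\eps,j}$ maps $\partial\Omega_\eps\cap V_j$ onto $\partial\Omega\cap V_j$; the vanishing trace of $u_j\in H^1_0(\Omega\cap V_j)$ then pulls back to a vanishing trace of $\tilde u_{\eps,j}$.

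For (ii), by the partition of unity I reduce to one index $j$ at a time. The key observation is the decomposition $\Omega_\eps\cap V_j=K_{\eps,j}\cup T_{\eps,j}$, where $h_{\eps,j}\equiv 0$ on $K_{\eps,j}$ (so $\tilde u_{\eps,j}=u_j$ there and contributions cancel) and $T_{\eps,j}\subset\{\tilde g_{\eps,j}(x')<x_N<g_{\eps,j}(x')\}$ has measure $O(\kappa_\eps)\to 0$. The difference between the two sides of \eqref{eq:conv-norm-Delta} therefore reduces, after change of variables, to $\int_{T_{\eps,j}}|\Delta(u_j\circ\Phi_{\eps,j})|^2\,dx$ minus $\int_{\Phi_{\eps,j}(T_{\eps,j})}|\Delta u_j|^2\,dy$; the latter vanishes by absolute continuity since its domain has measure $O(\kappa_\eps)$ and $\Delta u\in L^2$. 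Expanding the first via the chain rule yields a $D^2 u$-term with uniformly bounded coefficient $D\Phi_{\eps,j}^T D\Phi_{\eps,j}$ (again absolute continuity after change of variable) plus a cross term involving $(\nabla u)\circ\Phi_{\eps,j}$ and $D^2\Phi_{\eps,j}$.

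The cross term is the main obstacle. A naive estimate $\|D^2 h_{\eps,j}\|_\infty^2\int_{T_{\eps,j}}|\nabla u\circ\Phi_\eps|^2\,dx\le O(\kappa_\eps^{-2})\cdot O(\kappa_\eps)=O(\kappa_\eps^{-1})$ (using the trace-type inequality $\int_{\{d(\cdot,\partial\Omega)<t\}}|\nabla u|^2\le Ct$ for $u\in H^2\cap H^1_0$, valid because the normal trace $\partial_\nu u\in L^2(\partial\Omega)$) diverges. The resolution exploits the cubic vanishing of $h_{\eps,j}$ at $\psi:=x_N-\tilde g_{\eps,j}=0$: direct computation reveals that every term of $D^2 h_{\eps,j}$ carries at least one factor $\psi\in[0,k_\eps]$, and the refined decay $\|D^\beta(g_{\eps,j}-g_j)\|_\infty=o(\kappa_\eps^{3/2-|\beta|})$ from \eqref{eq:assumptions} matches each derivative of $\phi_\eps:=g_{\eps,j}-g_j$ appearing in $D^2 h_{\eps,j}$ with its compensating smallness factor. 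Integrating term by term, these refinements produce the missing $o(1)$ and establish \eqref{eq:conv-norm-Delta}. The proof of \eqref{eq:conv-norm-D^2} is entirely analogous, replacing $\Delta$ by $D^2$ componentwise throughout.
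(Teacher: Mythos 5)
Your proposal is correct and follows essentially the same route as the paper: split each chart into the region $K_{\eps,j}$ where $\Phi_{\eps,j}$ is the identity and the transition strip of measure $O(\kappa_\eps)$, kill the bounded-coefficient Hessian terms there by absolute continuity, and control the cross term $(\nabla u)\circ\Phi_{\eps,j}\cdot D^2\Phi_{\eps,j}$ by pairing the refined bound $\|D^2 h_{\eps,j}\|_\infty=o(\kappa_\eps^{-1/2})$ from \eqref{eq:assumptions} with the $O(\kappa_\eps)$ estimate for $\int_{\text{strip}}|\nabla u|^2$ obtained from the one-dimensional Sobolev embedding along vertical lines (the paper's \eqref{eq:o(1)-2} and \eqref{uniinf}). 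The only point you gloss over is the cross terms between different charts $i\neq j$, which the paper handles in \eqref{eq:pass-1}--\eqref{eq:pass-4} by Cauchy--Schwarz from the single-chart estimates; this is routine once your main estimate is in place.
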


\begin{proof} Since the proof of (i) easily follows from the definition of $E_\eps$, it is left to the reader.

We now prove \eqref{eq:conv-norm-Delta}.
Let $u\in V(\Omega)$ and according to \eqref{eq:uj-tilde}
introduce the functions $\tilde u_{\eps,j}$. Throughout the proof
of this lemma we denote by $C$ a positive constant independent of
$\eps$ which may vary from line to line.
We observe that in order to prove  \eqref{eq:conv-norm-Delta}, it is sufficient to
show that
\begin{align} \label{eq:pas-lim}
\lim_{\eps\to 0} \int_{\Omega_\eps} \frac{\partial^2\tilde u_{\eps,i}}{\partial x_k^2}
\frac{\partial^2\tilde u_{\eps,j}}{\partial x_l^2}\, dx=\int_\Omega \frac{\partial^2 u_i}{\partial x_k^2}
\frac{\partial^2 u_j}{\partial x_l^2} \, dx
\end{align}
for any $i,j\in \{1,\dots,s'\}$ and $k,l\in \{1,\dots,N\}$.
We first prove that \eqref{eq:pas-lim} holds when $i=j$ and $k=l$. One can proceed similarly to the proof of \cite[Lemma 6.2]{ArLa2} and
hence we only give here a sketch of it. By \eqref{eq:assumptions} and (6.7) in \cite{ArLa2} we have that
\begin{equation} \label{eq:estimate}
\|D^\alpha h_{\eps,j}\|_{L^\infty(W_j)}\le C \sum_{0\le \gamma \le \alpha} \frac{\|D^{\gamma}(g_{\eps,j}-g_j)\|_{L^\infty(W_j)}}{\kappa_\eps^{|\alpha|-|\gamma|}}
\end{equation}
for any $\alpha\in \N^N$ with $|\alpha|\le 2$. By $\gamma\le
\alpha$ we mean that the inequality holds component by component.
For any $x\in \Omega_\eps\cap V_i$ we have
\begin{equation} \label{eq:compute}
\frac{\partial^2 \tilde u_{\eps,i}}{\partial x_k^2}(x)=\sum_{j=1}^N \left\{ \left[
\sum_{l=1}^N \frac{\partial^2 u_i}{\partial x_l \partial x_j}(\Psi_{\eps,i}(x)) \frac{\partial[(\Psi_{\eps,i}(x))_l]}{\partial x_k} \right]
 \frac{\partial[(\Psi_{\eps,i}(x))_j]}{\partial x_k}+\frac{\partial u_i}{\partial x_j}(\Psi_{\eps,i}(x)) \frac{\partial^2[(\Psi_{\eps,i}(x))_j]}
 {\partial x_k^2} \right\} \, .
\end{equation}
By \eqref{eq:assumptions}, \eqref{eq:estimate} and the definitions of $\Phi_{\eps,i},\Psi_{\eps,i}$, we deduce that there exists $C>0$ independent of $\eps$
such that
\begin{equation} \label{eq:estimate-2}
\left\|\frac{\partial[(\Psi_{\eps,i}(x))_l]}{\partial x_k}  \frac{\partial[(\Psi_{\eps,i}(x))_j]}{\partial x_k}\right\|_{L^\infty(V_i\cap \Omega_\eps)}\le C \quad
\text{and} \quad \left\|\frac{\partial^2[(\Psi_{\eps,i}(x))_j]}{\partial x_k^2}\right\|_{L^\infty(V_i\cap \Omega_\eps)}=o(\kappa_\eps^{-1/2}) \ \ \text{as } \eps\to 0 \, .
\end{equation}
Exploiting \eqref{eq:compute}-\eqref{eq:estimate-2} and proceeding as in the proof of \cite[Lemma 6.2]{ArLa2} we deduce that
\begin{equation} \label{eq:pezzo-1}
\lim_{\eps\to 0} \int_{(\Omega_\eps \cap V_i)\setminus r_i^{-1}(K_{\eps,i})} \left( \frac{\partial^2\tilde u_{\eps,i}}{\partial x_k^2}\right)^2 dx=0 \, .
\end{equation}
We briefly give a proof of \eqref{eq:pezzo-1}. First of all by \eqref{eq:stima-Jacob} we have
\begin{align} \label{eq:o(1)-1}
\int_{(\Omega_\eps \cap V_i)\setminus r_i^{-1}(K_{\eps,i})}
\left(\frac{\partial^2 u_i}{\partial x_l \partial x_j}(\Psi_{\eps,i}(x))\right)^2 dx \le
C \int_{(\Omega\cap V_i)\setminus r_i^{-1}(K_{\eps,i})}
\left(\frac{\partial^2 u_i}{\partial x_l \partial x_j} \right)^2 dx\to 0
\end{align}
as $\eps\to 0$ since $|(\Omega\cap V_i)\setminus r_i^{-1}(K_{\eps,i})|\to 0$ as $\eps\to 0$.

On the other hand, putting $v_i(z)=u_i(r_i^{-1}(z))$ for any $z\in r_i(\Omega\cap V_i)$, by \eqref{eq:assumptions}, \eqref{eq:estimate-2} and changes of variables, we have
\begin{align} \label{eq:o(1)-2}
& \int_{(\Omega_\eps \cap V_i)\setminus r_i^{-1}(K_{\eps,i})} \left(\frac{\partial u_i}{\partial x_j}(\Psi_{\eps,i}(x)) \frac{\partial^2[(\Psi_{\eps,i}(x))_j]}
 {\partial x_k^2}\right)^2 dx\le o(\kappa_\eps^{-1}) \sum_{l=1}^N \int_{r_i(\Omega\cap V_i)\setminus K_{\eps,i}}
 \left(\frac{\partial v_i}{\partial z_l}\right)^2 dz \\
 \notag & =o(\kappa_\eps^{-1}) \sum_{l=1}^N \int_{W_i} \left( \int_{\tilde g_{\eps,i}(z')}^{g_i(z')} \left(\frac{\partial v_i}{\partial z_l}(z',z_N)\right)^2
 dz_N\right)dz' \\
 \notag & \le o(\kappa_\eps^{-1}) \sum_{l=1}^N \int_{W_i} |g_i(z')-\tilde g_{\eps,i}(z')| \left\|\frac{\partial v_i}{\partial z_l}(z',\cdot)\right\|
 _{L^\infty(a_{Ni},g_i(z'))}^2 dz' \\
 \notag & \le o(1) \sum_{l=1}^N \int_{W_i} \left\|\frac{\partial
v_i}{\partial z_l}(z',\cdot)\right\|
 _{H^1(a_{Ni},g_i(z'))}^2 dz'\\
 \notag &  \le o(1) \|v_i\|_{H^2(r_i(\Omega\cap V_i))}=o(1) \, .
\end{align}
Note that here we have used the classical one dimensional estimate
\begin{equation}\label{uniinf}
\|f\|_{L^{\infty }(a,b)}\le C \| f\|_{H^1(a,b)}
\end{equation} for all $f\in H^1(a,b) $ with a constant $C=C(d)$ uniformly bounded for $b-a>d$.
Combining \eqref{eq:o(1)-1}, \eqref{eq:o(1)-2} with \eqref{eq:compute} and \eqref{eq:estimate-2}, \eqref{eq:pezzo-1} follows.

On the other hand
\begin{equation} \label{eq:pezzo-2}
\lim_{\eps\to 0}\int_{r_i^{-1}(K_{\eps,i})}  \left( \frac{\partial^2\tilde u_{\eps,i}}{\partial x_k^2}\right)^2 dx
=\lim_{\eps\to 0} \int_{r_i^{-1}(K_{\eps,i})}  \left( \frac{\partial^2 u_{i}}{\partial x_k^2}\right)^2 dx
=\int_{\Omega\cap V_i}  \left( \frac{\partial^2 u_{i}}{\partial x_k^2}\right)^2 dx=\int_{\Omega}  \left( \frac{\partial^2 u_{i}}{\partial x_k^2}\right)^2 dx .
\end{equation}
Combining \eqref{eq:pezzo-1}-\eqref{eq:pezzo-2} we conclude the proof of \eqref{eq:pas-lim} in the case $i=j$ and $k=l$.

It remains to prove \eqref{eq:pas-lim} in the general case.

We observe that
\begin{align} \label{eq:pass-1}
 \int_{\Omega_\eps} & \frac{\partial^2\tilde u_{\eps,i}}{\partial x_k^2}
\frac{\partial^2\tilde u_{\eps,j}}{\partial x_l^2}\, dx
 =\int_{\Omega_\eps\cap V_i\cap V_j} \frac{\partial^2\tilde u_{\eps,i}}{\partial x_k^2}
\frac{\partial^2\tilde u_{\eps,j}}{\partial x_l^2}\, dx \\[8pt]
\notag & =\int_{r_i^{-1}(K_{\eps,i})\cap r_j^{-1}(K_{\eps,j})}
\frac{\partial^2\tilde u_{\eps,i}}{\partial x_k^2}
\frac{\partial^2\tilde u_{\eps,j}}{\partial x_l^2}\,
dx+\int_{(\Omega_\eps\cap V_i\cap V_j)\setminus
(r_i^{-1}(K_{\eps,i})\cap r_j^{-1}(K_{\eps,j}))}
\frac{\partial^2\tilde u_{\eps,i}}{\partial x_k^2}
\frac{\partial^2\tilde u_{\eps,j}}{\partial x_l^2}\, dx \, .
\end{align}
We have that $\Psi_{\eps,i}$ and $\Psi_{\eps,j}$ are the identity maps if restricted to $r_i^{-1}(K_{\eps,i})\cap r_j^{-1}(K_{\eps,j})$ so that by \eqref{eq:assumptions} we obtain
\begin{equation} \label{eq:pass-2}
\lim_{\eps\to 0} \int_{r_i^{-1}(K_{\eps,i})\cap r_j^{-1}(K_{\eps,j})} \frac{\partial^2\tilde u_{\eps,i}}{\partial x_k^2}
\frac{\partial^2\tilde u_{\eps,j}}{\partial x_l^2}\, dx=\lim_{\eps\to 0}
\int_{r_i^{-1}(K_{\eps,i})\cap r_j^{-1}(K_{\eps,j})} \frac{\partial^2 u_i}{\partial x_k^2}
\frac{\partial^2 u_j}{\partial x_l^2}\, dx=\int_\Omega \frac{\partial^2 u_i}{\partial x_k^2}
\frac{\partial^2 u_j}{\partial x_l^2}\, dx  \, .
\end{equation}
In order to estimate the other term in the right hand side of
\eqref{eq:pass-1}, we split the domain of integration in the
following way
\begin{equation} \label{eq:splitting}
(\Omega_\eps\cap V_i\cap V_j)\setminus
(r_i^{-1}(K_{\eps,i})\cap r_j^{-1}(K_{\eps,j}))=[(\Omega_\eps\cap V_i\cap V_j)\setminus r_i^{-1}(K_{\eps,i})]\cup [(r_i^{-1}(K_{\eps,i})\setminus r_j^{-1}(K_{\eps,j}))\cap V_j] \, .
\end{equation}
Then
\begin{align}  \label{eq:pass-3}
& \left|\int_{(\Omega_\eps\cap V_i\cap V_j)\setminus
r_i^{-1}(K_{\eps,i})} \frac{\partial^2\tilde u_{\eps,i}}{\partial x_k^2}
\frac{\partial^2\tilde u_{\eps,j}}{\partial x_l^2}\, dx\right| \\
\notag & \le \left(\int_{(\Omega_\eps\cap V_i)\setminus
r_i^{-1}(K_{\eps,i})} \left(\frac{\partial^2\tilde u_{\eps,i}}{\partial x_k^2}\right)^2 dx \right)^{1/2}
\left(\int_{\Omega_\eps\cap V_j} \left(\frac{\partial^2\tilde u_{\eps,j}}{\partial x_l^2}\right)^2 dx \right)^{1/2}\to 0 \qquad \text{as } \eps\to 0
\end{align}
in view of \eqref{eq:pezzo-1} and \eqref{eq:pas-lim} in the case
$i=j$ and $k=l$. On the other hand
\begin{align} \label{eq:pass-4}
 &  \left|\int_{(r_i^{-1}(K_{\eps,i})\setminus r_j^{-1}(K_{\eps,j}))\cap V_j} \frac{\partial^2\tilde u_{\eps,i}}{\partial x_k^2}
\frac{\partial^2\tilde u_{\eps,j}}{\partial x_l^2}\, dx\right| \\
\notag & \le
\left(\int_{r_i^{-1}(K_{\eps,i})} \left(\frac{\partial^2\tilde u_{\eps,i}}{\partial x_k^2}\right)^2 dx \right)^{1/2}
\left(\int_{(\Omega_\eps \cap V_j)\setminus r_j^{-1}(K_{\eps,j})} \left(\frac{\partial^2\tilde u_{\eps,j}}{\partial x_l^2}\right)^2 dx \right)^{1/2}\to 0 \qquad \text{as } \eps \to 0
\end{align}
in view of \eqref{eq:pezzo-1} and \eqref{eq:pezzo-2}.

Combining \eqref{eq:pass-2}-\eqref{eq:pass-4} with
\eqref{eq:pass-1}, the proof of \eqref{eq:pas-lim} follows also in
the general case.

We now prove  the validity of
\eqref{eq:conv-norm-D^2}. We observe that it is sufficient to show
that
\begin{align} \label{eq:pas-lim-bis}
\lim_{\eps\to 0} \int_{\Omega_\eps} \frac{\partial^2\tilde
u_{\eps,i}}{\partial x_k \partial x_l} \frac{\partial^2\tilde
u_{\eps,j}}{\partial x_k \partial x_l}\, dx=\int_\Omega
\frac{\partial^2 u_i}{\partial x_k \partial x_l} \frac{\partial^2
u_j}{\partial x_k \partial x_l} \, dx \, .
\end{align}
The procedure that one has to follow to prove
\eqref{eq:pas-lim-bis} is essentially the same adopted in Step 2:
first one has to prove \eqref{eq:pas-lim-bis} in the case $i=j$
with $k,l\in \{1,\dots,N\}$ not necessarily equal and then one has
to pass to the general case by arguing as above.
\end{proof}

\subsection{The  stability  result} \label{ss:s-r}

In this subsection we prove the following  result on spectral
convergence for problems \eqref{eq:Steklov} and
\eqref{eq:Steklov-modificato}. One of the main assumptions is
condition \eqref{eq:assumptions} which, as we can see from Lemma
\ref{l:1}, guarantees the validity of the properties
\eqref{eq:conv-norm-Delta} and \eqref{eq:conv-norm-D^2}.

\begin{theorem} \label{t:main-1}
Let $\mathcal A$ be an atlas. Let
$\{\Omega_\eps\}_{0<\eps\le \eps_0}$ be a family of domains of
class $C^{1,1}(\mathcal A)$ and let $\Omega$ be a domain of
class $C^{1,1}(\mathcal A)$. Assume the validity of condition
\eqref{eq:assumptions}. Then $S_{\Delta,\eps} \CC  S_{\Delta}$ and  $S_{D^2,\eps} \CC  S_{D^2}$ with respect to the operators $E_{\eps}$ defined in \eqref{eq:E-eps}. In particular, the spectra of \eqref{eq:Steklov} and
\eqref{eq:Steklov-modificato} behave continuously at $\eps=0$ in
the sense of Theorem \ref{vaithm}.
\end{theorem}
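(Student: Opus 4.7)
The plan is to apply the general Theorem~\ref{vaithm}: once the compact convergences $S_{\Delta,\eps}\CC S_\Delta$ and $S_{D^2,\eps}\CC S_{D^2}$ are established, the spectral convergence asserted in the statement follows automatically. By Lemma~\ref{l:1}, with the Hilbert space structures \eqref{eq:s-p} (resp.\ \eqref{eq:s-pbis}) on $V(\Omega_\eps)$ and $V(\Omega)$, the maps $E_\eps$ satisfy the norm-preservation condition \eqref{eq:norm-convergence} and are therefore admissible connecting operators in the sense of Subsection~\ref{ss:convergenceint}. I will describe the argument for $S_{\Delta,\eps}$; the one for $S_{D^2,\eps}$ is formally identical, with $\int_{\Omega_\eps}|\Delta\cdot|^2\,dx$ replaced by $\int_{\Omega_\eps}|D^2\cdot|^2\,dx$. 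What has to be verified is: (a) if $u_\eps\EC u$ then $S_{\Delta,\eps}u_\eps\EC S_\Delta u$, and (b) if $\|u_\eps\|_{V(\Omega_\eps)}\leq 1$ then $\{S_{\Delta,\eps}u_\eps\}_\eps$ is $E$-precompact.

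The technical heart of both claims is the continuity of the boundary bilinear form under $E$-convergence: if $u_\eps\EC u$ and $\varphi_\eps\EC\varphi$, then $\int_{\partial\Omega_\eps}(u_\eps)_\nu(\varphi_\eps)_\nu\,dS\to\int_{\partial\Omega}u_\nu\varphi_\nu\,dS$. I would prove this in two steps. First, $\nabla(E_\eps v)|_{\partial\Omega_\eps}$ coincides, up to an $L^2(\partial\Omega_\eps)$-error vanishing as $\eps\to 0$, with the natural transplantation of $\nabla v|_{\partial\Omega}$ through the local diffeomorphisms $\Psi_{\eps,j}$; this is the content of \eqref{intesection1}, and it reduces the boundary integral over $\partial\Omega_\eps$ to one over $\partial\Omega$ through a change of variables whose Jacobians and outer unit normals are controlled by condition \eqref{eq:assumptions}. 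Second, writing $u_\eps=E_\eps u+r_\eps$ with $\|r_\eps\|_{V(\Omega_\eps)}\to 0$ (and similarly for $\varphi_\eps$), and using that the trace operator $V(\Omega_\eps)\to L^2(\partial\Omega_\eps)$ has norm uniformly bounded in $\eps$ because the domains share the atlas $\mathcal A$, the residual cross-terms are absorbed.

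Granted this boundary convergence, (a) follows by a standard energy argument. Set $w_\eps:=S_{\Delta,\eps}u_\eps$, $w:=S_\Delta u$. The uniform bound $\|w_\eps\|_{V(\Omega_\eps)}\leq C$ permits the extraction of a weak-type limit; testing the defining identity of $w_\eps$ against $E_\eps\zeta$ for arbitrary $\zeta\in V(\Omega)$ and passing to the limit through Lemma~\ref{l:1} and the boundary convergence identifies this limit as $w$. The compactness of the normal-derivative trace \eqref{eq:trace} then strengthens the convergence of $(w_\eps)_\nu$ to a strong limit in the transplanted $L^2$-sense, and the scalar identity $\|w_\eps\|_{V(\Omega_\eps)}^2=\int_{\partial\Omega_\eps}(u_\eps)_\nu(w_\eps)_\nu\,dS$ yields $\|w_\eps\|_{V(\Omega_\eps)}\to\|w\|_{V(\Omega)}$. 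Combined with the weak limit and \eqref{eq:conv-norm-Delta}, this is equivalent to $\|w_\eps-E_\eps w\|_{V(\Omega_\eps)}\to 0$, i.e.\ $w_\eps\EC w$.

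For (b), the same uniform bound yields $\|S_{\Delta,\eps}u_\eps\|_{V(\Omega_\eps)}\leq C$. To find a convergent subsequence of $\{u_\eps\}$ itself, I would exploit the property that $E_\eps u=u$ on a large set $K_\eps\subset\Omega\cap\Omega_\eps$ with $|(\Omega\cup\Omega_\eps)\setminus K_\eps|\to 0$, produce a bounded $V$-family on a fixed neighbourhood of $\Omega$ by extension, and apply the compactness of the trace \eqref{eq:trace} to extract a subsequence whose boundary data converge in the transplanted $L^2$-sense; this identifies a limit $\tilde u\in V(\Omega)$ whose normal derivative realises that boundary limit. The argument of (a) then delivers $S_{\Delta,\eps}u_\eps\EC S_\Delta\tilde u$ along that subsequence, which is exactly the precompactness required. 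The main obstacle throughout is the delicate handling of the boundary bilinear form under the change of variables $\Psi_{\eps,j}$: the sharp quantitative control offered by \eqref{eq:assumptions}, in particular the scaling of $\|D^\beta(g_{\eps,j}-g_j)\|_{L^\infty(W_j)}$ by $\kappa_\eps^{3/2-|\beta|}$, is the crucial ingredient that makes the transplantation of normal derivatives asymptotically isometric and allows one to absorb the curvature-type errors arising on the slanted portions of $\partial\Omega_\eps$.
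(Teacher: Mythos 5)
Your overall architecture coincides with the paper's: reduce everything to (a) $EE$-convergence and (b) $E$-precompactness, with the convergence of the boundary bilinear form $\int_{\partial\Omega_\eps}(\cdot)_\nu(\cdot)_\nu\,dS$ as the pivot, handled by transplanting gradient traces to $\partial\Omega$ through the maps $\Psi_{\eps,j}$ and invoking uniform trace bounds plus the compact embedding $H^{1/2}(\partial\Omega)\subset L^2(\partial\Omega)$. However, there is a genuine gap at the point you dismiss in one clause: you assert that the trace operator $V(\Omega_\eps)\to L^2(\partial\Omega_\eps)$ for the gradient ``has norm uniformly bounded in $\eps$ because the domains share the atlas $\mathcal A$''. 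Sharing the atlas is not enough. The $V(\Omega_\eps)$-norm is $\|\Delta\cdot\|_{L^2(\Omega_\eps)}$ (resp.\ $\|D^2\cdot\|_{L^2(\Omega_\eps)}$), so the uniform trace bound for $\nabla u$ requires the uniform elliptic regularity estimate $\|w\|_{H^2(\Omega_\eps)}\le C\|\Delta w\|_{L^2(\Omega_\eps)}$ with $C$ independent of $\eps$ — and condition \eqref{eq:assumptions} explicitly does \emph{not} require the $C^{1,1}$-norms of the $g_{\eps,j}$ to be uniformly bounded (see Remark~\ref{remweakconv}), so this estimate cannot be obtained by a soft compactness-of-the-atlas argument. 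In the paper this is the single most substantial piece of work: Lemma~\ref{l:nirenberg} runs the Nirenberg translation method for the pulled-back operators $L_\eps$ and shows that the commutator terms involving $(\tilde A_\eps)_h-\tilde A_\eps$ are $o(1)$ precisely because of the $\kappa_\eps^{3/2-|\beta|}$ scaling in \eqref{eq:assumptions}; Lemmas~\ref{l:unif-equiv}, \ref{l:H1/2} and \ref{l:d-1-eps} then follow. Without this, your energy argument in (a) cannot start: the uniform bound on $\|S_{\Delta,\eps}u_\eps\|_{V(\Omega_\eps)}$, the absorption of the residual cross-terms $r_\eps$ in the boundary form, and the $\liminf_{\eps\to0}d_1^\eps>0$ you implicitly use, all rest on it. (Note also that \eqref{intesection1}, which you cite as an ingredient, is Proposition~\ref{intersec}, itself a corollary of these uniform estimates, so it cannot be used as an independent input.)

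A second, more minor, imprecision occurs in (b). The precompactness needed is of the outputs $\{S_{\Delta,\eps}u_\eps\}$, and the only convergence one can extract from the inputs is of their transplanted gradient traces in $L^2(\partial\Omega)$ (to some $F$, via the uniform $H^{1/2}$ bound and compactness); the inputs $u_\eps$ themselves do \emph{not} $E$-converge in $V$. Hence ``the argument of (a)'' cannot be invoked verbatim: one must rerun the limiting argument using only the boundary convergence of the data, identifying the limit as the solution of $Q_\Omega(\tilde u,\varphi)=\int_{\partial\Omega}(F\cdot\nu)\varphi_\nu\,dS$ and then upgrading to norm convergence — which is exactly what Lemma~\ref{l:compact} does. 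This adaptation is routine once stated, but as written your step (b) conflates convergence of boundary data with $E$-convergence of the inputs.
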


\begin{proof}
The proof of Theorem~\ref{t:main-1} follows directly by Theorem
\ref{vaithm} and Lemmas~\ref{l:E-conv}-\ref{l:compact} below.
\end{proof}

\begin{remark} \label{r:simplified} In the case of a single cuboid we can state a simplified version of Theorem \ref{t:main-1} the proof of which would follow the same lines of the proof of Theorem \ref{t:main-1} with obvious modifications. Namely, if $\Omega$, $\Omega_\eps$ are as in Remark \ref{r:cuboid} with $g_\eps$ and $g$ satisfying \eqref{eq:assumptions} with $g_\eps$ and $g$ in place of $g_{\eps,j}$ and $g_j$ respectively, then $S_{\Delta,\eps} \CC  S_{\Delta}$ and $S_{D^2,\eps} \CC  S_{D^2}$ with respect to the operators $E_{\eps}$ defined in Remark \ref{r:cuboid}. In particular, the spectra of \eqref{eq:Steklov} and \eqref{eq:Steklov-modificato} behave continuously at $\eps=0$ in the sense of Theorem \ref{vaithm}.

Note that although in this specific case $\Omega_\eps$ and $\Omega$ are not of class $C^{1,1}$ but only piecewise $C^{1,1}$, each of them is of class $C^{0,1}$ and satisfies the uniform outer ball condition.
\end{remark}

In order to prove Lemma~\ref{l:E-conv} and Lemma~\ref{l:compact},  we need a number of preliminary  technical results.

To begin with, we give the  definition of a map which acts
between the spaces $V(\Omega)$, $V(\Omega_\eps)$ in a reversed way
with respect to $E_\eps$.
For any $w\in V(\Omega_\eps)$ put
\begin{equation} \label{eq:hat-w}
\widehat w_{\eps,j}(x)=
\begin{cases} w_j(\Psi_{\eps,j}^{-1}(x)), & \qquad \text{if } x\in \Omega\cap V_j \\[8pt]
 0, & \qquad \text{if } x\in \Omega\setminus V_j \, .
\end{cases}
\end{equation}
for any $j\in \{1,\dots,s'\}$ and $w_j:= \psi_j w$ for any $j\in \{1,\dots,s\}$. We define
\begin{equation}\label{inversodiE}
B_\eps w:=\sum_{j=1}^{s'} \widehat w_{\eps,j}+\sum_{j=s'+1}^s w_j.
\end{equation}
 In this way we have constructed a map
$B_\eps:V(\Omega_\eps)\to V(\Omega)$.

\begin{lemma} \label{l:nirenberg} Let $\mathcal A$ be an atlas. Let
$\{\Omega_\eps\}_{0<\eps\le\eps_0}$ be a family of domains of
class $C^{1,1}(\mathcal A)$ and $\Omega$ a domain of class
$C^{1,1}(\mathcal A)$. Assume the validity of  condition \eqref{eq:assumptions}.
Then, up to shrink $\eps_0$ if necessary,
\begin{equation*}
\|B_\eps w\|_{H^2(\Omega)} \le C\|\Delta w\|_{L^2(\Omega_\eps)}
\qquad \text{for any } w\in V(\Omega_\eps) \ \text{and} \ \eps\in
(0,\eps_0]
\end{equation*}
where $C>0$ is a constant independent of $\eps$.
\end{lemma}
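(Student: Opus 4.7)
The approach is to combine elliptic regularity on the fixed domain $\Omega$ with chain-rule estimates for the local pullbacks that define $B_\eps$, controlling the blowup of the second derivatives of the diffeomorphisms on a boundary strip by the thinness of that strip. Since $\Omega$ is a bounded $C^{1,1}$ domain, Lemma~\ref{complete}(iii) and Remark~\ref{remcom} yield $\|v\|_{H^2(\Omega)}\le C_\Omega \|\Delta v\|_{L^2(\Omega)}$ for every $v\in V(\Omega)$, with $C_\Omega$ independent of $\eps$. Since $B_\eps w\in V(\Omega)$ by construction, the proof reduces to establishing
\[
\|\Delta B_\eps w\|_{L^2(\Omega)}\le C\,\|\Delta w\|_{L^2(\Omega_\eps)}
\]
with $C$ independent of $\eps$.

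For each $j\le s'$, I would compute $\Delta \widehat w_{\eps,j}$ by the chain rule. Since $\Psi_{\eps,j}^{-1}$ displaces only the last coordinate by a quantity whose derivatives are bounded via \eqref{eq:estimate} and \eqref{eq:assumptions}, we obtain a decomposition
\[
\Delta \widehat w_{\eps,j} = \big(\Delta(\psi_j w)\big)\circ \Psi_{\eps,j}^{-1} + \mathrm{I}_{\eps,j}+\mathrm{II}_{\eps,j},
\]
where $\mathrm{I}_{\eps,j}$ has $L^\infty$-coefficients of size $o(\kappa_\eps^{1/2})$ multiplying $(D^2(\psi_j w))\circ \Psi_{\eps,j}^{-1}$, while $\mathrm{II}_{\eps,j}$ has coefficients of size $o(\kappa_\eps^{-1/2})$ supported in the strip of width $O(\kappa_\eps)$ and multiplying $(\nabla(\psi_j w))\circ \Psi_{\eps,j}^{-1}$. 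By the Jacobian bound \eqref{eq:stima-Jacob} and a change of variables the main term is controlled by $C\|\Delta(\psi_j w)\|_{L^2(\Omega_\eps)}$; $\mathrm{I}_{\eps,j}$ by $o(\kappa_\eps^{1/2})\|D^2(\psi_j w)\|_{L^2(\Omega_\eps)}$; and $\mathrm{II}_{\eps,j}$, via the same one-dimensional trace-type inequality \eqref{uniinf} already exploited in \eqref{eq:o(1)-2}, by $o(\kappa_\eps^{-1/2})\cdot C\sqrt{\kappa_\eps}\,\|\psi_j w\|_{H^2(\Omega_\eps)}=o(1)\|\psi_j w\|_{H^2(\Omega_\eps)}$. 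Leibniz's rule together with Lemma~\ref{complete}(i) (whose Poincar\'{e} constant is uniform in $\eps$ thanks to the uniformly bounded diameters in the class $C^{1,1}(\mathcal A)$) gives $\|\Delta(\psi_j w)\|_{L^2(\Omega_\eps)}\le C\|\Delta w\|_{L^2(\Omega_\eps)}$. Summing over $j$ and accounting for the interior pieces yields
\[
\|\Delta B_\eps w\|_{L^2(\Omega)}\le C\|\Delta w\|_{L^2(\Omega_\eps)}+o(1)\|w\|_{H^2(\Omega_\eps)}.
\]

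To close the estimate I need the companion uniform regularity statement $\|w\|_{H^2(\Omega_\eps)}\le C\|\Delta w\|_{L^2(\Omega_\eps)}$ on the varying domains. I would prove it by the same localize-and-straighten strategy applied in reverse: for each $j$, apply fixed-$\Omega$ regularity to $\widehat w_{\eps,j}\in V(\Omega)$, plug in the chain-rule bound of the previous paragraph, and then transport the estimate back to $\Omega_\eps$ through a reverse change of variables via $\Psi_{\eps,j}$, once again controlling the $o(\kappa_\eps^{-1/2})$ blowup of $D^2\Psi_{\eps,j}$ on the strip of width $O(\kappa_\eps)$ by the $\sqrt{\kappa_\eps}$ gain from the trace-type inequality. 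The resulting inequality $\|\psi_jw\|_{H^2(\Omega_\eps)}\le C\|\Delta(\psi_jw)\|_{L^2(\Omega_\eps)}+o(1)\|\psi_jw\|_{H^2(\Omega_\eps)}$ is closed by absorption for $\eps$ small enough, and summing over $j$ gives the required uniform regularity on $\Omega_\eps$. Substituting into the bound of the previous paragraph completes the proof. The main obstacle is precisely the $o(\kappa_\eps^{-1/2})$ blowup of the second derivatives of $\Psi_{\eps,j}^{\pm 1}$ allowed by condition \eqref{eq:assumptions}: it is defeated by the fact that this blowup is confined to a boundary strip of width $O(\kappa_\eps)$, exactly the size that produces the compensating $\sqrt{\kappa_\eps}$ factor through the slice-by-slice one-dimensional inequality used in Lemma~\ref{l:1}, and the leftover $o(1)$ terms are absorbed by bootstrap once $\eps$ is small.
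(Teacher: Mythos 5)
Your proof is correct, but it reaches the conclusion by a genuinely different route from the paper's. The paper transforms the operator exactly rather than the function approximately: it observes that $L_\eps \widehat w_{\eps,j}:={\rm div}(A_\eps\nabla \widehat w_{\eps,j})$ equals $[{\rm det}(D\Psi_{\eps,j})]^{-1}(\Delta w_j)\circ\Psi_{\eps,j}^{-1}$ with no remainder, and then proves from scratch, by the Nirenberg translation (difference-quotient) method, the uniform a priori estimate \eqref{eq:stima-regolarita} for the variable-coefficient operators $L_\eps$ on the \emph{fixed} domain; there the $\eps$-dependence enters only through the commutator $\big((\tilde A_\eps)_h-\tilde A_\eps\big)\nabla\tilde u_h$, whose Lipschitz blow-up of order $o(\kappa_\eps^{-1/2})$ is confined to a strip of width $O(\kappa_\eps)$ and neutralized by \eqref{uniinf} --- exactly the cancellation you exploit. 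You instead keep the constant-coefficient Laplacian, invoke the known $H^2$ a priori estimate on the fixed $C^{1,1}$ domain $\Omega$ (Lemma~\ref{complete}~(iii) and Remark~\ref{remcom}, whose constant is indeed $\eps$-independent), and treat $\Delta\widehat w_{\eps,j}-(\Delta w_j)\circ\Psi_{\eps,j}^{-1}$ as an $o(1)$-perturbation measured against the $H^2$ norm, closing by absorption. Your route spares you from redoing the translation method, at the price of having to establish simultaneously the companion bound $\|w\|_{H^2(\Omega_\eps)}\le C\|\Delta w\|_{L^2(\Omega_\eps)}$ on the moving domains --- which the paper obtains only afterwards, as Lemma~\ref{l:unif-equiv}, as a corollary of the present lemma. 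Your two-way transport plus absorption handles this without circularity, since the $o(1)$ coefficients coming from \eqref{eq:assumptions} and \eqref{eq:estimate} are uniform in $w$ and the quantities being absorbed are finite for $w\in V(\Omega_\eps)$; the final Leibniz step $\|\Delta(\psi_j w)\|_{L^2(\Omega_\eps)}\le C\|\Delta w\|_{L^2(\Omega_\eps)}$ is justified by the uniform Poincar\'{e} constants, as in \eqref{eq:Faber-Krahn}. Both arguments rest on the same quantitative heart, namely that condition \eqref{eq:assumptions} confines the $o(\kappa_\eps^{-1/2})$ growth of $D^2\Psi_{\eps,j}^{\pm1}$ to a region whose thinness restores a factor $\sqrt{\kappa_\eps}$ through the one-dimensional inequality \eqref{uniinf}.
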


\begin{proof} Throughout the proof we denote by $C$ a general constant independent of $\eps$ which may vary from line to line.
Let $w\in V(\Omega_\eps)$. Since by \eqref{eq:assumptions} $|\Omega_\eps|\to |\Omega|$ as $\eps\to 0$ and the Poincar\'e constant depends only on the dimension of the space and the volume of the domain (see the comment before Lemma \ref{complete}), we deduce by \eqref{poin}  that
possibly  shrinking $\eps_0$
\begin{align} \label{eq:Faber-Krahn}
 \int_{\Omega_\eps} |\nabla w|^2 dx \le C \|\Delta w\|_{L^2(\Omega_\eps)}^2\  \  \text{and} \ \
\int_{\Omega_\eps} w^2\, dx \le C\|\Delta w\|_{L^2(\Omega_\eps)}^2
\end{align}
for all $\eps \in (0,\eps_0]$.
Define $w_{j}:=\psi_j w$. Then by \eqref{eq:Faber-Krahn} and the fact that
$\Delta w_{j}=w \Delta \psi_j+2\nabla \psi_j\cdot \nabla w+\psi_j \Delta w$, it follows
\begin{equation} \label{eq:stima-Delta}
\|\Delta w_{j}\|_{L^2(\Omega_\eps)}\le C\|\Delta w\|_{L^2(\Omega_\eps)} \, .
\end{equation}
In the rest of the proof we fix $j\in \{1,\dots,s'\}$.
By direct computation one can show that the function $\widehat w_{\eps,j}$ defined in \eqref{eq:hat-w} satisfies
\begin{equation*}
{\rm div}\left(A_\eps(y) \nabla \widehat w_{\eps,j}(y)\right)=[{\rm det}(D\Psi_{\eps,j}(\Psi_{\eps,j}^{-1}(y)))]^{-1}\ \Delta w_{j}(\Psi_{\eps,j}^{-1}(y))
\qquad \text{for any } y\in \Omega\cap V_j
\end{equation*}
where $A_\eps(y):=[{\rm det}(D\Psi_{\eps,j}(\Psi_{\eps,j}^{-1}(y)))]^{-1} [D\Psi_{\eps,j}(\Psi_{\eps,j}^{-1}(y))][D\Psi_{\eps,j}(\Psi_{\eps,j}^{-1}(y))]^T$.
We may think to the function $\Phi_{\eps,j}$ as defined over all the set $\{(x',x_N)\in \R^N:x'\in W_j\, , x_N\le g_{\eps,j}(x')\}$ since we may
extend trivially the function $h_{\eps,j}$ over the same set.
Let us introduce the operator $L_\eps v:={\rm div}\left(A_\eps(y) \nabla v\right)$ for any function $v\in V(\Omega)$.
Then we have
\begin{align} \label{eq:L-eps}
\int_{\Omega\cap V_j} |L_\eps \widehat w_{\eps,j}|^2 dy=\int_{\Omega_\eps\cap V_j} |\Delta w_{j}|^2 \ [{\rm det}(D\Psi_{\eps,j})]^{-1} dx
\end{align}
Combining \eqref{eq:stima-Delta}, \eqref{eq:L-eps} and \eqref{eq:stima-Jacob}, we deduce that
\begin{align} \label{eq:bound}
\int_{\Omega\cap V_j} |L_\eps \widehat w_{\eps,j}|^2 dy\le C \|\Delta w_{j}\|_{L^2(\Omega_\eps)}^2
 \le C\|\Delta w\|_{L^2(\Omega_\eps)}^2 \qquad \text{for any } \eps\in (0,\eps_0] \, .
\end{align}

In order to complete the proof of the lemma we prove the following claim: there exists a constant $C$ independent of $\eps$ such that
\begin{equation} \label{eq:stima-regolarita}
\|u\|_{H^2(\Omega)} \le C \|L_\eps u\|_{L^2(\Omega\cap V_j)} \qquad \text{for any } u\in V(\Omega)\, , \, \text{supp}(u) \subset V_j \, ,  \text{and} \ \eps\in (0,\eps_0] \, .
\end{equation}
In order to do that, we proceed by using a standard method from the theory of regularity.


For any $u\in V(\Omega)$ with $\text{supp}(u) \subset V_j$, we define the function $f_\eps\in L^2(r_j^{-1}(\Sigma_j))$ by setting
$f_\eps(y)=-L_\eps u(y)$ if $y\in \Omega\cap V_j$ and
$f_\eps(y)=0$ if $y\in r_j^{-1}(\Sigma_j) \setminus (\Omega \cap V_j) $ where
$$
\Sigma_j:=\{(x',x_N)\in \R^N: x'\in W_j\, ,
x_N\le g_j(x')\} \, .
$$
We define the deformation
\begin{align*}
& \widetilde\Phi_j:\Sigma_j \to W_j \times (-\infty,0] \\
& \widetilde\Phi_j(x',x_N):=(x',x_N-g_j(x'))
\end{align*}
and the function $\tilde u :  W_j \times (-\infty,0]\to \R$ which is defined by $\tilde
u(y)=u(r_j^{-1}(\widetilde\Phi_j^{-1}(y)))$ if $y\in
\widetilde\Phi_j(r_j(\Omega\cap V_j))$ and $\tilde u(y)=0$ if
$y\in (W_j\times (-\infty,0))\setminus
\widetilde\Phi_j(r_j(\Omega\cap V_j))$.

We observe that
$\tilde u\in H^2(W_j\times (-\infty,0))\cap H^1_0(W_j \times (-\infty,0))$ and its support is a compact set contained in $W_j\times (-\infty,0]$ thus allowing
to extend it trivially over all $\R^{N-1}\times (-\infty,0)$; we still denote by $\tilde u$ this trivial extension.
Moreover, we set  $\widetilde\Psi_j(z):=\widetilde\Phi_j(r_j(z))$ for all
$z\in r_j^{-1}(\Sigma_j)$, and we note that $\tilde u$ solves the equation
\begin{equation} \label{eq:equation}
-{\rm div}\left(\tilde A_\eps(y) \nabla \tilde u\right)=\tilde f_\eps \qquad \text{in } W_j\times (-\infty,0)
\end{equation}
where
$$
\tilde A_\eps(y):=[{\rm det}(D\widetilde\Psi_j(\widetilde\Psi_j^{-1}(y)))]^{-1} [D\widetilde\Psi_j(\widetilde\Psi_j^{-1}(y))]
A_\eps(\widetilde\Psi_j^{-1}(y))[D\widetilde\Psi_j(\widetilde\Psi_j^{-1}(y))]^T
$$
and $\tilde f_\eps(y):=[{\rm
det}(D\widetilde\Psi_j(\widetilde\Psi_j^{-1}(y)))]^{-1}\
f_\eps(\widetilde\Psi_j^{-1}(y))$ for any $y\in W_j\times
(-\infty,0)$.

We point out that, in the definition of $\tilde A_\eps$, the function
$A_\eps$ has been  extended by zero  over the set
$r_j^{-1}(\Sigma_j)$.

Then, we perform the Nirenberg translation method. In order to estimate second derivatives of $\tilde u$, we fix
arbitrarily $i\in \{1,\dots,N-1\}$ and we define $\tilde u_h(x):=\tilde u(x+he_i)$ for any $h\in \R$ where we  have omitted for simplicity the index $i$.
We use the same notation for any function $v\in H^1_0(\R^{N-1}\times (-\infty,0))$.
Similarly, we denote by $(\tilde A_\eps)_h$ the translation of $\tilde A_\eps$ where $\tilde A_\eps$ is extended by zero to the whole of
$\R^{N-1}\times (-\infty,0)$ (the fact that this extension may not be Lipschitz continuous in $\R^{N-1}\times (-\infty,0)$ is not relevant in what follows).

Fix $v\in H^1_0(W_j\times (-\infty,0))$ such that ${\rm supp}(v)$ is a compact set contained in $W_j\times (-\infty,0]$. Then we have for $|h|$
small enough  $v_{-h}\in H^1_0(W_j\times (-\infty,0))$ and
\begin{align} \label{eq:nirenberg-1}
  & \int_{W_j\times (-\infty,0)} \left(\tilde A_\eps \nabla (\tilde u_h-\tilde u)\right)
\cdot \nabla v \, dy\\
\notag & =\int_{W_j\times (-\infty,0)} \tilde f_\eps (v_{-h}-v) \, dy
-\int_{W_j\times (-\infty,0)} \left[\left((\tilde A_\eps)_h-\tilde A_\eps\right) \nabla \tilde u_h\right]
\cdot \nabla v \, dy
\end{align}
Now we choose $v=\tilde u_h-\tilde u$ in \eqref{eq:nirenberg-1} and we proceed by estimating the term on the right hand side of it.

In the rest of proof we denote by $C$ positive constants independent of $h$ and $\eps$ which may vary from line to line.

Let $\omega\subset \R^{N-1}$ and $\delta>0$ be such that $\omega$ is open,
$$
\text{supp}(\tilde u)+he_i \subset \omega\times (-\infty,0] \quad \text{and} \quad (\overline\omega\times (-\infty,0])+he_i \subset W_j\times (-\infty,0]
$$
for any $h\in (-\delta,\delta)$.  In this way we can write
\begin{equation} \label{eq:lip-1}
|(\tilde A_\eps)_h(y)-\tilde A_\eps(y)|\le C |h| \max_{n,m,l\in \{1,\dots,N\}} \left\|\frac{\partial(\tilde A_\eps)_{nm}}{\partial y_l}\right\|
_{L^\infty(W_j\times (-\infty,0))} \quad \forall y\in \omega\times (-\infty,0)\, , \forall |h|<\delta
\end{equation}
where $|A|:=\sup_{x\in \R^N\setminus\{0\}} |Ax|/|x|$ for any $N\times N$ matrix.
With this choice of $\omega$ and $\delta$, exploiting \eqref{eq:lip-1}, \eqref{eq:assumptions}, the definitions
of $\tilde A_\eps$, $\Psi_{\eps,j}$, $\Phi_{\eps,j}$, the fact that
$\tilde A_\eps(y',y_N)=I_N$ if $y_N<-(\hat k+1)\kappa_\eps<\tilde g_{\eps,j}(y')-g_j(y')$ (here $I_N$ denotes the $N\times N$ identity matrix),
and \eqref{uniinf} we have
\begin{align} \label{eq:Ah}
& \left|\int_{W_j\times (-\infty,0)} \left[\left((\tilde A_\eps)_h-\tilde A_\eps\right) \nabla \tilde u_h\right]
\cdot \nabla (\tilde u_h-\tilde u) \, dy\right|=\left|\int_{\omega\times (-\infty,0)} \left[\left((\tilde A_\eps)_h-\tilde A_\eps\right) \nabla \tilde u_h\right]
\cdot \nabla (\tilde u_h-\tilde u) \, dy\right| \\
\notag &
=\left|\int_{\omega} \left(\int_{-(\hat k+1)\kappa_\eps}^0 \left[\left((\tilde A_\eps)_h-\tilde A_\eps\right) \nabla \tilde u_h\right]
\cdot \nabla (\tilde u_h-\tilde u)\, dy_N \right) \, dy'\right| \\
\notag  & \le C|h|\  o(\kappa_\eps^{-1/2})  \left[\int_\omega\left(\int_{-(\hat k+1)\kappa_\eps}^0
\left|\nabla\Big(\tilde u_h-\tilde u\Big) \right|^2
dy_N \right) dy'
\right]^{1/2}   \times \\
\notag  & \qquad \qquad \times \left[\int_\omega (\hat k+1)\kappa_\eps \ \left\| |
\nabla \tilde u_h(y',\cdot)|
\right\|^2_{L^\infty(-\infty,0)} dy' \right]^{1/2}\\
\notag  & \le C|h|\ o(1) \ \|\tilde u_h-\tilde u\|_{H^1(W_j\times (-\infty,0))}
\left[\int_\omega \left\| \left|\nabla \tilde u_h(y',\cdot) \right|
\right\|^2_{H^1(-\infty,0)} dy' \right]^{1/2} \\
\notag  & \le C|h|\ o(1) \cdot \|\tilde u_h-\tilde u\|_{H^1(W_j\times (-\infty,0))} \|\tilde u\|_{H^2(W_j\times (-\infty,0))}
\end{align}
where here and in the rest of the proof, $o(\kappa_\eps^{-1/2})$ and $o(1)$ denote functions depending only on $\eps$ but independent of $h$
which have the prescribed asymptotic behavior as $\eps\to 0$.
We point out that the estimate $\left\| |
\nabla \tilde u_h(y',\cdot)|
\right\|^2_{L^\infty(-\infty,0)}\le C \left\| |
\nabla \tilde u_h(y',\cdot)|
\right\|^2_{H^1(-\infty,0)}$ has been obtained by exploiting the fact that there exists $M>0$ such that
$\text{supp}(\tilde u_h)\subset W_j\times (-M,0]$ for any $h\in (-\delta,\delta)$ and applying \eqref{uniinf} on the interval $(-M,0)$.
On the other hand we also have that
\begin{equation} \label{eq:f-eps}
\left|\int_{W_j\times (-\infty,0)} \tilde f_\eps [(\tilde u_h-\tilde u)_{-h}-(\tilde u_h-\tilde u)] \, dy \right|\le C|h|
\ \|\tilde f_\eps\|_{L^2(W_j\times (-\infty,0))}
\ \|\tilde u_h-\tilde u\|
_{H^1(W_j\times (-\infty,0))}
\end{equation}
Let us consider the left hand side of \eqref{eq:nirenberg-1}. Since by \eqref{eq:assumptions}, $\tilde A_\eps\to I_N$ in $L^\infty(W_j\times (-\infty,0))$,
then up to shrink $\eps_0$ if necessary we have
\begin{equation} \label{eq:ellipticity}
\sum_{n,m=1}^N (\tilde A_\eps(y))_{nm} \ \xi_n \xi_m \ge \frac 12 |\xi|^2  \qquad \text{for any } y\in W_j\times (-\infty,0)\, , \xi\in \R^N \ \text{and}
\ \eps\in (0,\eps_0] \, .
\end{equation}
Therefore we have that
\begin{align} \label{eq:left-hand-side}
& \int_{W_j\times (-\infty,0)} \left(\tilde A_\eps(y)\nabla (\tilde u_h-\tilde u)\right)
\cdot \nabla(\tilde u_h-\tilde u) \, dy \\
\notag & \qquad \ge \frac 12 \int_{W_j\times (-\infty,0)} |\nabla(\tilde u_h-\tilde u)|^2 dy \ge
C\|\tilde u_h-\tilde u\|_{H^1(W_j\times (-\infty,0))}^2 \, .
\end{align}
Combining \eqref{eq:Ah}-\eqref{eq:left-hand-side} with \eqref{eq:nirenberg-1} we obtain
\begin{align*}
\left\|\frac{\tilde u_h-\tilde u}h\right\|_{H^1(W_j\times (-\infty,0))}\!\! \le o(1)\cdot  \|\tilde u\|_{H^2(W_j\times (-\infty,0))}
\!+\! C \|\tilde f_\eps\|_{L^2(W_j\times (-\infty,0))} \ \ \  \forall 0<|h|<\delta \, , \forall \eps\in (0,\eps_0] \, .
\end{align*}
Passing to the limit as $h\to 0$ we obtain
\begin{equation} \label{eq:1-N-1}
\left\|\frac{\partial \tilde u}{\partial x_i}\right\|_{H^1(W_j\times (-\infty,0))} \le
 o(1)\cdot  \|\tilde u\|_{H^2(W_j\times (-\infty,0))}
+C \|\tilde f_\eps\|_{L^2(W_j\times (-\infty,0))} \quad  \forall \eps\in (0,\eps_0]
\end{equation}
for any $i\in \{1,\dots,N-1\}$.

Since $\tilde u$ solves equation \eqref{eq:equation}, using a standard argument (see e.g. page 319 in \cite{Evans}) one can show that the validity of \eqref{eq:1-N-1} for any $i\in \{1,\dots,N-1\}$ implies
the validity of \eqref{eq:1-N-1} also for $i=N$. Therefore we may conclude that up to shrink $\eps_0$ if necessary, for any $\eps\in (0,\eps_0]$ we have
\begin{equation*} 
\left\|D^\alpha \tilde u\right\|_{L^2(W_j\times (-\infty,0))}\!\! \le
 o(1)\cdot  \|\tilde u\|_{H^2(W_j\times (-\infty,0))}
+C \|\tilde f_\eps\|_{L^2(W_j\times (-\infty,0))} \ \forall \alpha\in \N^N, |\alpha|=2, \ \forall \eps\in (0,\eps_0]
\end{equation*}
and, in turn,
\begin{equation}  \label{eq:second-derivatives}
\|\tilde u\|_{H^2(W_j\times (-\infty,0))}\le C\left(\|\tilde u\|_{H^1(W_j\times (-\infty,0))}+\|\tilde f_\eps\|_{L^2(W_j\times (-\infty,0))}\right) \, .
\end{equation}
Testing \eqref{eq:equation} with $\tilde u$, using
\eqref{eq:ellipticity} and the Poincar\'e inequality in $W_j\times
(-\infty,0)$, we deduce that
\begin{equation*} 
\|\tilde u\|_{H^1(W_j\times (-\infty,0))}\le C
 \|\tilde f_\eps\|_{L^2(W_j\times (-\infty,0))} \quad  \forall \eps\in (0,\eps_0]
\end{equation*}
which combined with \eqref{eq:second-derivatives} gives
\begin{equation*} 
\|\tilde u\|_{H^2(W_j\times (-\infty,0))}\le C
 \|\tilde f_\eps\|_{L^2(W_j\times (-\infty,0))} \quad  \forall \eps\in (0,\eps_0] \, .
\end{equation*}
Coming back to $V_j$ the last estimate proves \eqref{eq:stima-regolarita}.

Applying \eqref{eq:stima-regolarita} to $\widehat w_{\eps,j}$ and using \eqref{eq:bound} it follows that
\begin{equation} \label{eq:norma-H2}
\|\widehat w_{\eps,j}\|_{H^2(\Omega)}\le C\|L_\eps \widehat
w_{\eps,j}\|_{L^2(\Omega\cap V_j)}\le C\|\Delta
w\|_{L^2(\Omega_\eps)}
\end{equation}
for any $\eps\in (0,\eps_0]$.

The proof of the lemma then follows from \eqref{eq:hat-w}.
\end{proof}

As a consequence of Lemma \ref{l:nirenberg} we prove the following

\begin{lemma} \label{l:unif-equiv} Let $\mathcal A$ be an atlas. Let
$\{\Omega_\eps\}_{0<\eps\le\eps_0}$ be a family of domains of
class $C^{1,1}(\mathcal A)$ and $\Omega$ a domain of class
$C^{1,1}(\mathcal A)$. Assume the validity of condition
\eqref{eq:assumptions}. Then, up to shrink $\eps_0$ if
necessary,
\begin{equation*}
\|w\|_{H^2(\Omega_\eps)}\le C\|\Delta w\|_{L^2(\Omega_\eps)}
\qquad \text{for any } w\in V(\Omega_\eps) \ \text{and} \ \eps\in
(0,\eps_0]
\end{equation*}
where $C>0$ is a constant independent of $w$ and $\eps$.
\end{lemma}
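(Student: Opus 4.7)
The plan is to deduce Lemma \ref{l:unif-equiv} from Lemma \ref{l:nirenberg} by transferring the $H^2$ bound on $B_\eps w$, which lives on the fixed domain $\Omega$, back to an $H^2$ bound on $w$ itself via the local diffeomorphisms $\Psi_{\eps,j}$. The key point is that $w$ and $B_\eps w$ are related, piece by piece, through these diffeomorphisms, which are the identity outside a thin boundary layer of width $O(\kappa_\eps)$.

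More precisely, I would fix $w\in V(\Omega_\eps)$, set $w_j:=\psi_j w$ as in the definition of $B_\eps$, and split
\[
\|w\|_{H^2(\Omega_\eps)}\le \sum_{j=1}^{s'}\|w_j\|_{H^2(\Omega_\eps\cap V_j)}+\sum_{j=s'+1}^{s}\|w_j\|_{H^2(V_j)}.
\]
For a boundary cuboid $j\in\{1,\dots,s'\}$, one uses $w_j(x)=\widehat w_{\eps,j}(\Psi_{\eps,j}(x))$ for $x\in\Omega_\eps\cap V_j$ and the chain rule to expand $\partial_k\partial_l w_j$ into a quadratic-in-$D\Psi_{\eps,j}$ term involving $D^2\widehat w_{\eps,j}\circ\Psi_{\eps,j}$ plus a $\nabla\widehat w_{\eps,j}\circ\Psi_{\eps,j}$ times $D^2\Psi_{\eps,j}$ term. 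The first-order factors $D\Psi_{\eps,j}$ are uniformly bounded because the choice $\hat k>6$ gives $|\partial_N h_{\eps,j}|<1/2$ and the tangential derivatives of $h_{\eps,j}$ vanish as $\eps\to 0$ by \eqref{eq:assumptions}. Combined with \eqref{eq:stima-Jacob}, a change of variables $y=\Psi_{\eps,j}(x)$ controls the first term by $C\|\widehat w_{\eps,j}\|_{H^2(\Omega\cap V_j)}^2$.

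The main obstacle is the second term, where $\|D^2\Psi_{\eps,j}\|_{L^\infty}=o(\kappa_\eps^{-1/2})$ blows up. However, $\Psi_{\eps,j}$ is the identity on $r_j^{-1}(K_{\eps,j})$, so this contribution is supported in the thin layer $(\Omega_\eps\cap V_j)\setminus r_j^{-1}(K_{\eps,j})$, which in rotated coordinates is a graph of thickness $O(\kappa_\eps)$ over $W_j$. I would then repeat verbatim the slicing argument carried out in the derivation of \eqref{eq:o(1)-2}: after change of variables, estimate $\int|\nabla\widehat w_{\eps,j}|^2$ on this thin layer by the one-dimensional trace inequality \eqref{uniinf} applied along vertical segments, gaining a factor $\kappa_\eps$ that cancels the $o(\kappa_\eps^{-1})$ coming from $|D^2\Psi_{\eps,j}|^2$. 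This yields $o(1)\|\widehat w_{\eps,j}\|_{H^2}^2$, so altogether $\|w_j\|_{H^2(\Omega_\eps\cap V_j)}\le C\|\widehat w_{\eps,j}\|_{H^2(\Omega\cap V_j)}$ uniformly in $\eps$ (after possibly shrinking $\eps_0$). Now invoking the individual bound \eqref{eq:norma-H2} established inside the proof of Lemma \ref{l:nirenberg}, which gives $\|\widehat w_{\eps,j}\|_{H^2(\Omega)}\le C\|\Delta w\|_{L^2(\Omega_\eps)}$, finishes the boundary cuboids.

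For an interior cuboid $j\in\{s'+1,\dots,s\}$, the atlas conditions force $V_j\subset\Omega\cap\Omega_\eps$, and $w_j=\psi_j w$ has compact support inside $V_j$. Extending $w_j$ by zero to a slightly larger smooth set $B\Subset\Omega_\eps$ and using the standard a priori estimate $\|w_j\|_{H^2(B)}\le C\|\Delta w_j\|_{L^2(B)}$, together with the pointwise identity $\Delta w_j=w\Delta\psi_j+2\nabla\psi_j\cdot\nabla w+\psi_j\Delta w$ and the Poincaré-type bounds \eqref{poin} in $V(\Omega_\eps)$ (which are uniform in $\eps$ since $|\Omega_\eps|\to|\Omega|$), gives $\|w_j\|_{H^2(V_j)}\le C\|\Delta w\|_{L^2(\Omega_\eps)}$. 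Summing over $j$ yields the claimed uniform estimate.
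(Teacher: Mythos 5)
Your proposal is correct and follows essentially the same route as the paper: the paper's proof also writes $w_j=\widehat w_{\eps,j}\circ\Psi_{\eps,j}$, expands the second derivatives by the chain rule (formula \eqref{eq:compute-bis}), controls the $D^2\Psi_{\eps,j}$ contribution on the thin layer by the same slicing argument used for \eqref{eq:o(1)-2} in Lemma \ref{l:1}, and concludes with the bound \eqref{eq:norma-H2} from Lemma \ref{l:nirenberg}. You merely spell out details the paper leaves implicit (notably the interior cuboids, which are handled by the elementary identity $\|D^2 v\|_{L^2}=\|\Delta v\|_{L^2}$ for compactly supported functions).
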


\begin{proof} Let $w\in V(\Omega_\eps)$. In this proof we use the
same notations introduced in the proof of Lemma \ref{l:nirenberg}.
By $C$ we denote positive constants independent of $w$ and $\eps$ which
may vary from line to line.

For any $j\in \{1,\dots,s'\}$ we have
\begin{align} \label{eq:compute-bis}
\frac{\partial^2 w_j}{\partial x_n \partial x_m}(x)&=\sum_{k=1}^N \left\{
 \left[ \sum_{l=1}^N \frac{\partial^2 \widehat
w_{\eps,j}}{\partial x_l
\partial x_k}(\Psi_{\eps,j}(x))
\frac{\partial[(\Psi_{\eps,j}(x))_l]}{\partial x_m} \right]
 \frac{\partial[(\Psi_{\eps,j}(x))_k]}{\partial x_n} \right. \\[7pt]
\notag  & \left. +\frac{\partial \widehat w_{\eps,j}}{\partial
x_k}(\Psi_{\eps,j}(x))
 \frac{\partial^2[(\Psi_{\eps,j}(x))_k]}
 {\partial x_n\partial x_m} \right \}  \, .
\end{align}
By \eqref{eq:assumptions}, \eqref{eq:compute-bis}, with an
argument similar to the one exploited in the proof of Lemma
\ref{l:1}, we infer
\begin{equation*}
\|w_j\|_{H^2(\Omega_\eps)}\le C\|\widehat
w_{\eps,j}\|_{H^2(\Omega)} \, .
\end{equation*}
The proof of the lemma now follows combining this estimate with
\eqref{eq:norma-H2}.
\end{proof}

Thanks to Lemma \ref{l:unif-equiv} we deduce that, under the
assumption \eqref{eq:assumptions}, the two norms
\begin{equation*}
u\mapsto \left(\int_{\Omega_\eps}|\Delta u|^2 dx \right)^{1/2} \,
, \qquad u\mapsto \left(\int_{\Omega_\eps}|D^2 u|^2 dx
\right)^{1/2}
\end{equation*}
defined on the space $V(\Omega_\eps)$, are \textit{uniformly equivalent} in
the sense that there exists a constant $C>0$ independent of $u$ and $\eps$
such that
\begin{equation*}
 \left(\int_{\Omega_\eps}|\Delta u|^2 dx \right)^{1/2}\le C\left(\int_{\Omega_\eps}|D^2 u|^2 dx
\right)^{1/2} \qquad \text{and} \qquad
\left(\int_{\Omega_\eps}|D^2 u|^2 dx \right)^{1/2} \le C
\left(\int_{\Omega_\eps}|\Delta u|^2 dx \right)^{1/2}
\end{equation*}
for any $u\in V(\Omega_\eps)$.

For this reason, for any $u\in V(\Omega_\eps)$, we put
\begin{equation} \label{eq:norm-V-eps}
\|u\|_{V(\Omega_\eps)}:=
\begin{cases}
\left(\int_{\Omega_\eps}|\Delta u|^2 dx \right)^{1/2}, & \qquad
\text{for problem } \eqref{eq:Steklov} \, , \\
\left(\int_{\Omega_\eps}|D^2 u|^2 dx \right)^{1/2}, & \qquad
\text{for problem } \eqref{eq:Steklov-modificato} \, .
\end{cases}
\end{equation}

For the same reason, given a set $A$, we denote by
$Q_A(\cdot,\cdot)$ and $Q_A(\cdot)$ the bilinear form and the
quadratic form respectively defined by
\begin{equation} \label{eq:QA-Laplacian}
Q_A(u,v):=\int_A \Delta u \Delta v \, dx  \qquad \text{and} \qquad
Q_A(u)=Q_A(u,u)
\end{equation}
or by
\begin{equation} \label{eq:QA-Hessian}
Q_A(u,v):=\int_A D^2 u : D^2 v \, dx  \qquad \text{and} \qquad
Q_A(u)=Q_A(u,u)
\end{equation}
depending again on the problem we are treating among
\eqref{eq:Steklov} or \eqref{eq:Steklov-modificato}.

The next lemma deals with a uniform estimate with respect to
domain perturbation of the $H^{1/2}$-norm on the boundary in terms
of the $H^1$-norm of the domain. For simplicity, here and in the sequel we denote the trace of a function and the function itself with the same symbol.

\begin{lemma} \label{l:H1/2} Let $\mathcal A$ be an atlas. Let
$\{\Omega_\eps\}_{0<\eps\le\eps_0}$ be a family of domains of
class $C^{1,1}(\mathcal A)$ and $\Omega$ a domain of class
$C^{1,1}(\mathcal A)$. Assume the validity of condition
\eqref{eq:assumptions}. Then, up to shrink $\eps_0$ if
necessary,
\begin{equation*}
\|w\|_{H^{1/2}(\partial\Omega_\eps)}\le C\|w\|_{H^1(\Omega_\eps)}
\qquad \text{for any } w\in H^1(\Omega_\eps) \ \text{and} \
\eps\in (0,\eps_0]
\end{equation*}
where $C>0$ is a constant independent of $\eps$.
\end{lemma}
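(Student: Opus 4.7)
The plan is to reduce the trace inequality on each $\Omega_{\eps}$ to a trace inequality on a fixed reference half-cuboid, by locally flattening the boundary and exploiting the fact that \eqref{eq:assumptions} forces the Lipschitz norms of the $g_{\eps,j}$ to be uniformly bounded. First I would note that applying \eqref{eq:assumptions} with $|\beta|=1$ yields $\|D g_{\eps,j}-Dg_j\|_{L^\infty(W_j)}=o(\kappa_\eps^{1/2})\to 0$, so there exists $M>0$ independent of $\eps$ and $j$ such that $\|Dg_{\eps,j}\|_{L^\infty(W_j)}\le M$. Hence $\{\Omega_\eps\}$ is a uniformly Lipschitz family relative to the common atlas $\mathcal A$.

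Next, I would use the partition of unity $\{\psi_j\}_{1\le j\le s}$ introduced in Section \ref{subsecoperatorsE} to decompose $w=\sum_{j=1}^{s}\psi_j w$. Since $\psi_j$ is supported strictly inside $V_j$ and for $j\ge s'+1$ the cuboid $V_j$ avoids $\partial\Omega_\eps$, only the indices $j\in\{1,\dots,s'\}$ contribute to the trace. Thus it suffices to estimate $\|\psi_j w\|_{H^{1/2}(\partial\Omega_\eps)}$ by $C\|w\|_{H^1(\Omega_\eps)}$ with $C$ independent of $\eps$, for each such $j$. After rotating by $r_j$, I would introduce the flattening $\widetilde\Phi_{\eps,j}(x',x_N):=(x',x_N-g_{\eps,j}(x'))$, which is a bi-Lipschitz map between $r_j(\Omega_\eps\cap V_j)$ and a portion of $W_j\times(-\infty,0)$ sending the graph portion of $\partial\Omega_\eps\cap V_j$ onto a subset of $W_j\times\{0\}$. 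Its Jacobian determinant equals $1$ and the operator norm of its differential is bounded by $C(1+\|Dg_{\eps,j}\|_{L^\infty})\le C'$ uniformly in $\eps$. Consequently the transplanted function $\tilde w_j(y):=(\psi_j w)(r_j^{-1}(\widetilde\Phi_{\eps,j}^{-1}(y)))$ satisfies
\begin{equation*}
\|\tilde w_j\|_{H^1(W_j\times(-\infty,0))}\le C\,\|\psi_j w\|_{H^1(\Omega_\eps\cap V_j)}\le C\,\|w\|_{H^1(\Omega_\eps)},
\end{equation*}
uniformly in $\eps$.

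Then I would apply the classical trace inequality on the fixed reference half-cuboid $W_j\times(-\infty,0)$, which yields $\|\tilde w_j\|_{H^{1/2}(W_j\times\{0\})}\le C\|\tilde w_j\|_{H^1(W_j\times(-\infty,0))}$ with a universal constant, and then transport this estimate back to the graph $\partial\Omega_\eps\cap V_j$ via the inverse bi-Lipschitz map. Summing over $j=1,\dots,s'$ concludes the proof.

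The main obstacle is the very last transport step, that is, showing that the $H^{1/2}$-norm on the graph $\{(x',g_{\eps,j}(x')):x'\in W_j\}$ is equivalent to the $H^{1/2}$-norm on $W_j\times\{0\}$ of the pulled-back function, with equivalence constants depending only on the uniform Lipschitz bound $M$ and not on $\eps$. This is handled by writing the Slobodeckij double integral on each surface and observing that the parametrisation $x'\mapsto (x',g_{\eps,j}(x'))$ multiplies the surface measure by $\sqrt{1+|\nabla g_{\eps,j}|^2}\in[1,\sqrt{1+M^2}]$ and distorts distances between points by a factor in $[1,\sqrt{1+M^2}]$, so the double integrals are comparable with constants depending only on $M$.
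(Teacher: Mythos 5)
Your proof is correct and follows essentially the same route as the paper: localize with the partition of unity, use that \eqref{eq:assumptions} with $|\beta|=1$ gives a uniform Lipschitz bound on the $g_{\eps,j}$, flatten to the fixed reference set $W_j\times(-\infty,0)$, and invoke the classical trace theorem there. The only (minor) difference is at the end: the paper simply \emph{defines} the $H^{1/2}(\partial\Omega_\eps)$-norm as the sum of the $H^{1/2}(\R^{N-1})$-norms of the flattened localized pieces (citing N\v{e}cas), so no transport back to the graph is needed, whereas you work with the intrinsic Slobodeckij norm on the graph and verify its uniform equivalence with the flattened norm via the bi-Lipschitz distortion bounds --- a correct and slightly more self-contained variant of the same argument.
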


\begin{proof} In this proof $C$ will denote a positive constant independent of $w$ and $\eps$
which may vary from line to line. The norm in
$H^{1/2}(\partial\Omega_\eps)$ can be defined locally using the
partition of unity $\{\psi_j\}_{1\le j\le s}$ introduced in
Subsection \ref{subsecoperatorsE}. In other words if $v\in
H^{1/2}(\partial\Omega_\eps)$, for any $j=1,\dots,s'$, one can
define $v_j=\psi_j v$ on $\partial\Omega_\eps\cap V_j$ and the
function
\begin{equation} \label{eq:v}
\bar v_{\eps,j}(y'):=v_j(r_j^{-1}(y',g_{\eps,j}(y'))) \qquad
\text{for any } y'\in W_j\subset \R^{N-1} \, .
\end{equation}
Let now $w\in H^1(\Omega_\eps)$ and still denote by $w$ its trace
on $\partial\Omega_\eps$ so that $w$ can be also seen as an
element of $H^{1/2}(\partial\Omega_\eps)$. Let $\overline{
w}_{\eps,j}$ be the corresponding function defined on $W_j$
according to \eqref{eq:v}.

Exploiting in an appropriate way the maps $\Phi_{\eps,j}$,
$\Psi_{\eps,j}$ defined in Subsection \ref{subsecoperatorsE} and
the maps $\widetilde \Phi_{\eps,j}$, $\widetilde \Psi_{\eps,j}$
defined in the proof of Lemma \ref{l:nirenberg}, since $w\in
H^1(\Omega_\eps)$, the function $\overline{ w}_{\eps,j}$ can be also
interpreted as a function defined on the whole $W_j\times
(-\infty,0)$ which belongs to $H^1(W_j\times (-\infty,0))$ and its
support is a compact set contained in $W_j\times (-\infty,0]$.

Now it is clear that the extension-by-zero of $\overline{
w}_{\eps,j}$ to the whole $\R^{N-1}\times (-\infty , 0)$, that we still denote by
$\overline{ w}_{\eps,j}$, belongs to $H^1(\R^{N-1}\times (-\infty , 0))$ and its trace
belongs to $H^{1/2}(\R^{N-1})$. Since $\overline{ w}_{\eps,j}$ is a
compactly supported $H^{1/2}(\R^{N-1})$-function, its norm may be
defined in the usual way by mean of the Fourier transform.

Hence, as possible choice (see \cite[Page 83]{Necas}) for
$H^{1/2}(\partial\Omega_\eps)$-norm of $w$, we put
\begin{equation} \label{eq:H1/2-norm}
\|w\|_{H^{1/2}(\partial\Omega_\eps)}=\left(\sum_{j=1}^{s'}
\|\overline w_{\eps,j}\|_{H^{1/2}(\R^{N-1})}^2\right)^{1/2} \, .
\end{equation}
Applying a well-known trace theorem for functions in $H^1(\R^{N-1}\times (-\infty,0))$
(see Theorem 5.1, Chap. 2 in \cite{Necas}) we obtain
\begin{equation*}
\|\overline{ w}_{\eps,j}\|_{H^{1/2}(\R^{N-1})}\le C \|\overline{
w}_{\eps,j}\|_{H^1(\R^{N-1}\times (-\infty , 0))} \, .
\end{equation*}
By \eqref{eq:assumptions} and direct computation, one can verify
that
\begin{equation*}
\|\overline{ w}_{\eps,j}\|_{H^1( \R^{N-1}\times (-\infty , 0))}\le
C\|w_j\|_{H^1(\Omega_\eps)}
\end{equation*}
The proof of the lemma now follows from \eqref{eq:H1/2-norm} and
the definition of $w_j$.
\end{proof}

\begin{lemma} \label{l:d-1-eps} Let $\mathcal A$ be an atlas. Let
$\{\Omega_\eps\}_{0<\eps\le\eps_0}$ be a family of domains of
class $C^{1,1}(\mathcal A)$ and $\Omega$ a domain of class
$C^{1,1}(\mathcal A)$. Assume the validity of condition
\eqref{eq:assumptions}. Then
\begin{equation*}
\liminf_{\eps\to 0} d_1^\eps>0
\end{equation*}
where $d_1^\eps$ is given by \eqref{eq:d1} with $\Omega_\eps$ in place of $\Omega$.
\end{lemma}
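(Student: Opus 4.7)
\emph{Plan.} The goal is to show a uniform lower bound of the form $d_1^\eps \ge c > 0$ for all $\eps$ sufficiently small, from which $\liminf_{\eps\to 0}d_1^\eps > 0$ follows at once. Recalling the variational characterization
$$
d_1^\eps = \inf_{v\in V(\Omega_\eps)\setminus H^2_0(\Omega_\eps)} \frac{\int_{\Omega_\eps}|\Delta v|^2\,dx}{\int_{\partial\Omega_\eps} v_\nu^2\,dS},
$$
what we need is a uniform-in-$\eps$ trace-type inequality $\int_{\partial\Omega_\eps} v_\nu^2\,dS \le C\int_{\Omega_\eps}|\Delta v|^2\,dx$ on $V(\Omega_\eps)$.

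The strategy is to chain together the two uniform estimates already established in this section. First, by Lemma~\ref{l:unif-equiv}, possibly after shrinking $\eps_0$, we have $\|v\|_{H^2(\Omega_\eps)} \le C\|\Delta v\|_{L^2(\Omega_\eps)}$ for every $v\in V(\Omega_\eps)$, with $C$ independent of $\eps$. Next, since $v\in H^2(\Omega_\eps)$ each partial derivative $\partial_i v$ belongs to $H^1(\Omega_\eps)$, so Lemma~\ref{l:H1/2} applied componentwise yields
$$
\|\partial_i v\|_{H^{1/2}(\partial\Omega_\eps)} \le C\|\partial_i v\|_{H^1(\Omega_\eps)} \le C\|v\|_{H^2(\Omega_\eps)}
$$
with a constant independent of $\eps$.

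To pass from this $H^{1/2}$-bound to an $L^2$-bound on $\partial\Omega_\eps$, I would use the continuous embedding $H^{1/2}(\partial\Omega_\eps)\hookrightarrow L^2(\partial\Omega_\eps)$; by the Fourier-based definition of the $H^{1/2}$-norm adopted in the proof of Lemma~\ref{l:H1/2}, this embedding actually holds with constant $1$ (or at worst with a universal constant independent of $\eps$), since the weight $(1+|\xi|^2)^{1/4}\ge 1$ in the Fourier symbol. Combining everything with the pointwise bound $|v_\nu|\le |\nabla v|$ on $\partial\Omega_\eps$, I obtain
$$
\int_{\partial\Omega_\eps} v_\nu^2\,dS \le \int_{\partial\Omega_\eps}|\nabla v|^2\,dS \le C\|v\|_{H^2(\Omega_\eps)}^2 \le C\|\Delta v\|_{L^2(\Omega_\eps)}^2,
$$
so that $d_1^\eps \ge 1/C$ uniformly in $\eps$, which proves the claim.

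The only delicate point is the uniformity of the $H^{1/2}\hookrightarrow L^2$ embedding constant on $\partial\Omega_\eps$; this is not an obstacle here because the $H^{1/2}(\partial\Omega_\eps)$-norm is defined, via the partition of unity $\{\psi_j\}$ and the local charts $(y',g_{\eps,j}(y'))$, as a sum of Fourier-based norms of the compactly supported flattened pieces $\overline{w}_{\eps,j}$ on $\R^{N-1}$, and the corresponding local $L^2(\partial\Omega_\eps)$-norm is comparable to the sum of $L^2(W_j)$-norms of these pieces up to the Jacobians $\sqrt{1+|\nabla g_{\eps,j}|^2}$, which are uniformly bounded by \eqref{eq:assumptions}. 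All the other steps are direct applications of lemmas already proved.
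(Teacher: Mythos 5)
Your proof is correct, but it follows a genuinely different route from the paper's. The paper argues by contradiction: it takes first eigenfunctions $w_\eps$ normalized by $\int_{\partial\Omega_\eps}(w_\eps)_\nu^2\,dS=1$, observes that $d_1^\eps\to 0$ would force $\|\Delta w_\eps\|_{L^2(\Omega_\eps)}\to 0$, pulls the $w_\eps$ back to the fixed domain $\Omega$ via the operator $B_\eps$, and then uses Lemma~\ref{l:nirenberg}, the continuity of the trace map \eqref{eq:trace} on $\Omega$, and the convergence of the Jacobian factors of $\Psi_{\eps,j}^{-1}$ (see \eqref{eq:PSI-1}--\eqref{eq:delta-2}) to transfer the resulting decay back to $\partial\Omega_\eps$ and contradict the normalization. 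You instead prove directly the uniform trace inequality $\int_{\partial\Omega_\eps}v_\nu^2\,dS\le C\|\Delta v\|^2_{L^2(\Omega_\eps)}$ by chaining Lemma~\ref{l:unif-equiv} with Lemma~\ref{l:H1/2} applied to each $\partial_i v$ and the embedding $H^{1/2}(\partial\Omega_\eps)\hookrightarrow L^2(\partial\Omega_\eps)$. The one point that needs care — the uniformity in $\eps$ of this last embedding — you handle correctly: with the norm \eqref{eq:H1/2-norm}, the Fourier-based $H^{1/2}(\R^{N-1})$-norm dominates the $L^2(\R^{N-1})$-norm with constant $1$, and the local surface measures differ from $dy'$ only by the factors $\sqrt{1+|\nabla g_{\eps,j}|^2}$, which are uniformly bounded under \eqref{eq:assumptions}. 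Your route is shorter, yields the stronger quantitative bound $d_1^\eps\ge 1/C$ rather than just $\liminf d_1^\eps>0$, and is in fact the very strategy the authors adopt for the analogous Lemma~\ref{l:delta-1} in Section~\ref{s:optimality} (via Lemma~\ref{l:unif-equiv-2}); the paper's contradiction argument avoids any discussion of uniform boundary embeddings at the price of a longer chart-by-chart computation on the moving boundary.
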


\begin{proof} In this proof we denote by $C$ a general constant independent of $\eps$ which may vary
from estimate to estimate. For any $0<\eps\le \eps_0$, let
$w_\eps$ an eigenfunction of \eqref{eq:Steklov} corresponding to
$d_1^\eps$ satisfying $\int_{\partial\Omega_\eps} (w_\eps)_\nu^2
dS=1$ and let $u_\eps:=B_\eps w_\eps$ with $B_\eps$ as in
Subsection \ref{ss:s-r}. Suppose by contradiction that
$\liminf_{\eps\to 0} d_1^\eps=0$ so that along a sequence of
values of $\eps$ converging to zero we may assume that
$d_1^\eps\to 0$. For simplicity we write this as $d_1^\eps\to 0$
as $\eps\to 0$. Then we have that along the same sequence
$\|w_\eps\|_{V(\Omega_\eps)}\to 0$ as $\eps\to 0$ since
$\int_{\Omega_\eps} |\Delta w_\eps|^2 dx=d_1^\eps
\int_{\partial\Omega_\eps} (w_\eps)_\nu^2 dS$. Applying Lemma
\ref{l:nirenberg} we deduce that $u_\eps\to 0$ in $V(\Omega)$.

According to \eqref{eq:hat-w}, for any $1\le j \le s'$, we can define $\widehat w_{\eps,j}$.
Moreover for any $1\le j\le s$ we put $w_{\eps,j}:=\psi_j w_\eps$. In this way we have that
$u_\eps=\sum_{j=1}^{s'} \widehat w_{\eps,j}+\sum_{j=s'+1}^s w_{\eps,j}$.
By direct computation one sees that
\begin{equation} \label{eq:boundary-est-1}
\frac{\partial \widehat w_{\eps,j}}{\partial x_k}(x)=\sum_{n=1}^N \frac{\partial w_{\eps,j}}{\partial x_n}(\Psi_{\eps,j}^{-1}(x))
\frac{\partial[(\Psi_{\eps,j}^{-1}(x))_n]}{\partial x_k} \qquad \text{for any } x\in \Omega\cap V_j \, .
\end{equation}
We observe that from the definition of $\Psi_{\eps,j}$ and
\eqref{eq:assumptions} we have
\begin{equation} \label{eq:PSI-1}
\frac{\partial[(\Psi_{\eps,j}^{-1}(x))_n]}{\partial x_k}\to \delta_{nk} \qquad \text{uniformly in } V_j\cap \Omega \ \ \text{as } \eps\to 0
\end{equation}
for any $n,k\in \{1,\dots,N\}$.

On the other hand, since $\int_{\partial\Omega_\eps} |\nabla w_\eps|^2 \, dS=\int_{\partial\Omega_\eps} (w_\eps)_\nu^2 \, dS=1$ being $w_\eps\equiv 0$ on
$\partial\Omega_\eps$, for any $j\in \{1,\dots,s'\}$ and $k\in \{1,\dots,N\}$,
$\int_{\partial\Omega_\eps} \left(\frac{\partial w_{\eps,j}}{\partial x_k}\right)^2 dS$ remains bounded as $\eps\to 0$. Moreover, using the local parametrizations
of $\partial\Omega_\eps$ and $\partial\Omega$ combined with \eqref{eq:assumptions}, we also have
\begin{equation} \label{eq:confronto-1}
\int_{\partial\Omega\cap V_j} \left(\frac{\partial w_{\eps,j}}{\partial x_k}(\Psi_{\eps,j}^{-1}(x))\right)^2 dS
=\int_{\partial\Omega_\eps\cap V_j} \left(\frac{\partial w_{\eps,j}}{\partial x_k}\right)^2 dS+o(1)=O(1) \qquad \text{as } \eps\to 0 \, .
\end{equation}
Combining \eqref{eq:confronto-1} with \eqref{eq:PSI-1}, for any $j\in \{1,\dots,s'\}$ and $n,k\in \{1,\dots,N\}$, we obtain
\begin{align} \label{eq:delta-1}
& \int_{\partial\Omega\cap V_j} \left(\frac{\partial w_{\eps,j}}{\partial x_n}(\Psi_{\eps,j}^{-1}(x))\right)^2
\left(\frac{\partial[(\Psi_{\eps,j}^{-1}(x))_n]}{\partial x_k}\right)^2 \, dS\to 0 \quad \text{as } \eps\to 0
\end{align}
if $n\neq k$ and
\begin{align} \label{eq:delta-2}
& \int_{\partial\Omega\cap V_j} \left(\frac{\partial w_{\eps,j}}{\partial x_k}(\Psi_{\eps,j}^{-1}(x))\right)^2
\left(\frac{\partial[(\Psi_{\eps,i}^{-1}(x))_k]}{\partial x_k}\right)^2 \, dS
=\int_{\partial\Omega\cap V_j} \left(\frac{\partial w_{\eps,j}}{\partial x_k}(\Psi_{\eps,j}^{-1}(x))\right)^2 dS+o(1)
\quad \text{as } \eps\to 0 \, .
\end{align}
From the proof of Lemma \ref{l:nirenberg} one can deduce that not only $u_\eps\to 0$ in $V(\Omega)$ but also $\widehat w_{\eps,j}\to 0$ in $V(\Omega)$,
for any $j\in \{1,\dots,s'\}$. Hence from the continuity of the trace map \eqref{eq:trace} we also have that $\frac{\partial\widehat w_{\eps,j}}{\partial x_k}\to 0$
in $L^2(\partial\Omega)$ as $\eps\to 0$. From this and \eqref{eq:boundary-est-1}, \eqref{eq:delta-1}, it follows that
$\frac{\partial w_{\eps,j}}{\partial x_k}(\Psi_{\eps,j}^{-1}(x))\frac{\partial[(\Psi_{\eps,j}^{-1}(x))_k]}{\partial x_k}\to 0$ in $L^2(\partial\Omega)$
as $\eps\to 0$. This, together with \eqref{eq:confronto-1} and \eqref{eq:delta-2} yields
\begin{equation*}
\int_{\partial\Omega_\eps} \left(\frac{\partial w_{\eps,j}}{\partial x_k}\right)^2 dS\to 0 \qquad \text{as } \eps \to 0  \quad
\text{for any } j\in \{1,\dots,s'\} \ \text{and} \ k\in \{1,\dots,N\}
\end{equation*}
and, in turn, $\int_{\partial\Omega_\eps} |\nabla w_\eps|^2 dS=o(1)$ as $\eps\to 0$, a contradiction.
\end{proof}

Now, we introduce some notations which will be fundamental in the
proofs of the next lemmas. For any $y\in \Psi_{\eps,j}(\partial
\Omega_\eps\cap V_i \cap V_j)$ we put
$\Theta_{\eps,i,j}(y):=\Psi_{\eps,i}(\Psi_{\eps,j}^{-1}(y))$ in
order to define
\begin{equation}  \label{eq:Thetaij}
\Theta_{\eps,i,j}:\Psi_{\eps,j}(\partial\Omega_\eps\cap V_i\cap
V_j)\to \Theta_{\eps,i,j}(\Psi_{\eps,j}(\partial\Omega_\eps\cap
V_i\cap V_j))
\end{equation}
as a diffeomorphism between two open subsets of the manifold
$\partial\Omega$.

For any $k\in \{1,\dots,s'\}$ we define the local charts
$\Gamma_k:\partial\Omega\cap V_k\to W_k\subset \R^{N-1}$  where
$\Gamma_k(y):=P(r_k(y))$ and $P:\R^{N}\to \R^{N-1}$ is the
projection $(x',x_N)\mapsto x'$. We observe that
$\Gamma_k^{-1}:W_k \to
\partial\Omega\cap V_k$ satisfies
$\Gamma_k^{-1}(z')=r_k^{-1}(z',g_k(z'))$ for any $z'\in W_k$. Next
we introduce the map
\begin{equation} \label{eq:Upsilon}
\Upsilon_{\eps,i,j}:\Gamma_j(\Psi_{\eps,j}(\partial\Omega_\eps\cap
V_i\cap V_j))\to \Upsilon_{\eps,i,j}
(\Gamma_j(\Psi_{\eps,j}(\partial\Omega_\eps\cap V_i\cap V_j)))
\end{equation}
where $\Upsilon_{\eps,i,j}(z'):=\Gamma_i(\Theta_{\eps,i,j}(\Gamma_j^{-1}(z')))$ for any $z'\in \Gamma_j(\Psi_{\eps,j}(\partial\Omega_\eps\cap V_i\cap V_j))$.

\begin{lemma} \label{p:7} Let $\mathcal A$ be an atlas. Let
$\{\Omega_\eps\}_{0<\eps\le\eps_0}$ be a family of domains of
class $C^{1,1}(\mathcal A)$ and $\Omega$ a domain of class
$C^{1,1}(\mathcal A)$. Assume the validity of condition \eqref{eq:assumptions}.

\begin{itemize}
\item[(i)]  Let $i,j\in \{1,\dots,s'\}$.  Let $\{f_\eps\}_{0<\eps\le\eps_0}\subset L^2(\R^{N-1})$ be such
that ${\rm supp}(f_\eps)\subset \Gamma_i(\partial\Omega\cap V_i)$
for any $\eps\in (0,\eps_0]$ and let $f\in L^2(\R^{N-1})$ be such
that $f_\eps\to f$ in $L^2(\R^{N-1})$ as $\eps\to 0$. Let
$\Upsilon_{\eps,i,j}$ be as in \eqref{eq:Upsilon}.
  Define
\begin{equation*}
\tilde f_\eps(z'):=
\begin{cases}
f_\eps(\Upsilon_{\eps,i,j}(z')) & \qquad \text{if } z'\in \Gamma_j(\Psi_{\eps,j}(\partial\Omega_\eps\cap V_i\cap V_j)) \, ,\\[8pt]
0 & \qquad \text{if } z'\in \R^{N-1}\setminus
\Gamma_j(\Psi_{\eps,j}(\partial\Omega_\eps\cap V_i\cap V_j)) \, ,
\end{cases}
\end{equation*}
for any $\eps\in (0,\eps_0]$ and
\begin{equation*}
\tilde f(z'):=
\begin{cases}
f(\Gamma_i(\Gamma_j^{-1}(z'))) & \qquad \text{if } z'\in \Gamma_j(\partial\Omega\cap V_i\cap V_j) \, , \\[8pt]
0 & \qquad \text{if } z' \in \R^{N-1}\setminus
\Gamma_j(\partial\Omega\cap V_i\cap V_j) \, .
\end{cases}
\end{equation*}
Then $\tilde f_\eps\to \tilde f$ in $L^2(\R^{N-1})$ as $\eps\to
0$.

\item[(ii)] Let $\{\omega_\eps\}_{0<\eps\le \eps_0}\subset
L^2(\partial\Omega)$ be such that ${\rm supp}(\omega_\eps)\subset
\partial\Omega \cap V_i$ for any $\eps\in (0,\eps_0]$, for some $i\in
\{1,\dots,s'\}$. Suppose that there exists $\omega\in
L^2(\partial\Omega)$ such that $\omega_\eps\to \omega$ in
$L^2(\partial\Omega)$ as $\eps\to 0$. For $j\in \{1,\dots,s'\}$
let $\Theta_{\eps,i,j}$ be as in \eqref{eq:Thetaij}.
For any $\eps\in (0,\eps_0]$ define the function
\begin{equation*}
\tilde\omega_\eps(y):=
\begin{cases}
\omega_\eps(\Theta_{\eps,i,j}(y)) & \qquad \text{if } y\in \Psi_{\eps,j}(\partial\Omega_\eps\cap V_i\cap V_j)\, , \\
0 & \qquad \text{if } y\in \partial\Omega \setminus
\Psi_{\eps,j}(\partial\Omega_\eps\cap V_i\cap V_j)  \, .
\end{cases}
\end{equation*}
Then $\tilde\omega_\eps\to \omega\chi_{\partial\Omega\cap V_i\cap
V_j}$ in $L^2(\partial\Omega)$ as $\eps\to 0$.
\end{itemize}
\end{lemma}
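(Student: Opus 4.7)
\proof

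The plan is to treat (i) and (ii) in parallel since they differ only in whether the computations are performed in local charts on $\mathbb R^{N-1}$ or directly on the manifold $\partial\Omega$; the essential content in both cases is that, by condition \eqref{eq:assumptions}, the diffeomorphisms $\Psi_{\eps,i}$, $\Psi_{\eps,j}$ converge in $C^1$ to the identity on the relevant portions of $\partial\Omega$, and hence so do the composed maps $\Theta_{\eps,i,j}=\Psi_{\eps,i}\circ\Psi_{\eps,j}^{-1}$ and $\Upsilon_{\eps,i,j}=\Gamma_i\circ\Theta_{\eps,i,j}\circ\Gamma_j^{-1}$.

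For part (i), I would first record the basic convergence facts entailed by \eqref{eq:assumptions}: the functions $g_{\eps,j}\to g_j$ uniformly, the first derivatives converge uniformly, and hence $\Psi_{\eps,j}\to \mathrm{id}$ and $\Psi_{\eps,j}^{-1}\to\mathrm{id}$ uniformly on their domains, together with their first derivatives. In particular, $\Upsilon_{\eps,i,j}$ is a $C^1$ diffeomorphism between open subsets of $\mathbb R^{N-1}$ whose Jacobian determinant $J\Upsilon_{\eps,i,j}$ is uniformly bounded above and away from zero for $\eps$ small, and $\Upsilon_{\eps,i,j}\to \Gamma_i\circ\Gamma_j^{-1}$ uniformly on any compact subset of $\Gamma_j(\partial\Omega\cap V_i\cap V_j)$, with convergence of the Jacobians as well. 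In addition, the varying domain $\Gamma_j(\Psi_{\eps,j}(\partial\Omega_\eps\cap V_i\cap V_j))$ converges to $\Gamma_j(\partial\Omega\cap V_i\cap V_j)$ in the sense that its symmetric difference with the latter has $(N{-}1)$-dimensional Lebesgue measure tending to zero.

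Given these facts, the standard strategy is to split
\[
\|\tilde f_\eps-\tilde f\|_{L^2(\mathbb R^{N-1})}\le \|\tilde f_\eps-\tilde g_\eps\|_{L^2(\mathbb R^{N-1})}+\|\tilde g_\eps-\tilde f\|_{L^2(\mathbb R^{N-1})},
\]
where $\tilde g_\eps(z'):=f(\Upsilon_{\eps,i,j}(z'))$ on $\Gamma_j(\Psi_{\eps,j}(\partial\Omega_\eps\cap V_i\cap V_j))$ and extended by zero elsewhere. The first summand, after the change of variables $y=\Upsilon_{\eps,i,j}(z')$, is controlled by $C\|f_\eps-f\|_{L^2(\mathbb R^{N-1})}$ thanks to the uniform lower bound on $|J\Upsilon_{\eps,i,j}|$, and hence vanishes by hypothesis. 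The second summand is handled by a density argument: approximate $f\in L^2(\mathbb R^{N-1})$ in $L^2$-norm by a continuous compactly supported function $\varphi$; for such $\varphi$, dominated convergence together with the uniform convergence $\Upsilon_{\eps,i,j}\to \Gamma_i\circ\Gamma_j^{-1}$ and the convergence of the domains of definition gives $\varphi\circ\Upsilon_{\eps,i,j}\chi_{\cdots}\to \varphi\circ\Gamma_i\circ\Gamma_j^{-1}\chi_{\Gamma_j(\partial\Omega\cap V_i\cap V_j)}$ in $L^2(\mathbb R^{N-1})$, and the error between $\varphi$ and $f$ is controlled, uniformly in $\eps$, by the change of variables and the bound on the Jacobian.

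Part (ii) follows the same pattern with $\Theta_{\eps,i,j}$ in place of $\Upsilon_{\eps,i,j}$, with integrals computed against the surface measure on $\partial\Omega$; the change of variables now involves the surface Jacobian of $\Theta_{\eps,i,j}$, which is again uniformly bounded above and below and converges uniformly to the one of the identity by \eqref{eq:assumptions}. The main technical obstacle I foresee is bookkeeping the varying domains $\Gamma_j(\Psi_{\eps,j}(\partial\Omega_\eps\cap V_i\cap V_j))$ and showing that their characteristic functions converge in measure to that of $\Gamma_j(\partial\Omega\cap V_i\cap V_j)$; this is a purely geometric statement that reduces, via the local graph representation, to the uniform convergence $g_{\eps,j}\to g_j$ and $\nabla g_{\eps,j}\to \nabla g_j$ encoded in \eqref{eq:assumptions}, and once established makes both the change-of-variables step and the density argument routine.
\endproof
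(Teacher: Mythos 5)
Your proposal is correct and follows essentially the same route as the paper: the same decomposition via the auxiliary function $\tilde g_\eps=f\circ\Upsilon_{\eps,i,j}$, the same change-of-variables bound using the uniform two-sided Jacobian bounds to absorb $\|f_\eps-f\|_{L^2}$, and the same density argument with continuous compactly supported approximants plus dominated convergence, all resting on the $C^1$ convergence of the chart maps entailed by \eqref{eq:assumptions}. The only cosmetic difference is in part (ii): the paper reduces it to part (i) by pulling everything back through the charts $\Gamma_i$, $\Gamma_j$ and comparing surface densities $\sqrt{1+|\nabla g_j|^2}$, whereas you rerun the argument directly on $\partial\Omega$ with the surface Jacobian of $\Theta_{\eps,i,j}$ — the two are equivalent.
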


\begin{proof} We divide the proof of the lemma into three steps.
Throughout this proof, $C$ denotes a constant independent of
$\eps$ which may vary from line to line.

{\bf Step 1.} In this step we prove (i) when $f_\eps\equiv f$ for
any $\eps\in (0,\eps_0]$.

For any $\sigma>0$ let $\varphi_\sigma\in C^0(\R^{N-1})$ be such
that $\|f-\varphi_\sigma\|_{L^2(\R^{N-1})}<\sigma$ and ${\rm
supp}(\varphi_\sigma)\subset \Gamma_i(\partial\Omega\cap V_i)$.

Similarly to the statement of the lemma we define
\begin{equation*}
\tilde\varphi_{\sigma,\eps}(z'):=
\begin{cases}
\varphi_\sigma(\Upsilon_{\eps,i,j}(z')) & \qquad \text{if } z'\in \Gamma_j(\Psi_{\eps,j}(\partial\Omega_\eps\cap V_i\cap V_j)) \, ,\\[8pt]
0 & \qquad \text{if } z'\in \R^{N-1}\setminus
\Gamma_j(\Psi_{\eps,j}(\partial\Omega_\eps\cap V_i\cap V_j)) \, .
\end{cases}
\end{equation*}
and
\begin{equation*}
\tilde \varphi_\sigma(z'):=
\begin{cases}
\varphi_\sigma(\Gamma_i(\Gamma_j^{-1}(z'))) & \qquad \text{if } z'\in \Gamma_j(\partial\Omega\cap V_i\cap V_j) \, , \\[8pt]
0 & \qquad \text{if } z'\in \R^{N-1}\setminus
\Gamma_j(\partial\Omega\cap V_i\cap V_j) \, .
\end{cases}
\end{equation*}
By \eqref{eq:assumptions} we deduce that
$\tilde\varphi_{\eps,\sigma}\to \tilde\varphi_\sigma$ almost
everywhere in $\R^{N-1}$ and that up to shrink $\eps_0$ if
necessary, there exists a compact set in $\R^{N-1}$ which contains
the support of $\tilde\varphi_{\sigma,\eps}$ for any $\eps\in
(0,\eps_0]$. Therefore by the Dominated Convergence Theorem we
have that
\begin{equation} \label{eq:conv-varphi}
\tilde\varphi_{\sigma,\eps}\to \tilde \varphi_\sigma\qquad
\text{in } L^2(\R^{N-1}) \ \ \text{as } \eps\to 0 \, .
\end{equation}

On the other hand with a change of variable, by
\eqref{eq:assumptions} we obtain
\begin{align} \label{eq:conv-f-varphi-1}
\|\tilde f_\eps-\tilde \varphi_{\sigma,\eps}\|_{L^2(\R^{N-1})}^2 &
=\int_{\Gamma_j(\Psi_{\eps,j}(\partial\Omega_\eps\cap V_i\cap V_j))} |f(\Upsilon_{\eps,i,j}(z'))-\varphi_\sigma(\Upsilon_{\eps,i,j}(z'))|^2 dz' \\
\notag & \le C \int_{\Gamma_i(\partial\Omega\cap V_i)}
|f(x')-\varphi_\sigma(x')|^2 dx' \le C\sigma^2
\end{align}
and
\begin{align} \label{eq:conv-f-varphi-2}
\|\tilde f-\tilde\varphi_\sigma\|_{L^2(\R^{N-1})}^2 & =
\int_{\Gamma_j(\partial\Omega\cap V_i\cap V_j)} |f(\Gamma_i(\Gamma_j^{-1}(z')))-\varphi_\sigma(\Gamma_i(\Gamma_j^{-1}(z')))|^2 dz'\\
\notag & \le C\int_{\Gamma_i(\partial\Omega\cap V_i)}
|f(x')-\varphi_\sigma(x')|^2 dx'\le C\sigma^2 \, .
\end{align}
Combining \eqref{eq:conv-f-varphi-1}, \eqref{eq:conv-f-varphi-2}
we have
\begin{align*}
& \|\tilde f_\eps-\tilde f\|_{L^2(\R^{N-1})} \\
& \le \|\tilde f_\eps-\tilde
\varphi_{\sigma,\eps}\|_{L^2(\R^{N-1})}+\|\tilde
\varphi_{\sigma,\eps}-\tilde\varphi_\sigma\|_{L^2(\R^{N-1})}
+\|\tilde\varphi_\sigma-\tilde f\|_{L^2(\R^{N-1})} \le 2\sqrt C
\sigma+\|\tilde\varphi_{\sigma,\eps}-\tilde\varphi_\sigma\|_{L^2(\R^{N-1})}
\end{align*}
and the proof of Step 1 follows by \eqref{eq:conv-varphi} and the
arbitrariness of $\sigma$.

{\bf Step 2.} In this step we prove (i) in the general case. To
this purpose we define
\begin{equation*}
\tilde g_\eps(z'):=
\begin{cases}
f(\Upsilon_{\eps,i,j}(z')) & \qquad \text{if } z'\in \Gamma_j(\Psi_{\eps,j}(\partial\Omega_\eps\cap V_i\cap V_j)) \, ,\\[8pt]
0 & \qquad \text{if } z'\in \R^{N-1}\setminus
\Gamma_j(\Psi_{\eps,j}(\partial\Omega_\eps\cap V_i\cap V_j))
\end{cases}
\end{equation*}
for any $\eps\in (0,\eps_0]$. By Step 1 we have
\begin{equation} \label{eq:g-eps}
\tilde g_\eps\to \tilde f \qquad \text{in } L^2(\R^{N-1}) \ \
\text{as } \eps \to 0 \, .
\end{equation}
On the other hand by \eqref{eq:assumptions},
\begin{align} \label{eq:g-f-eps}
\|\tilde f_\eps-\tilde g_\eps\|_{L^2(\R^{N-1})}^2
&=\int_{\Gamma_j(\Psi_{\eps,j}(\partial\Omega_\eps\cap V_i\cap
V_j))}
|f_\eps(\Upsilon_{\eps,i,j}(z'))-f(\Upsilon_{\eps,i,j}(z'))|^2 dz' \\
\notag & \le C \int_{\Gamma_i(\partial\Omega\cap V_i)}
|f_\eps(x')-f(x')|^2 dx'=C\|f_\eps-f\|_{L^2(\R^{N-1})} \, .
\end{align}
Combining \eqref{eq:g-eps} and \eqref{eq:g-f-eps} we obtain
$$
\|\tilde f_\eps-\tilde f\|_{L^2(\R^{N-1})}\le \|\tilde
f_\eps-\tilde g_\eps\|_{L^2(\R^{N-1})}+\|\tilde g_\eps-\tilde
f\|_{L^2(\R^{N-1})}
$$
the proof of Step 2 follows.

{\bf Step 3.} In this step we prove (ii).
We set
\begin{equation*}
f_\eps(z')=
\begin{cases}
\omega_\eps(\Gamma_i^{-1}(z')) & \qquad \text{if }  z'\in
\Gamma_i(\partial\Omega\cap V_i) \, , \\
0 & \qquad \text{if } z'\in \R^{N-1}\setminus
\Gamma_i(\partial\Omega\cap V_i) \, ,
\end{cases}
\end{equation*}
\begin{equation*}
f(z')=
\begin{cases}
\omega(\Gamma_i^{-1}(z')) & \qquad \text{if }  z'\in
\Gamma_i(\partial\Omega\cap V_i) \, , \\
0 & \qquad \text{if } z'\in \R^{N-1}\setminus
\Gamma_i(\partial\Omega\cap V_i) \, ,
\end{cases}
\end{equation*}
and
\begin{equation*}
\phi_\eps(y)=
\begin{cases}
f_\eps(\Gamma_i(\Theta_{\eps,i,j}(y)))  & \qquad \text{if } y\in \Psi_{\eps,j}(\partial\Omega_\eps\cap V_i\cap V_j) \, ,\\[8pt]
0 & \qquad \text{if } y\in \partial\Omega \setminus
\Psi_{\eps,j}(\partial\Omega_\eps\cap V_i\cap V_j),
\end{cases}
\end{equation*}
\begin{equation*}
\phi(y)=
\begin{cases}
 f(\Gamma_i(y)) & \qquad \text{if } y\in \partial\Omega\cap V_i\cap V_j \, , \\[8pt]
0 & \qquad \text{if } y\in \partial\Omega\setminus (V_i\cap V_j)
\, .
\end{cases}
\end{equation*}

We observe that, using the notation of statement (i), $\tilde
f_\eps(z')=\phi_\eps(\Gamma_j^{-1}(z'))$
and $\tilde f(z')=\phi(\Gamma_j^{-1}(z'))$ for any
$z'\in \Gamma_j(\partial\Omega\cap V_j)$. Moreover,
\begin{align*}
& \int_{\partial\Omega} |\phi_\eps-\phi|^2 dS=\int_{\partial\Omega\cap V_j} |\phi_\eps-\phi|^2 dS=
\int_{\Gamma_j(\partial\Omega\cap V_j)} |\phi_\eps(\Gamma_j^{-1}(z'))-\phi(\Gamma_j^{-1}(z'))|^2 \sqrt{1+|\nabla_{z'} g_j(z')|^2}  \, dz' \\
& \qquad \le C\int_{\R^{N-1}} |\tilde f_\eps(z')-\tilde f(z')|^2
\, dz'=C \|\tilde f_\eps-\tilde f\|_{L^2(\R^{N-1})} \, .
\end{align*}

Then by (i) we have that  $\phi_\eps\to \phi$ in $L^2(\partial\Omega)$ as $\eps\to 0$. In order to conclude it suffices to observe that $\tilde\omega_\eps$ coincides with $\phi_\eps$ and
$\omega\chi_{\partial\Omega\cap V_i\cap v_j}$ coincides with
$\phi$ .
\end{proof}

Next we prove

\begin{lemma} \label{l:weak-convergence} Let $\mathcal A$ be an atlas. Let
$\{\Omega_\eps\}_{0<\eps\le\eps_0}$ be a family of domains of
class $C^{1,1}(\mathcal A)$ and $\Omega$ a domain of class
$C^{1,1}(\mathcal A)$. Assume the validity of condition \eqref{eq:assumptions}. Let $w_\eps \in V(\Omega_{\eps})$ with $0<\eps\le \eps_0$, and $w\in V(\Omega)$ be
such that $w_\eps \EC w$. If we put $u_\eps:=S_{\Delta,\eps}
w_\eps$ and $u:=S_{\Delta} w$ or $u_\eps:=S_{D^2,\eps} w_\eps$ and
$u:=S_{D^2} w$, then $B_\eps u_\eps\rightharpoonup u$ in
$V(\Omega)$ as $\eps\to 0$.
\end{lemma}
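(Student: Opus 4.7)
The plan is to prove uniform boundedness of $\{B_\eps u_\eps\}$ in $V(\Omega)$, extract a weakly convergent subsequence, and identify the limit as $u$ by testing the equation for $u_\eps$ against $E_\eps v$ and passing to the limit; uniqueness of the limit then forces the full family to converge weakly. I will treat the case of $S_{\Delta,\eps}$ in detail; the argument for $S_{D^2,\eps}$ is identical after replacing the scalar product \eqref{eq:s-p} with \eqref{eq:s-pbis}.

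First I would establish the boundedness. From the variational definition $\|u_\eps\|_{V(\Omega_\eps)}^2 = \int_{\partial\Omega_\eps}(w_\eps)_\nu (u_\eps)_\nu\, dS$, combined with the trace continuity \eqref{eq:trace}, the uniform $H^{1/2}$-trace bound of Lemma \ref{l:H1/2}, and the uniform equivalence of norms of Lemma \ref{l:unif-equiv}, one obtains an $\eps$-independent constant with $\|\nabla z\|_{L^2(\partial\Omega_\eps)} \le C\|z\|_{V(\Omega_\eps)}$ for every $z\in V(\Omega_\eps)$. Cauchy--Schwarz then yields $\|u_\eps\|_{V(\Omega_\eps)} \le C\|w_\eps\|_{V(\Omega_\eps)}$, and $\|w_\eps\|_{V(\Omega_\eps)}$ is bounded because $w_\eps \EC w$. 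Applying Lemma \ref{l:nirenberg} to $u_\eps \in V(\Omega_\eps)$ gives $\|B_\eps u_\eps\|_{H^2(\Omega)} \le C$, so along a subsequence $B_{\eps_n} u_{\eps_n} \rightharpoonup \tilde u$ in $V(\Omega)$.

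To identify $\tilde u$, I would fix an arbitrary $v\in V(\Omega)$ and exploit the identity $\int_{\Omega_\eps}\Delta u_\eps \,\Delta(E_\eps v)\,dx = \int_{\partial\Omega_\eps}(w_\eps)_\nu(E_\eps v)_\nu\, dS$. For the interior term I would split $\Omega_\eps = K_\eps \cup (\Omega_\eps\setminus K_\eps)$, where $K_\eps\subset \Omega\cap\Omega_\eps$ is the common region on which every local diffeomorphism $\Psi_{\eps,j}$ is the identity; on $K_\eps$ both $E_\eps v = v$ and (by the partition of unity) $B_\eps u_\eps = u_\eps$, so the dominant contribution rewrites as $\int_\Omega \chi_{K_\eps}\Delta(B_\eps u_\eps)\Delta v\, dx$. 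The weak convergence $\Delta(B_\eps u_\eps) \rightharpoonup \Delta \tilde u$ in $L^2(\Omega)$ together with the strong convergence $\chi_{K_\eps}\Delta v \to \Delta v$ in $L^2(\Omega)$ (since $|\Omega\setminus K_\eps|\to 0$) yields the limit $\int_\Omega \Delta\tilde u\,\Delta v\,dx$, while the residual on $\Omega_\eps\setminus K_\eps$ is controlled by Cauchy--Schwarz, the uniform bound on $\Delta u_\eps$ and the vanishing estimate \eqref{eq:pezzo-1} from the proof of Lemma \ref{l:1}, which gives $\|\Delta(E_\eps v)\|_{L^2(\Omega_\eps\setminus K_\eps)}\to 0$.

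The delicate step is the boundary integral. Writing $\nabla z = z_\nu \nu$ on $\partial\Omega_\eps$ for $z\in V(\Omega_\eps)$, I would rewrite it as $\int_{\partial\Omega_\eps}\nabla w_\eps\cdot \nabla(E_\eps v)\,dS$ and use \eqref{intesection1} to replace $\nabla w_\eps$ by $E_\eps\nabla w$ with an error vanishing in $L^2(\partial\Omega_\eps)$. Inserting the partition of unity and pulling back to the charts via the diffeomorphisms $\Psi_{\eps,j}$, the resulting integrand matches, up to Jacobian factors that converge uniformly to the ones on $\partial\Omega$, the integrand of $\int_{\partial\Omega}\nabla w\cdot \nabla v\,dS$; the $L^2$-convergence statements of Lemma \ref{p:7} then deliver the limit $\int_{\partial\Omega} w_\nu v_\nu\, dS$. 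Altogether $\tilde u$ satisfies $\int_\Omega \Delta\tilde u\,\Delta v\,dx = \int_{\partial\Omega} w_\nu v_\nu\,dS$ for every $v\in V(\Omega)$, hence $\tilde u = S_\Delta w = u$; since the limit is subsequence-independent, the full family converges, $B_\eps u_\eps \rightharpoonup u$ in $V(\Omega)$. The main obstacle is precisely the transport of the boundary integral from $\partial\Omega_\eps$ to $\partial\Omega$, for which the interplay between \eqref{intesection1}, the chart-wise change of variables, and Lemma \ref{p:7} is essential.
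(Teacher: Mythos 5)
Your proposal is correct and follows the same architecture as the paper's proof: uniform bound on $\|u_\eps\|_{V(\Omega_\eps)}$, Lemma \ref{l:nirenberg} to bound $B_\eps u_\eps$ and extract a weak limit $\tilde u$, passage to the limit in $Q_{\Omega_\eps}(u_\eps,E_\eps\varphi)=\int_{\partial\Omega_\eps}(w_\eps)_\nu(E_\eps\varphi)_\nu\,dS$ via the splitting into $K_\eps$ and its complement for the interior term and a chart-wise pullback plus Lemma \ref{p:7} for the boundary term, and finally identification of $\tilde u$ with $u$ by uniqueness. The one genuine divergence is the boundedness step: the paper bounds the boundary integrals by $(d_1^\eps)^{-1}$ times the energy and relies on the separate contradiction argument of Lemma \ref{l:d-1-eps}, whereas you obtain the uniform trace inequality $\|\nabla z\|_{L^2(\partial\Omega_\eps)}\le C\|z\|_{V(\Omega_\eps)}$ directly by chaining Lemma \ref{l:H1/2} with Lemma \ref{l:unif-equiv}; this is legitimate (it is exactly the mechanism the paper itself uses in Step 1 of Lemma \ref{l:compact} and in Lemma \ref{l:delta-1}), slightly more economical, and in fact yields $\liminf_\eps d_1^\eps>0$ as a byproduct rather than as a prerequisite. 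Your appeal to \eqref{intesection1} for swapping $\nabla w_\eps$ with $E_\eps\nabla w$ on $\partial\Omega_\eps$ is also not circular, since Proposition \ref{intersec} rests only on Lemmas \ref{l:1}, \ref{l:unif-equiv} and \ref{l:H1/2}; it plays the role of the paper's estimate \eqref{eq:sost}. The only place where I would ask for more care is the claim that on $K_\eps$ one has $E_\eps v=v$ and $B_\eps u_\eps=u_\eps$ exactly: with the paper's definition of $K_\eps$ as a union over charts this fails (a point of $r_i^{-1}(K_{\eps,i})$ need not lie in $r_j^{-1}(K_{\eps,j})$ for $j\ne i$), which is why the paper estimates the cross terms \eqref{eq:pass-3}--\eqref{eq:pass-4}; your ``common region'' (intersection-type) definition of $K_\eps$ repairs this, and the complement still has vanishing measure, so the argument goes through.
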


\begin{proof} We prove the lemma by using an argument which works
for both problems \eqref{eq:Steklov} and
\eqref{eq:Steklov-modificato}. In doing this, we use the notation
introduced in \eqref{eq:norm-V-eps}, \eqref{eq:QA-Laplacian},
\eqref{eq:QA-Hessian}. We divide the proof of the lemma in several
steps.

{\bf Step 1.} In this step we prove that the $V(\Omega_\eps)$-norm
of $u_\eps$ is uniformly bounded with respect to $\eps\in
(0,\eps_0]$.

First of all we observe that
\begin{align}\label{duno}
\|u_\eps\|_{V(\Omega_\eps)}^2 & =\int_{\partial\Omega_\eps}
(w_\eps)_\nu (u_\eps)_\nu \, dS \le
\left(\int_{\partial\Omega_\eps} (w_\eps)_\nu^2 \, dS\right)^{1/2}
\left(\int_{\partial\Omega_\eps} (u_\eps)_\nu^2 \, dS\right)^{1/2} \\
\notag & \le (d_1^\eps)^{-1} \left(\int_{\Omega_\eps} |\Delta w_\eps|^2
dx\right)^{1/2} \left(\int_{\Omega_\eps} |\Delta u_\eps|^2
dx\right)^{1/2}\le N (d_1^\eps)^{-1} \|w_\eps\|_{V(\Omega_\eps)}
\|u_\eps\|_{V(\Omega_\eps)} \, .
\end{align}
from which it follows that $\|u_\eps\|_{V(\Omega_\eps)}\le
N(d_1^\eps)^{-1} \|w_\eps\|_{V(\Omega_\eps)}$.

Therefore, by Lemma \ref{l:d-1-eps} and the fact that $\eps\mapsto \|w_\eps\|_{V(\Omega_\eps)}$ is bounded in view of Lemma \ref{l:1} (ii),
it follows that there exists a constant $C>0$ independent of $\eps$ such that
\begin{equation} \label{eq:bound-u-eps}
\|u_\eps\|_{V(\Omega_\eps)}\le C \qquad \text{for any } \eps\in (0,\eps_0] \, .
\end{equation}

{\bf Step 2.} We note  that $\{B_\eps u_\eps\}_{0<\eps\le \eps_0}$
is bounded in $V(\Omega)$ and, in particular, that it is weakly
convergent in $V(\Omega)$ along a sequence $\eps_n\downarrow 0$ as
$n\to +\infty$.
Indeed,
applying Lemma \ref{l:nirenberg} to $\{u_\eps\}_{0<\eps\le
\eps_0}$, it follows that $\{B_\eps u_\eps\}_{0<\eps\le \eps_0}$
is bounded in $V(\Omega)$ up to shrink $\eps_0$ if necessary.
Thus, for  a sequence $\eps_n \downarrow 0$ we have that there exists
$\tilde u\in V(\Omega)$ such that $B_{\eps_n} u_{\eps_n}
\rightharpoonup \tilde u$ in $V(\Omega)$. For simplicity in the
sequel we only write $B_\eps u_{\eps} \rightharpoonup \tilde u$ as
$\eps\to 0$ for denoting this convergence along the sequence
$\{\eps_n\}$. The purpose of the next two steps is to prove that
$\tilde u=u$. This will be done passing to the limit in the
following identity
\begin{equation} \label{eq:u_eps}
Q_{\Omega_\eps}(u_\eps,E_\eps \varphi)=\int_{\partial\Omega_\eps} (w_\eps)_\nu (E_\eps \varphi)_\nu \, dS
\qquad \text{for any } \varphi\in V(\Omega)\, .
\end{equation}

{\bf Step 3.} In this step we pass to the limit in the left hand
side of \eqref{eq:u_eps}.

 We define $K_\eps:=\left(\bigcup_{j=1}^{s'} r_j^{-1}(K_{\eps,j})\right)\cup
\left(\bigcup_{j=s'+1}^s V_j\right)$ and we split the left hand
side of \eqref{eq:u_eps} into two parts
\begin{align} \label{eq:splitting-2}
Q_{\Omega_\eps}(u_\eps,E_\eps \varphi)=Q_{K_\eps}(u_\eps,E_\eps \varphi)+Q_{\Omega_\eps\setminus K_\eps}(u_\eps,E_\eps \varphi) \, .
\end{align}
Here $K_{\eps,j}$ denotes the set defined in Subsection
\ref{subsecoperatorsE}.

We recall that  $\varphi_j=\psi_j\varphi $  and that  $E_\eps \varphi=\sum_{j=1}^{s'} \tilde \varphi_{\eps,j}+\sum_{j=s'+1}^s \varphi_j$. Thus
\begin{align*}
& Q_{K_\eps}(u_\eps,E_\eps \varphi)=\sum_{j=1}^{s'} Q_{K_\eps}(u_\eps,\tilde \varphi_{\eps,j})
+\sum_{j=s'+1}^s Q_{K_\eps}(u_\eps,\varphi_j)
=\sum_{j=1}^{s'} Q_{K_\eps\cap V_j}(u_\eps,\tilde \varphi_{\eps,j})+\sum_{j=s'+1}^s Q_{K_\eps}(u_\eps,\varphi_j)\\
& =\sum_{j=1}^{s'} Q_{(K_\eps\cap V_j)\setminus r_j^{-1}(K_{\eps,j})}(u_\eps,\tilde \varphi_{\eps,j})
+\sum_{j=1}^{s'} Q_{r_j^{-1}(K_{\eps,j})}(u_\eps,\tilde \varphi_{\eps,j})
+\sum_{j=s'+1}^s Q_{K_\eps}(u_\eps,\varphi_j) \, .
\end{align*}
But
\begin{equation*}
|Q_{(K_\eps\cap V_j)\setminus r_j^{-1}(K_{\eps,j})}(u_\eps,\tilde \varphi_{\eps,j})|\le
\left(Q_{(K_\eps\cap V_j)\setminus r_j^{-1}(K_{\eps,j})}(u_\eps)\right)^{1/2}
\left(Q_{(K_\eps\cap V_j)\setminus r_j^{-1}(K_{\eps,j})}(\tilde \varphi_{\eps,j})\right)^{1/2}=o(1)
\end{equation*}
since $Q_{(K_\eps\cap V_j)\setminus r_j^{-1}(K_{\eps,j})}(u_\eps)=O(1)$ thanks to \eqref{eq:bound-u-eps}
and $Q_{(K_\eps\cap V_j)\setminus r_j^{-1}(K_{\eps,j})}(\tilde \varphi_{\eps,j})=o(1)$
as one can see by proceeding as for \eqref{eq:pezzo-1} .
Similarly we have $Q_{(K_\eps\cap V_j)\setminus r_j^{-1}(K_{\eps,j})}(u_\eps,\varphi_j)=o(1)$.
Hence
\begin{align} \label{eq:u-E-phi}
& Q_{K_\eps}(u_\eps,E_\eps \varphi)=
\sum_{j=1}^{s'} Q_{r_j^{-1}(K_{\eps,j})}(u_\eps,\tilde \varphi_{\eps,j})
+\sum_{j=s'+1}^s Q_{K_\eps}(u_\eps,\varphi_j)+o(1) \\
\notag & =\sum_{j=1}^{s'} Q_{r_j^{-1}(K_{\eps,j})}(u_\eps,\varphi_j)+\sum_{j=s'+1}^s Q_{K_\eps}(u_\eps,\varphi_j)+o(1) \\
\notag & =\sum_{j=1}^{s'} Q_{K_\eps}(u_\eps,\varphi_j)+\sum_{j=s'+1}^s Q_{K_\eps}(u_\eps,\varphi_j)+o(1)
=Q_{K_\eps}(u_\eps,\varphi)+o(1) \, .
\end{align}
In a similar way one can treat $Q_{K_\eps}(B_\eps u_\eps,\varphi)$:
\begin{align} \label{eq:Q-K-B}
 & Q_{K_\eps}(B_\eps u_\eps,\varphi)=\sum_{j=1}^{s'} Q_{K_\eps}(\widehat u_{\eps,j},\varphi)
+\sum_{j=s'+1}^s Q_{K_\eps}(u_{\eps,j},\varphi)=\sum_{j=1}^{s'} Q_{K_\eps\cap V_j}(\widehat u_{\eps,j}, \varphi)+\sum_{j=s'+1}^s Q_{K_\eps}(u_{\eps,j},\varphi)\\
\notag & =\sum_{j=1}^{s'} Q_{(K_\eps\cap V_j)\setminus r_j^{-1}(K_{\eps,j})}(\widehat u_{\eps,j},\varphi)
+\sum_{j=1}^{s'} Q_{r_j^{-1}(K_{\eps,j})}(\widehat u_{\eps,j},\varphi)
+\sum_{j=s'+1}^s Q_{K_\eps}(u_{\eps,j},\varphi)\\
\notag & =\sum_{j=1}^{s'} Q_{(K_\eps\cap V_j)\setminus r_j^{-1}(K_{\eps,j})}(\widehat u_{\eps,j},\varphi)
+\sum_{j=1}^{s'} Q_{r_j^{-1}(K_{\eps,j})}(u_{\eps,j},\varphi)
+\sum_{j=s'+1}^s Q_{K_\eps}(u_{\eps,j},\varphi) \, .
\end{align}
From the end of the proof of Lemma \ref{l:nirenberg} we infer that $\{\widehat u_{\eps,j}\}_{0<\eps\le \eps_0}$ is bounded in $H^2(\Omega)$.

Moreover $Q_{(K_\eps\cap V_j)\setminus r_j^{-1}(K_{\eps,j})}(\varphi)=o(1)$. Therefore, as
$\eps\to 0$, we have
\begin{align*}
 & |Q_{(K_\eps\cap V_j)\setminus r_j^{-1}(K_{\eps,j})}(\widehat u_{\eps,j},\varphi) |      \le \left(Q_\Omega(\widehat u_{\eps,j})\right)^{1/2}
 \left(Q_{(K_\eps\cap V_j)\setminus r_j^{-1}(K_{\eps,j})}(\varphi)\right)^{1/2}=o(1) \, ,  \\[5pt]
 &
| Q_{(K_\eps\cap V_j)\setminus r_j^{-1}(K_{\eps,j})}(u_{\eps,j},\varphi)  |   \le \left(Q_{\Omega_\eps}(u_{\eps})\right)^{1/2}
 \left(Q_{(K_\eps\cap V_j)\setminus r_j^{-1}(K_{\eps,j})}(\varphi)\right)^{1/2}=o(1) \, .
\end{align*}
Combining this with \eqref{eq:Q-K-B}, we obtain
\begin{align} \label{eq:B-u-phi}
  Q_{K_\eps}(B_\eps u_\eps,\varphi)& =\sum_{j=1}^{s'} Q_{(K_\eps\cap V_j)\setminus r_j^{-1}(K_{\eps,j})}(u_{\eps,j},\varphi)
+\sum_{j=1}^{s'} Q_{r_j^{-1}(K_{\eps,j})}(u_{\eps,j},\varphi)
+\sum_{j=s'+1}^s Q_{K_\eps}(u_{\eps,j},\varphi)+o(1) \\
\notag & =Q_{K_\eps}(u_\eps,\varphi)+o(1) \qquad \text{as } \eps\to 0 \, .
\end{align}
By \eqref{eq:u-E-phi} and \eqref{eq:B-u-phi} it follows
\begin{equation} \label{eq:scarico-E-B}
Q_{K_\eps}(u_\eps,E_\eps\varphi)=Q_{K_\eps}(B_\eps u_\eps,\varphi)+o(1) \qquad \text{as } \eps\to 0 \, .
\end{equation}
Now, let us consider the second term on the right hand side of \eqref{eq:splitting-2}.
We have
\begin{equation} \label{eq:resto-1}
|Q_{\Omega_\eps\setminus K_\eps}(u_\eps,E_\eps \varphi) |
\le \left(Q_{\Omega_\eps}(u_\eps)\right)^{1/2} \left(Q_{\Omega_\eps\setminus K_\eps}(E_\eps \varphi)\right)^{1/2} \, .
\end{equation}
But, as $\eps\to 0$, we have
\begin{align} \label{eq:resto-2}
Q_{\Omega_\eps\setminus K_\eps}(E_\eps \varphi)\le s' \sum_{j=1}^{s'} Q_{\Omega_\eps\setminus K_\eps}(\tilde\varphi_{\eps,j})
\le s' \sum_{j=1}^{s'} Q_{(\Omega_\eps\cap V_j)\setminus r_j^{-1}(K_{\eps,j})}(\tilde\varphi_{\eps,j})=o(1) \, .
\end{align}
By \eqref{eq:resto-1} and \eqref{eq:resto-2} we arrive to
$Q_{\Omega_\eps\setminus K_\eps}(u_\eps,E_\eps \varphi)=o(1)$ as
$\eps \to 0$.

Similarly we also have $Q_{\Omega\setminus K_\eps}(B_\eps u_\eps,\varphi)=o(1)$ as $\eps \to 0$.
Inserting these two last asymptotic estimates and \eqref{eq:scarico-E-B} into \eqref{eq:splitting-2} we infer
\begin{equation}\label{eq:passaggio-01}
Q_{\Omega_\eps}(u_\eps,E_\eps\varphi)=Q_{K_\eps}(B_\eps u_\eps,\varphi)+Q_{\Omega\setminus K_\eps}(B_\eps u_\eps,\varphi)+o(1)
=Q_\Omega(B_\eps u_\eps,\varphi)+o(1) \qquad \text{as } \eps\to 0 \, .
\end{equation}
Since $B_\eps u_\eps \rightharpoonup \tilde u$ in $V(\Omega)$ then
\begin{equation} \label{eq:passaggio-1}
Q_{\Omega_\eps}(u_\eps,E_\eps\varphi)\to Q_\Omega(\tilde u,\varphi) \qquad \text{as } \eps\to 0 \, .
\end{equation}

{\bf Step 4.} The next purpose is to pass to the limit in the
right hand side of \eqref{eq:u_eps}.

First of all we observe that thanks to Lemma \ref{l:1}
\begin{align} \label{eq:sost}
& \left|\int_{\partial\Omega_\eps} (w_\eps)_\nu (E_\eps
\varphi)_\nu \, dS-\int_{\partial\Omega_\eps} (E_\eps w)_\nu
(E_\eps
\varphi)_\nu \, dS\right| \\
\notag & \qquad \le 2(d_1^\eps)^{-1} \ \|w_\eps-E_\eps
w\|_{V(\Omega_\eps)}\cdot (\|\varphi\|_{V(\Omega)}+o(1))=o(1)  \,
,
\end{align}
as $\eps\to 0$. We claim that
\begin{equation} \label{eq:claim}
\int_{\partial\Omega_\eps} (E_\eps w)_\nu (E_\eps \varphi)_\nu \,
dS\to \int_{\partial\Omega} w_\nu \varphi_\nu \, dS \qquad
\text{as } \eps\to 0 \, .
\end{equation}
In order to estimate $\int_{\partial\Omega_\eps} (E_\eps w)_\nu
(E_\eps \varphi )_\nu \, dS$ we proceed as follows:
\begin{align} \label{eq:computation-0}
& \int_{\partial\Omega_\eps} (E_\eps w)_\nu (E_\eps \varphi)_\nu
\, dS=\sum_{n,m=1}^N \int_{\partial\Omega_\eps}
\frac{\partial(E_\eps w)}{\partial x_n} \, \nu_{\eps,n}
\frac{\partial(E_\eps \varphi)}{\partial x_m} \, \nu_{\eps,m} \,
dS
\\
\notag & =\sum_{n,m=1}^N \sum_{i,j=1}^{s'}
\int_{\partial\Omega_\eps \cap V_i \cap V_j}
\frac{\partial(w_{i}(\Psi_{\eps,i}(x)))}{\partial x_n}
\frac{\partial (\varphi_{j}(\Psi_{\eps,j}(x)))}{\partial x_m} \,
\nu_{\eps,n}(x) \nu_{\eps,m}(x) \, dS
\\
\notag & =\sum_{n,m,k,l=1}^N \sum_{i,j=1}^{s'}
\int_{\Psi_{\eps,j}(\partial\Omega_\eps \cap V_i \cap V_j)}
\frac{\partial w_{i}   }{\partial
y_k} (\Psi_{\eps,i}(\Psi_{\eps,j}^{-1}(y)))\frac{\partial \varphi_{j}}{\partial y_l}(y)
W_{\eps,n,m,k,l,i,j}(y) \, dS \, .
\end{align}
where we denoted by $\nu_\eps=(\nu_{\eps,1},\dots,\nu_{\eps,N})$
the unit normal to $\partial\Omega_\eps$ and we put
\begin{equation} \label{eq:W-eps-nmj-0}
W_{\eps,n,m,k,l,i,j}(y):= \nu_{\eps,n}(\Psi_{\eps,j}^{-1}(y))
\nu_{\eps,m}(\Psi_{\eps,j}^{-1}(y))
\frac{\partial(\Psi_{\eps,i})_k}{\partial
x_n}(\Psi_{\eps,j}^{-1}(y))
\frac{\partial(\Psi_{\eps,j})_l}{\partial
x_m}(\Psi_{\eps,j}^{-1}(y)) \tfrac{\sqrt{1+|\nabla_{x'}
g_{\eps,j}(\Gamma_j(y))|^2}}{\sqrt{1+|\nabla_{x'}
g_{j}(\Gamma_j(y))|^2}} \, .
\end{equation}
By \eqref{eq:assumptions} we deduce that the trivial extension of
$W_{\eps,n,m,k,l,i,j}$ to the whole $\partial\Omega$ converges
almost everywhere to the function $\nu_n \nu_m \delta_{kn}
\delta_{lm} \ \chi_{\partial\Omega\cap V_i\cap V_j}$ and it
remains uniformly bounded as $\eps\to 0$. Then, applying
Lemma \ref{p:7} (ii) to $w$,
by \eqref{eq:computation-0} we obtain as $\eps\to 0$
\begin{align*}
\int_{\partial\Omega_\eps} (E_\eps w)_\nu (E_\eps \varphi)_\nu \,
dS\to \sum_{n,m=1}^N \sum_{i,j=1}^{s'} \int_{\partial\Omega\cap
V_i\cap V_j} \frac{\partial w_i}{\partial y_n} \frac{\partial
\varphi_j}{\partial y_m} \,  \nu_n \nu_m \,
dS=\int_{\partial\Omega} w_\nu \varphi_\nu \, , dS \, .
 \end{align*}
This completes the proof of \eqref{eq:claim}. In turn, by
\eqref{eq:sost}-\eqref{eq:claim}, we obtain
\begin{equation} \label{eq:passaggio-2}
\int_{\partial\Omega_\eps} (w_\eps)_\nu (E_\eps \varphi)_\nu \, dS
\to \int_{\partial\Omega} w_\nu \varphi_\nu \, dS \qquad \text{as
} \eps\to 0 \, .
\end{equation}

{\bf Step 5.} In this last step we complete the proof of the
lemma.

Inserting \eqref{eq:passaggio-1} and \eqref{eq:passaggio-2} into
\eqref{eq:u_eps} we deduce that
\begin{equation*}
Q_\Omega(\tilde u,\varphi)=\int_{\partial\Omega} w_\nu \varphi_\nu
\, dS \qquad \text{for any } \varphi\in V(\Omega)\, .
\end{equation*}
On the other hand, also the function $u$ is a solution of the same
variational problem and hence, by uniqueness of the solution of
such a problem, we conclude that $\tilde u=u$. In particular this
means that the weak limit $\tilde u$ does not depend on the choice
of the sequence $\eps\downarrow 0$, thus proving that the
convergence $B_\eps u_\eps\rightharpoonup u$ does not occur only
along a special sequence but as $\eps \to 0$ in the usual sense.
This completes the proof of the lemma.
\end{proof}

\begin{lemma} \label{l:E-conv} Let $\mathcal A$ be an atlas. Let
$\{\Omega_\eps\}_{0<\eps\le\eps_0}$ be a family of domains of
class $C^{1,1}(\mathcal A)$ and $\Omega$ a domain of class
$C^{1,1}(\mathcal A)$. Assume the validity of condition \eqref{eq:assumptions}.
 Let $w_\eps \in V(\Omega_{\eps})$ with $0<\eps\le \eps_0$, and $w\in V(\Omega)$ be
such that $w_\eps \EC w$. If we put $u_\eps:=S_{\Delta,\eps}
w_\eps$ and $u:=S_\Delta w$ or $u_\eps:=S_{D^2,\eps} w_\eps$ and
$u:=S_{D^2} w$ then $u_\eps \EC u$. In particular this implies
that $S_{\Delta,\eps} \EEC S_\Delta$ and $S_{D^2,\eps} \EEC
S_{D^2}$ as $\eps\to 0$ in the sense of Definition
\ref{d:EE-convergence-ArCaLo}.
\end{lemma}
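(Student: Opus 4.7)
The plan is to upgrade the weak convergence $B_\eps u_\eps \rightharpoonup u$ in $V(\Omega)$ (furnished by Lemma \ref{l:weak-convergence}) to the required $E$-convergence of $u_\eps$ to $u$. Using the scalar product $Q_{\Omega_\eps}$ from \eqref{eq:QA-Laplacian} or \eqref{eq:QA-Hessian} and the norm \eqref{eq:norm-V-eps}, I would expand
\begin{equation*}
\|u_\eps-E_\eps u\|_{V(\Omega_\eps)}^2 = Q_{\Omega_\eps}(u_\eps)- 2 Q_{\Omega_\eps}(u_\eps,E_\eps u)+ Q_{\Omega_\eps}(E_\eps u).
\end{equation*}
The last term converges to $Q_\Omega(u)=\|u\|_{V(\Omega)}^2$ thanks to Lemma \ref{l:1}. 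For the mixed term, I would plug $v=E_\eps u$ into the defining identity $Q_{\Omega_\eps}(u_\eps,v)=\int_{\partial\Omega_\eps}(w_\eps)_\nu v_\nu\,dS$ and then repeat verbatim Step 4 of the proof of Lemma \ref{l:weak-convergence} (with the test function $\varphi$ there replaced by $u$) to obtain the limit $\int_{\partial\Omega}w_\nu u_\nu\,dS=Q_\Omega(u)$.

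The nontrivial part is the convergence of the quadratic term $Q_{\Omega_\eps}(u_\eps)=\int_{\partial\Omega_\eps}(w_\eps)_\nu(u_\eps)_\nu\,dS$ to $Q_\Omega(u)$. First I would replace $(w_\eps)_\nu$ with $(E_\eps w)_\nu$ up to an $o(1)$ error: since $w_\eps\EC w$, Lemma \ref{l:d-1-eps} combined with the uniform bound on the trace map (Lemma \ref{l:1}, together with Lemma \ref{l:unif-equiv}) gives $\bigl|\int_{\partial\Omega_\eps}(w_\eps-E_\eps w)_\nu(u_\eps)_\nu\,dS\bigr|\le C\|w_\eps-E_\eps w\|_{V(\Omega_\eps)}\|u_\eps\|_{V(\Omega_\eps)}\to 0$, where $\|u_\eps\|_{V(\Omega_\eps)}$ is bounded by Step 1 of Lemma \ref{l:weak-convergence}.

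Then I would perform the patch-wise change of variables through the diffeomorphisms $\Psi_{\eps,j}$ exactly as in \eqref{eq:computation-0}, but with $u_\eps$ in the place of $E_\eps\varphi$. This rewrites $\int_{\partial\Omega_\eps}(E_\eps w)_\nu(u_\eps)_\nu\,dS$ as a sum of boundary integrals on $\partial\Omega$ of products of $\partial_k w_i$ with $\partial_l(\widehat u_{\eps,j})\circ \Gamma_{\eps,j}^{-1}$-type terms, multiplied by tensor coefficients $W_{\eps,n,m,k,l,i,j}$ which, by \eqref{eq:assumptions}, are uniformly bounded and converge a.e.\ to $\nu_n\nu_m\delta_{kn}\delta_{lm}\chi_{\partial\Omega\cap V_i\cap V_j}$. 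The main technical obstacle is that, unlike in Lemma \ref{l:weak-convergence}, both factors depend on $\eps$. The key ingredient to overcome this is the compactness of the trace map \eqref{eq:trace}: since $B_\eps u_\eps\rightharpoonup u$ in $V(\Omega)$, the gradient traces $\nabla(B_\eps u_\eps)$ converge \emph{strongly} in $L^2(\partial\Omega)^N$ to $\nabla u$. Pairing this strong convergence with the fixed factor $\nabla w\in L^2(\partial\Omega)^N$ and the bounded a.e.\ convergent coefficients $W_{\eps,n,m,k,l,i,j}$, I can pass to the limit by dominated convergence together with the Lemma \ref{p:7}(ii) transplantation argument, getting
\begin{equation*}
\int_{\partial\Omega_\eps}(E_\eps w)_\nu(u_\eps)_\nu\,dS\;\longrightarrow\;\sum_{n,m=1}^N\int_{\partial\Omega} w_{x_n}u_{x_m}\nu_n\nu_m\,dS=\int_{\partial\Omega}w_\nu u_\nu\,dS=Q_\Omega(u).
\end{equation*}

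Putting these three convergences together yields $\|u_\eps-E_\eps u\|_{V(\Omega_\eps)}\to 0$, i.e.\ $u_\eps\EC u$. Since this holds whenever $w_\eps\EC w$, it is exactly the definition of $S_{\Delta,\eps}\EEC S_\Delta$ (respectively $S_{D^2,\eps}\EEC S_{D^2}$), concluding the proof.
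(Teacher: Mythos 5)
Your proposal is correct and follows essentially the same route as the paper: expand $\|u_\eps-E_\eps u\|_{V(\Omega_\eps)}^2$, use Lemma \ref{l:1} for $\|E_\eps u\|_{V(\Omega_\eps)}^2$, and handle the quadratic term $\|u_\eps\|_{V(\Omega_\eps)}^2=\int_{\partial\Omega_\eps}(w_\eps)_\nu(u_\eps)_\nu\,dS$ by the $E_\eps w$ substitution, the change of variables of \eqref{eq:computation-0}, Lemma \ref{p:7}(ii), and the compactness of the trace map upgrading $B_\eps u_\eps\rightharpoonup u$ to strong $L^2(\partial\Omega)$ convergence of the gradient traces — exactly the paper's argument. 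The only (immaterial) deviation is in the mixed term, where you pass to the limit in the right-hand side $\int_{\partial\Omega_\eps}(w_\eps)_\nu(E_\eps u)_\nu\,dS$ via Step 4 of Lemma \ref{l:weak-convergence} and the variational equation for $u$, whereas the paper passes to the limit in the left-hand side via \eqref{eq:passaggio-01} and the weak convergence of $B_\eps u_\eps$; both are valid.
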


\begin{proof} We use the notation of \eqref{eq:norm-V-eps},
\eqref{eq:QA-Laplacian}, \eqref{eq:QA-Hessian} in order to treat
the problems \eqref{eq:Steklov} and \eqref{eq:Steklov-modificato}
simultaneously.

Let us consider
\begin{equation} \label{eq:conv-finale}
\|u_\eps-E_\eps
u\|_{V(\Omega_\eps)}^2=\|u_\eps\|_{V(\Omega_\eps)}^2-2Q_{\Omega_\eps}(u_\eps,E_\eps
u)+\|E_\eps u\|_{V(\Omega_\eps)}^2 \, .
\end{equation}
Proceeding as in \eqref{eq:passaggio-01} and  \eqref{eq:passaggio-1} we
obtain
\begin{equation} \label{eq:conv-1}
Q_{\Omega_\eps}(u_\eps,E_\eps u)=Q_\Omega(B_\eps u_\eps,u)+o(1)\to
Q_\Omega(u,u) \, .
\end{equation}
The fact that $Q_\Omega(B_\eps u_\eps,u)\to Q_\Omega(u,u)$ is a
consequence of the weak convergence $B_\eps u_\eps \rightharpoonup
u$ in $V(\Omega)$ proved in Lemma \ref{l:weak-convergence}. On the
other hand by Lemma \ref{l:1} we have
\begin{equation} \label{eq:conv-2}
\|E_\eps u\|_{V(\Omega_\eps)}^2\to
\|u\|_{V(\Omega)}^2=Q_\Omega(u,u) \, .
\end{equation}
It remains to prove that $\|u_\eps\|_{V(\Omega_\eps)}^2\to
\|u\|_{V(\Omega)}^2$. We proceed as follows
\begin{align} \label{eq:57}
& Q_{\Omega_\eps}(u_\eps,u_\eps)=\int_{\partial\Omega_\eps}
(w_\eps)_\nu (u_\eps)_\nu \, dS=\int_{\partial\Omega_\eps} (E_\eps
w)_\nu (u_\eps)_\nu \, dS+o(1)
\end{align}
where the second identity can be obtained by proceeding exactly as
in \eqref{eq:sost} and exploiting the fact that
$\|u_\eps\|_{V(\Omega_\eps)}=O(1)$ as $\eps\to 0$ as we have shown
in \eqref{eq:bound-u-eps}.

We claim that
\begin{equation} \label{eq:claim-2}
\int_{\partial\Omega_\eps} (E_\eps w)_\nu (u_\eps)_\nu \, dS\to
\int_{\partial\Omega} w_\nu u_\nu \, dS=Q_\Omega(u,u) \, .
\end{equation}
Inserting \eqref{eq:conv-1}-\eqref{eq:claim-2} into
\eqref{eq:conv-finale} we obtain the proof of the lemma. It
remains to prove \eqref{eq:claim-2}.

In order to estimate $\int_{\partial\Omega_\eps} (E_\eps w)_\nu
(u_\eps)_\nu \, dS$ we proceed as follows:
\begin{align} \label{eq:computation}
& \int_{\partial\Omega_\eps} (E_\eps w)_\nu (u_\eps)_\nu \,
dS=\sum_{n,m=1}^N \int_{\partial\Omega_\eps} \frac{\partial(E_\eps
w)}{\partial x_n} \, \nu_{\eps,n} \frac{\partial u_\eps}{\partial
x_m} \, \nu_{\eps,m} \, dS
\\
\notag & =\sum_{n,m=1}^N \sum_{i,j=1}^{s'}
\int_{\partial\Omega_\eps \cap V_i \cap V_j}
\frac{\partial(w_{i}(\Psi_{\eps,i}(x)))}{\partial x_n}
\frac{\partial u_{\eps,j}}{\partial x_m}(x) \, \nu_{\eps,n}(x)
\nu_{\eps,m}(x) \, dS
\\
\notag & =\sum_{n,m,k,l=1}^N \sum_{i,j=1}^{s'}
\int_{\Psi_{\eps,j}(\partial\Omega_\eps \cap V_i \cap V_j)}
\frac{\partial w_{i}     }{\partial
y_k}  (\Psi_{\eps,i}(\Psi_{\eps,j}^{-1}(y))) \frac{\partial \widehat u_{\eps,j}}{\partial y_l}(y)
W_{\eps,n,m,k,l,i,j}(y) \, dS \, .
\end{align}
with $\nu_{\eps,n}$, $\nu_{\eps,m}$ and $W_{\eps,n,m,k,l,i, j}$ as in
\eqref{eq:W-eps-nmj-0}.

Applying Lemma \ref{p:7} (ii) to $w$, applying
\eqref{eq:norma-H2} to $u_\eps$, exploiting the fact that $B_\eps
u_\eps \rightharpoonup u$ in $V(\Omega)$, and proceeding as at the
end of the proof of Lemma \ref{l:weak-convergence}, as $\eps\to
0$, we have
\begin{align*}
& \int_{\partial\Omega_\eps} (E_\eps w)_\nu (u_\eps)_\nu \, dS
=\sum_{n,m=1}^N \sum_{i,j=1}^{s'} \int_{\partial\Omega\cap V_i\cap
V_j} \frac{\partial w_{i}}{\partial y_n} \frac{\partial \widehat
u_{\eps,j}}{\partial y_m} \nu_n \nu_m \,
dS+o(1)\\
& = \sum_{n,m=1}^N \sum_{i=1}^{s'} \int_{\partial\Omega\cap V_i}
\frac{\partial w_{i}}{\partial y_n} \frac{\partial(B_\eps
u_\eps)}{\partial y_m} \nu_n \nu_m \,
dS+o(1)\\
& =\sum_{n,m=1}^N \sum_{i=1}^{s'} \int_{\partial\Omega\cap V_i}
\frac{\partial w_{i}}{\partial y_n} \frac{\partial u}{\partial
y_m} \nu_n \nu_m \, dS+o(1)=\int_{\partial\Omega} w_\nu u_\nu \,  dS+o(1) \,
.
\end{align*}
This completes the proof of \eqref{eq:claim-2} and of the lemma.
\end{proof}

\begin{lemma} \label{l:compact} Let $\mathcal A$ be an atlas. Let
$\{\Omega_\eps\}_{0<\eps\le\eps_0}$ be a family of domains of
class $C^{1,1}(\mathcal A)$ and $\Omega$ a domain of class
$C^{1,1}(\mathcal A)$. Assume the validity of condition  \eqref{eq:assumptions}. Let $w_\eps \in V(\Omega_{\eps})$ with   $0<\eps\le \eps_0$ be such that
$\|w_\eps\|_{V(\Omega_\eps)}=1$. Then $\{S_{\Delta,\eps}
w_\eps\}_{0<\eps\le \eps_0}$ and $\{S_{D^2,\eps}
w_\eps\}_{0<\eps\le \eps_0}$ are precompact in the sense of
Definition \ref{d:precompact-ArCaLo}. In particular, by Definition
\ref{d:CC-convergence-ArCaLo} and Lemma \ref{l:E-conv} we have
that $S_{\Delta,\eps} \CC S_\Delta$ and $S_{D^2,\eps} \CC S_{D^2}$
as $\eps \to 0$.
\end{lemma}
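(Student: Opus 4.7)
The plan is to establish precompactness by showing that, along a suitable subsequence, the family $\{u_\eps\} := \{S_{\Delta,\eps}w_\eps\}$ (and analogously $\{S_{D^2,\eps}w_\eps\}$) admits an $E$-limit in $V(\Omega)$. First I would verify a uniform bound $\|u_\eps\|_{V(\Omega_\eps)}\le C$ by testing the variational identity defining $u_\eps$ against $u_\eps$ itself and invoking Lemma~\ref{l:d-1-eps}, exactly as in Step 1 of Lemma~\ref{l:weak-convergence}. Applying Lemma~\ref{l:nirenberg} to both $w_\eps$ and $u_\eps$ yields that $\{B_\eps w_\eps\}$ and $\{B_\eps u_\eps\}$ are bounded in $V(\Omega)\subset H^2(\Omega)$. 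By reflexivity together with the compactness of the trace map \eqref{eq:trace}, I can extract a subsequence $\eps_n\downarrow 0$ along which $B_{\eps_n}w_{\eps_n}\rightharpoonup w$ and $B_{\eps_n}u_{\eps_n}\rightharpoonup u$ weakly in $V(\Omega)$, with $\nabla B_{\eps_n}w_{\eps_n}\to \nabla w$ and $\nabla B_{\eps_n}u_{\eps_n}\to \nabla u$ strongly in $L^2(\partial\Omega)^N$.

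The next step is to identify $u=S_\Delta w$ (respectively $u=S_{D^2}w$). The strategy follows Lemma~\ref{l:weak-convergence}, starting from
\begin{equation*}
Q_{\Omega_{\eps_n}}(u_{\eps_n},E_{\eps_n}\varphi)=\int_{\partial\Omega_{\eps_n}}(w_{\eps_n})_\nu\,(E_{\eps_n}\varphi)_\nu\,dS\qquad\forall\varphi\in V(\Omega).
\end{equation*}
The left-hand side passes to the limit $Q_\Omega(u,\varphi)$ by Step~3 of Lemma~\ref{l:weak-convergence} applied verbatim to $u_{\eps_n}$. For the right-hand side, Step~4 of Lemma~\ref{l:weak-convergence} relied on replacing $w_\eps$ by $E_\eps w$, which is not available here; instead I would pull the boundary integral back to $\partial\Omega$ via the diffeomorphisms $\Psi_{\eps_n,j}^{-1}$, arriving (as in \eqref{eq:computation-0}) at an integral over $\partial\Omega$ of $\partial_\cdot B_{\eps_n}w_{\eps_n}\cdot \partial_\cdot(B_{\eps_n}\text{-transplant of }E_{\eps_n}\varphi)$ multiplied by the factors $W_{\eps_n,n,m,k,l,i,j}$, which converge uniformly to $\nu_n\nu_m\delta_{kn}\delta_{lm}\chi_{\partial\Omega\cap V_i\cap V_j}$. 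Combining the strong $L^2(\partial\Omega)$-convergence $\nabla B_{\eps_n}w_{\eps_n}\to\nabla w$ with Lemma~\ref{p:7} yields the desired passage to the limit $\int_{\partial\Omega}w_\nu\varphi_\nu\,dS$. Uniqueness in the Lax--Milgram sense then gives $u=S_\Delta w$.

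Finally, I would upgrade weak convergence to $E$-convergence by expanding
\begin{equation*}
\|u_{\eps_n}-E_{\eps_n}u\|_{V(\Omega_{\eps_n})}^2=\|u_{\eps_n}\|^2_{V(\Omega_{\eps_n})}-2Q_{\Omega_{\eps_n}}(u_{\eps_n},E_{\eps_n}u)+\|E_{\eps_n}u\|^2_{V(\Omega_{\eps_n})}.
\end{equation*}
The term $\|E_{\eps_n}u\|^2\to\|u\|^2_{V(\Omega)}$ by Lemma~\ref{l:1}; the cross term $Q_{\Omega_{\eps_n}}(u_{\eps_n},E_{\eps_n}u)\to Q_\Omega(u,u)$ by the identification step with $\varphi=u$; and
$$
\|u_{\eps_n}\|^2_{V(\Omega_{\eps_n})}=\int_{\partial\Omega_{\eps_n}}(w_{\eps_n})_\nu(u_{\eps_n})_\nu\,dS\longrightarrow \int_{\partial\Omega}w_\nu u_\nu\,dS=Q_\Omega(u,u),
$$
using once more the pull-back computation together with the \emph{simultaneous} strong $L^2(\partial\Omega)$-convergence of both $\nabla B_{\eps_n}w_{\eps_n}$ and $\nabla B_{\eps_n}u_{\eps_n}$. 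The three limits collapse $\|u_{\eps_n}-E_{\eps_n}u\|^2_{V(\Omega_{\eps_n})}$ to zero, proving $u_{\eps_n}\EC u$ and hence precompactness.

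The main obstacle I expect is the last boundary integral: both $(w_{\eps_n})_\nu$ and $(u_{\eps_n})_\nu$ live on the moving boundary $\partial\Omega_{\eps_n}$ and, unlike in Lemma~\ref{l:E-conv}, neither factor can be replaced by an $E_{\eps}$-image under the standing hypotheses. The convergence must therefore be extracted purely from the weak/strong trace convergence of the transplanted gradients of $B_{\eps_n}w_{\eps_n}$ and $B_{\eps_n}u_{\eps_n}$, performed simultaneously, which requires careful bookkeeping entirely analogous to that of the proof of Lemma~\ref{l:d-1-eps} and Step~4 of Lemma~\ref{l:weak-convergence}, exploiting the uniform convergence of the Jacobians and normal coefficients to their limiting values on $\partial\Omega$.
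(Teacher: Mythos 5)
Your proposal is correct and follows essentially the same route as the paper's proof: uniform bound on $\|u_\eps\|_{V(\Omega_\eps)}$ via Lemma~\ref{l:d-1-eps}, weak convergence of the transplants $B_\eps u_\eps$, strong $L^2(\partial\Omega)$ convergence of the transplanted boundary gradients of $w_\eps$ by a compactness argument, passage to the limit in \eqref{eq:u_eps} and in $\|u_\eps\|^2_{V(\Omega_\eps)}=\int_{\partial\Omega_\eps}(w_\eps)_\nu(u_\eps)_\nu\,dS$ via Lemma~\ref{p:7}, and the three-term expansion of $\|u_\eps-E_\eps\widetilde u\|^2_{V(\Omega_\eps)}$. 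The only (harmless) deviation is that you extract the strong boundary convergence from the compact trace map on the fixed domain applied to $B_\eps w_\eps$ and identify the limiting datum as $w_\nu$, whereas the paper bounds the localized gradients $\psi_i\,\partial w_\eps/\partial x_n$ in $H^{1/2}(\partial\Omega_\eps)$ uniformly (Lemmas~\ref{l:unif-equiv} and \ref{l:H1/2}) and uses the compact embedding $H^{1/2}(\partial\Omega)\subset L^2(\partial\Omega)$, keeping the limit as an abstract $F$ without identifying $\widetilde u$ as $S_\Delta w$.
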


\begin{proof} According to the notations of Lemma
\ref{l:weak-convergence} we put $u_\eps:=S_{\Delta,\eps} w_\eps$
for problem \eqref{eq:Steklov} and $u_\eps:=S_{D^2,\eps} w_\eps$
for problem \eqref{eq:Steklov-modificato}. Exploiting the fact
that $\|w_\eps\|_{V(\Omega_\eps)}$ is bounded we may proceed as in
the proof of Lemma \ref{l:weak-convergence} and prove that $B_\eps
u_\eps \rightharpoonup \widetilde u$ along a sequence, for some
$\widetilde u\in V(\Omega)$. We divide the remaining part of the
proof into four steps. We recall that as in the proof of Lemma
\ref{l:weak-convergence}, for any $\eps\in (0,\eps_0]$ we have
that $u_\eps$ satisfies \eqref{eq:u_eps}.

{\bf Step 1.} In this step we pass to the limit in the right hand
side of \eqref{eq:u_eps}.

We put $h_{\eps,n,i}:=\psi_i \frac{\partial w_\eps}{\partial x_n}$
for $n=1,\dots,N$ and $i=1,\dots,s'$, and we proceed as follows:
\begin{align} \label{eq:computation-0-bis}
& \int_{\partial\Omega_\eps} (w_\eps)_\nu (E_\eps \varphi)_\nu \,
dS=\sum_{n,m=1}^N \int_{\partial\Omega_\eps} \frac{\partial w_\eps
}{\partial x_n} \, \nu_{\eps,n} \frac{\partial(E_\eps
\varphi)}{\partial x_m} \, \nu_{\eps,m} \, dS
\\
\notag & =\sum_{n,m=1}^N \sum_{i,j=1}^{s'}
\int_{\partial\Omega_\eps \cap V_i \cap V_j} h_{\eps,n,i}(x)
\frac{\partial (\varphi_{j}(\Psi_{\eps,j}(x)))}{\partial x_m} \,
\nu_{\eps,n}(x) \nu_{\eps,m}(x) \, dS
\\
\notag & =\sum_{n,m,l=1}^N \sum_{i,j=1}^{s'}
\int_{\Psi_{\eps,j}(\partial\Omega_\eps \cap V_i \cap V_j)}
h_{\eps,n,i}(\Psi_{\eps,j}^{-1}(y)) \frac{\partial
\varphi_{j}}{\partial y_l}(y) W_{\eps,n,m,l,j}(y) \, dS \, .
\end{align}
where we denoted by $\nu_\eps=(\nu_{\eps,1},\dots,\nu_{\eps,N})$
the unit normal to $\partial\Omega_\eps$ and we put
\begin{equation*} 
W_{\eps,n,m,l,j}(y):= \nu_{\eps,n}(\Psi_{\eps,j}^{-1}(y))
\nu_{\eps,m}(\Psi_{\eps,j}^{-1}(y))
\frac{\partial(\Psi_{\eps,j})_l}{\partial
x_m}(\Psi_{\eps,j}^{-1}(y)) \tfrac{\sqrt{1+|\nabla_{x'}
g_{\eps,j}(\Gamma_j(y))|^2}}{\sqrt{1+|\nabla_{x'}
g_{j}(\Gamma_j(y))|^2}} \, .
\end{equation*}
By \eqref{eq:assumptions} we deduce that the trivial extension of
$W_{\eps,n,m,l,j}$ to the whole $\partial\Omega$ converges almost
everywhere to the function $\nu_n \nu_m  \delta_{lm} \
\chi_{\partial\Omega\cap V_i\cap V_j}$ and it remains uniformly
bounded as $\eps\to 0$.

Now we observe that by Lemma \ref{l:unif-equiv} and Lemma
\ref{l:H1/2}, there exists a positive constant $C$ such that
$$
\|h_{\eps,n,i}\|_{H^{1/2}(\partial\Omega_\eps)}<C \qquad \text{for
any } \eps\in (0,\eps_0] \, .
$$
Hence, if we put $\omega_{\eps,n,i}=h_{\eps,n,i}\circ
\Psi_{\eps,i}^{-1}$, by \eqref{eq:H1/2-norm}, we also deduce that
$\{\omega_{\eps,n,i}\}_{0<\eps\le \eps_0}$ is bounded in
$H^{1/2}(\partial\Omega)$ and by the compact embedding
$H^{1/2}(\partial\Omega)\subset L^2(\partial\Omega)$ we also have
that $\{\omega_{\eps,n,i}\}_{0<\eps\le \eps_0}$ is precompact in
$L^2(\partial\Omega)$. Then, along a sequence $\eps_k\downarrow 0$
we may assume that $\omega_{\eps_k,n,i}\to F_{n,i}$ in
$L^2(\partial\Omega)$ as $k\to +\infty$. For simplicity, in the
rest of the proof of the lemma we will omit the subindex $k$ and
we simply write $\omega_{\eps,n,i}\to F_{n,i}$ in
$L^2(\partial\Omega)$ as $\eps\to 0$.

But
$\omega_{\eps,n,i}(\Theta_{\eps,i,j}(y))=h_{\eps,n,i}(\Psi_{\eps,j}^{-1}(y))$
for any $y\in \Psi_{\eps,j}(\partial\Omega_\eps\cap V_i\cap V_j)$.

Then, applying Lemma \ref{p:7} (ii) to $\omega_{\eps,n,i}$,
by \eqref{eq:computation-0-bis} we obtain as $\eps\to 0$
\begin{align} \label{eq:PassLim-1}
\int_{\partial\Omega_\eps} (w_\eps)_\nu (E_\eps \varphi)_\nu \,
dS\to \sum_{n,m=1}^N \sum_{i,j=1}^{s'} \int_{\partial\Omega\cap
V_i\cap V_j} F_{n,i} \frac{\partial \varphi_j}{\partial y_m} \,
\nu_n \nu_m \, dS=\int_{\partial\Omega} (F\cdot \nu) \,
\varphi_\nu \,  dS
 \end{align}
where we put $F=\left(\sum_{i=1}^{s'}
F_{1,i},\dots,\sum_{i=1}^{s'} F_{N,i}\right)$.

{\bf Step 2.} In this step we pass to the limit in the left hand
side of \eqref{eq:u_eps}.

We can proceed exactly as in the proof of Step 3 in Lemma
\ref{l:weak-convergence} since the only fact exploited there, is
that the $V(\Omega_\eps)$-norm of $u_\eps$ can be estimated
uniformly with respect to $\eps$. In this way, one proves that
\eqref{eq:passaggio-1} holds true.

Inserting \eqref{eq:passaggio-1} and \eqref{eq:PassLim-1} into
\eqref{eq:u_eps} we obtain
\begin{equation} \label{eq:F-nu}
Q_\Omega(\widetilde u,\varphi)=\int_{\partial\Omega} (F\cdot \nu)
\, \varphi_\nu \,  dS \qquad \text{for any } \varphi \in V(\Omega)
\, .
\end{equation}

{\bf Step 3.} The purpose of this step is to pass to the limit in
the right hand side of the following identity
\begin{equation} \label{eq:u-eps-bis}
Q_{\Omega_\eps}(u_\eps,u_\eps)=\int_{\partial\Omega_\eps}
(w_\eps)_\nu (u_\eps)_\nu \, dS \, .
\end{equation}
Similarly to \eqref{eq:computation-0-bis} we can write
\begin{align} \label{eq:conv-forte}
& \int_{\partial\Omega_\eps} (w_\eps)_\nu (u_\eps)_\nu \, dS
=\sum_{n,m,l=1}^N \sum_{i,j=1}^{s'}
\int_{\Psi_{\eps,j}(\partial\Omega_\eps \cap V_i \cap V_j)}
h_{\eps,n,i}(\Psi_{\eps,j}^{-1}(y)) \frac{\partial \widehat
u_{\eps,j}}{\partial y_l}(y) W_{\eps,n,m,l,j}(y) \, dS
\end{align}
with $h_{\eps,n,i}$ and $W_{\eps,n,m,l,j}$ as in Step 1, and
$\widehat u_{\eps,j}$ as in \eqref{eq:hat-w}. By
\eqref{eq:norma-H2} we deduce that, passing to the limit along a
subsequence of the sequence $\{\eps_k\}$ introduced in Step 1, for any $j\in \{1,\dots,s'\}$ there exists a function
$U_j\in V(\Omega)$ such that $\widehat u_{\eps,j}\rightharpoonup
U_j$ in $V(\Omega)$. Using the same notations of Step 1, we simply
write $\eps\to 0$ to denote the convergence along a sequence. By
\eqref{eq:conv-forte}, we then have as $\eps\to 0$
\begin{equation} \label{eq:conv-forte-2}
\int_{\partial\Omega_\eps} (w_\eps)_\nu (u_\eps)_\nu \, dS \to
\sum_{n,m,l=1}^N \sum_{i,j=1}^{s'} \int_{\partial\Omega\cap V_i
\cap V_j} F_{n,i} \frac{\partial U_j}{\partial y_l} \nu_n \nu_m \,
dS
\end{equation}
Since $B_\eps u_\eps=\sum_{j=1}^{s'} \widehat u_{\eps,j}$ on
$\partial\Omega$, from the uniqueness of the weak limit in
$V(\Omega)$, one immediately obtains $\widetilde u=\sum_{j=1}^{s'}
U_j$ which inserted into \eqref{eq:conv-forte-2} gives
\begin{equation}  \label{eq:conv-forte-3}
\int_{\partial\Omega_\eps} (w_\eps)_\nu (u_\eps)_\nu \, dS \to
 \int_{\partial\Omega} (F\cdot \nu) \,  \widetilde u_\nu \,
dS \, .
\end{equation}

{\bf Step 4.} In this step we conclude the proof of the lemma.

Choosing $\varphi=\widetilde u$ in \eqref{eq:F-nu} and combining
this with \eqref{eq:u-eps-bis}, \eqref{eq:conv-forte-3}, we obtain
as $\eps\to 0$ along a proper sequence $\eps_k\downarrow 0$,
\begin{equation} \label{eq:conv-norm-1}
\|u_\eps\|_{V(\Omega_\eps)}^2=Q_{\Omega_\eps}(u_\eps,u_\eps)=\int_{\partial\Omega_\eps}
(w_\eps)_\nu (u_\eps)_\nu \, dS\to \int_{\partial\Omega} (F\cdot
\nu)\, \widetilde u_\nu \, dS=Q_\Omega(\widetilde u,\widetilde
u)=\|\widetilde u\|_{V(\Omega)}^2 \, .
\end{equation}
On the other hand, by \eqref{eq:u_eps}, \eqref{eq:PassLim-1},
\eqref{eq:F-nu} with $\varphi=\widetilde u$, we obtain as $\eps\to
0$ along the same sequence converging to zero,
\begin{equation} \label{eq:conv-norm-2}
Q_{\Omega_\eps}(u_\eps,E_\eps \widetilde u)\to Q_\Omega
(\widetilde u,\widetilde u)=\|\widetilde u\|_{V(\Omega)}^2 \, .
\end{equation}
Combining \eqref{eq:conv-norm-1} and \eqref{eq:conv-norm-2} with
Lemma \ref{l:1} (ii), we obtain
\begin{align*}
\|u_\eps-E_\eps \widetilde
u\|_{V(\Omega_\eps)}^2=\|u_\eps\|_{V(\Omega_\eps)}^2-2Q_{\Omega_\eps}(u_\eps,E_\eps
\widetilde u)+\|E_\eps \widetilde u\|_{V(\Omega_\eps)}^2\to 0 \, .
\end{align*}
This proves that along a sequence we have $u_\eps \EC \widetilde
u$ as $\eps\to 0$ or equivalently that
$\{S_{\Delta,\eps}w_\eps\}_{0<\eps\le \eps_0}$ and
$\{S_{D^2,\eps}w_\eps\}_{0<\eps\le \eps_0}$ are precompact in the
sense of Definition \ref{d:precompact-ArCaLo}. The proof of the lemma now follows from Lemma \ref{l:E-conv} and Definition \ref{d:CC-convergence-ArCaLo}.
\end{proof}

As a bypass product of a number of results proved in this section, we have the following proposition
which we believe has its own interest  since it clarifies even more the meaning of $E$-convergence with respect to the operators  $E_{\eps}$ used above.

\begin{proposition}\label{intersec}
 Let $\mathcal A$ be an atlas. Let
$\{\Omega_\eps\}_{0<\eps\le\eps_0}$ be a family of domains of
class $C^{1,1}(\mathcal A)$, $\Omega$ a domain of class
$C^{1,1}(\mathcal A)$ and for any $\eps\in (0,\eps_0]$ let $E_\eps$ be the map defined in \eqref{eq:E-eps}. Assume  the validity of  condition  \eqref{eq:assumptions}.
If  $u_{\eps}\in V(\Omega_{\eps})$, $u\in V(\Omega)$ is  such that $u_{\eps}\EC u$ as $\eps \to 0 $ then
\begin{equation}\label{intersec0}
\| u_{\eps}- u\|_{H^2(\Omega_{\eps}\cap \Omega )}\to 0, \ \ {\rm and }\ \ \| \nabla u_{\eps}     - E_{\eps} \nabla u   \|_{L^2(\partial \Omega_{\eps})}\to 0,
\end{equation}
as $\eps\to 0$.
\end{proposition}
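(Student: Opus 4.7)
The central idea is to exploit a set $K_\eps$ on which the operator $E_\eps$ acts as the identity. Concretely, I would introduce
\[
K_\eps := (\Omega \cap \Omega_\eps) \cap \bigcap_{j=1}^{s'}\bigl(V_j^c \cup r_j^{-1}(K_{\eps,j})\bigr).
\]
Inspecting the construction of $E_\eps$ (the partition of unity sums to $1$ and each $\Psi_{\eps,j}$ equals the identity on $r_j^{-1}(K_{\eps,j})$), one sees that $(E_\eps v)(x)=v(x)$ on $K_\eps$ for any function $v$ to which $E_\eps$ applies. Moreover, condition \eqref{eq:assumptions} implies that $V_j\setminus r_j^{-1}(K_{\eps,j})$ is a strip of width $O(\kappa_\eps)$, so that $|(\Omega_\eps\cup \Omega)\setminus K_\eps| = O(\kappa_\eps)\to 0$.

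For the first assertion, the plan is to split $\Omega_\eps\cap \Omega = K_\eps \sqcup ((\Omega_\eps\cap \Omega)\setminus K_\eps)$. On $K_\eps$, the identity $u=E_\eps u$ and the uniform equivalence provided by Lemma \ref{l:unif-equiv} yield
\[
\|u_\eps - u\|_{H^2(K_\eps)} \le \|u_\eps - E_\eps u\|_{H^2(\Omega_\eps)} \le C\|u_\eps - E_\eps u\|_{V(\Omega_\eps)} \to 0,
\]
by hypothesis. On the complement, $\|u\|_{H^2((\Omega_\eps\cap \Omega)\setminus K_\eps)}\to 0$ follows from absolute continuity of the Lebesgue integral, while
\[
\|u_\eps\|_{H^2(\Omega_\eps\setminus K_\eps)} \le \|u_\eps - E_\eps u\|_{H^2(\Omega_\eps)} + \|E_\eps u\|_{H^2(\Omega_\eps\setminus K_\eps)},
\]
where the second summand vanishes in view of the estimates already carried out in the proof of Lemma \ref{l:1}; see in particular \eqref{eq:pezzo-1} together with \eqref{eq:o(1)-1}--\eqref{eq:o(1)-2}.

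For the second assertion, Lemma \ref{l:H1/2} reduces the boundary $L^2$-bound to an interior $H^1$-bound, giving $\|\nabla u_\eps - E_\eps \nabla u\|_{L^2(\partial\Omega_\eps)} \le C \|\nabla u_\eps - E_\eps \nabla u\|_{H^1(\Omega_\eps)}$, where $E_\eps$ is applied componentwise to $\nabla u$ (well-defined because $\partial_k u\in H^1(\Omega)$). I would then split across $K_\eps$ once more: on $K_\eps$ the equalities $E_\eps \partial_k u = \partial_k u = \partial_k(E_\eps u)$ yield $\partial_k u_\eps - E_\eps \partial_k u = \partial_k(u_\eps - E_\eps u)$, whose $H^1(K_\eps)$-norm is bounded by $\|u_\eps - E_\eps u\|_{H^2(\Omega_\eps)}\to 0$. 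Off $K_\eps$, since $D\Psi_{\eps,j}$ is uniformly bounded (indeed $D\Psi_{\eps,j}\to I$ uniformly by \eqref{eq:assumptions} and \eqref{eq:estimate}), a plain change of variables reduces $\|E_\eps \partial_k u\|_{H^1(\Omega_\eps\setminus K_\eps)}$ to $\|\partial_k u\|_{H^1(\Omega\setminus K_\eps)}= o(1)$, and the companion term $\|\partial_k u_\eps\|_{H^1(\Omega_\eps\setminus K_\eps)}$ is handled exactly as in the previous paragraph.

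The only delicate point, and therefore the main obstacle, is the estimate $\|E_\eps u\|_{H^2(\Omega_\eps\setminus K_\eps)}\to 0$: the second derivatives of $\Psi_{\eps,j}$ are only $o(\kappa_\eps^{-1/2})$ and may blow up. This blow-up is, however, exactly compensated by the fact that the integration region $(\Omega_\eps\cap V_j)\setminus r_j^{-1}(K_{\eps,j})$ is a strip of width $O(\kappa_\eps)$, via the one-dimensional trace \eqref{uniinf} applied inside the strip as in \eqref{eq:o(1)-2}. This is exactly the cancellation mechanism powering Lemma \ref{l:1}, so no new analytical input is needed beyond repackaging those estimates.
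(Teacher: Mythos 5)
Your argument is correct and follows essentially the same route as the paper's proof: you use the set $K_\eps$ on which $E_\eps$ acts as the identity, the uniform norm equivalence of Lemma \ref{l:unif-equiv}, the thin-strip estimates from the proof of Lemma \ref{l:1} to kill $\|E_\eps u\|_{H^2}$ off $K_\eps$, and the uniform trace constant of Lemma \ref{l:H1/2} for the boundary term. The only differences are organizational (you split $u_\eps-u$ over $K_\eps$ and its complement rather than inserting $E_\eps u$ globally, and you apply the trace inequality to $\nabla u_\eps-E_\eps\nabla u$ directly rather than to $\nabla(u_\eps-E_\eps u)$ followed by a boundary comparison of $\nabla(E_\eps u)$ with $E_\eps\nabla u$), and these are immaterial.
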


\begin{proof}Let $K_{\eps}\subset \Omega_{\eps}\cap \Omega  $ be as in the proof of  Lemma~\ref{l:weak-convergence}. Recall  that
$E_{\eps}u=u$ on $ K_{\eps}$.
In order to prove the first limit in  \eqref{intersec} it suffices to note  that by Lemma~\ref{l:unif-equiv} and the proof of Lemma~\ref{l:1} we have
\begin{eqnarray*}\label{intersec1}
\| u_{\eps}- u\|_{H^2(\Omega_{\eps}\cap \Omega )}&\le& \| u_{\eps}- E_{\eps}u\|_{H^2(\Omega_{\eps}\cap \Omega )}+\| E_{\eps}u- u\|_{H^2(\Omega_{\eps}\cap \Omega )}\nonumber \\
&\le & \| u_{\eps}- E_{\eps}u\|_{H^2(\Omega_{\eps} )}+\| E_{\eps}u- u\|_{H^2((\Omega_{\eps}\cap \Omega)\setminus K_{\eps} )}\nonumber\\
&\le & C \| u_{\eps}- E_{\eps}u\|_{V(\Omega_{\eps} )}+\| E_{\eps}u- u\|_{H^2((\Omega_{\eps}\cap \Omega)\setminus K_{\eps} )}\nonumber\\
&= & C \| u_{\eps}- E_{\eps}u\|_{V(\Omega_{\eps} )}+ o(1).
\end{eqnarray*}
In order to prove the second  limit in  \eqref{intersec} we note  that by Lemma~\ref{l:H1/2} the trace constant can be taken uniform with the respect to $\eps$ hence $\| \nabla u_{\eps}-\nabla( E_{\eps}u) \|_{L^2(\partial \Omega_{\eps })}   \le C   \| u_{\eps}- E_{\eps}u\|_{H^2(\Omega_{\eps} )}=o(1) $. Finally, by standard computations one can see that  $ \| \nabla u_{\eps}- E_{\eps}\nabla u \|_{L^2(\partial \Omega_{\eps })}    = \| \nabla u_{\eps}-\nabla( E_{\eps}u) \|_{L^2(\partial \Omega_{\eps })}
+o(1)$.
\end{proof}

\section{Optimality of condition \eqref{eq:assumptions}: degenerations and strange terms
}\label{s:optimality}

We plan to discuss the optimality of condition
\eqref{eq:assumptions}. We ask whether spectral stability is still
verified if we remove condition \eqref{eq:assumptions} from the statement of
Theorem \ref{t:main-1}. In order to give an answer to this
question we assume particular conditions on the domain $\Omega$
and on its perturbations $\Omega_\eps$. We follow closely the
approach introduced in \cite{ArLa2}. We assume that $\Omega$ is in
the form $\Omega=W\times (-1,0)$ where $W$ is a cuboid or a bounded domain in $\R^{N-1}$ of class $C^{1,1}$.
We assume that the perturbed domain
$\Omega_\eps$ is given by
\begin{equation*}
\Omega_\eps=\{(x',x_N):x'\in W\, ,
-1<x_N<g_\eps(x')\}
\end{equation*}
where $g_\eps(x')=\eps^\alpha b(x'/\eps)$ for any $x'\in W$ and
$b:\R^{N-1}\to [0,+\infty)$ is a $Y$-periodic function where $Y=\left(-\frac 12,\frac 12\right)^{N-1}$ is the unit cell in $\R^{N-1}$.

We denote by $\Gamma_\eps$ and $\Gamma$ the sets
\begin{equation} \label{eq:Gamma-eps}
\Gamma_\eps:=\{(x',g_\eps(x')):x'\in W\} \quad \text{and} \quad
\Gamma:=W\times \{0\} \, .
\end{equation}

In this case, condition \eqref{eq:assumptions} can be used provided $\alpha >3/2$. In fact, we have the following straightforward application of
Theorem~\ref{t:main-1} in the simplified form discussed in Remark \ref{r:simplified}.

\begin{corollary} \label{t:main-2ante} Suppose that the function $b$ defined above
belongs to $C^{1,1}(\R^{N-1})$. Let $\Omega$ and $\Omega_\eps$ be
as above. Let
$S_{\Delta,\eps}$, $S_{\Delta}$ be the Steklov operators for
\eqref{eq:Steklov} in $\Omega_\eps$ and
$\Omega$ respectively, and let
$S_{D^2,\eps}$, $S_{D^2}$ be the Steklov operators for
\eqref{eq:Steklov-modificato} in $\Omega_\eps$ and
$\Omega$ respectively,
 see Subsection \ref{s:functional-setting-B}. If $\alpha
>3/2$ then $S_{\Delta,\eps}\CC S_{\Delta}$ and $S_{D^2,\eps}\CC S_{D^2}$ as $\eps\to0$.
\end{corollary}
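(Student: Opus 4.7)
The strategy is to apply Theorem \ref{t:main-1} through its single-cuboid simplification in Remark \ref{r:simplified}, reducing the problem to the verification of condition \eqref{eq:assumptions} for the functions $g\equiv 0$ and $g_\eps(x')=\eps^{\alpha}b(x'/\eps)$ on $W$. First I would compute the relevant $L^{\infty}$-norms: since $b$ is $Y$-periodic and lies in $C^{1,1}(\R^{N-1})$, its weak derivatives up to order $2$ are essentially bounded on $\R^{N-1}$, and the chain rule yields
\[
\|g_\eps\|_{L^{\infty}(W)}=\eps^{\alpha}\|b\|_{L^{\infty}},\qquad \|D^{\beta}g_\eps\|_{L^{\infty}(W)}\le \eps^{\alpha-|\beta|}\|D^{\beta}b\|_{L^{\infty}}\quad\text{for }|\beta|\le 2.
\]

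Next I would seek $\kappa_\eps$ of the form $\eps^{\gamma}$ realising \eqref{eq:assumptions}. The first inequality $\|g_\eps\|_{L^{\infty}(W)}<\kappa_\eps$ becomes $\eps^{\alpha}<\eps^{\gamma}$ for small $\eps$, hence $\gamma<\alpha$, while the vanishing condition at each $|\beta|\in\{0,1,2\}$ translates into
\[
\eps^{\alpha-|\beta|-\gamma(3/2-|\beta|)}\longrightarrow 0,
\]
giving $\gamma<2\alpha/3$ when $|\beta|=0$, $\gamma<2(\alpha-1)$ when $|\beta|=1$, and $\gamma>2(2-\alpha)$ when $|\beta|=2$ (automatic for $\alpha\ge 2$). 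Adding the requirement $\kappa_\eps\to 0$, i.e.\ $\gamma>0$, the admissible range for $\gamma$ becomes $(\max(0,4-2\alpha),\,\min(\alpha,2\alpha/3,2\alpha-2))$.

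A brief comparison of the endpoints shows that for $\alpha>3/2$ this window is non-empty: since $2\alpha/3<\alpha$ always and $2\alpha/3<2\alpha-2$ iff $\alpha>3/2$, the upper bound equals $2\alpha/3$; and the inequality $4-2\alpha<2\alpha/3$ rearranges exactly to $\alpha>3/2$. Any $\gamma$ chosen in this window then yields $\kappa_\eps=\eps^{\gamma}$ satisfying \eqref{eq:assumptions}, and Theorem \ref{t:main-1} in the form of Remark \ref{r:simplified} immediately delivers $S_{\Delta,\eps}\CC S_{\Delta}$ and $S_{D^2,\eps}\CC S_{D^2}$. There is no genuine obstacle in this argument; the whole proof is an elementary calibration of exponents, and the threshold $\alpha=3/2$ emerges naturally as the value at which the admissible window collapses.
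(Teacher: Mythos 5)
Your proof is correct and follows essentially the same route as the paper: the paper likewise reduces to Remark \ref{r:simplified} and verifies condition \eqref{eq:assumptions} by taking $\kappa_\eps=\eps^{2\tilde\alpha/3}$ for a fixed $\tilde\alpha\in\,]3/2,\alpha[$, a choice that lies inside the window of admissible exponents you computed. Your explicit calibration of the three constraints for $|\beta|=0,1,2$ is just a more detailed write-up of the same verification, with the added (correct) observation that the window is non-empty precisely when $\alpha>3/2$.
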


\begin{proof} For fixed $\tilde \alpha\in
] 3/2  ,\alpha  [$, we set $\kappa_{\eps }=\eps
^{2\tilde \alpha /3}$ and we note that condition
\eqref{eq:assumptions} is satisfied  with $g_{\eps,j}$, $g_{j}$
replaced by $g_{\eps}$, $g\equiv 0$ respectively. The proof
follows by Theorem~\ref{t:main-1} (actually, by the simplified
version of it concerning the case of domains defined as
subgraphs of functions defined in  one single cuboid, see Remark \ref{r:simplified}).
\end{proof}

We shall see that  Corollary~\ref{t:main-2ante} no longer holds for problem \eqref{eq:Steklov-modificato}  if
$\alpha \le 3/2$. The cases $\alpha <3/2$ and $\alpha =3/2$ are
considerably different. In the first case we have degeneration to
Dirichlet boundary conditions, in the second case we have the
appearance of a strange term in the limiting boundary conditions.
We discuss the two cases in the following subsections.

\subsection{Case $\alpha <3/2$: degeneration}\label{degsec}

In this case it appears that the energy spaces $V(\Omega_{\eps
})=H^2(\Omega_\eps)\cap H^1_0(\Omega_\eps)$ degenerate as $\eps
\to 0$ to a suitable energy space which encodes Dirichlet boundary
conditions on $\Gamma$.
 Namely,
let us define the space
\begin{equation} \label{eq:H^2-0-Gamma}
H^2_{0,\Gamma}(\Omega):=\{u\in H^2(\Omega)\cap H^1_0(\Omega):
u_\nu=0 \ \text{on} \ \Gamma\} \, .
\end{equation}
According to Section \ref{s:functional-setting-B}, we define
$T_{D^2,\Gamma}:H^2_{0,\Gamma}(\Omega)\to
(H^2_{0,\Gamma}(\Omega))'$ by
$$
_{(H^2_{0,\Gamma}(\Omega))'} \langle T_{D^2,\Gamma}u,v \rangle_{H^2_{0,\Gamma}(\Omega)} :=\int_\Omega D^2u:D^2v\, dx
\qquad \text{for any } u,v\in H^2_{0,\Gamma}(\Omega) \, .
$$
We also define the operator $J_\Gamma:H^2_{0,\Gamma}(\Omega)\to
(H^2_{0,\Gamma}(\Omega))'$ by
$$
_{(H^2_{0,\Gamma}(\Omega))'} \langle J_\Gamma u,v\rangle_{H^2_{0,\Gamma}(\Omega)} :=\int_{\partial\Omega\setminus \Gamma}
u_\nu v_\nu \, dS \qquad \text{for any } u,v\in
H^2_{0,\Gamma}(\Omega) \, .
$$
Finally we define $S_{D^2,\Gamma}:H^2_{0,\Gamma}(\Omega)\to
H^2_{0,\Gamma}(\Omega)$ as
$S_{D^2,\Gamma}:=T_{D^2,\Gamma}^{-1}\circ J_\Gamma$.

 We shall prove that $S_{D^2,\eps}$  converges compactly to $S_{D^2,\Gamma}$ as $\eps \to 0$. To do so we need to clarify which
family of operators $E_{\eps}:  H^2_{0,\Gamma}(\Omega)\to
V(\Omega_{\eps })$ we consider. In the case $\alpha <3/2$ we can
no longer use the  operators $E_{\eps }$ defined in
Subsection~\ref{subsecoperatorsE}. However, here it is possible to
consider a much simpler family of operators given by the
extension-by-zero operators.

 Let us define the extension-by-zero
operator $\mathcal{E}_0:H^2_{0,\Gamma}(\Omega)\to V(\Omega_\eps)$
which maps any function $u\in H^2_{0,\Gamma}(\Omega)$ to
$\mathcal{E}_0u$ where $\mathcal{E}_0 u:W\times (-1,+\infty)\to \R$
coincides with $u$ over $\Omega$ and vanishes in $W\times
[0,+\infty)$. We observe that $\mathcal E_0$ is well-defined as an
operator from $H^2_{0,\Gamma}(\Omega)$ to $V(\Omega_\eps)$ since
for any $u\in H^2_{0,\Gamma}(\Omega)$ we have $\mathcal E_0 u\in
H^2(W\times (-1,+\infty))$ being $u=u_\nu=0$ on $\Gamma$ and hence
its restriction to $\Omega_\eps$ belongs indeed to
$V(\Omega_\eps)$.

Since we are dealing with problem \eqref{eq:Steklov-modificato},
the space $V(\Omega_\eps)$ and the space $H^2_{0,\Gamma}(\Omega)$
will be endowed with the second scalar product in \eqref{eq:s-p},
i.e.
\begin{align*}
& \|u\|_{V(\Omega_\eps)}:=\left(\int_{\Omega_\eps} |D^2 u|^2
dx\right)^{1/2} \qquad \text{for any } u\in V(\Omega_\eps) \, , \\
& \|u\|_{H^2_{0,\Gamma}(\Omega)}:=\left(\int_{\Omega} |D^2 u|^2
dx\right)^{1/2} \qquad \text{for any } u\in H^2_{0,\Gamma}(\Omega)
\, .
\end{align*}

According to Definitions
\ref{d:E-convergence-ArCaLo}-\ref{d:CC-convergence-ArCaLo}, the
following notions of convergence are well defined
\begin{equation*}
u_\eps \EC u \, , \qquad B_\eps \EEC B_0 \, , \qquad B_\eps \CC
B_0
\end{equation*}
once we put $\mathcal H_\eps=V(\Omega_\eps)$, $\mathcal
H_0=H^2_{0,\Gamma}(\Omega)$ and $E_\eps=\mathcal E_0$. We observe
that property \eqref{eq:norm-convergence} is trivially satisfied.

Then the following result holds

\begin{theorem} \label{t:main-2} Suppose that the function $b$ defined above
belongs to $C^{1,1}(\R^{N-1})$ and is not constant.
Let $\Omega$ and $\Omega_\eps$ be as above. Let
$S_{D^2,\eps}$, $S_{D^2}$ be the Steklov operators for
\eqref{eq:Steklov-modificato} in $\Omega_\eps$ and
$\Omega$ respectively, see Section \ref{s:functional-setting-B}. Finally, let
$S_{D^2,\Gamma}:H^2_{0,\Gamma}(\Omega)\to H^2_{0,\Gamma}(\Omega)$
be as above.
 If $\alpha\in \left[1,\frac 32\right)$ then
$S_{D^2,\eps} \CC S_{D^2,\Gamma}$ as $\eps\to 0$ and in particular
the eigenvalues of \eqref{eq:Steklov-modificato} in $\Omega_\eps$
converge, as $\eps\to 0$, to the  eigenvalues of the following
problem
\begin{equation} \label{eq:P-alpha<3/2}
\begin{cases}
\Delta^2 u=0, & \qquad \text{\rm in } \Omega \, , \\
u=0, & \qquad \text{\rm on } \partial\Omega \, , \\
u_\nu=0, & \qquad \text{\rm on } \Gamma \, ,\\
\Delta u-K(x)u_\nu-\delta u_\nu=0, & \qquad \text{\rm on }
\partial\Omega\setminus \Gamma \, .
\end{cases}
\end{equation}
\end{theorem}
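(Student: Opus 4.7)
The plan is to establish the compact convergence $S_{D^2,\eps} \CC S_{D^2,\Gamma}$ in the sense of Definition~\ref{d:CC-convergence-ArCaLo} along the general scheme of Section~\ref{strongsec}: I will verify that $u_\eps \EC u$ implies $S_{D^2,\eps} u_\eps \EC S_{D^2,\Gamma} u$, and that $\{S_{D^2,\eps} u_\eps\}$ is precompact whenever $\|u_\eps\|_{V(\Omega_\eps)}$ is bounded, where the connecting operators are the extension-by-zero maps $\mathcal{E}_0$. A preliminary step is to prove $\liminf_{\eps\to 0}\delta_1(\eps)>0$: since for $\alpha\ge 1$ the family $\{\Omega_\eps\}$ is uniformly Lipschitz, the Poincar\'e inequality $\|v\|_{H^2(\Omega_\eps)}\le C\|D^2 v\|_{L^2(\Omega_\eps)}$ on $V(\Omega_\eps)$ and the $H^{1/2}$-trace estimate hold with constants independent of $\eps$, so a contradiction argument modeled on Lemma~\ref{l:d-1-eps} rules out the degeneration of $\delta_1(\eps)$ to zero. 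This yields the uniform bound $\|w_\eps\|_{V(\Omega_\eps)}\le C$ for $w_\eps:=S_{D^2,\eps}u_\eps$ whenever $\|u_\eps\|_{V(\Omega_\eps)}$ is bounded.

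Since $\Omega\subset\Omega_\eps$ and $\{w_\eps\}$ is bounded in $H^2(\Omega_\eps)$, along a subsequence $w_\eps|_\Omega\rightharpoonup\tilde w$ weakly in $H^2(\Omega)$. Because $w_\eps=0$ on $\partial\Omega_\eps$ and $\partial\Omega\setminus\Gamma\subset\partial\Omega_\eps$, the trace of $\tilde w$ vanishes on $\partial\Omega\setminus\Gamma$. The identity $w_\eps(x',0)=-\int_0^{g_\eps(x')}\partial_N w_\eps(x',t)\,dt$, valid because $w_\eps(x',g_\eps(x'))=0$, together with Cauchy--Schwarz and $\|g_\eps\|_{\infty}=O(\eps^\alpha)$, gives $\|w_\eps(\cdot,0)\|_{L^2(W)}=O(\eps^{\alpha/2})$, so $\tilde w=0$ on $\Gamma$ as well.

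The hard part will be to show $\tilde w_\nu=0$ on $\Gamma$, which is precisely the manifestation of the degeneration in the range $\alpha<3/2$. Starting from $\partial_N w_\eps(x',0)=\partial_N w_\eps(x',g_\eps(x'))-\int_0^{g_\eps(x')}\partial_N^2 w_\eps(x',t)\,dt$, the integral term is $O(\eps^{\alpha/2})$ in $L^2(W)$ by the same thin-layer estimate, while the boundary term equals $(w_\eps)_\nu(x',g_\eps(x'))/\sqrt{1+|\nabla g_\eps|^2}$ because $\nabla w_\eps=(w_\eps)_\nu\nu_\eps$ on $\Gamma_\eps$. The delicate task is therefore to prove that the lifted trace $(w_\eps)_\nu$ on $\Gamma_\eps$, averaged against smooth functions $\varphi(x')$ on $W$ via the parametrization $x'\mapsto(x',g_\eps(x'))$, vanishes in the limit for $\alpha<3/2$. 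Following the scaling analysis of \cite[Section~7]{ArLa2}, I would test the variational identity $\int_{\Omega_\eps}D^2w_\eps:D^2\Phi\,dx=\int_{\partial\Omega_\eps}(u_\eps)_\nu\Phi_\nu\,dS$ against boundary-layer test functions $\Phi_\eps$ supported in $\Omega_\eps\setminus\Omega$, obtained by multiplying $\varphi(x')$ by a profile concentrated at scale $\eps^\alpha$ in the normal direction, and then exploit the blow-up $\|D^2 g_\eps\|_\infty=O(\eps^{\alpha-2})$ which, combined with the amplitude $\|g_\eps\|_\infty=O(\eps^\alpha)$, makes the Hessian contribution from the oscillating layer dominate precisely when $\alpha<3/2$. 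This is the critical exponent (the same one that makes condition \eqref{eq:assumptions} fail) which forces the averaged trace to vanish and hence $\tilde w_\nu\equiv 0$ on $\Gamma$.

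Once $\tilde w\in H^2_{0,\Gamma}(\Omega)$ is established, I would pass to the limit in the variational identity $\int_{\Omega_\eps}D^2w_\eps:D^2\mathcal{E}_0\varphi\,dx=\int_{\partial\Omega_\eps}(u_\eps)_\nu(\mathcal{E}_0\varphi)_\nu\,dS$ for every $\varphi\in H^2_{0,\Gamma}(\Omega)$: the left-hand side splits as $\int_\Omega D^2 w_\eps:D^2\varphi\,dx\to\int_\Omega D^2\tilde w:D^2\varphi\,dx$ plus a remainder on $\Omega_\eps\setminus\Omega$ which is $o(1)$ by the thin-layer bound, while the right-hand side reduces to $\int_{\partial\Omega\setminus\Gamma}u_\nu\varphi_\nu\,dS$ because $\mathcal{E}_0\varphi$ vanishes identically in a neighborhood of $\Gamma_\eps$ in $\Omega_\eps$ and $(u_\eps)_\nu\to u_\nu$ in $L^2(\partial\Omega\setminus\Gamma)$ by the trace theorem. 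Uniqueness of the limit problem identifies $\tilde w$ with $S_{D^2,\Gamma}u$, giving weak convergence of the full sequence; the upgrade to $E$-convergence is then obtained by testing with $\varphi=\tilde w$ to derive $\|w_\eps\|_{V(\Omega_\eps)}^2\to\|\tilde w\|_{H^2_{0,\Gamma}(\Omega)}^2$ as in Lemma~\ref{l:E-conv}, and the precompactness statement follows by applying the same machinery to an arbitrary bounded sequence $\{u_\eps\}\subset V(\Omega_\eps)$.
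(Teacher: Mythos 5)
Your overall architecture coincides with the paper's: a uniform lower bound on $\delta_1^\eps$ via uniform Poincar\'e/trace constants (the paper's Lemmas \ref{l:unif-equiv-2} and \ref{l:delta-1}), extraction of a weak $H^2(\Omega)$ limit, identification of the limit as the solution of the $\Gamma$-clamped problem, upgrade to $E$-convergence by proving convergence of the energies, and a separate precompactness argument. However, there is a genuine gap at the single most delicate point, namely the proof that the weak limit satisfies $\tilde w_\nu=0$ on $\Gamma$. Your elementary reductions are fine up to the identity $\partial_N w_\eps(x',0)=\partial_N w_\eps(x',g_\eps(x'))+O(\eps^{\alpha/2})$ and the observation $\nabla w_\eps=(w_\eps)_\nu\nu_\eps$ on $\Gamma_\eps$, but at that stage you have only restated the problem: for $\alpha>1$ one has $\nu_\eps\to e_N$ uniformly, so showing that the averaged trace of $(w_\eps)_\nu$ on $\Gamma_\eps$ vanishes is exactly equivalent to the conclusion. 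The mechanism you then invoke does not close this. Note that the first-order tangency relation $\partial_i w_\eps(x',g_\eps)=-\eps^{\alpha-1}\partial_i b(x'/\eps)\,\partial_N w_\eps(x',g_\eps)$, combined with the thin-layer estimates, yields $\partial_N\tilde w(\cdot,0)=0$ only when $\alpha=1$ (the prefactor $\eps^{2\alpha-2}$ kills the information for $\alpha>1$), so the range $1<\alpha<3/2$ cannot be handled by trace/FTC arguments of the kind you use elsewhere. Moreover, testing the weak formulation of $w_\eps=S_{D^2,\eps}u_\eps$ against functions $\Phi_\eps\in V(\Omega_\eps)$ supported in $\Omega_\eps\setminus\Omega$ produces the identity $\int_{\Omega_\eps\setminus\Omega}D^2w_\eps:D^2\Phi_\eps\,dx=\int_{\Gamma_\eps}(u_\eps)_\nu(\Phi_\eps)_\nu\,dS$, which constrains $D^2w_\eps$ in the layer in terms of the \emph{datum} $(u_\eps)_\nu$, not the trace $(w_\eps)_\nu$ you need to control; and the blow-up $\|D^2g_\eps\|_\infty=O(\eps^{\alpha-2})$ explains why condition \eqref{eq:assumptions} fails for $\alpha<3/2$ but is not, by itself, a proof of degeneration. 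The paper closes this step by a genuinely different device: it forms, for each $i\in\{1,\dots,N-1\}$, the vector field $V_\eps^{(i)}=(0,\dots,-\partial_N u_\eps,\dots,0,\partial_i u_\eps)$, which is bounded in $H^1(\Omega_\eps)^N$ and satisfies $V_\eps^{(i)}\cdot\nu=0$ on $\Gamma_\eps$, and then applies the two-scale/unfolding result \cite[Lemma 4.3 (i)]{CaLuSu} (valid precisely for $\alpha<3/2$) to obtain $\partial_N u(x',0)\,\partial_i b(y')=0$, whence $u_\nu=0$ on $\Gamma$ since $b$ is nonconstant. Without this (or an equivalent) ingredient your proof does not establish the degeneration, which is the heart of the theorem.

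A second, smaller gap is in the precompactness step, which you dispatch with ``the same machinery.'' In the $E$-convergence lemma the convergence of the boundary integral $\int_{\partial\Omega_\eps}(u_\eps)_\nu(w_\eps)_\nu\,dS$ exploits the $E$-convergence of the datum ($\|u_\eps-\mathcal E_0 u\|_{V(\Omega_\eps)}\to0$); in the compactness part the datum is only norm-bounded, so that argument is unavailable. The paper has to argue separately that the contribution over $\partial\Omega_\eps\setminus(\Gamma_\eps\cup\partial\Omega)$ vanishes (via a uniform $H^{1/2}$ trace bound and the vanishing surface measure), and that $\int_{\Gamma_\eps}(u_\eps)_\nu(w_\eps)_\nu\,dS\to0$, the latter using the fundamental theorem of calculus across the thin layer \emph{together with} the fact that the limit solution already lies in $H^2_{0,\Gamma}(\Omega)$ so that $\partial_i w_\eps(\cdot,0)\to0$ strongly in $L^2(W)$. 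These estimates are of the same flavor as ones you use, but they are not literally ``the same machinery'' and should be spelled out.
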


%

The proof of Theorem \ref{t:main-2} is based on a number of technical results. First of all,
  we prove that the $V(\Omega_\eps)$-norm is uniformly
equivalent to the $H^2(\Omega_\eps)$-norm.

\begin{lemma} \label{l:unif-equiv-2} Suppose that all the assumptions of Theorem
\ref{t:main-2} are satisfied. Then there exist $\eps_0>0$ and
$C>0$ such that
\begin{itemize}
\item[(i)] for any $u\in V(\Omega_\eps)$ and $\eps\in (0,\eps_0]$
we have
\begin{equation*} \|u\|_{H^2(\Omega_\eps)}\le C
\|u\|_{V(\Omega_\eps)}  \, ;
\end{equation*}

\item[(ii)] for any $v\in H^1(\Omega_\eps)$ and $\eps\in
(0,\eps_0]$ we have
\begin{equation*}
\|v\|_{L^2(\partial\Omega_\eps)} \le C\|v\|_{H^1(\Omega_\eps)} \,
.
\end{equation*}
\end{itemize}
\end{lemma}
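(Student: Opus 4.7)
Both parts rely on the key observation that, since $\alpha\ge 1$, the function $g_\eps$ and its gradient $\nabla g_\eps(x')=\eps^{\alpha-1}\nabla b(x'/\eps)$ are uniformly bounded in $L^\infty(W)$, with in addition $\|g_\eps\|_{L^\infty(W)}\to 0$ as $\eps\to 0$. In particular the domains $\Omega_\eps$ are all contained in a fixed strip $\widetilde\Omega:=W\times(-1,M)$, and their boundaries are uniformly Lipschitz.

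For (i), I would argue as follows. Since $u\in H^1_0(\Omega_\eps)$, extending $u$ by zero to $\widetilde\Omega$ and applying the Poincar\'e inequality on the fixed strip $\widetilde\Omega$ gives $\|u\|_{L^2(\Omega_\eps)}\le C_P\|\nabla u\|_{L^2(\Omega_\eps)}$ with a constant $C_P$ independent of $\eps$. An integration by parts (whose boundary term vanishes because $u\in H^1_0(\Omega_\eps)$) yields $\int_{\Omega_\eps}|\nabla u|^2 dx=-\int_{\Omega_\eps}u\,\Delta u\,dx$, and the Cauchy--Schwarz inequality combined with the Poincar\'e estimate above implies $\|\nabla u\|_{L^2(\Omega_\eps)}\le C_P\|\Delta u\|_{L^2(\Omega_\eps)}$; the pointwise bound $|\Delta u|^2\le N|D^2u|^2$ then gives the uniform control $\|u\|_{H^2(\Omega_\eps)}\le C\,\|D^2u\|_{L^2(\Omega_\eps)}$.

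For (ii), the strategy is to transport the standard trace inequality from the fixed Lipschitz domain $\Omega=W\times(-1,0)$ to $\Omega_\eps$ via the explicit diffeomorphism $\Phi_\eps:\Omega\to\Omega_\eps$ defined by $\Phi_\eps(x',x_N):=(x',x_N+(x_N+1)g_\eps(x'))$. Note that $\Phi_\eps$ is the identity on $W\times\{-1\}$ and on the lateral part of $\partial\Omega$, and it maps $\Gamma=W\times\{0\}$ onto $\Gamma_\eps$. Since $\alpha\ge 1$, both $D\Phi_\eps$ and $D(\Phi_\eps^{-1})$ have $L^\infty$-norms bounded uniformly in $\eps$, and $\det D\Phi_\eps=1+g_\eps(x')$ is uniformly bounded away from $0$ and $\infty$. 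Therefore, for any $v\in H^1(\Omega_\eps)$, the pull-back $\tilde v:=v\circ\Phi_\eps$ lies in $H^1(\Omega)$ with $\|\tilde v\|_{H^1(\Omega)}\le C\|v\|_{H^1(\Omega_\eps)}$ uniformly in $\eps$. Combining this with the standard trace inequality on the fixed Lipschitz domain $\Omega$ and using that the surface Jacobian of $\Phi_\eps$ on each face of $\partial\Omega$ (which on $\Gamma$ equals $\sqrt{1+|\nabla g_\eps|^2}$ and is $1$ on the other faces) is uniformly bounded above and away from zero, we conclude $\|v\|_{L^2(\partial\Omega_\eps)}\le C\|v\|_{H^1(\Omega_\eps)}$ with $C$ independent of $\eps$.

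The main technical point is verifying the uniform bi-Lipschitz bounds for $\Phi_\eps$, which is precisely where the hypothesis $\alpha\ge 1$ is essential: it is the exact threshold at which $\|\nabla g_\eps\|_{L^\infty(W)}$ remains bounded as $\eps\to 0$. For $\alpha<1$ the gradient of $g_\eps$ would blow up, invalidating both the diffeomorphism argument and the uniform trace estimate; conversely, since we only use $\alpha\ge 1$ (and not $\alpha<3/2$), the lemma in fact holds for the whole range $\alpha\ge 1$.
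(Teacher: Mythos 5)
Your proof is correct and follows essentially the same route as the paper: part (i) is exactly the uniform Poincar\'e argument of Lemma~\ref{complete} (cf.\ \eqref{eq:Faber-Krahn}) combined with the pointwise bound $|\Delta u|^2\le N|D^2u|^2$, and part (ii) is the ``direct computation'' the paper alludes to, resting on the uniform boundedness of $\nabla g_\eps=\eps^{\alpha-1}\nabla b(\cdot/\eps)$ for $\alpha\ge 1$. One harmless slip: your $\Phi_\eps$ is not the identity on the lateral part of $\partial\Omega$ unless $g_\eps$ vanishes on $\partial W$ (it stretches the lateral face onto that of $\Omega_\eps$), but the argument only needs the surface Jacobians to be uniformly bounded above and away from zero, which your uniform bi-Lipschitz bounds already provide.
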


\begin{proof} The proof of (i) is a consequence of Lemma~\ref{complete}, see also \eqref{eq:Faber-Krahn}.
The proof of (ii) follows from direct computation exploiting the
fact that under the assumptions of Theorem \ref{t:main-2}, the
first order derivatives of $g_\eps$ remain uniformly bounded as
$\eps\to 0$.
\end{proof}

As an immediate consequence of Lemma \ref{l:unif-equiv-2} we
obtain the following uniform estimate for the first eigenvalue of
\eqref{eq:Steklov-modificato}.

\begin{lemma} \label{l:delta-1} Suppose that all the assumptions of Theorem
\ref{t:main-2} are satisfied. Then
$$
\liminf_{\eps\to 0} \delta_1^\eps>0
$$
where $\delta_1^\eps$ is defined by \eqref{eq:char-delta-1} with $\Omega_\eps$ in place of $\Omega$.
\end{lemma}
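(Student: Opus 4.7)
The plan is to obtain a uniform positive lower bound for the Rayleigh quotient characterizing $\delta_1^\eps$ by chaining the two uniform estimates from Lemma~\ref{l:unif-equiv-2}. Specifically, in the setting of Theorem~\ref{t:main-2} the first eigenvalue admits the variational characterization
\begin{equation*}
\delta_1^\eps=\inf_{v\in V(\Omega_\eps)\setminus H^2_0(\Omega_\eps)} \frac{\int_{\Omega_\eps}|D^2 v|^2\,dx}{\int_{\partial\Omega_\eps} v_\nu^2\,dS},
\end{equation*}
so it suffices to exhibit a constant $c>0$, independent of $\eps$ for $\eps$ small, such that
\begin{equation*}
\int_{\partial\Omega_\eps} v_\nu^2\,dS \le c\int_{\Omega_\eps}|D^2 v|^2\,dx\qquad\text{for all }v\in V(\Omega_\eps).
\end{equation*}

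I would first fix $v\in V(\Omega_\eps)$ and use the pointwise bound $v_\nu^2\le |\nabla v|^2$ on $\partial\Omega_\eps$ to reduce the boundary integral to $\|\nabla v\|_{L^2(\partial\Omega_\eps)}^2$. Next I would apply Lemma~\ref{l:unif-equiv-2}(ii) componentwise to each partial derivative $\partial v/\partial x_i\in H^1(\Omega_\eps)$, which gives the uniform trace estimate $\|\nabla v\|_{L^2(\partial\Omega_\eps)}\le C\|\nabla v\|_{H^1(\Omega_\eps)}\le C\|v\|_{H^2(\Omega_\eps)}$ with $C$ independent of $\eps$. Finally I would invoke Lemma~\ref{l:unif-equiv-2}(i) to dominate $\|v\|_{H^2(\Omega_\eps)}$ by $\|v\|_{V(\Omega_\eps)}=\big(\int_{\Omega_\eps}|D^2 v|^2\,dx\big)^{1/2}$, again with a constant independent of $\eps$.

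Combining these estimates yields $\int_{\partial\Omega_\eps}v_\nu^2\,dS\le C'\int_{\Omega_\eps}|D^2 v|^2\,dx$ for every $v\in V(\Omega_\eps)$ and every sufficiently small $\eps$, where $C'>0$ does not depend on $\eps$. Taking the infimum in the Rayleigh quotient gives $\delta_1^\eps\ge 1/C'$ for all small $\eps$, whence $\liminf_{\eps\to 0}\delta_1^\eps\ge 1/C'>0$. There is no real obstacle here since the two ingredients of Lemma~\ref{l:unif-equiv-2} do all the work; the only point that needs attention is verifying that the vector-valued trace inequality is a direct componentwise consequence of the scalar statement~(ii), which is immediate.
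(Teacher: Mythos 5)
Your proposal is correct and follows essentially the same route as the paper: the authors also apply Lemma~\ref{l:unif-equiv-2}(ii) to each partial derivative $\partial u/\partial x_i$, chain the resulting uniform trace bound with Lemma~\ref{l:unif-equiv-2}(i) to get $\int_{\partial\Omega_\eps}u_\nu^2\,dS\le C\int_{\Omega_\eps}|D^2u|^2\,dx$ with $C$ independent of $\eps$, and conclude via the variational characterization \eqref{eq:char-delta-1}. No issues.
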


\begin{proof} We denote by $C$ a positive
constant independent of $\eps$ and $u$ which may vary from line to
line. Let $u\in V(\Omega_\eps)$ and for $i\in \{1,\dots,N\}$ let
us apply Lemma \ref{l:unif-equiv-2} to $\frac{\partial
u}{\partial x_i}$:
\begin{equation*}
\left\|\frac{\partial u}{\partial
x_i}\right\|_{L^2(\partial\Omega_\eps)}\le C\left\|\frac{\partial
u}{\partial x_i}\right\|_{H^1(\Omega_\eps)}\le
C\|u\|_{H^2(\Omega_\eps)}\le C
\|u\|_{V(\Omega_\eps)} \qquad \text{for any } u\in V(\Omega_\eps)\ \text{and} \ \eps\in (0,\eps_0] \, .
\end{equation*}
Thus
$$
\int_{\partial\Omega_\eps} u_\nu^2 \, dS\le C \int_{\Omega_\eps}
|D^2 u|^2 dx \qquad \text{for any } u\in V(\Omega_\eps) \
\text{and} \ \eps\in (0,\eps_0] \, .
$$
The proof of the lemma then follows from \eqref{eq:char-delta-1}.
\end{proof}

Next we prove that the family of operators $\{S_{D^2,\eps}\}$
$E$-converges to $S_{D^2,\Gamma}$ as $\eps\to 0$.

\begin{lemma} \label{l:E-conv-bis} Suppose that all the assumptions of Theorem
\ref{t:main-2} are satisfied. Then $S_{D^2,\eps}\EEC
S_{D^2,\Gamma}$ as $\eps\to 0$.
\end{lemma}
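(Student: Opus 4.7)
The plan is to show that if $w_\eps \EC w$ (via the extension-by-zero operator $\mathcal E_0$) then $u_\eps := S_{D^2,\eps} w_\eps$ $E$-converges to $u := S_{D^2,\Gamma} w$. Testing the defining identity
\[
\int_{\Omega_\eps} D^2 u_\eps : D^2 v \, dx = \int_{\partial\Omega_\eps} (w_\eps)_\nu v_\nu \, dS
\]
with $v = u_\eps$ and invoking Lemma~\ref{l:delta-1} together with Cauchy-Schwarz gives $\|u_\eps\|_{V(\Omega_\eps)} \le C\|w_\eps\|_{V(\Omega_\eps)}$, uniformly bounded; Lemma~\ref{l:unif-equiv-2}(i) then upgrades this to a uniform bound on $\|u_\eps\|_{H^2(\Omega_\eps)}$. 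Since $\Omega\subset \Omega_\eps$, restricting to $\Omega$ and extracting a subsequence yields $u_\eps|_\Omega \rightharpoonup \tilde u$ weakly in $H^2(\Omega)$. The target is to prove $\tilde u = u$ (in particular $\tilde u\in H^2_{0,\Gamma}(\Omega)$) and then upgrade to strong $V(\Omega_\eps)$-convergence. The condition $\tilde u = 0$ on $\partial\Omega\setminus\Gamma$ is immediate since $u_\eps = 0$ on $\partial\Omega_\eps$ and the two boundaries coincide there, while $\tilde u|_\Gamma = 0$ follows from
\[
\|u_\eps(\cdot,0)\|_{L^2(W)}^2 \le \|g_\eps\|_\infty \int_{\Omega_\eps\setminus\Omega} |\nabla u_\eps|^2 \, dx = O(\eps^\alpha) \to 0
\]
together with the compact trace from $H^2(\Omega)$ to $L^2(\Gamma)$.

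The main obstacle is the clamping $\tilde u_\nu = 0$ on $\Gamma$, which is the source of the Dirichlet-type degeneration. The starting point is the identity obtained by differentiating $u_\eps(x', g_\eps(x')) \equiv 0$:
\[
\partial_i u_\eps(x', g_\eps(x')) = -\partial_N u_\eps(x', g_\eps(x')) \, \partial_i g_\eps(x'), \qquad i = 1,\dots,N-1.
\]
An analogous fundamental-theorem-of-calculus estimate gives $\|\partial_i u_\eps(\cdot, 0) - \partial_i u_\eps(\cdot, g_\eps(\cdot))\|_{L^2(W)} \to 0$, and since $\tilde u|_\Gamma = 0$ its tangential derivatives vanish, so compact trace yields $\partial_i u_\eps(\cdot, 0) \to 0$ strongly in $L^2(W)$ for $i < N$. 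Hence $\partial_N u_\eps(\cdot, g_\eps(\cdot)) \, \partial_i g_\eps(\cdot) \to 0$ in $L^2(W)$. With $g_\eps = \eps^\alpha b(\cdot/\eps)$, $b$ nonconstant and $Y$-periodic, a Babuska-paradox-type argument in the spirit of \cite{ArLa2} (using the oscillation of $(\partial_i b)(\cdot/\eps)$, its nontrivial mean $\int_Y|\nabla b|^2\,dy>0$, and the uniform $L^2(W)$-bound on $\partial_N u_\eps(\cdot, g_\eps(\cdot))$ supplied by Lemma~\ref{l:unif-equiv-2}(ii) applied to $\partial_N u_\eps$) forces $\partial_N u_\eps(\cdot, g_\eps(\cdot)) \rightharpoonup 0$ weakly in $L^2(W)$. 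A final fundamental-theorem estimate transfers this to $\partial_N u_\eps(\cdot, 0) \rightharpoonup 0$, and strong compact trace identifies $\partial_N \tilde u|_\Gamma = 0$. Therefore $\tilde u\in H^2_{0,\Gamma}(\Omega)$.

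The identification and upgrade go as follows. For arbitrary $\varphi\in H^2_{0,\Gamma}(\Omega)$, the extension $\mathcal E_0\varphi$ vanishes together with its gradient in a neighbourhood of $\Gamma_\eps$ (because $\varphi=\varphi_\nu=0$ on $\Gamma$), so testing the equation for $u_\eps$ with $\mathcal E_0 \varphi$ reduces to
\[
\int_\Omega D^2 u_\eps : D^2 \varphi \, dx = \int_{\partial\Omega\setminus\Gamma} (w_\eps)_\nu \varphi_\nu \, dS.
\]
Passing to the limit (weak $H^2(\Omega)$-convergence on the left; $w_\eps\EC w$ on the right) identifies $\tilde u = u$ by uniqueness, so the whole family (not just a subsequence) converges weakly to $u$. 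Finally, taking $v = u_\eps$ in the defining equation and passing to the limit yields
\[
\|u_\eps\|^2_{V(\Omega_\eps)} = \int_{\partial\Omega_\eps}(w_\eps)_\nu(u_\eps)_\nu \, dS \to \int_{\partial\Omega\setminus\Gamma} w_\nu u_\nu \, dS = \|u\|^2_{H^2_{0,\Gamma}(\Omega)},
\]
and the expansion
\[
\|u_\eps - \mathcal E_0 u\|^2_{V(\Omega_\eps)} = \|u_\eps\|^2_{V(\Omega_\eps)} - 2\int_\Omega D^2 u_\eps : D^2 u \, dx + \|u\|^2_{H^2_{0,\Gamma}(\Omega)},
\]
together with weak-strong pairing on the middle term, gives $\|u_\eps - \mathcal E_0 u\|_{V(\Omega_\eps)} \to 0$, i.e., $u_\eps \EC u$, as required.
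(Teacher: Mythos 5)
Your architecture coincides with the paper's: the uniform bound on $\|u_\eps\|_{V(\Omega_\eps)}$ via Lemma \ref{l:delta-1}, the upgrade to $H^2$ via Lemma \ref{l:unif-equiv-2}, extraction of a weak $H^2(\Omega)$ limit $\tilde u$, identification of the limit by testing with $\mathcal E_0\varphi$ for $\varphi\in H^2_{0,\Gamma}(\Omega)$, and the final norm-convergence argument giving $u_\eps\EC u$. The gap is in the one step that actually produces the degeneration, namely the proof that $\partial_N\tilde u=0$ on $\Gamma$. Your input is the tangency identity $\partial_i u_\eps(x',g_\eps(x'))=-\partial_N u_\eps(x',g_\eps(x'))\,\partial_i g_\eps(x')$, from which you deduce that $\partial_N u_\eps(\cdot,g_\eps(\cdot))\,\partial_i g_\eps(\cdot)\to 0$ in $L^2(W)$ and then claim that the oscillation of $(\partial_i b)(\cdot/\eps)$ forces $\partial_N u_\eps(\cdot,g_\eps(\cdot))\rightharpoonup 0$. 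But $\partial_i g_\eps(x')=\eps^{\alpha-1}(\partial_i b)(x'/\eps)$, so for every $\alpha\in(1,3/2)$ one has $\nabla g_\eps\to 0$ uniformly, and the product tends to zero in $L^2(W)$ for \emph{any} $L^2$-bounded family $\partial_N u_\eps(\cdot,g_\eps(\cdot))$: the statement is vacuous and cannot force anything. Your mechanism works only at $\alpha=1$, where $\partial_i g_\eps=(\partial_i b)(\cdot/\eps)$ genuinely oscillates with positive mean square. Dividing the identity by $\eps^{\alpha-1}$ does not repair it: one would then need $\eps^{1-\alpha}\|\partial_i u_\eps(\cdot,0)\|_{L^2(W)}\to 0$, and compactness of the trace gives $\|\partial_i u_\eps(\cdot,0)\|_{L^2(W)}\to 0$ with no rate, so this product is indeterminate.

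The paper closes exactly this step by forming the vector fields $V^{(i)}_\eps=(0,\dots,-\partial_N u_\eps,\dots,0,\partial_i u_\eps)$, which are bounded in $H^1(\Omega_\eps)^N$ and satisfy $V^{(i)}_\eps\cdot\nu=0$ on $\Gamma_\eps$ (this is precisely your identity), and then invoking \cite[Lemma 4.3 (i)]{CaLuSu}; its conclusion $-\partial_N u(x',0)\,\partial_i b(y')=0$ for all $y'$ holds in the whole range $\alpha<3/2$, and nonconstancy of $b$ then yields $u_\nu=0$ on $\Gamma$. That lemma is a genuinely quantitative two-scale statement and is not recoverable from the soft oscillation argument you sketch, so you must either cite it (as the paper does) or supply a proof of it. The remaining steps of your proposal (including your variant route to $\tilde u|_\Gamma=0$ via the fundamental theorem of calculus, where the paper instead uses the trivial extension in $H^1(\R^N)$) are sound and essentially identical to the paper's.
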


\begin{proof} Let $w_\eps\in V(\Omega_\eps)$ and $w\in
H^2_{0,\Gamma}(\Omega)$ be such that $w_\eps \EC w$ as $\eps\to
0$. Put
$u_\eps:=S_{D^2,\eps}w_\eps$. 
We claim that $\|u_\eps\|_{V(\Omega_\eps)}$ remains bounded as
$\eps\to 0$. Using the same argument as in \eqref{duno} we have
$
 \|u_\eps\|_{V(\Omega_\eps)}^2 \le (\delta_1^\eps)^{-1} \|w_\eps\|_{V(\Omega_\eps)}
\|u_\eps\|_{V(\Omega_\eps)}
$
and this together with Lemma \ref{l:delta-1} proves the claim
being $\|w_\eps\|_{V(\Omega_\eps)}$ bounded as $\eps\to 0$ as a
consequence of the fact that $w_\eps \EC w$.

In particular by Lemma \ref{l:unif-equiv-2} (i) we also have that
$\{(u_\eps)_{|\Omega}\}_{0<\eps\le \eps_0}$ is bounded in $H^2(\Omega)$ as $\eps\to
0$ and hence along a sequence we have that there exists $u\in
H^2(\Omega)$, possibly depending on the choice of the sequence,
such that $(u_\eps)_{|\Omega}\rightharpoonup u$ in $H^2(\Omega)$
as $\eps\to 0$.

By taking the trivial extension of $u_\eps$ to the whole $\R^N$,
one sees that it is weakly convergent in $H^1(\R^N)$ and pointwise
convergent almost everywhere (up to a subsequence) to a function
belonging to $H^1(\R^N)$, whose restriction to $\Omega$ coincides
with $u$. This shows that $u\in H^1_0(\Omega)$.

We claim that $u_\nu=\frac{\partial u}{\partial x_N}=0$ on
$\Gamma$ thus showing that $u\in H^2_{0,\Gamma}(\Omega)$. In order
to prove the claim we apply \cite[Lemma 4.3]{CaLuSu} or more
precisely its extension to the $N$-dimensional case. Indeed, for
any $i\in \{1,\dots,N-1\}$, proceeding as in proof of Theorem 7.4
in \cite{ArLa2}, we define the family of vector fields depending
on the parameter $\eps$
$$
V_\eps^{(i)}=\left(0,\dots,0,-\frac{\partial u_\eps}{\partial
x_N},0, \dots,0,\frac{\partial u_\eps}{\partial x_i}\right)
$$
where the only two nontrivial components are the $i$-th and $N$-th
ones. Since $u_\eps(x',g_\eps(x'))=0$ for any $x'\in W$ it follows
that $V_\eps^{(i)}\cdot \nu=0$ on $\Gamma_\eps$ and hence, since $\alpha<\frac 32$, we can apply
\cite[Lemma 4.3 (i)]{CaLuSu} to this vector field and obtain
$$
-\frac{\partial u}{\partial x_N}(x',0)\frac{\partial b}{\partial
y_i}(y')=0 \qquad \text{for any } x'\in W \ \text{and} \ y'\in
\R^{N-1} \, .
$$
Since $b$ is not constant  we deduce that $\frac{\partial
u}{\partial x_N}\equiv 0$ in $\Gamma$ thus proving the claim.

We now prove that $u$ does not depend on the sequence converging
to zero. Letting $\varphi\in H^2_{0,\Gamma}(\Omega)$ we obtain
\begin{align} \label{eq:mod}
& \int_\Omega D^2u_\eps:D^2\varphi \, dx=\int_{\Omega_\eps}
D^2u_\eps:D^2(\mathcal E_0\varphi) \, dx=\int_{\partial\Omega_\eps}
(w_\eps)_\nu (\mathcal E_0\varphi)_\nu \,
dS=\int_{\partial\Omega\setminus\Gamma} (w_\eps)_\nu \varphi_\nu
\, dS \, .
\end{align}
From the weak convergence $(u_\eps)_{|\Omega}\rightharpoonup u$ in
$H^2(\Omega)$, boundedness of $\{(w_\eps)_{|\Omega}\}_{0<\eps \le \eps_0}$ in
$H^2(\Omega)$ as $\eps\to 0$, and compactness of the trace map, we
can pass to the limit in \eqref{eq:mod} as $\eps\to 0$ along a sequence and obtain
\begin{equation} \label{eq:var-Gamma}
\int_\Omega D^2u:D^2 \varphi\, dx=\int_{\partial\Omega\setminus
\Gamma} w_\nu \varphi_\nu \, dS \qquad \text{for any } \varphi\in
H^2_{0,\Gamma}(\Omega) \, .
\end{equation}
We observe that the first function appearing in the integral at the right hand side of \eqref{eq:var-Gamma} is exactly $w$ since, by Lemma \ref{l:unif-equiv-2} (i), $w_\eps\EC w$ implies $(w_\eps)_{|\Omega}\to w$ in $H^2(\Omega)$.

The variational identity \eqref{eq:var-Gamma} proves that $u\in H^2_{0,\Gamma}(\Omega)$ is a weak solution
of the problem
\begin{equation} \label{eq:modificato}
\begin{cases}
\Delta^2 u=0, & \qquad \text{in } \Omega\, , \\
u=0, & \qquad \text{on } \partial \Omega\, , \\
u_\nu=0, & \qquad \text{on } \Gamma \, , \\
\Delta u-K(x)u_\nu=w_\nu, & \qquad \text{on } \partial\Omega\setminus\Gamma \, .
\end{cases}
\end{equation}
Since it is easy to check that this problem admits a
unique solution, we deduce that $u$ depends only on $w$ and not on
the sequence converging to zero. Moreover, we also have that
$u=S_{D^2,\Gamma}w$.

Then we prove that $u_\eps \EC u$. Since $\mathcal E_0u$ vanishes
outside $\Omega$ and $(u_\eps)_{|\Omega}\rightharpoonup u$ in
$H^2(\Omega)$, we have
\begin{align} \label{eq:strong-conv}
& \|u_\eps-\mathcal E_0u\|_{V(\Omega_\eps)}^2=\int_{\Omega_\eps}
|D^2u_\eps|^2 dx-2\int_{\Omega} D^2 u_\eps: D^2 u \,
dx+\int_\Omega |D^2 u|^2 dx\\
\notag & \qquad =\int_{\Omega_\eps} |D^2u_\eps|^2 dx-\int_\Omega
|D^2 u|^2 dx+o(1) \, .
\end{align}
Now we observe that
\begin{equation} \label{eq:D^2 u}
\int_{\Omega_\eps} |D^2u_\eps|^2 dx=\int_{\partial\Omega_\eps}
(w_\eps)_\nu (u_\eps)_\nu \, dS \, .
\end{equation}
We prove that
\begin{equation} \label{eq:convbordo}
\int_{\partial\Omega_\eps} (w_\eps)_\nu (u_\eps)_\nu \, dS \to
\int_{\partial\Omega} w_\nu u_\nu \,
dS=\int_{\partial\Omega\setminus\Gamma} w_\nu u_\nu \, dS \, .
\end{equation}
Indeed we have
\begin{align*}
& \left|\int_{\partial\Omega_\eps} (w_\eps)_\nu (u_\eps)_\nu \,
dS-\int_{\partial\Omega} w_\nu u_\nu \, dS\right|\\
& \qquad \le \left|\int_{\partial\Omega_\eps} (w_\eps-\mathcal E_0
w)_\nu (u_\eps)_\nu \, dS\right|+\left|\int_{\partial\Omega_\eps}
(\mathcal E_0 w)_\nu (u_\eps)_\nu \, dS -\int_{\partial\Omega} w_\nu
u_\nu \, dS\right|\\
& \qquad \le(\delta_1^\eps)^{-1} \|w_\eps-\mathcal
E_0 w\|_{V(\Omega_\eps)}
\|u_\eps\|_{V(\Omega_\eps)}+\left|\int_{\partial\Omega} w_\nu
(u_\eps)_\nu \, dS -\int_{\partial\Omega} w_\nu u_\nu \,
dS\right|=o(1) \quad \text{as } \eps \to 0
\end{align*}
since $(\delta_1^\eps)^{-1}$ is bounded as $\eps\to 0$ in view of
Lemma \ref{l:delta-1}, $\|u_\eps\|_{V(\Omega_\eps)}$ is bounded as
$\eps\to 0$ as already observed, $\|w_\eps-\mathcal
E_0 u\|_{V(\Omega_\eps)}\to 0$ as $\eps\to 0$ since $w_\eps\EC w$ and
$(u_\eps)_\nu \to u_\nu$ in $L^2(\partial\Omega)$ since
$(u_\eps)_{|\Omega}\rightharpoonup u$ in $H^2(\Omega)$ as $\eps\to
0$. This proves the validity of \eqref{eq:convbordo}.

Finally we observe that, by \eqref{eq:var-Gamma}, we also have
\begin{equation} \label{eq:var-Gamma-2}
\int_\Omega |D^2u|^2 dx=\int_{\partial\Omega\setminus\Gamma} w_\nu
u_\nu \, dS \, .
\end{equation}
Combining \eqref{eq:D^2 u}, \eqref{eq:convbordo} and
\eqref{eq:var-Gamma-2} we infer that $\int_{\Omega_\eps} |D^2
u_\eps|^2 dx\to \int_\Omega |D^2 u|^2 dx$ as $\eps\to 0$ which
combined with \eqref{eq:strong-conv} proves that $u_\eps \EC u$.

We just proved that $S_{D^2,\eps}w_\eps \EC S_{D^2,\Gamma} w$ as
$\eps\to 0$ and this, according to Definition
\ref{d:EE-convergence-ArCaLo}, implies $S_{D^2,\eps}\EEC
S_{D^2,\Gamma}$ as $\eps\to 0$.
\end{proof}

\textit{End of the proof of Theorem \ref{t:main-2}. }
Having already proved Lemma~\ref{l:E-conv-bis}, it suffices to prove the validity of the compactness property stated in Definition~\ref{d:CC-convergence-ArCaLo}.
 Let $w_\eps \in
V(\Omega_\eps)$ and $\|w_\eps\|_{V(\Omega_\eps)}=1$ for any
$\eps>0$. As in Lemma \ref{l:E-conv-bis} we put
$u_\eps=S_{D^2,\eps} w_\eps$. By Lemma \ref{l:unif-equiv-2} (i) we
may assume that, along a sequence converging to zero, there exists
$w\in H^2(\Omega)$ such that $(w_\eps)_{|\Omega}\rightharpoonup w$
in $H^2(\Omega)$. Proceeding as in the proof of Lemma
\ref{l:E-conv-bis} we obtain
\begin{equation*}
\|u_\eps\|_{V(\Omega_\eps)}\le (\delta_1^\eps)^{-1}
\|w_\eps\|_{V(\Omega_\eps)}\le C \qquad \text{for any } \eps\in
(0,\eps_0]
\end{equation*}
for some constants $\eps_0$ and $C$ independent of $\eps$. By
Lemma \ref{l:unif-equiv-2} (i) we have that
$\|u_\eps\|_{H^2(\Omega_\eps)}\le C$ for any $\eps\in (0,\eps_0]$
and hence $\{(u_\eps)_{|\Omega}\}_{0<\eps\le \eps_0}$ is bounded
in $H^2(\Omega)$; in particular there exists $\widetilde u\in
H^2(\Omega)$ such that $(u_\eps)_{|\Omega}\rightharpoonup
\widetilde u$ in $H^2(\Omega)$ as $\eps\to 0$ along a sequence.
Proceeding as in the proof of Lemma \ref{l:E-conv-bis} one can
show that $\widetilde u\in H^2_{0,\Gamma}(\Omega)$ and it solves
\eqref{eq:modificato}. In particular, we have that $\widetilde
u=u:=S_{D^2,\Gamma}w$.

We claim that $u_\eps\EC u$ as $\eps\to 0$ along a sequence.
Proceeding exactly as in the proof of Lemma \ref{l:E-conv-bis} one
can verify that \eqref{eq:strong-conv}, \eqref{eq:D^2 u} and
\eqref{eq:var-Gamma-2} still hold true here. It remains to prove
\eqref{eq:convbordo} but here we cannot exploit the
$E$-convergence of $w_\eps$ as we did in the proof of Lemma
\ref{l:E-conv-bis}. Since $\alpha\ge 1$ we have that
$\partial\Omega_\eps$ is Lipschitz uniformly with respect to
$\eps$ converging to zero. If we look at the proof of
\cite[Theorem 5.5, Chapter 2]{Necas} with $k=1$ and $p=2$, we
deduce the following uniform estimate
\begin{equation} \label{eq:unif-v}
\|v\|_{H^{1/2}(\partial\Omega_\eps)}\le C\|v\|_{H^1(\Omega_\eps)}
\qquad \text{for any } v\in H^1(\Omega_\eps)
\end{equation}
where $C$ is a positive constant independent of $v$ and $\eps$
small. Introducing a $\eps$-dependent family of extension linear
operators from $H^1(\Omega_\eps)$ to $H^1(W\times (-1,+\infty))$
whose operatorial norms remain bounded as $\eps\to 0$ (we recall
that $\alpha\ge 1$ in the definition of $g_\eps$), we deduce that
if $\{v_\eps\}_{0<\eps\le \eps_0}$ is a family of functions whose
$H^1(\Omega_\eps)$-norms remain bounded as $\eps\to 0$ then
\begin{equation} \label{eq:residuo}
\int_{\partial\Omega_\eps\setminus (\Gamma_\eps\cup
\partial\Omega)} v_\eps^2 \, dS\to 0 \qquad \text{as } \eps\to 0
\end{equation}
since the surface measure of $\partial\Omega_\eps\setminus
(\Gamma_\eps\cup \partial\Omega)$ converges to zero as $\eps\to
0$.

Applying \eqref{eq:residuo} to $\frac{\partial w_\eps}{\partial
x_i}$ and $\frac{\partial u_\eps}{\partial x_i}$ and recalling
that $\|u_\eps\|_{H^2(\Omega_\eps)}, \|w_\eps\|_{H^2(\Omega_\eps)}$
remain bounded as $\eps\to 0$, we infer that
\begin{equation} \label{eq:residuo-2}
\int_{\partial\Omega_\eps\setminus (\Gamma_\eps\cup
\partial\Omega)} (w_\eps)_\nu (u_\eps)_\nu \, dS\to 0 \qquad
\text{as } \eps\to 0 \, .
\end{equation}
On the other hand, by \eqref{eq:unif-v} applied to $\frac{\partial w_\eps}{\partial
x_i}$ and $\frac{\partial u_\eps}{\partial x_i}$ combined with the compact embedding \\
 $H^{1/2}(\partial\Omega\setminus \Gamma)\subset L^2(\partial\Omega\setminus \Gamma)$, we have that
\begin{equation} \label{eq:partial-Omega}
\int_{\partial\Omega\setminus \Gamma} (w_\eps)_\nu (u_\eps)_\nu \, dS
\to \int_{\partial\Omega\setminus \Gamma} w_\nu u_\nu \, dS
\end{equation}
along a sequence converging to zero.

It remains to consider $\int_{\Gamma_\eps} (w_\eps)_\nu
(u_\eps)_\nu \, dS$. By Lemma \ref{l:delta-1}, the fact that
$\alpha\ge 1$ and $b\in C^{1,1}(\R^{N-1})$, we obtain for $\eps$ small
enough
\begin{align} \label{eq:st-Gamma}
& \left|\int_{\Gamma_\eps} (w_\eps)_\nu (u_\eps) \, dS\right|\le \left(\int_{\Gamma_\eps} (w_\eps)_\nu^2 \, dS\right)^{1/2}
 \left(\int_{\Gamma_\eps} (u_\eps)_\nu^2 \, dS\right)^{1/2} \\
 \notag &  \le (\delta_1^\eps)^{-1/2} \|w_\eps\|_{V(\Omega_\eps)}  \left(\int_{\Gamma_\eps} (u_\eps)_\nu^2 \, dS\right)^{1/2}
 \le C\left(\sum_{i=1}^N \int_{\Gamma_\eps} \left|\frac{\partial u_\eps}{\partial x_i}\right|^2 dS\right)^{1/2}
 \\
\notag  & = C\left(\sum_{i=1}^N \int_{W} \left|\tfrac{\partial
u_\eps}{\partial x_i}(x',g_\eps(x'))\right|^2 \!\!\!
\sqrt{1+|\nabla_{x'}g_\eps(x')|^2} \, dx' \right)^{\!\!\! 1/2}
\!\!\! \le C\left(\sum_{i=1}^N \int_{W} \left|\tfrac{\partial
u_\eps}{\partial x_i}(x',g_\eps(x'))\right|^2  dx' \right)^{\!\!\!
1/2}
\end{align}
where $C$ denotes a constant independent of $\eps$ which may vary
in every estimate.

We claim that $\int_{W} \left|\tfrac{\partial u_\eps}{\partial
x_i}(x',g_\eps(x'))\right|^2  dx'\to 0$ as $\eps\to 0$. Indeed, we
have
\begin{align} \label{eq:st-Gamma-2}
& \int_{W} \left|\tfrac{\partial u_\eps}{\partial
x_i}(x',g_\eps(x'))\right|^2  dx' \le 2 \int_{W}
\left|\tfrac{\partial u_\eps}{\partial
x_i}(x',g_\eps(x'))-\tfrac{\partial u_\eps}{\partial
x_i}(x',0)\right|^2  dx' +2 \int_{W} \left|\tfrac{\partial
u_\eps}{\partial x_i}(x',0)\right|^2  dx'  \, .
\end{align}
The second term at the right hand side of \eqref{eq:st-Gamma-2} converges to zero as one can verify combining the fact that $(u_\eps)_{|\Omega}\rightharpoonup u$ in $H^2(\Omega)$ and $u\in H^2_{0,\Gamma}(\Omega)$, with compactness of the trace map.
With the first term at the right hand side of \eqref{eq:st-Gamma-2} we proceed in this way
\begin{align} \label{eq:st-Gamma-3}
& \int_{W} \left|\tfrac{\partial u_\eps}{\partial x_i}(x',g_\eps(x'))-\tfrac{\partial u_\eps}{\partial x_i}(x',0)\right|^2  dx'
=\int_W \left|\int_0^{g_\eps(x')} \tfrac{\partial^2 u_\eps}{\partial x_N \partial x_i}(x',x_N)\, dx_N\right|^2 dx'
\\
\notag & \le \int_W |g_\eps(x')| \left|\int_0^{g_\eps(x')}
\left|\tfrac{\partial^2 u_\eps}{\partial x_N \partial
x_i}(x',x_N)\right|^2 dx_N\right|dx'
\le \eps^\alpha \|b\|_{L^\infty(\R^{N-1})} \int_{\Omega_\eps \setminus \Omega} \left|\tfrac{\partial^2 u_\eps}{\partial x_N \partial x_i}\right|^2 dx\\
\notag & \le  \eps^\alpha \|b\|_{L^\infty(\R^{N-1})} \|u_\eps\|_{H^2(\Omega_\eps)}^2\to 0 \qquad \text{as } \eps\to 0 \, .
\end{align}
The proof of the claim follows by combining \eqref{eq:st-Gamma-2} and \eqref{eq:st-Gamma-3}. In turn, the claim combined with \eqref{eq:st-Gamma} implies
\begin{equation} \label{eq:tb}
\int_{\Gamma_\eps} (w_\eps)_\nu (u_\eps)_\nu \, dS\to 0 \qquad \text{as } \eps \to 0 \, .
\end{equation}
Now, combining \eqref{eq:residuo-2}, \eqref{eq:partial-Omega} and \eqref{eq:tb}, we obtain
\begin{equation} \label{eq:conv-bordo-finale}
\int_{\partial\Omega_\eps} (w_\eps)_\nu (u_\eps)_\nu \, dS \to \int_{\partial\Omega\setminus \Gamma} w_\nu u_\nu \, dS
\end{equation}
as $\eps\to 0$ along a sequence.
Since $u_\eps$ satisfies \eqref{eq:D^2 u} and $u\in H^2_{0,\Gamma}(\Omega)$ is a weak solution of \eqref{eq:modificato}, from \eqref{eq:conv-bordo-finale}
we obtain
\begin{align} \label{eq:conv-norm-finale}
\int_{\Omega_\eps} |D^2 u_\eps|^2 dx=\int_{\partial\Omega_\eps} (w_\eps)_\nu (u_\eps)_\nu \, dS \to \int_{\partial\Omega\setminus \Gamma} w_\nu u_\nu \, dS=
\int_\Omega |D^2 u|^2 dx
\end{align}
as $\eps\to 0$ along a sequence.

Since \eqref{eq:strong-conv} still holds true for $u_\eps$, by
\eqref{eq:conv-norm-finale}, it follows that $u_\eps \EC u$ as
$\eps\to 0$ along a sequence. We have proved that
$\{S_{D^2,\eps}w_\eps\}$ is precompact in the sense of Definition
\ref{d:precompact-ArCaLo}. The proof of the theorem is now a
consequence of Lemma \ref{l:E-conv-bis} and Definition
\ref{d:CC-convergence-ArCaLo}.

\subsection{Case $\alpha =3/2$: strange term}\label{strangetermsteklov}




Assume that $\Omega$, $\Omega_{\eps }$, $\Gamma$ and
$\Gamma_{\eps}$ are as above.  We set   $\Sigma_{\eps}=
\partial\Omega_{\eps }\setminus \Gamma_{\eps}$ and  $\Sigma =\partial \Omega \setminus \Gamma$. In this
subsection we discuss the following modified Steklov problem:
\begin{equation} \label{eq:Steklov-modificatoDIR}
\begin{cases}
\Delta^2 u=0, & \qquad \text{in } \Omega_{\eps} \, , \\
u=0, & \qquad \text{on } \partial\Omega_{\eps } \, , \\
u_\nu=0, & \qquad \text{on } \Sigma_{\eps} \, , \\
\Delta u-K(x)u_\nu=\delta u_\nu, & \qquad \text{on } \Gamma_{\eps
} \, .
\end{cases}
\end{equation}

The third condition $u_\nu=0$ on $\Sigma_{\eps}$ is used here to
simplify  our arguments as well as to avoid a few annoying
technicalities which seem not particularly interesting for the
specific case under discussion.

The eigenvalues of problem   \eqref{eq:Steklov-modificatoDIR} are
the reciprocals of the eigenvalues of a Navier-to-Neumann     map   $N_{\Omega_{\eps}}$   that
we now define. Namely, given  $f \in L^2(\Gamma_{\eps })$ consider
the solution $u_f=u$ to the following modified Navier problem
\begin{equation} \label{eq:Steklov-modificato-pier}
\begin{cases}
\Delta^2 u=0, & \qquad \text{in } \Omega_{\eps} \, , \\
u=0, & \qquad \text{on } \partial\Omega_{\eps } \, , \\
u_\nu=0,  & \qquad \text{on }  \Sigma_{\eps} \, , \\
\Delta u-K(x)u_\nu=f, & \qquad \text{on } \Gamma_{\eps } \, ,
\end{cases}
\end{equation}
which means that $u\in H^2_{0,\Sigma_{\eps}}(\Omega_{\eps } )\cap
H^1_0(\Omega_{\eps } )$ is the solution to the weak problem
\begin{equation}
\label{crit} \int_{\Omega_{\eps  }}D^2u:D^2\varphi \, dx
=\int_{\Gamma_{\eps } } f\frac{\partial \varphi }{\partial \nu }
\, dS \, ,\ \ \ \forall \  \varphi\in
H^2_{0,\Sigma_{\eps}}(\Omega_{\eps } )\cap H^1_0(\Omega_{\eps } ).
\end{equation}
Here $H^2_{0,\Sigma_{\eps}}(\Omega_{\eps })$ is defined as in \eqref{eq:H^2-0-Gamma}  with $\Omega $ replaced by $\Omega_{\eps}$ and  $\Gamma $ replaced by $\Sigma_{\eps} $. Then we consider the function
$N_{\Omega_{\eps }} $ from $L^2(\Gamma_{\eps } )$ to itself
defined by $N_{\Omega_{\eps } }f=\frac{\partial u_{f}}{\partial
\nu }$ for all $f\in  L^2(\Gamma_{\eps } )$ where $u_f$ is the solution
to \eqref{crit}. Here we analyse the behaviour of the map
$N_{\Omega_{\eps }}$ as $\eps \to 0$ in the case $\alpha =3/2$.

The limiting problem involves a strange factor $\gamma $,
which is called `strange curvature' in  \cite{ArLa2}. We recall that
$\gamma $ is defined by
 \begin{equation}\label{strangecurvature}
 \gamma=\int_{Y\times (-\infty , 0)  }|D^2
V|^2dy,
\end{equation}
 and the function $V$ is $Y$-periodic in the variables $y'$ and  satisfies the following microscopic problem
 \begin{equation}
\left\{
 \begin{array}{ll}
 \Delta^2V=0,\  & \hbox{\rm in } Y\times (-\infty, 0), \\
 V(y', 0)=b(y'),\ & \hbox{\rm on } Y, \\
 \frac{\partial^2 V}{\partial y_N^2}(y',0)=0,\  &\hbox{\rm on } Y.
  \end{array}
\right.
 \end{equation}

 It turns out that the limiting
functional for the maps $N_{\Omega_{\eps}}$ as $\eps\to 0$ is the map $S_{\Omega }$ defined as follows. Given $f
\in L^2(\Gamma )$, consider the solution $\tilde u_f=u$ to the problem
\begin{equation} \label{eq:Steklov-modificatoOmega}
\begin{cases}
\Delta^2 u=0, & \qquad \text{in } \Omega  \, , \\
u=0, & \qquad \text{on } \partial\Omega  \, , \\
u_\nu =0,  & \qquad \text{on }  \Sigma \, , \\
\Delta u-K(x)u_\nu+\gamma u_\nu=f, & \qquad \text{on } \Gamma\, .
\end{cases}
\end{equation}

 Then  $S_{\Omega }$ is the map from  $L^2(\Gamma  )$ to itself defined by
$S_{\Omega }f=\frac{\partial \tilde u_f}{\partial \nu}$ for all $f
\in L^2(\Gamma)$. Since the trace operator is compact we have that
both operators $N_{\Omega_{\eps }} $ and $S_{\Omega } $ are
compact.

We now consider a family of operators  $E_\eps$ from  $L^2(\Gamma )$ to
$L^2(\Gamma_{\eps}  )$ defined by  $E_\eps(f)(x',g_{\eps}(x'))=f(x',0)$
for all $x'\in W$ and all  $f\in L^2(\Gamma)$. It is obvious
that if $\alpha =3/2$ then $g_{\eps }$ converges to zero uniformly
together with its first derivatives hence
\begin{equation}
\| E_\eps f\|_{L^2(\Gamma_{\eps })} \to \|f\|_{L^2(\Gamma)}
\end{equation}
as $\eps \to 0$, which means that condition
\eqref{eq:norm-convergence} is satisfied. With reference to these
operators $E_\eps$, the following theorem holds.

 \begin{theorem}\label{thmcritstek}
 If $\alpha =3/2$ then  $N_{\Omega_{\eps }}\CC S_{\Omega }$ as $\eps \to 0$, hence the spectrum of problem \eqref{eq:Steklov-modificatoDIR}
 converges to the spectrum of \eqref{eq:Steklov-modificatoOmega} in the sense of Theorem~\ref{vaithm}.
 \end{theorem}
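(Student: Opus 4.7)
The plan is to apply the Vainikko-type compact convergence framework of Theorem~\ref{vaithm} with ${\mathcal H}_\eps = L^2(\Gamma_\eps)$, ${\mathcal H}_0 = L^2(\Gamma)$ and the operators $E_\eps$ defined in the statement. Two things have to be shown: (a) $N_{\Omega_\eps}\EEC S_\Omega$, i.e.\ $f_\eps \EC f$ implies $N_{\Omega_\eps} f_\eps \EC S_\Omega f$; and (b) $E$-precompactness of $\{N_{\Omega_\eps} f_\eps\}$ whenever $\|f_\eps\|_{L^2(\Gamma_\eps)}$ is uniformly bounded. For the a priori estimate, given $f_\eps$ bounded in $L^2(\Gamma_\eps)$, let $u_\eps=u_{f_\eps}$ solve \eqref{crit}. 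Testing with $u_\eps$ and combining with the adaptations of Lemmas~\ref{l:unif-equiv-2} and~\ref{l:delta-1} to the space $H^2_{0,\Sigma_\eps}(\Omega_\eps)\cap H^1_0(\Omega_\eps)$ (which remain valid because the first derivatives of $g_\eps$ are uniformly bounded for $\alpha=3/2$), one gets $\|u_\eps\|_{H^2(\Omega_\eps)}\le C\|f_\eps\|_{L^2(\Gamma_\eps)}$. Extending $u_\eps$ by zero above $\Gamma$ (allowed since $u_\eps$ and $(u_\eps)_\nu$ vanish on $\Sigma_\eps$) and restricting to $\Omega$, I extract along a subsequence a weak $H^2$-limit $\tilde u \in H^2_{0,\Sigma}(\Omega)\cap H^1_0(\Omega)$.

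The core step is the identification $\tilde u = \tilde u_f$, i.e.\ that $\tilde u$ solves \eqref{eq:Steklov-modificatoOmega}. Following the scheme used in \cite{ArLa2} for the Babu\v{s}ka paradox, I would construct, for each test function $\varphi\in H^2_{0,\Sigma}(\Omega)\cap H^1_0(\Omega)$, an oscillating test function of the form
\begin{equation*}
\Phi_\eps(x) = \mathcal E_0\varphi(x) + \eps^{3/2}\, W(x/\eps)\, \theta_\eps(x)\, \varphi_\nu(x',0),
\end{equation*}
where $W$ is obtained from the microscopic corrector $V$ by subtracting its linear profile as $y_N\to -\infty$ (so that $\Phi_\eps$ fits the Navier conditions on $\Sigma_\eps$ and matches the oscillating trace on $\Gamma_\eps$), and $\theta_\eps$ is a smooth cut-off localising in a boundary layer of thickness $O(\eps)$ near $\Gamma$. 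Plugging $\Phi_\eps$ into \eqref{crit} and expanding $D^2 u_\eps : D^2\Phi_\eps$, the macroscopic part yields $\int_\Omega D^2\tilde u : D^2\varphi\, dx$ in the limit, while the microscopic boundary-layer part, after the rescaling $y=x/\eps$ and $Y$-periodicity of $V$, produces the integral $\int_Y\!\!\int_{-\infty}^0 |D^2 V|^2 dy_N dy'$ times the macroscopic density $\tilde u_\nu\,\varphi_\nu$, that is, precisely $\gamma\int_\Gamma \tilde u_\nu\,\varphi_\nu\, dS$ with $\gamma$ as in \eqref{strangecurvature}; the right-hand side $\int_{\Gamma_\eps} f_\eps (\Phi_\eps)_\nu\, dS$ converges to $\int_\Gamma f\varphi_\nu\, dS$ by the very definition of the operators $E_\eps$. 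This gives the weak formulation of \eqref{eq:Steklov-modificatoOmega}, hence $\tilde u=\tilde u_f$, and (by uniqueness) the weak convergence of the whole sequence.

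Finally, to pass from weak convergence of $u_\eps$ to $E$-convergence of the traces $(u_\eps)_\nu = N_{\Omega_\eps} f_\eps$ to $\tilde u_\nu = S_\Omega f$, I would use an energy identity: testing \eqref{crit} with $u_\eps$ itself and comparing with the limit variational formulation gives
\begin{equation*}
\int_{\Omega_\eps}|D^2 u_\eps|^2 dx \;\longrightarrow\; \int_\Omega |D^2\tilde u|^2 dx + \gamma\int_\Gamma \tilde u_\nu^2\, dS = \int_\Gamma f\,\tilde u_\nu\, dS,
\end{equation*}
which by the uniform trace estimate of Lemma~\ref{l:unif-equiv-2}(ii) applied to the first derivatives upgrades to strong $L^2(\Gamma_\eps)$ convergence $\|(u_\eps)_\nu - E_\eps \tilde u_\nu\|_{L^2(\Gamma_\eps)}\to 0$, establishing (a); applying the same argument to an arbitrary bounded sequence yields (b). The main obstacle is the rigorous construction and analysis of the corrector $\Phi_\eps$: one has to guarantee $\Phi_\eps\in H^2_{0,\Sigma_\eps}(\Omega_\eps)\cap H^1_0(\Omega_\eps)$, and show that every remainder term (arising from the mismatch between $\Gamma$ and $\Gamma_\eps$, the derivatives falling on $\theta_\eps$, the non-periodic macroscopic factor $\varphi_\nu$, and the mixed Hessian cross-terms $D^2(\mathcal E_0\varphi):D^2(\eps^{3/2} W(\cdot/\eps)\theta_\eps\varphi_\nu)$) is $o(1)$ in $L^1(\Omega_\eps)$, so that only the macroscopic energy and the strange term $\gamma$ survive in the limit.
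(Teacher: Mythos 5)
Your skeleton (uniform $H^2$ bound, weak limit in $H^2(\Omega)$, identification of the limit problem by testing, then convergence of the traces) matches the paper's, but the crucial step --- where $\gamma$ emerges --- is handled very differently, and as you present it there is a genuine gap. The paper builds no oscillating corrector: it tests \eqref{crit} with the pulled-back function $\varphi_\eps=\varphi\circ\Phi_\eps$, where $\Phi_\eps$ is the diffeomorphism defined through \eqref{accatris}, splits the energy into the part on $K_\eps$ (where $\Phi_\eps$ is the identity) and the boundary layer $\Omega_\eps\setminus K_\eps$, and invokes \cite[Theorem~8.72]{ArLa2} for the limit \eqref{trico2bis} of the boundary-layer term. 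Your route instead requires redoing that entire homogenization analysis, which you acknowledge you have not carried out; worse, the ansatz $\mathcal E_0\varphi+\eps^{3/2}W(x/\eps)\theta_\eps\varphi_\nu$ fails at the outset, because the extension by zero $\mathcal E_0\varphi$ does not belong to $H^2(\Omega_\eps)$ when $\varphi_\nu\neq 0$ on $\Gamma$ (its normal derivative jumps across $\Gamma$ by the $O(1)$ quantity $\varphi_\nu$, which the $O(\eps^{1/2})$ normal derivative of the corrector term cannot cancel), so your candidate test function is not admissible in \eqref{crit}. Repairing this forces you to replace $\mathcal E_0\varphi$ by a genuine $H^2$ transplantation of $\varphi$ to $\Omega_\eps$, which is essentially $\varphi\circ\Phi_\eps$ --- i.e.\ the paper's construction.

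The final step also rests on a false premise. The limit energy is $\int_\Omega|D^2\tilde u|^2\,dx+\gamma\int_\Gamma\tilde u_\nu^2\,dS$, strictly larger than $\int_\Omega|D^2\tilde u|^2\,dx$ whenever $\tilde u_\nu\not\equiv 0$ on $\Gamma$; part of the strange-term energy persists in an $O(\eps)$-strip inside $\Omega$, so $u_\eps$ does \emph{not} converge strongly in $H^2(\Omega)$ (compare Theorem~\ref{trico}(ii), which only asserts weak $H^2$ convergence in the critical case). Consequently no ``upgrade'' of the energy identity through the trace inequality of Lemma~\ref{l:unif-equiv-2}(ii) can yield strong convergence of $(u_\eps)_\nu$; a continuity estimate for the trace does not convert convergence of Hessian norms into convergence of boundary gradients in any case. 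The correct and simpler route, which is what the paper uses in \eqref{crit5}--\eqref{crit7}, is that weak $H^2(\Omega)$ convergence already gives $\nabla u_\eps(\cdot,0)\to\nabla \tilde u(\cdot,0)$ strongly in $L^2(W)$ by compactness of the trace operator, while the Fundamental Theorem of Calculus gives $\|\nabla u_\eps(\cdot,g_\eps(\cdot))-\nabla u_\eps(\cdot,0)\|_{L^2(W)}=O(\eps^{\alpha/2})$; together these yield the $E$-convergence of the normal derivatives, and the same argument applied to an arbitrary bounded family gives the precompactness.
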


\begin{proof} Let $f_{\eps }\in L^2(\Gamma_{\eps})$ be $E$-convergent to $f\in L^2(\Gamma )$ as $\eps \to 0$ in the sense of Definition \ref{d:E-convergence-ArCaLo}. Let $u_{\eps }$
be the  solution to problem \eqref{crit} with $f$ replaced by
$f_{\eps}$. Using $u_{\eps}$
 as a test function in \eqref{crit}, we get that $\|u_{\eps}\|_{H^2(\Omega_{\eps})}$ is uniformly bounded, hence there exists
 $u\in H^2_{0, \Sigma}(\Omega)\cap H^1_0(\Omega)$ such that  possibly passing to a subsequence, $u_{\eps }\rightharpoonup u$ weakly in $H^2(\Omega )$
 and strongly  in $H^1(\Omega)$.  We now prove that  $u$ satisfies  problem \eqref{eq:Steklov-modificatoOmega}.

To do so, following~\cite[\S~8.1]{ArLa2} we define a diffeomorphism $\Phi_{\eps}$ from
$\overline\Omega_{\eps }$ onto $\overline\Omega$.
 Namely,  $ \Phi_{\eps }(x',x_N)=(x',x_N- h_{\eps }(x',x_N))$ for all $(x',x_N)\in
\overline\Omega_{\eps }$ where
\begin{equation}
\label{accatris}
 h_{\eps }(x', x_N)=\left\{\begin{array}{ll}0,& \ \ {\rm if }\ -1\le x_N\le -\eps \vspace{2mm}  \\
g_{\eps }(x')\left( \frac{x_N+\eps }{g_{\eps }(x')+\eps }
\right)^{3},& \ \ {\rm if }\  -\eps<x_N\le g_{\eps }(x')\, .
\end{array} \right.
\end{equation}
It is convenient to denote  by  $ K_{\eps}$ the set where
$\Phi_{\eps}$ coincides with the identity, that is $K_{\eps} =\{
(x', x_N)\in W\times \R:\ -1<x_N\le-\eps \}$.

Let $\varphi \in H^2_{0, \Sigma}(\Omega)\cap H^1_0(\Omega )$ be
fixed.  We set $\varphi _{\eps }=\varphi \circ \Phi_{\eps }$ and
we note that $\varphi_{\eps }\in
H^2_{0,\Sigma_{\eps}}(\Omega_{\eps } )\cap H^1_0(\Omega_{\eps }
)$. Using $\varphi_{\eps }$ as a test function in \eqref{crit} we
get

\begin{equation}
\label{crit1} \int_{K_{\eps} }D^2 u_{\eps} : D^2  \varphi \,
dx+\int_{\Omega_{\eps}\setminus K_{\eps} }D^2 u_{\eps} : D^2
\varphi_{\eps} \, dx= \int_{\Omega_{\eps }}D^2u_{\eps
}:D^2\varphi_{\eps } \, dx =\int_{\Gamma_{\eps } } f_{\eps
}\frac{\partial \varphi_{\eps } }{\partial \nu } \, dS \, .
\end{equation}
It is proved in \cite[Theorem~8.72]{ArLa2} that
\begin{equation} \label{trico2bis}
\lim_{\eps \to 0}\int_{\Omega_{\eps}\setminus K_{\eps}} D^2
u_{\eps} : D^2  \varphi_{\eps} \, dx= \gamma \int_W\frac{\partial
u}{\partial x_N}(x', 0) \frac{\partial \varphi}{\partial
x_N}(x',0)\, dx' \, ,
\end{equation}
where $\gamma$ is as in the statement. Note that a careful inspection of the arguments in \cite[\S~8]{ArLa2} shows that the  definition of
$\gamma$ is not affected by our Dirichlet boundary conditions on
$\Sigma_{\eps}$. Thus
\begin{equation}
\label{crit2bis} \lim_{\eps \to 0} \int_{\Omega_{\eps }}
D^2u_{\eps }:D^2\varphi_{\eps} \, dx= \int_{\Omega}
D^2u:D^2\varphi \, dx+\gamma  \int_W\frac{\partial u}{\partial
x_N}(x',0)\frac{\partial \varphi}{\partial x_N}(x',0)\, dx' \, .
\end{equation}
We now consider the integral in the right-hand
side of \eqref{crit1}. We have

\begin{align}\label{crit3}
& \left|\int_{\Gamma_{\eps } } f_{\eps }\frac{\partial
\varphi_{\eps } }{\partial \nu } \, dS -
\int_{\Gamma}f\frac{\partial \varphi }{\partial \nu } \, dS\right|
\le \int_{\Gamma_{\eps } }   \left| f_{\eps } -E_\eps f\right| \left|
\frac{\partial \varphi_{\eps } }{\partial \nu }\right| dS+\left|
\int_{\Gamma_{\eps } } E_\eps f\frac{\partial \varphi_{\eps } }{\partial
\nu } \, dS - \int_{\Gamma} f\frac{\partial \varphi
}{\partial \nu } \, dS \right| \\
\notag & \le C\|  f_{\eps } -E_\eps f\|_{L^2(\Gamma_{\eps })}+
\left|\int_{\Gamma_{\eps }}E_\eps f \frac{\partial \varphi_{\eps }
}{\partial \nu } \, dS - \int_{\Gamma} f \frac{\partial \varphi
}{\partial \nu } \, dS\right|\\
\notag & =o(1)+\left|\int_{W} f(x',0) \frac{\partial
\varphi_{\eps} }{\partial \nu} (x',g_{\eps }(x'))
\sqrt{1+|\nabla_{x'} g_{\eps}( x')|^2 }\, dx'-\int_{W} f(x',0)
\frac{\partial \varphi }{\partial \nu
 }(x',0) \, dx'\right|=o(1),
\end{align}
where $C>0$ is independent of $\eps$.
It follows by \eqref{crit2bis}-\eqref{crit3} that function $u$
satisfies the problem
\begin{equation}
\label{crit4} \int_{\Omega } D^2u:D^2\varphi \, dx+ \gamma
\int_W\frac{\partial u}{\partial x_N}(x',0) \frac{\partial
\varphi}{\partial x_N}(x',0)\, dx'= \int_{\Gamma} f\frac{\partial
\varphi }{\partial \nu } dS \, .
\end{equation}

We have now to prove that $\frac{\partial u_{\eps
}}{\partial\nu_{\eps }}$ is $E$-convergent to $\frac{\partial u
}{\partial\nu}$. We note that

\begin{eqnarray}\label{crit5}\lefteqn{
\int_{\partial \Omega_{\eps } } \left|      \frac{\partial u_{\eps
} }{\partial \nu } - E_\eps \frac{\partial u}{\partial \nu }
\right|^2 dS = \int_{W } \left|\frac{\partial u_{\eps }
}{\partial \nu } (x',g_{\eps}(x')) -
\frac{\partial u }{\partial \nu }(x',0) \right|^2  \sqrt{1+|\nabla g_{\eps }(x')|^2 }dx' }\nonumber \\
& & \qquad\qquad\qquad\qquad\qquad\qquad\qquad\qquad\qquad =
\int_{W } \left|      \frac{\partial u_{\eps } }{\partial \nu }
(x',g_{\eps}(x'))-\frac{\partial u }{\partial \nu }(x',0)
\right|^2  dx' +o(1)\, .
\end{eqnarray}

By the Fundamental Theorem of Calculus we have that

\begin{equation}
\label{crit6} \|\nabla u_{\eps} (x',g_{\eps}(x'))-\nabla
u_{\eps}(x',0)\|_{L^2(W)}\le c \eps^{\alpha/2 }\| u_{\eps
}\|_{H^2(\Omega_{\eps } \setminus \Omega )}=O(\eps^{\alpha /2})
\end{equation}
as $\eps \to 0$. On the other hand, by  the compactness of the
trace operator we have that possibly passing to a subsequence
\begin{equation}
\label{crit7} \| \nabla u_{\eps}(x',0)-\nabla u (x',0)
\|_{L^2(W)}\to 0
\end{equation}
 $\eps \to 0$.
By combining \eqref{crit5}-\eqref{crit7} we conclude that
$\int_{\partial \Omega_{\eps } } \left|\frac{\partial u_{\eps }
}{\partial \nu } - E\frac{\partial u}{\partial \nu } \right|^2 dS
\to 0 $ as $\eps \to 0$ hence $\frac{\partial u_{\eps
}}{\partial\nu}$ is $E$-convergent to $\frac{\partial u
}{\partial\nu}$.

In order to conclude the proof, it suffices to observe that if
$f_{\eps}$ is a sequence with $\| f_{\eps }\|_{L^2(\Gamma_{\eps
})}=1$ for all $\eps >0$, then the corresponding solutions
$u_{\eps }$ have uniformly bounded $H^2$ norms, hence exploiting
the compactness of the trace operator as above one can easily
prove that $\frac{\partial u_{\eps }}{\partial\nu  }$ has an
$E$-convergent subsequence.
\end{proof}

\subsection{Another way of viewing the trichotomy}

In this subsection we consider the eigenvalue problem \eqref{eq:Steklov-modificatoDIR} and we re-interpret the results of this section adapting the analysis
of Subsection~\ref{degsec} to this specific case. This allows to state  a trichotomy result for the eigenvalues which we believe is quite transparent. Namely, we have the following theorem.

\begin{theorem} Let $\Omega_{\eps}$, $\eps \geq 0$ be as above, with $\Omega_0=\Omega$. Let $\lambda_n(\eps )$, $n\in \N$,  be the eigenvalues of problems  \eqref{eq:Steklov-modificatoDIR} for  $\eps \geq 0$. Then the following statements hold:
\begin{itemize}
\item[(i)] If $\alpha >3/2$ then  $\lambda_n(\eps )\to \lambda_n(0
)$, as $\eps \to 0$. \item[(ii)] If $\alpha =3/2$ then
$\lambda_n(\eps )\to  \lambda_n(0 )+\gamma $ as $\eps \to 0$,
where $\gamma$ is as in \eqref{strangecurvature}. \item[(iii)] If
$1\le \alpha < 3/2$ then $\lambda_n(\eps)\to \infty $, as $\eps
\to 0$.
\end{itemize}
\end{theorem}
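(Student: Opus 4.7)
All three cases will be handled by analyzing the compact self-adjoint Navier-to-Neumann maps $N_{\Omega_\eps}$ on $L^2(\Gamma_\eps)$ introduced in Subsection~\ref{strangetermsteklov} (for cases (i) and (ii)) or by a direct min-max argument (for case (iii)). The key identity is that the non-zero eigenvalues $\mu_n(\eps)$ of $N_{\Omega_\eps}$ are the reciprocals of the eigenvalues $\lambda_n(\eps)$ of problem \eqref{eq:Steklov-modificatoDIR}, so that the $E$-type compact convergence $N_{\Omega_\eps}\CC A_0$ provided by Theorem~\ref{vaithm} translates into the eigenvalue convergence $\lambda_n(\eps)\to 1/\mu_n(0)$.

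\emph{Case (ii) $\alpha=3/2$.} Theorem~\ref{thmcritstek} gives directly $N_{\Omega_\eps}\CC S_\Omega$, hence by Theorem~\ref{vaithm} we obtain $\lambda_n(\eps)\to \tilde\lambda_n$ where $\tilde\lambda_n$ is the $n$-th eigenvalue of \eqref{eq:Steklov-modificatoOmega}. Rewriting the last boundary condition of \eqref{eq:Steklov-modificatoOmega} as $\Delta u-K(x)u_\nu=(\tilde\lambda-\gamma)u_\nu$ on $\Gamma$, one sees that a pair $(\tilde\lambda,u)$ solves \eqref{eq:Steklov-modificatoOmega} if and only if $(\tilde\lambda-\gamma,u)$ solves \eqref{eq:Steklov-modificatoDIR} on $\Omega$. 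Hence $\tilde\lambda_n=\lambda_n(0)+\gamma$, proving (ii).

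\emph{Case (i) $\alpha>3/2$.} I would reproduce the proof of Theorem~\ref{thmcritstek} verbatim, the only difference appearing in the limit \eqref{trico2bis}. For $\alpha>3/2$ the strip $\Omega_\eps\setminus K_\eps$ is so thin (and, in the $E$-convergence language, condition \eqref{eq:assumptions} holds with $\kappa_\eps=\eps^{2\tilde\alpha/3}$ for some $\tilde\alpha\in(3/2,\alpha)$, cf.\ the proof of Corollary~\ref{t:main-2ante}) that a direct computation replacing \eqref{accatris} by the diffeomorphism and estimates of Corollary~\ref{t:main-2ante} shows that the integral $\int_{\Omega_\eps\setminus K_\eps}D^2u_\eps:D^2\varphi_\eps\,dx$ now tends to $0$ instead of $\gamma\int_W\partial_N u\,\partial_N\varphi\,dx'$. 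The remaining steps of the proof of Theorem~\ref{thmcritstek} are unaffected and yield $N_{\Omega_\eps}\CC N_\Omega$; consequently $\lambda_n(\eps)\to \lambda_n(0)$, which is case (i).

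\emph{Case (iii) $1\le\alpha<3/2$.} Argue by contradiction: assume that along a subsequence (not relabeled) $\lambda_n(\eps)\le M$. Choose corresponding eigenfunctions $u_\eps\in H^2_{0,\Sigma_\eps}(\Omega_\eps)\cap H^1_0(\Omega_\eps)$ normalized by $\int_{\Gamma_\eps}(u_\eps)_\nu^2\,dS=1$. Then $\int_{\Omega_\eps}|D^2 u_\eps|^2\,dx=\lambda_n(\eps)\le M$, and by Lemma~\ref{l:unif-equiv-2}(i) $\{u_\eps|_\Omega\}$ is uniformly bounded in $H^2(\Omega)$. Extract a weak limit $u\in H^2(\Omega)\cap H^1_0(\Omega)$ (lower semicontinuity of the Dirichlet trace gives $u=0$ on $\Sigma$ so $u\in H^2_{0,\Sigma}(\Omega)$). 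As in the proof of Theorem~\ref{t:main-2}, applying \cite[Lemma~4.3]{CaLuSu} (which crucially requires $\alpha<3/2$) to the vector fields $V_\eps^{(i)}=(0,\dots,-\partial_N u_\eps,\dots,\partial_i u_\eps)$ and using that $b$ is non-constant, deduce $u_\nu=0$ on $\Gamma$; thus $u\in H^2_0(\Omega)$. Finally repeat the boundary-trace computation \eqref{eq:st-Gamma}--\eqref{eq:st-Gamma-3} from the proof of Theorem~\ref{t:main-2} with the eigenfunctions $u_\eps$ in place of both $w_\eps$ and $u_\eps$ there, using $\alpha\ge 1$ and $b\in C^{1,1}$, to obtain $\int_{\Gamma_\eps}(u_\eps)_\nu^2\,dS\to\int_\Gamma u_\nu^2\,dS=0$, which contradicts the normalization.

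\emph{Main obstacle.} The most delicate point is case (iii): one must convert the weak-$H^2$ convergence on $\Omega$ and the vanishing of $u_\nu$ on $\Gamma$ (obtained via the CaLuSu-type lemma) into a quantitative statement about the boundary integral on the \emph{oscillating} surface $\Gamma_\eps$. All the technical ingredients are already available in the proof of Theorem~\ref{t:main-2}, so the plan reduces to reassembling those estimates in the contradiction argument above.
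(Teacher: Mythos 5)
Your proposal is correct, and parts of it diverge from the paper's own argument in an interesting way. For (ii) you do exactly what the paper does: invoke Theorem~\ref{thmcritstek} and absorb the strange term $\gamma$ into the spectral parameter. For (i) the paper simply reruns the proof of Corollary~\ref{t:main-2ante} (i.e.\ the $E$-compact convergence of the $H^2$-based Steklov operators under condition \eqref{eq:assumptions}, adapted to the extra condition $u_\nu=0$ on $\Sigma_\eps$), whereas you rerun the proof of Theorem~\ref{thmcritstek} and show that the strip contribution replacing \eqref{trico2bis} vanishes when $\alpha>3/2$; both routes work, yours costing a re-verification of the boundary-integral limits on $\Gamma_\eps$ that Corollary~\ref{t:main-2ante} already packages. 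The genuine difference is in (iii): the paper redefines the operators on the energy spaces $H^2_{0,\Sigma_\eps}(\Omega_\eps)\cap H^1_0(\Omega_\eps)$ and $H^2_0(\Omega)$, observes that the limiting operator $\tilde S_{D^2,\Gamma}$ is identically zero (because $J_\Gamma$ vanishes on $H^2_0(\Omega)$), proves $\tilde S_{D^2,\eps}\CC 0$ by the argument of Theorem~\ref{t:main-2}, and then concludes via Vainikko's regular-convergence theorem that a bounded subsequence of $\lambda_n(\eps)$ would force the zero operator to have a positive eigenvalue. Your argument instead normalizes the eigenfunctions by $\int_{\Gamma_\eps}(u_\eps)_\nu^2\,dS=1$, extracts a weak $H^2$-limit $u$, shows $u\in H^2_0(\Omega)$ via the Casado-D\'iaz--Luna-Laynez--Su\'arez-Grau lemma, and derives the contradiction $\int_{\Gamma_\eps}(u_\eps)_\nu^2\,dS\to\int_\Gamma u_\nu^2\,dS=0$ by the trace estimates \eqref{eq:st-Gamma}--\eqref{eq:st-Gamma-3}; this is more elementary and self-contained (no appeal to the abstract spectral machinery for the zero operator), at the price of redoing the boundary estimates, while the paper's version is shorter because all the compact-convergence ingredients are already in place from Section~\ref{s:optimality}. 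One small point worth making explicit in your (iii): the normalization is legitimate because an eigenfunction of \eqref{eq:Steklov-modificatoDIR} with $\int_{\Gamma_\eps}(u_\eps)_\nu^2\,dS=0$ would satisfy $\int_{\Omega_\eps}|D^2u_\eps|^2\,dx=0$ and hence vanish.
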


\begin{proof} The proof of (i) is  exactly the same proof of Corollary~\ref{t:main-2ante} with obvious minor modifications. Statement (ii) is a straightforward application of
Theorem~\ref{thmcritstek}. We now prove statement (iii). We
proceed as in Subsection~\ref{degsec} replacing the energy spaces
$H^2(\Omega_{\eps })\cap H^1_0(\Omega_{\eps })$ and $H^2_{0,
\Gamma}(\Omega )$ by  $H^2_{0,\Sigma_{\eps}}(\Omega_{\eps } )\cap
H^1_0(\Omega_{\eps } )$ and $H^2_0(\Omega )$ respectively. Accordingly, the
operators $S_{D^2, \eps}$ and $S_{D^2,\Gamma}$ used in
Subsection~\ref{degsec} have to be re-defined  in an obvious way
(formally, it is the same way) taking into account the new energy
spaces. Let's call $\tilde S_{D^2, \eps}$ and $\tilde
S_{D^2,\Gamma}$ the new operators replacing $S_{D^2, \eps}$ and
$S_{D^2,\Gamma}$, respectively. Having a closer look at the
definition of $\tilde S_{D^2,\Gamma}$, one realises that actually
$\tilde S_{D^2,\Gamma}$ is the operator identically equal to zero:
indeed, the map $J_{\Gamma }$ used in the definition of
$S_{D^2,\Gamma}$ vanishes on $H^2_0(\Omega )$. Thus, by the same
argument used in the proof of Theorem~\ref{t:main-2}  we have that
$\tilde S_{D^2,\eps}\CC 0$ as $\eps \to 0$. Assume now by
contradiction that there exists a sequence $\eps_k\to 0$ such that
for some $n\in \N$ we have $\sup_{k\in \N}\lambda _n(\eps
_k)<\infty $. Using the the Poincar\'{e} inequality, one can see
that $\inf_{k\in \N}\lambda _n(\eps _k)>0$, see also
Lemma~\ref{l:delta-1}. Now, we set $\mu_{n}(\eps_k )=1/\lambda
_n(\eps _k)$ and we observe that the sequence $\mu_{n}(\eps_k )$,
$k\in \N$ is bounded away from zero and infinity. Thus, possibly
passing to a subsequence, there exists $\mu >0$ such that
$\mu_{n}(\eps_k )\to \mu$ as $k\to \infty$. Since $\mu_{n}(\eps_k
)$ is an eigenvalue of $\tilde S_{D^2,\eps_{k}}$ and $\tilde
S_{D^2,\eps_k}\CC 0$, by Vainikko~\cite[Theorem~6.1]{Vainikko} it
follows that $\mu$ is an eigenvalue of the zero operator, a
contradiction.
\end{proof}

\section{Navier-type problems and Navier-to-Neumann maps}
\label{navtoneusection}

The main aim of this section is to study the stability of  the inverse  of a Dirichlet-to-Neumann-type map associated with problem \eqref{eq:Steklov}
under weak assumptions on the convergence of the varying domains.  As we have mentioned in the introduction, this map
is called here Navier-to-Neumann map. To do so, we begin with discussing  the stability of the Navier problem under domain perturbations of general type.

\subsection{Navier problems}
\label{naviersub}

Given a sufficiently smooth bounded domain $\Omega $  in
${\mathbb{R}}^N$,  the classical Navier problem for the biharmonic
operator reads
\begin{equation}
\label{classicalnav} \left\{
\begin{array}{ll}
\Delta^2u=0,& \ {\rm in}\ \Omega, \\
u=0,& \  {\rm on }\ \partial \Omega, \\
\Delta u=f, & \  {\rm on }\ \partial \Omega .
\end{array}
\right.
\end{equation}
In a classical setting, it would be natural to require  at least
that $\Omega$  is of class $C^{0,1}$ and that $f\in H^{1/2}(
\partial \Omega )$.  Actually, since the trace operator is
surjective, one could directly assume that $f\in H^1(\Omega )$.

As it is well-known, problem \eqref{classicalnav} could be recast
in the form of a system by setting $v:=\Delta u$ and solving
\begin{equation}\label{poidir}
\left\{
\begin{array}{ll}
\Delta v=0,&\ {\rm in }\ \Omega\\
v=f,&\ {\rm on}\ \partial \Omega\\
\end{array}
\right. \ \ {\rm and }\ \ \left\{
\begin{array}{ll}
\Delta u=v,&\ {\rm in }\ \Omega\\
u=0,&\ {\rm on}\ \partial \Omega.\\
\end{array}
\right.
\end{equation}

Accordingly,  in order to prove that problem \eqref{classicalnav}
is stable under domain perturbations, one may think of exploiting
the properties of  both Poisson and Dirichlet problems in
\eqref{poidir} which are stable under sufficiently regular
boundary perturbations. However, since we aim at considering a
rather general class of domain perturbations as well as at
comparing problem \eqref{classicalnav}  with problem
\eqref{classicalhes}, we prefer to adopt another point of view
which has  its own interest, and which is based on the energy
quadratic form naturally associated with \eqref{classicalnav}.
This point of view allows to relax the boundary regularity
assumptions on $\Omega$ even more.

Namely, we interpret  problem  \eqref{classicalnav} in a weak
sense by formulating it  as follows
\begin{equation}\label{weaknav}
\int_{\Omega }\Delta u \Delta \varphi \, dx = \int_{\Omega }
(f\Delta \varphi +\nabla f\nabla \varphi) \, dx,\ \ {\rm for\
all}\ \varphi \in   H(\Delta, \Omega )\cap H^1_0(\Omega ),
\end{equation}
where the unknown $u$ has to be considered in $ H(\Delta , \Omega
)\cap H^1_0(\Omega )$. Note that our weak formulation is motivated
by the fact that if $\Omega $ is  regular enough, for example as
in Lemma~\ref{complete} (iii),  then $ H(\Delta , \Omega )\cap
H^1_0(\Omega )=H^2(\Omega )\cap H^1_0(\Omega )$ hence  equality
\eqref{weaknav} could be re-written as
\begin{equation}\label{weaknavclassical}
\int_{\Omega }\Delta u \Delta \varphi \, dx = \int_{\partial\Omega
}f \varphi_\nu \, dS \, ,
\end{equation}
which is a more familiar   way of writing the variational
formulation for \eqref{classicalnav}. In fact  the right-hand side
of \eqref{weaknav} is the standard way of defining the normal
derivative for a function in  $ H(\Delta , \Omega )$ as an element
of $H^{-1/2}(\partial \Omega )$, see \eqref{conormal} below, see
also e.g., \cite{Helf}.

The following lemma shows that problem \eqref{weaknav} is
well-posed even without any boundary regularity assumption on
$\Omega$.

\begin{lemma}
Let $\Omega $ be a domain  in $\R^N$ such that the Poincar\'{e}
inequality holds and let $f\in H^1(\Omega)$. Then problem
\eqref{weaknav} is well-posed in the sense of Hadamard, that is
there exists a unique solution $u$ in $H(\Delta, \Omega )\cap
H^1_0(\Omega )$ which depends continuously on $f$.
\end{lemma}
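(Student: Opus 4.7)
The plan is to apply the Riesz representation theorem directly on the Hilbert space $H(\Delta,\Omega)\cap H^1_0(\Omega)$ equipped with the scalar product $(u,v)\mapsto \int_\Omega \Delta u\,\Delta v\,dx$, which is indeed a Hilbert space by Lemma~\ref{complete}(ii). The bilinear form in the left-hand side of \eqref{weaknav} is exactly this scalar product, so it is automatically continuous and coercive; the whole task is to show that the right-hand side defines a continuous linear functional on this space, with norm controlled by $\|f\|_{H^1(\Omega)}$.

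To this end, for $\varphi\in H(\Delta,\Omega)\cap H^1_0(\Omega)$, I would estimate the two contributions separately. By Cauchy-Schwarz,
\begin{equation*}
\left|\int_\Omega f\,\Delta\varphi\,dx\right|\le \|f\|_{L^2(\Omega)}\|\Delta\varphi\|_{L^2(\Omega)},
\end{equation*}
while the second term is handled by Cauchy-Schwarz together with the second inequality in \eqref{poin} of Lemma~\ref{complete}(i):
\begin{equation*}
\left|\int_\Omega \nabla f\cdot\nabla\varphi\,dx\right|\le \|\nabla f\|_{L^2(\Omega)}\|\nabla\varphi\|_{L^2(\Omega)}\le c_{\mathcal P}\|\nabla f\|_{L^2(\Omega)}\|\Delta\varphi\|_{L^2(\Omega)}.
\end{equation*}
Putting these together,
\begin{equation*}
\left|\int_\Omega\bigl(f\Delta\varphi+\nabla f\cdot\nabla\varphi\bigr)dx\right|\le C\,\|f\|_{H^1(\Omega)}\,\|\Delta\varphi\|_{L^2(\Omega)},
\end{equation*}
with $C=\max\{1,c_{\mathcal P}\}$. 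Hence $\varphi\mapsto \int_\Omega(f\Delta\varphi+\nabla f\cdot\nabla\varphi)dx$ is a continuous linear functional on $H(\Delta,\Omega)\cap H^1_0(\Omega)$.

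By the Riesz representation theorem, there exists a unique $u\in H(\Delta,\Omega)\cap H^1_0(\Omega)$ satisfying \eqref{weaknav}, and moreover $\|\Delta u\|_{L^2(\Omega)}\le C\|f\|_{H^1(\Omega)}$. Combining this estimate with the two Poincar\'{e}-type inequalities in \eqref{poin} of Lemma~\ref{complete}(i), one obtains a full bound of the form $\|u\|_{H(\Delta,\Omega)}+\|u\|_{H^1_0(\Omega)}\le C'\|f\|_{H^1(\Omega)}$, which is exactly the continuous dependence of $u$ on the datum $f$. There is no genuine difficulty in this argument; the only subtle point, already incorporated in the choice of the weak formulation \eqref{weaknav}, is that one cannot use a boundary integral $\int_{\partial\Omega}f\varphi_\nu\,dS$ without boundary regularity, which is why $f$ is extended into the domain via $\int_\Omega(f\Delta\varphi+\nabla f\cdot\nabla\varphi)dx$: this is the canonical distributional pairing that makes sense for arbitrary open sets admitting the Poincar\'{e} inequality.
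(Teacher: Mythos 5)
Your proposal is correct and follows exactly the paper's argument: the paper also observes that the right-hand side of \eqref{weaknav} defines (via Lemma~\ref{complete}) a continuous linear functional on the Hilbert space $H(\Delta,\Omega)\cap H^1_0(\Omega)$ endowed with the scalar product $\int_\Omega \Delta u\,\Delta v\,dx$, and then concludes by the Riesz theorem. You have merely made explicit the Cauchy--Schwarz and Poincar\'{e}-type estimates that the paper leaves implicit, so there is nothing to add.
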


\begin{proof}We simply observe that by Lemma~\ref{complete}, the map $\varphi \mapsto \int_{\Omega } (f\Delta \varphi +\nabla f\nabla \varphi)\, dx$
is an element of the dual of $ H(\Delta , \Omega )\cap H^1_0(\Omega )$ which depends continuously on $f\in H^1(\Omega)$. Then the proof easily follows by the Riesz Theorem applied to the Hilbert space
 $ H(\Delta , \Omega )\cap H^1_0(\Omega )$.
\end{proof}

We now consider a family of bounded domains $\Omega_{\eps }$,
$\eps >0$   and the corresponding boundary value problems
\begin{equation}
\label{classicalnaveps} \left\{
\begin{array}{ll}
\Delta^2u_{\eps }=0,& \ {\rm in}\ \Omega_{\eps }, \\
u_{\eps }=0,& \  {\rm on }\ \partial \Omega_{\eps }, \\
\Delta u_{\eps }=f_{\eps }, & \  {\rm on }\ \partial \Omega_{\eps
} ,
\end{array}
\right.
\end{equation}
in the unknown $u_{\eps }\in  H(\Delta , \Omega_{\eps} )\cap
H^1_0(\Omega_{\eps} )$, where $f_{\eps }\in H^1(\Omega_{\eps })$,
with the understanding that
 problem \eqref{classicalnaveps} has to be interpreted in the weak sense as above.

We shall assume that the domains $\Omega_{\eps}$ compactly
converge to $\Omega$ and that $\Omega $ is regular (stable) in the
sense of the following definition which goes back to Keldysh (cf.
\cite[\S~4.9]{BucBut}).

\begin{definition}
We say that the domains $\Omega_{\eps}$, $\eps >0$, compactly
converge  to a domain   $\Omega$ as $\eps \to 0$ if for any
compact set $K\subset \Omega \cup ({\R^N\setminus \overline \Omega})$
there exists $\eps_{K}>0$ such that $K\subset \Omega_{\eps} \cup
({\R^N\setminus \overline \Omega_{\eps }})$ for all $\eps \in
]0,\eps_K[$. Moreover, we say that $\Omega $ is stable  if any
function in $\ H^1(\R^N)$ vanishing almost everywhere on
$\R^N\setminus \overline \Omega $ belongs to $H^1_0(\Omega )$.
\end{definition}

It is well-known that if $\Omega$ is of class $C^{0,1}$ then
$\Omega $ is stable. We refer to  \cite{BucBut} for more
information.  Then we have the following stability result. Here we denote  by $v_0$  the extension-by-zero of a function $v$ outside its natural domain of definition.

\begin{theorem}
\label{stabnav} Let $D$ be a fixed bounded set and let
$\Omega_{\eps}$, $\eps >0$, and $\Omega$ be domains  contained in
$D$. Assume that the sets  $\Omega_{\eps }$ compactly converge to
$\Omega $ as $\eps \to 0$ and that $\Omega $ is stable. If the
data $f_{\eps}$ converge to $f$ in the sense that
\begin{equation}
\label{stabnav1} \| f_{\eps}-f_0\|_{L^2(\Omega_{\eps } )}, \ \ \|
\nabla f_{\eps}-(\nabla f)_0\|_{L^2(\Omega_{\eps }) } \to 0,\ \
{\rm as }\ \eps \to 0
\end{equation}
 then the solutions $u_{\eps }$ of problems \eqref{classicalnaveps}  converge to the solution $u$ of problem  \eqref{classicalnav} in the sense that
\begin{equation}\label{stabnav2}
\| u_{\eps}-u_0\|_{L^2(\Omega_{\eps } )}, \ \ \| \nabla
u_{\eps}-(\nabla u)_0\|_{L^2(\Omega_{\eps } )}, \ \ \| \Delta
u_{\eps }- (\Delta u)_0\|_{L^2(\Omega_{\eps } )} \to 0 ,\ \ {\rm
as }\ \eps \to 0 .
\end{equation}

\end{theorem}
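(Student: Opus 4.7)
My plan is to recast the Navier problem \eqref{weaknav} on each $\Omega_{\eps}$ as a coupled Poisson--Dirichlet system for the pair $(u_{\eps},w_{\eps})$, with $w_{\eps}:=\Delta u_{\eps}\in H^1(\Omega_{\eps})$ playing the role of a weak harmonic extension of $f_{\eps}$ into $\Omega_{\eps}$, and to apply the classical Keldysh-type stability (equivalently, $\gamma$-convergence) of the Dirichlet Laplacian under compact convergence to a stable domain (see e.g.\ \cite[\S~4.9]{BucBut}) separately to each sub-problem.

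The first step is to justify the decomposition. I would construct $\hat w_{\eps}=f_{\eps}+\tilde w_{\eps}$, with $\tilde w_{\eps}\in H^1_0(\Omega_{\eps})$ the unique solution (by Lax--Milgram) of $\int_{\Omega_{\eps}}\nabla\tilde w_{\eps}\cdot\nabla\varphi\,dx=-\int_{\Omega_{\eps}}\nabla f_{\eps}\cdot\nabla\varphi\,dx$ for all $\varphi\in H^1_0(\Omega_{\eps})$, and then take $\hat u_{\eps}\in H^1_0(\Omega_{\eps})$ to be the Dirichlet solution of $\Delta\hat u_{\eps}=\hat w_{\eps}$ with $L^2$-source $\hat w_{\eps}$. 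Integrating by parts on $\tilde w_{\eps}\in H^1_0(\Omega_{\eps})$ (using the vanishing trace to kill the boundary pairing against $\partial_{\nu}\varphi\in H^{-1/2}$) shows that $\hat u_{\eps}$ satisfies \eqref{weaknav}; by uniqueness, $\hat u_{\eps}=u_{\eps}$ and $w_{\eps}=\hat w_{\eps}\in H^1(\Omega_{\eps})$.

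Now the convergence proceeds in two stages. By \eqref{stabnav1}, the right-hand sides of the weak equations for $\tilde w_{\eps}$ converge in the sense that $(-\nabla f_{\eps})_0\to(-\nabla f)_0$ strongly in $L^2(\R^N)$, which is more than enough for the $\gamma$-convergence of the Dirichlet Laplacian under the compact convergence $\Omega_{\eps}\to\Omega$ with $\Omega$ stable. This yields $(\tilde w_{\eps})_0\to\tilde w_0$ strongly in $H^1(\R^N)$, where $\tilde w\in H^1_0(\Omega)$ satisfies the analogous equation on $\Omega$. Combining with $(f_{\eps})_0\to f_0$ in $L^2(\R^N)$ from \eqref{stabnav1} gives $(w_{\eps})_0\to w_0$ strongly in $L^2(\R^N)$, where $w:=f+\tilde w\in H^1(\Omega)$. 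In the second stage, $u_{\eps}\in H^1_0(\Omega_{\eps})$ is the Dirichlet solution of $\Delta u_{\eps}=w_{\eps}$ with $L^2$-source $w_{\eps}$ whose extensions converge strongly in $L^2(\R^N)$; a second application of the same stability result yields $(u_{\eps})_0\to u_0$ strongly in $H^1(\R^N)$, where $u\in H^1_0(\Omega)$ satisfies $\Delta u=w$. By construction $(u,w)$ is the limiting coupled system, so $u$ is the Navier solution of \eqref{classicalnav}. The three convergences in \eqref{stabnav2} follow at once: the first two from the strong $H^1(\R^N)$-convergence of $(u_{\eps})_0$, and the third from the identity $(\Delta u_{\eps})_0=(w_{\eps})_0\to w_0=(\Delta u)_0$ strongly in $L^2(\R^N)$.

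The main obstacle is to apply the Keldysh-type stability in the precise form needed: the data are not the restrictions to $\Omega_{\eps}$ of a single fixed function on $\R^N$ but only converge in the strong sense encoded by \eqref{stabnav1}, and the sources of the Poisson problems naturally live in $H^{-1}$ with a specific support structure. The convergence of Dirichlet solutions with strongly converging sources (after extension by zero) under compact convergence to a stable domain is equivalent to $\gamma$-convergence of the Dirichlet Laplacian, a classical fact, but some care is needed to check that the extensions-by-zero formalism in \eqref{stabnav1}--\eqref{stabnav2} matches the source convergence required by $\gamma$-convergence.
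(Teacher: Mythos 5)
Your proposal is correct, but it takes a genuinely different route from the paper. The paper explicitly mentions the decomposition of the Navier problem into the coupled Poisson--Dirichlet system \eqref{poidir} and then deliberately avoids it, working instead directly with the quadratic form $\int_{\Omega_\eps}|\Delta u_\eps|^2dx$: a uniform energy bound, extraction of weak limits, identification of the limit by transplanting each test function $\varphi$ to $\Omega_\eps$ as the solution $\varphi_\eps$ of the auxiliary Dirichlet problem $\Delta\varphi_\eps=(\Delta\varphi)_0$ in \eqref{stabnav7}, and finally convergence of norms to upgrade weak to strong convergence. You instead realise $\Delta u_\eps$ intrinsically as $f_\eps+\tilde w_\eps$ with $\tilde w_\eps\in H^1_0(\Omega_\eps)$ the Lax--Milgram correction (so no trace theory is needed, and the integration by parts against $H^1_0$ test functions that kills the boundary pairing is valid on arbitrary domains), and then invoke the Mosco/$\gamma$-convergence of the Dirichlet Laplacian twice; this is more modular and reduces the whole theorem to classical second-order stability, and in fact your second-stage Poisson problem is essentially dual to the paper's test-function construction \eqref{stabnav7}. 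What the paper's approach buys is not extra generality for this particular theorem (your hypotheses are exactly the stated ones: compact convergence, stability of $\Omega$, and \eqref{stabnav1}) but rather a template that transfers to the Hessian-form problem \eqref{classicalhes} in Subsection \ref{instabsec}, where no decomposition into a second-order system is available and where stability in fact fails. Two points deserve explicit care in a write-up: first, since the sources vary with $\eps$, ``$\gamma$-convergence'' must be applied in the form ``strong $L^2(\R^N)$ convergence of extended sources plus Mosco convergence of $H^1_0(\Omega_\eps)$ implies strong $H^1(\R^N)$ convergence of extended solutions'', which you should prove by the standard energy argument (test with the solution itself and pass to the limit in strong-times-weak pairings) rather than cite as a black box; second, the passage from \eqref{stabnav1} to $(f_\eps)_0\to f_0$ in $L^2(\R^N)$ uses $|\Omega\setminus\Omega_\eps|\to0$, which follows from compact convergence exactly as in the paper's \eqref{measurein}.
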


\begin{proof} By assumption, it follows that
\begin{equation}\label{stabnav3}
\| f_{\eps }\|_{L^2(\Omega_{\eps })}, \ \| \nabla f_{\eps
}\|_{L^2(\Omega_{\eps })} \le C\, ,
\end{equation}
for all $\eps >0$ sufficiently small.  Using $u_{\eps}$ as a test
function in (\ref{weaknav}) we have
\begin{equation}\label{stabnav35}
\int_{\Omega_{\eps} }|\Delta u_{\eps }|^2 dx = \int_{\Omega_{\eps}
} (f_{\eps}\Delta u_{\eps }+\nabla f_{\eps }\nabla u_{\eps })\,
dx,
\end{equation}
which, combined with (\ref{poin}) and (\ref{stabnav3}), allows to
conclude that
\begin{equation}\label{stabnav4}
\| u_{\eps}\|_{L^2(\Omega_{\eps })}, \| \nabla
u_{\eps}\|_{L^2(\Omega_{\eps })}, \| \Delta
u_{\eps}\|_{L^2(\Omega_{\eps })}\le C,
\end{equation}
for all $\eps >0$ sufficiently small. Since $D$ is bounded, by the Rellich-Kondrakov Theorem, $H^1_0(D)$ is compactly embedded in $L^2(D)$. This combined with the fact that $(u_\eps)_0\in H^1_0(D)$ allows to conclude that there exists $u\in H^1_0(D)$ such that
possibly passing to a subsequence
\begin{equation}\label{stabnav5}
(u_{\eps})_0\to u, \ \ {\rm in}\ \ L^2(D),\ \ {\rm and}\ \
(u_{\eps})_0\  \rightharpoonup  u, \ \ {\rm in}\ \ H^1(D)
\end{equation}
as $\eps \to 0$.  By the compact convergence  of the domains
$\Omega_{\eps} $ to the set  $\Omega $ it follows that $u=0$ almost everywhere  on
$D\setminus \overline \Omega$. Since $\Omega $ is stable, this implies
that $u\in H^1_0(\Omega )$ and that $u=0$  almost everywhere on $D\setminus \Omega$, see \cite[p.~117]{BucBut}. By \eqref{stabnav4} it follows that
possibly passing to a subsequence there exists $v\in L^2(D )$ such
that $(\Delta u_{\eps })_0 \rightharpoonup v$ in $L^2(D )$. Let
$\psi\in C^{\infty }_c(\Omega )$ be  fixed.  By the compact
convergence  of the domains  $\Omega_{\eps} $ to the set $\Omega
$ it follows that for all $\eps
>0$ sufficiently small, $\psi \in C^{\infty }_c(\Omega_{\eps} )$ hence
$\int_{\Omega_{\eps }} u_{\eps }\Delta \psi \, dx
=\int_{\Omega_{\eps }} \Delta u_{\eps } \psi \, dx$ and
$\int_{\Omega } u_{\eps }\Delta \psi \, dx=\int_{\Omega } \Delta
u_{\eps } \psi \, dx$. Passing to the limit in the last equality
as $\eps \to 0$, it follows that  $\int_{\Omega } u \Delta \psi
dx=\int_{\Omega } v \psi dx$ which, by the arbitrary choice of
$\psi$,  implies that $u\in H(\Delta, \Omega )\cap H^1_0(\Omega )$
and $\Delta u=v  $. In particular
\begin{equation}\label{stabnav6}
((\Delta u_{\eps})_0)_{|_{\Omega}}\rightharpoonup  \Delta u, \ \
{\rm in}\ \ L^2(\Omega ),
\end{equation}
as $\eps \to 0$.

Let $\varphi \in H(\Delta, \Omega )\cap H^1_0(\Omega )$ be fixed.
For every $\eps >0$ we consider the function $\varphi_{\eps}\in
 H(\Delta , \Omega_{\eps })\cap H^1_0(\Omega_{\eps} )  $ uniquely
defined as the solution to the problem
\begin{equation}
\label{stabnav7} \left\{
\begin{array}{ll}
\Delta \varphi_{\eps }=(\Delta \varphi )_0,& \ \ {\rm in }\ \Omega_{\eps},\\
\varphi_\eps =0,& \ \ {\rm on }\ \partial \Omega_{\eps } .
\end{array}
\right.
\end{equation}
This means that $\varphi_{\eps}$ satisfies the weak equation
\begin{equation}
\label{stabnav8} \int_{\Omega _{\eps }}\nabla \varphi_{\eps
}\nabla \eta \, dx=\int_{\Omega_{\eps }}(\Delta \varphi )_0\eta \,
dx,\ \ {\rm for \ all }\ \eta\in H^{1 }_0(\Omega_{\eps }).
\end{equation}

We claim that
\begin{equation}\label{stabnav9}
((\nabla \varphi_{\eps })_0)_{|_{\Omega }} \rightharpoonup \nabla
\varphi , \ \ {\rm in }\ \  L^2(\Omega ),
\end{equation}
 as $\eps \to 0$. Indeed, using $\varphi_{\eps }$ as a test function in (\ref{stabnav8}) and the Poincar\'{e} inequality  we obtain
 \begin{equation}\label{stabnav10}
 \|\nabla \varphi_{\eps } \|_{L^2(\Omega_{\eps})}^2\le  \| \Delta\varphi \|_{L^2(\Omega)} \| \varphi_{\eps } \|_{L^2(\Omega_{\eps})}  \\
 \le
 c_N|\Omega_{\eps }|^{\frac{1}{N}}\| \Delta\varphi \|_{L^2(\Omega)} \| \nabla \varphi_{\eps } \|_{L^2(\Omega_{\eps})}
 \end{equation}
 by which we deduce that $\|\nabla \varphi_{\eps}\|_{L^2(\Omega_{\eps})}$ is uniformly  bounded for   $\eps >0 $ sufficiently small.
 Thus, possibly passing to a subsequence,  it turns out that $(\varphi_{\eps})_0 $ is strongly convergent in $L^2(D )$ and weakly in $H^1(D)$ to a function  $\psi $.
 As above, the compact convergence of $\Omega_{\eps} $ implies that $\psi =0$ on ${\mathbb{R}}^N\setminus\overline \Omega $. Thus, since $\Omega$ is stable, $\psi \in H^1_0(\Omega)$. Fixing now $\eta\in C^{\infty}_c(\Omega)$ and observing that  $\eta\in C^{\infty}_c(\Omega_{\eps })$ for all $\eps >0$ sufficiently small, we can test $\eta$ in \eqref{stabnav8} and pass to the limit to conclude that $\psi =\varphi$ in $\Omega$. Thus   (\ref{stabnav9}) holds.

Before proceeding with using function $\varphi_{\eps}$, we observe
that by assumptions  \eqref{stabnav1}  we have that
\begin{equation}\label{strip}
\int_{\Omega_{\eps}\setminus \Omega }|f_{\eps}|^2 dx, \ \
\int_{\Omega_{\eps}\setminus \Omega }|\nabla f_{\eps}|^2 dx \to 0
\end{equation}
as $\eps \to 0$, and by the compact convergence
of domains we have
\begin{equation}\label{measurein}
|\Omega\setminus \Omega_{\eps}|\to 0 \, , \qquad \text{as } \eps
\to 0 \, .
\end{equation}

Testing $\varphi_{\eps}$  in  the weak formulation of problem
\eqref{classicalnaveps} we  have
\begin{eqnarray}\label{stabnav11}\lefteqn{
\int_{\Omega}(\Delta u_{\eps })_0\Delta\varphi \,
dx=\int_{\Omega\cap\Omega_{\eps }}\Delta u_{\eps }\Delta\varphi \,
dx=
\int_{\Omega\cap\Omega_{\eps }}\Delta u_{\eps }(\Delta\varphi )_0 \, dx  }\nonumber \\
& & \quad =\int_{\Omega_{\eps }}\Delta u_{\eps
}\Delta\varphi_{\eps }dx=\int_{\Omega_{\eps}} (f_{\eps
}\Delta\varphi_{\eps}
+\nabla f_{\eps}\nabla \varphi_{\eps}) \, dx\nonumber \\
& & \quad = \int_{\Omega_{\eps}} (f_{\eps }(\Delta\varphi )_0
+\nabla f_{\eps}\nabla \varphi_{\eps}) \, dx
=\int_{\Omega\cap \Omega_{\eps}} (f_{\eps }\Delta\varphi +\nabla f_{\eps}\nabla \varphi_{\eps}) \, dx\nonumber \\
& &  \quad +\int_{\Omega_{\eps }\setminus \Omega} (\nabla
f_{\eps}\nabla \varphi_{\eps}) \, dx \, .
\end{eqnarray}

Now we note that by the triangle inequality
\begin{eqnarray}\label{stabnav12}\lefteqn{\biggl|
\int_{\Omega_{\eps }\cap\Omega   }\nabla f_{\eps}\nabla
\varphi_{\eps}\, dx-
\int_{\Omega_{\eps }\cap \Omega   }\nabla f \nabla \varphi \, dx \biggr|   } \nonumber \\
& & \le  C \| \nabla f_{\eps}-\nabla f \|_{L^2(\Omega_{\eps
}\cap\Omega) } +\biggl|\int_{\Omega_{\eps }\cap\Omega   } \nabla f
(\nabla \varphi_{\eps }-\nabla \varphi) \, dx\biggr| \, .
\end{eqnarray}

By combining \eqref{stabnav1}, \eqref{stabnav9},    \eqref{strip}
and \eqref{stabnav12}, it follows that
\begin{equation}\label{stabnav14}
\int_{\Omega_{\eps }\cap\Omega   }\nabla f_{\eps}\nabla
\varphi_{\eps}\, dx=  \int_{\Omega_{\eps }\cap \Omega }\nabla f
\nabla \varphi \, dx +o(1),\ \  \int_{\Omega\cap
\Omega_{\eps}}f_{\eps }\Delta\varphi  \, dx = \int_{\Omega\cap
\Omega_{\eps}}f\Delta\varphi \, dx +o(1),
\end{equation}
and  $\int_{\Omega_{\eps }\setminus \Omega   }\nabla
f_{\eps}\nabla \varphi_{\eps}\, dx\to 0$, as $\eps \to 0$. Thus,
 \eqref{stabnav11} can be re-written as follows

\begin{equation}\label{stabnav11bis}
\int_{\Omega}(\Delta u_{\eps })_0\Delta\varphi dx=\int_{\Omega\cap
\Omega_{\eps}} (f \Delta\varphi +\nabla f \nabla \varphi) \, dx
+o(1) \, .
\end{equation}

By passing to the limit in \eqref{stabnav11bis} and using
\eqref{stabnav6}, \eqref{measurein} we conclude that $u$ is the
solution to problem \eqref{classicalnav} as required in the
statement.

Now, the first limit in \eqref{stabnav2} follows by the first limit in \eqref{stabnav5} and by observing that $u$ vanishes almost everywhere outside $\Omega $.

We now  prove that the third limit in \eqref{stabnav2} holds. To
do so, we consider the identity
\begin{eqnarray} \label{stabnav15}
 \| \Delta u_{\eps}- (\Delta u)_0\|^2_{L^2(\Omega_{\eps})}&=& \| \Delta u_{\eps }\|^2_{L^2(\Omega_{\eps})}-2(\Delta u_{\eps },\Delta u)_{L^2(\Omega_{\eps} \cap \Omega )}+ \| \Delta u \|^2_{L^2(\Omega_{\eps }\cap \Omega )}\nonumber\\
& =  &
 \| \Delta u_{\eps }\|^2_{L^2(\Omega_{\eps})}-2(\Delta u_{\eps },\Delta u)_{L^2( \Omega )}+ \| \Delta u \|^2_{L^2(\Omega )} +o(1),
\end{eqnarray}
where the second equality  in \eqref{stabnav15} is deduced by
\eqref{measurein}.

We note that arguing as above we can prove that
\begin{equation}\label{stabnav1505}
\int_{\Omega_{\eps} } (f_{\eps}\Delta u_{\eps }+\nabla f_{\eps
}\nabla u_{\eps })\, dx\to \int_{\Omega} (f \Delta u +\nabla f
\nabla u) \,  dx,
\end{equation}
as $\eps \to 0$. By passing to the limit in \eqref{stabnav35} as
$\eps \to 0$, and by \eqref{stabnav1505}, we deduce that
\begin{equation}\label{stabnav16}
 \| \Delta u_{\eps }\|_{L^2(\Omega_{\eps})}\to \| \Delta u\|_{L^2(\Omega )},
\end{equation}
as $\eps \to 0$.  Finally, using \eqref{stabnav16} and passing to
the limit in \eqref{stabnav15}  allows to conclude that the third
limit in \eqref{stabnav2} holds.

It remains to prove that the second limit in  \eqref{stabnav2}
holds.  To do so, it is convenient to set $F_{\eps}:=-\Delta
u_{\eps}$ and  $F:=-\Delta u $. Thus, $u_{\eps}$ and $u$ are
solutions to two Poisson problems for the Dirichlet Laplacian in
$\Omega_{\eps}$ and $\Omega$   with data $F_{\eps }$ and $F$
respectively, and $\|F_{\eps}-F_0\|_{L^2(\Omega_{\eps })}\to 0$ as
$\eps \to 0$. Proceeding as above, using the identity
\begin{equation} \label{stabnav17}
\| \nabla u_{\eps}- (\nabla u)_0\|^2_{L^2(\Omega_{\eps})}= \|
\nabla  u_{\eps }\|^2_{L^2(\Omega_{\eps})}-2(\nabla u_{\eps },
\nabla u)_{L^2(\Omega )}+ \| \nabla u \|^2_{L^2(\Omega )}+o(1)
\end{equation}
and exploiting the weak formulations of the Poisson problems for
the Dirichlet Laplacian and the weak convergence of the gradients
of $\nabla u_{\eps}$, we conclude that the second limit in
\eqref{stabnav2}  holds.
\end{proof}

\subsection{Stability of the Navier-to-Neumann map}

It is well-known that if $\Omega $ is a bounded domain  of class
$C^{0,1}$, given a function $u\in  H(\Delta , \Omega )\cap H^1_0(\Omega)$, it is possible
to define the normal derivative $u_{ \nu}$
of $u$ as an element of $H^{-1/2}(\partial\Omega )$ by
setting
\begin{equation}\label{conormal}
_{H^{-1/2}(\partial\Omega )}\left\langle u_{ \nu},
\varphi\right\rangle_{H^{1/2}(\partial\Omega )} =\int_{\Omega }(\Delta u \varphi +\nabla u
\nabla \varphi) \, dx
\end{equation}
for all $\varphi \in H^{1/2}(\partial \Omega )$, where it
is meant that $\varphi $ is extended to the whole of $\Omega$ as
an element of $H^1(\Omega)$. Recall that the trace operator acts
from $H^1(\Omega)$ onto $H^{1/2}(\partial\Omega )$ and
note that definition \eqref{conormal} does not depend on the
specific extension of $\varphi$ since the above integral vanishes
for functions $\varphi$ belonging to $H^1_0(\Omega)$.

Since we aim at studying a domain perturbation problem, we find it
convenient to consider $u_{\nu}$ as an
element of the dual of $H^1(\Omega)$  defined  by formula
\eqref{conormal} for all $\varphi \in H^1(\Omega)$.

We call {\it Navier-to-Neumann map}  the function ${\mathcal
N}_{\Omega}$ from $H^1(\Omega )$ to $(H^1(\Omega))'$ defined by
${\mathcal N}_{\Omega }(f)=u_{\nu}$  where
$u$ is the solution to \eqref{classicalnav}.

We note that if $\Omega $ is sufficiently smooth, say $\Omega$ is a domain of class $C^{1,1}$, $d$ is an
eigenvalue of \eqref{eq:Steklov} with eigenfunction $u\in
V(\Omega)$ if and only
\begin{equation}
\label{veryweak}
\mathcal N_\Omega f=\mu J_0 f
\end{equation}
where  $\mu=\frac 1d$ and  $f$ is any function in $H^1(\Omega)$ such that
$f_{|\partial\Omega}=du_\nu\in H^{1/2}(\partial\Omega)$ and
$J_0:H^1(\Omega)\to (H^1(\Omega))'$ is defined by
$$
_{(H^1(\Omega))'}\langle J_0 f,v\rangle_{H^1(\Omega)}:=\int_{\partial\Omega} fv \, dS \qquad
\text{for any } f,v\in H^1(\Omega) \, .
$$

\begin{remark}\label{veryweakrem}
If one wishes to consider  equation \eqref{veryweak} as an eigenvalue problem, it would be clearly appropriate to formulate it on $H^1(\Omega)/H^1_0(\Omega)$ by considering  ${\mathcal
N}_{\Omega}$  and $J_0$ as functions from  $H^1(\Omega)/H^1_0(\Omega)$  to its dual. Indeed,  any function $f\in H^1_0(\Omega)$ satisfies  \eqref{veryweak} for any $\mu\in {\mathbb{R}}$. However, as we have mentioned in the introduction, using the quotient space would not bypass the main obstructions  to the proof of the spectral stability of the classical DBS problem under weak conditions on the convergence of the domains. For this reason, we prefer
 to work directly on the space $H^1(\Omega)$ to avoid further technical complications.
\end{remark}

In Theorems  \ref{l:EE*}, \ref{t:dual} below we establish   convergence
results for the family of maps ${\mathcal N}_{\Omega_{\eps}}$
associated with a family of domains  $\Omega_{\eps}$ compactly
convergent to $\Omega$. For the sake of simplicity, in order to avoid imposing extra assumptions on $\Omega_\eps$ or further technicalities in the
proofs, we assume that $\Omega\subseteq \Omega_\eps \subset D$ for any $\eps>0$, where $D$ is a fixed bounded domain.

Since ${\mathcal N}_{\Omega_{\eps }} $ takes values in the dual of
$H^1(\Omega_{\eps })$, in order to establish the vicinity of
${\mathcal N}_{\Omega_{\eps }}$ to ${\mathcal N}_{\Omega }$ we
introduce a notion of $E$-convergence for families of functionals
on $H^1$.

\begin{definition}\label{connectingdual} Let $\Omega, \Omega_{\eps} $, with $\eps>0$,  be bounded domains in $\R^N$ with $\Omega $ of class $C^{0,1}$ and
$\Omega \subseteq \Omega_{\eps}$ for all $\eps>0$. Let
 $\mathcal E :H^1(\Omega)\to H^1(\R^N)$ be  a fixed  linear
continuous extension operator.
\begin{itemize}
\item[(i)]
Let $\{f_{\eps}\}_{\eps>0}$ be a family of functions
$f_\eps\in H^1(\Omega_\eps)$. We say that   $f_\eps \EC f$ with $f\in H^1(\Omega)$
 if
$\|f_\eps-\mathcal E f\|_{H^1(\Omega_\eps)}\to 0$ as $\eps\to 0$.
\item[(ii)]
Let $\{\Lambda\}_{\eps>0}$ be a family of functionals
$\Lambda_\eps\in (H^1(\Omega_\eps))'$. We say that $\Lambda_\eps
\ECDUAL \Lambda$ with $\Lambda\in (H^1(\Omega))'$ if
\begin{equation} \label{eq:dual-conv}
\|\Lambda_\eps-E_\eps^*\Lambda\|_{(H^1(\Omega))'}\to 0 \qquad
\text{as } \eps\to 0
\end{equation}
where $E_\eps^*:(H^1(\Omega))'\to (H^1(\Omega_\eps))'$ is the
family of linear continuous operators defined by
$$
_{(H^1(\Omega_\eps))'}\langle
E_\eps^*\Lambda,v\rangle_{H^1(\Omega_\eps)}:=\
_{(H^1(\Omega))'}\langle \Lambda,v_{|\Omega}\rangle_{H^1(\Omega)}
\qquad \text{for any } v\in H^1(\Omega_\eps)
$$
for any $\Lambda\in (H^1(\Omega))'$.
\item[(iii)]    Let
$\{B_\eps\}_{\eps>0}$ be a family of linear continuous operators
such that $B_\eps:H^1(\Omega_\eps) \to (H^1(\Omega_\eps))'$. We
say that $B_\eps \EECDUAL B$ as $\eps\to 0$, with $B\in \mathcal
L(H^1(\Omega);(H^1(\Omega))')$, if $B_\eps f_\eps \ECDUAL Bf$
whenever $f_\eps \EC f$.

\end{itemize}
\end{definition}

\begin{remark}
The family of operators $\{E_\eps^*\}_{\eps>0}$ represents a
``connecting system'' in the sense of \cite[Section 1]{Vainikko}.
In order to see this, one can first show that
$\|E_\eps^*\Lambda\|_{(H^1(\Omega_\eps))'}\le
\|\Lambda\|_{(H^1(\Omega))'}$ and then, by using the extension
operator $\mathcal E $ defined above and the fact that $|\Omega_{\eps}\setminus \Omega |\to 0$ as $\eps \to 0$, one shows that for any
$\sigma>0$ there exists $\overline \eps>0$ such that for any
$\eps\in (0,\overline\eps)$ one has
$\|\Lambda\|_{(H^1(\Omega))'}\le
(1+\sigma)\|E_\eps^*\Lambda\|_{(H^1(\Omega_\eps))'}+\sigma$; the
conclusion follows by letting $\eps\to 0$ and by exploiting the
arbitrariness of $\sigma>0$.
\end{remark}

As a consequence of Theorem \ref{stabnav} we have

\begin{lemma} \label{l:stabnav}
Let $\Omega_{\eps}$, $\Omega$ and $D$ be as in Theorem
\ref{stabnav}. Assume that    $\Omega $ is of class $C^{0,1}$,
$\Omega \subseteq \Omega_{\eps}$ for all $\eps>0$ and       $\Omega_{\eps }$ compactly converges
to $\Omega $ as $\eps \to 0$.
Let $f_\eps\in H^1(\Omega_\eps)$ for any $\eps >0$ and $f\in H^1(\Omega)$.
Let
$u_{\eps}$, $u$ be the solutions of problems \eqref{classicalnaveps},  \eqref{classicalnav} respectively. If
 $f_\eps \EC f$ as $\eps\to 0$ as above then \eqref{stabnav2} holds.
\end{lemma}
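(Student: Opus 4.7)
The plan is to reduce Lemma~\ref{l:stabnav} directly to Theorem~\ref{stabnav}: the remaining hypotheses (boundedness inside $D$, compact convergence of $\Omega_{\eps}$ to $\Omega$, and stability of $\Omega$) are common to both statements, so it suffices to verify that the $E$-convergence hypothesis $f_\eps \EC f$ of Definition~\ref{connectingdual}(i) implies the extension-by-zero convergence condition \eqref{stabnav1} of Theorem~\ref{stabnav}.

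The two convergence notions differ only on the strip $\Omega_{\eps}\setminus\Omega$. Writing $\mathcal{E} f\in H^1(\R^N)$ for the fixed extension appearing in Definition~\ref{connectingdual}, the inclusion $\Omega\subseteq \Omega_{\eps}$ gives $f_0 = \mathcal{E} f$ on $\Omega$ and $f_0 \equiv 0$ on $\Omega_{\eps}\setminus \Omega$, and analogously $(\nabla f)_0 = \nabla (\mathcal{E} f)$ on $\Omega$ while $(\nabla f)_0 \equiv 0$ on the strip. The triangle inequality then yields
\[
\|f_\eps - f_0\|_{L^2(\Omega_{\eps})} \le \|f_\eps - \mathcal{E} f\|_{L^2(\Omega_{\eps})} + \|\mathcal{E} f\|_{L^2(\Omega_{\eps}\setminus \Omega)},
\]
together with the completely parallel bound
\[
\|\nabla f_\eps - (\nabla f)_0\|_{L^2(\Omega_{\eps})} \le \|\nabla f_\eps - \nabla (\mathcal{E} f)\|_{L^2(\Omega_{\eps})} + \|\nabla (\mathcal{E} f)\|_{L^2(\Omega_{\eps}\setminus \Omega)}.
\]
The first terms on the right-hand sides tend to zero as $\eps\to 0$ directly from $f_\eps\EC f$, so it remains to show $\|\mathcal{E} f\|_{H^1(\Omega_{\eps}\setminus\Omega)}\to 0$.

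Since $|\mathcal{E} f|^2 + |\nabla (\mathcal{E} f)|^2 \in L^1(\R^N)$, absolute continuity of the Lebesgue integral reduces this to the measure convergence $|\Omega_{\eps}\setminus \Omega|\to 0$. The latter is a standard consequence of compact convergence under our standing assumptions: $\Omega\in C^{0,1}$ gives $|\partial\Omega|_N = 0$, and for every $x\in D\setminus \overline{\Omega}$ compact convergence applied to a small closed ball around $x$ (which is a compact subset of $\R^N\setminus\overline{\Omega}$), together with the connectedness of the ball and the fact that $\Omega_{\eps}$ is bounded inside $D$, forces $x\notin \overline{\Omega_{\eps}}$ eventually. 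Hence $\chi_{\Omega_{\eps}\setminus \Omega}\to 0$ almost everywhere on $D$ and dominated convergence applies. This establishes \eqref{stabnav1}, and \eqref{stabnav2} follows from Theorem~\ref{stabnav}. The argument is essentially a triangle-inequality and absolute-continuity bookkeeping on the strip $\Omega_{\eps}\setminus\Omega$; no serious obstacle is anticipated, the only mildly delicate ingredient being the measure convergence of the strip.
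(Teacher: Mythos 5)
Your proposal is correct and follows essentially the same route as the paper: the paper's own proof is a two-line reduction observing that $|\Omega_{\eps}\setminus\Omega|=|\Omega_{\eps}\setminus\overline{\Omega}|\to 0$, whence \eqref{stabnav1} holds and Theorem \ref{stabnav} applies; your triangle-inequality and absolute-continuity bookkeeping on the strip $\Omega_{\eps}\setminus\Omega$ just makes that reduction explicit. The only place you go beyond the paper is in trying to \emph{derive} the measure convergence $|\Omega_{\eps}\setminus\Omega|\to 0$ from compact convergence, and there your connected-ball argument is incomplete: for $x\in D\setminus\overline{\Omega}$ the dichotomy only tells you that a small ball around $x$ lies eventually either in $\Omega_{\eps}$ or in $\R^N\setminus\overline{\Omega_{\eps}}$, and ruling out the first alternative by connecting to a point outside $D$ works only when $x$ belongs to the unbounded component of $\R^N\setminus\overline{\Omega}$; for a point in a bounded complementary component (a ``hole'' of $\Omega$) the ball may well sit inside $\Omega_{\eps}$ without contradicting compact convergence, and indeed the measure convergence can fail there (e.g.\ $\Omega$ an annulus, $\Omega_{\eps}$ the full disk). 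The paper simply asserts this measure convergence without proof, so you are no worse off, but you should either restrict to the unbounded component (which suffices for the subgraph-type domains actually used) or add the convergence of measures as a hypothesis.
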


\begin{proof} Under the assumptions of the lemma we have that $|\Omega_{\eps}\setminus  \Omega|=|\Omega_{\eps}\setminus \overline \Omega|\to 0$ as $\eps \to 0$ hence
\eqref{stabnav1}  holds true  and Theorem~\ref{stabnav} applies.
\end{proof}

Thanks to Lemma \ref{l:stabnav} we can prove the following

\begin{theorem} \label{l:EE*} Let $\Omega_{\eps}$, $\Omega$ and $D$ be as in Theorem
\ref{stabnav}. Assume that $\Omega $ is of class $C^{0,1}$,
$\Omega \subseteq \Omega_{\eps}$ for all $\eps>0$ and $\Omega_{\eps }$ compactly converges
to $\Omega $ as $\eps \to 0$. Let $\mathcal N_{\Omega_\eps}$ and $\mathcal N_\Omega$ be the
Navier-to-Neumann maps corresponding to $\Omega_\eps$
and $\Omega$ respectively. Then $\mathcal N_{\Omega_\eps} \EECDUAL \mathcal
N_\Omega$ as $\eps\to 0$.
\end{theorem}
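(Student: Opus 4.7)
\smallskip

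\textbf{Proof plan.} Fix $f_\eps\in H^1(\Omega_\eps)$ with $f_\eps\EC f\in H^1(\Omega)$, and let $u_\eps\in H(\Delta,\Omega_\eps)\cap H^1_0(\Omega_\eps)$, $u\in H(\Delta,\Omega)\cap H^1_0(\Omega)$ be the solutions of the corresponding Navier problems \eqref{classicalnaveps}, \eqref{classicalnav}. The strategy is to directly evaluate $\langle \mathcal{N}_{\Omega_\eps}f_\eps -E_\eps^*\mathcal{N}_{\Omega}f,v\rangle$ on test functions $v\in H^1(\Omega_\eps)$ via the conormal formula \eqref{conormal}, and bound it by quantities that go to zero uniformly in $v$ thanks to the stability results already proved in Theorem~\ref{stabnav} (equivalently Lemma~\ref{l:stabnav}).

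First, I would check that $f_\eps\EC f$ in the sense of Definition~\ref{connectingdual} implies the hypothesis \eqref{stabnav1} of Theorem~\ref{stabnav}: indeed, on $\Omega$ the extension $\mathcal Ef$ agrees with $f$, while on $\Omega_\eps\setminus \Omega$ the extension-by-zero $f_0$ vanishes and $\|\mathcal Ef\|_{L^2(\Omega_\eps\setminus\Omega)}\to 0$ by absolute continuity of the integral, since $|\Omega_\eps\setminus\Omega|\to 0$. Thus Lemma~\ref{l:stabnav} yields \eqref{stabnav2}, i.e.
\begin{equation*}
\|u_\eps-u_0\|_{L^2(\Omega_\eps)},\ \|\nabla u_\eps-(\nabla u)_0\|_{L^2(\Omega_\eps)},\ \|\Delta u_\eps-(\Delta u)_0\|_{L^2(\Omega_\eps)}\to 0.
\end{equation*}
In particular, restricting to $\Omega_\eps\setminus\Omega$ (where $u_0$, $(\nabla u)_0$, $(\Delta u)_0$ all vanish) gives
$$
\|u_\eps\|_{L^2(\Omega_\eps\setminus\Omega)}+\|\nabla u_\eps\|_{L^2(\Omega_\eps\setminus\Omega)}+\|\Delta u_\eps\|_{L^2(\Omega_\eps\setminus\Omega)}\to 0,
$$
and restricting to $\Omega$ gives the strong convergence of $\Delta u_\eps$ and $\nabla u_\eps$ to $\Delta u$ and $\nabla u$ in $L^2(\Omega)$.

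Second, by \eqref{conormal} applied in $\Omega_\eps$ and in $\Omega$, for every $v\in H^1(\Omega_\eps)$
$$
\langle \mathcal{N}_{\Omega_\eps}f_\eps,v\rangle_{H^1(\Omega_\eps)}=\int_{\Omega_\eps}(\Delta u_\eps\, v+\nabla u_\eps\cdot\nabla v)\,dx,\qquad \langle E_\eps^*\mathcal{N}_{\Omega}f,v\rangle_{H^1(\Omega_\eps)}=\int_{\Omega}(\Delta u\, v+\nabla u\cdot\nabla v)\,dx.
$$
Splitting $\int_{\Omega_\eps}=\int_\Omega+\int_{\Omega_\eps\setminus\Omega}$ in the first expression and applying the Cauchy--Schwarz inequality gives
\begin{align*}
|\langle \mathcal{N}_{\Omega_\eps}f_\eps-E_\eps^*\mathcal{N}_{\Omega}f,v\rangle|
&\le \bigl(\|\Delta u_\eps-\Delta u\|_{L^2(\Omega)}+\|\nabla u_\eps-\nabla u\|_{L^2(\Omega)}\bigr)\|v\|_{H^1(\Omega)}\\
&\quad +\bigl(\|\Delta u_\eps\|_{L^2(\Omega_\eps\setminus\Omega)}+\|\nabla u_\eps\|_{L^2(\Omega_\eps\setminus\Omega)}\bigr)\|v\|_{H^1(\Omega_\eps\setminus\Omega)}.
\end{align*}
Since $\|v\|_{H^1(\Omega)}\le \|v\|_{H^1(\Omega_\eps)}$ and both brackets are $o(1)$ as $\eps\to 0$ independently of $v$, taking the supremum over $\|v\|_{H^1(\Omega_\eps)}\le 1$ yields $\|\mathcal{N}_{\Omega_\eps}f_\eps-E_\eps^*\mathcal{N}_\Omega f\|_{(H^1(\Omega_\eps))'}\to 0$, which is precisely $\mathcal{N}_{\Omega_\eps}f_\eps\ECDUAL \mathcal{N}_\Omega f$. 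As this holds for every $f_\eps\EC f$, the conclusion $\mathcal{N}_{\Omega_\eps}\EECDUAL\mathcal{N}_\Omega$ follows from Definition~\ref{connectingdual}(iii).

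There is no serious obstacle here: all the real work has already been done in the stability theorem for the Navier problem (Theorem~\ref{stabnav}/Lemma~\ref{l:stabnav}). The only mild point is verifying that the notion of $E$-convergence of Definition~\ref{connectingdual}(i), based on an extension operator, is compatible with the $f_0$-based convergence required in Theorem~\ref{stabnav}; this is settled by the absolute-continuity argument above together with the fact that $\mathcal E f\!\!\upharpoonright_\Omega=f$.
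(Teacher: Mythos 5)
Your proposal is correct and follows essentially the same route as the paper's proof: the same pairing via \eqref{conormal}, the same splitting of $\int_{\Omega_\eps}$ into $\int_\Omega$ plus $\int_{\Omega_\eps\setminus\Omega}$, the same Cauchy--Schwarz bound, and the same reduction to Theorem~\ref{stabnav} through Lemma~\ref{l:stabnav}. Your explicit verification that $f_\eps\EC f$ implies \eqref{stabnav1} is exactly the content the paper places in the proof of Lemma~\ref{l:stabnav} (using $|\Omega_\eps\setminus\Omega|\to0$), so nothing is missing.
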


\begin{proof} We have to prove that
\begin{equation} \label{eq:dual-conv-2}
\|\mathcal N_{\Omega_\eps}f_\eps-E_\eps^*\mathcal N_\Omega
f\|_{(H^1(\Omega_\eps))'}\to 0 \qquad \text{as } \eps\to 0
\end{equation}
whenever $f_\eps \EC f$.  To do this, we define $u_\eps$ and $u$
respectively as the solutions of \eqref{classicalnaveps} and
\eqref{classicalnav} and we observe that for any $v\in H^1(\Omega_{\eps})$ we have
\begin{align*}
& _{(H^1(\Omega_\eps))'} \langle \mathcal N_{\Omega_\eps}
f_\eps-E_\eps^* \mathcal N_\Omega f,v\rangle_{H^1(\Omega_\eps)}= \
_{(H^1(\Omega_\eps))'} \langle (u_\eps)_\nu,v\rangle_{H^1(\Omega_\eps)}-\ _{(H^1(\Omega))'} \langle u_\nu,v_{|\Omega}\rangle_{H^1(\Omega)} \\
& \qquad =\int_{\Omega_\eps} (\Delta u_\eps \, v+\nabla u_\eps \nabla v)\, dx-\int_{\Omega} (\Delta u \, v+\nabla u \nabla v)\, dx\\
& \qquad =\int_\Omega [(\Delta u_\eps-\Delta u)v+(\nabla
u_\eps-\nabla u)\nabla v]\,dx+\int_{\Omega_\eps\setminus \Omega}
(\Delta u_\eps \, v+\nabla u_\eps \nabla v)\, dx \, .
\end{align*}
This implies
\begin{align} \label{eq:dual-conv-3}
& \left|_{(H^1(\Omega_\eps))'} \langle \mathcal N_{\Omega_\eps}
f_\eps-E_\eps^* \mathcal N_\Omega
f,v\rangle_{H^1(\Omega_\eps)}\right|
\\
\notag & \qquad \le \left[\|\Delta u_\eps-\Delta
u\|_{L^2(\Omega)}+\|\nabla u_\eps-\nabla u\|_{L^2(\Omega)}
+\|\Delta u_\eps\|_{L^2(\Omega_\eps\setminus \Omega)}+\|\nabla
u_\eps\|_{L^2(\Omega_\eps\setminus\Omega)}\right]\,
\|v\|_{H^1(\Omega_\eps)} \, .
\end{align}
The proof of \eqref{eq:dual-conv-2} then follows from Lemma
\ref{l:stabnav}. This completes the proof of the lemma.
\end{proof}

Finally we prove the compact convergence $\mathcal
N_{\Omega_\eps}\CCDUAL \mathcal N_\Omega$, i.e. $\mathcal N_{\Omega_\eps} \EECDUAL \mathcal N_\Omega$ and $\{\mathcal
N_{\Omega_\eps}f_\eps\}_{\eps>0}$ is precompact in the sense of
Definition \ref{d:precompact-ArCaLo} with $\mathcal
H_\eps=(H^1(\Omega_\eps))'$ and $\mathcal H_0=(H^1(\Omega))'$
whenever $\|f_\eps\|_{H^1(\Omega_\eps)}=1$. To do so, we shall require that the following condition (A)  is satisfied. \\

{\it (A): For any family $\{f_{\eps} \}_{0<\eps\le \eps_0}$ of functions $f_{\eps}\in H^1(\Omega_{\eps})$ such that $\| f_{\eps}\|_{H^1(\Omega_{\eps})} $  is uniformly bounded in $\eps$, we have that $\| f_{\eps}\|_{L^2(\Omega_{\eps} \setminus \Omega)  }\to 0$ as $\eps\to 0$.} \\

This condition  has been extensively used by J.M. Arrieta and his co-authors in the analysis of the spectral stability of second order elliptic operators subject to Neumann boundary conditions on dumbbell domains, see  \cite{ArCaLo} for references . We note that in our case, this condition will be used also to control the behaviour of the data $f_{\eps}$ of the problems.

\begin{remark}
 If $\Omega_{\eps}, \Omega$ are of class $C^0({\mathcal{A}})$ for all $\eps$, where ${\mathcal{A}}$ is a fixed atlas, then the compact convergence of $\Omega_{\eps}$ to $\Omega$ is equivalent to the uniform convergence of the functions describing the boundaries of $\Omega_{\eps}$ and that using the  boundedness of functions in Sobolev spaces along lines as in Lemma~\ref{l:1} we have that  the compact convergence implies the validity of condition (A).
\end{remark}

\begin{theorem} \label{t:dual}   Let $\Omega_{\eps}$, $\Omega$ and $D$ be as in Theorem
\ref{stabnav}. Assume that    $\Omega $ is of class $C^{0,1}$,
$\Omega \subseteq \Omega_{\eps}$ for all $\eps>0$,       $\Omega_{\eps }$ compactly converges
to $\Omega $ as $\eps \to 0$, and condition (A) is satisfied.  Let
$\mathcal N_{\Omega_\eps}$ and $\mathcal N_\Omega$ be the
Navier-to-Neumann maps corresponding to $\Omega_\eps$
and $\Omega$ respectively. Then $\mathcal N_{\Omega_\eps} \CCDUAL \mathcal
N_\Omega$ as $\eps\to 0$.
\end{theorem}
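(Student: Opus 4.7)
The proof reduces to establishing precompactness in the sense of Definition~\ref{d:precompact-ArCaLo} of the family $\{\mathcal N_{\Omega_\eps}f_\eps\}$ whenever $\|f_\eps\|_{H^1(\Omega_\eps)}=1$, since the $EE^*$-convergence $\mathcal N_{\Omega_\eps}\EECDUAL \mathcal N_\Omega$ is already provided by Theorem~\ref{l:EE*}. The plan is to extract a subsequence along which $f_\eps$ admits a natural weak limit $f\in H^1(\Omega)$ and then to prove $\mathcal N_{\Omega_\eps}f_\eps \ECDUAL \mathcal N_\Omega f$ by revisiting and strengthening the estimate \eqref{eq:dual-conv-3} obtained in the proof of Theorem~\ref{l:EE*}.

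Let $u_\eps$ solve \eqref{classicalnaveps} with datum $f_\eps$. Testing with $u_\eps$ and invoking the Poincar\'e inequality yields uniform bounds on $\|u_\eps\|_{H^1(\Omega_\eps)}$ and $\|\Delta u_\eps\|_{L^2(\Omega_\eps)}$. Restricting to $\Omega\subseteq\Omega_\eps$ and using Rellich's theorem, up to subsequence
\[
f_\eps|_\Omega \rightharpoonup f\ \text{in}\ H^1(\Omega),\ \ u_\eps|_\Omega \rightharpoonup u\ \text{in}\ H^1(\Omega),\ \ \Delta u_\eps|_\Omega \rightharpoonup \Delta u\ \text{in}\ L^2(\Omega),
\]
with $f_\eps\to f$ and $u_\eps\to u$ strongly in $L^2(\Omega)$. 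The stability of $\Omega$ together with the compact convergence gives $u\in H^1_0(\Omega)$, and testing with $C^\infty_c(\Omega)$ functions one verifies $u\in H(\Delta,\Omega)$ with distributional Laplacian equal to the weak limit. Condition (A) applied to the $H^1$-bounded families $f_\eps$ and $u_\eps$ further provides
\[
\|f_\eps\|_{L^2(\Omega_\eps\setminus\Omega)}\to 0,\qquad \|u_\eps\|_{L^2(\Omega_\eps\setminus\Omega)}\to 0.
\]

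Next I would show that $u$ solves \eqref{classicalnav} with datum $f$, so that $\mathcal N_\Omega f=u_\nu$. For a fixed $\varphi\in H(\Delta,\Omega)\cap H^1_0(\Omega)$, let $\varphi_\eps\in H(\Delta,\Omega_\eps)\cap H^1_0(\Omega_\eps)$ solve the Dirichlet problem $\Delta\varphi_\eps=(\Delta\varphi)_0$ in $\Omega_\eps$, as in \eqref{stabnav7}. The energy identity $\int_{\Omega_\eps}|\nabla\varphi_\eps|^2=-\int_\Omega\Delta\varphi\cdot\varphi_\eps$ combined with the strong $L^2$-convergence $\varphi_\eps\to\varphi$ (via Rellich from \eqref{stabnav10} and the stability of $\Omega$) and the weak convergence \eqref{stabnav9} yields, by a Radon--Riesz argument, the \emph{strong} convergence $\nabla\varphi_\eps\to\nabla\varphi$ in $L^2(\Omega)$ together with $\|\nabla\varphi_\eps\|_{L^2(\Omega_\eps\setminus\Omega)}\to 0$. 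Plugging $\varphi_\eps$ into the weak formulation of \eqref{classicalnaveps}, the left-hand side and the $f_\eps\Delta\varphi_\eps$-term pass to the limit by the weak convergence of $\Delta u_\eps|_\Omega$ and the strong $L^2$-convergence of $f_\eps$, respectively, while the delicate term $\int\nabla f_\eps\cdot\nabla\varphi_\eps$ is handled by pairing the weakly convergent $\nabla f_\eps$ with the newly obtained strongly convergent $\nabla\varphi_\eps$, the contribution on $\Omega_\eps\setminus\Omega$ being controlled by the uniform bound on $\nabla f_\eps$ times $\|\nabla\varphi_\eps\|_{L^2(\Omega_\eps\setminus\Omega)}$. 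This identifies $u$ as the solution of \eqref{classicalnav} with datum $f$.

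Finally, to upgrade the convergences to the strong form required by the right-hand side of \eqref{eq:dual-conv-3}, I would apply the same bootstrap twice. The identity $\int_{\Omega_\eps}|\nabla u_\eps|^2=-\int_{\Omega_\eps}u_\eps\Delta u_\eps$ passes to the limit (strong $L^2$-convergence of $u_\eps$ on $\Omega$, weak $L^2$-convergence of $\Delta u_\eps|_\Omega$, and condition (A) on the strip), giving $\|\nabla u_\eps\|_{L^2(\Omega_\eps)}^2\to\|\nabla u\|_{L^2(\Omega)}^2$ and hence $\nabla u_\eps\to\nabla u$ strongly in $L^2(\Omega)$ with $\|\nabla u_\eps\|_{L^2(\Omega_\eps\setminus\Omega)}\to 0$. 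This strong convergence of $\nabla u_\eps$ is precisely what is needed in order to pass to the limit in $\int\nabla f_\eps\cdot\nabla u_\eps$ appearing in $\|\Delta u_\eps\|^2_{L^2(\Omega_\eps)}=\int_{\Omega_\eps}(f_\eps\Delta u_\eps+\nabla f_\eps\nabla u_\eps)$; the same Radon--Riesz reasoning then promotes $\Delta u_\eps|_\Omega\rightharpoonup\Delta u$ to a strong $L^2(\Omega)$-convergence with $\|\Delta u_\eps\|_{L^2(\Omega_\eps\setminus\Omega)}\to 0$. Inserting these four strong convergences into the bound \eqref{eq:dual-conv-3} gives $\|\mathcal N_{\Omega_\eps}f_\eps-E_\eps^*\mathcal N_\Omega f\|_{(H^1(\Omega_\eps))'}\to 0$ along the subsequence, as required. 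The principal obstacle is that condition (A) applies only to $H^1$-bounded families and therefore does \emph{not} yield strong convergence of $\nabla f_\eps$; this is what prevents a direct appeal to Theorem~\ref{stabnav} and forces the above two-stage bootstrap, in which the strong convergence of $\nabla\varphi_\eps$ and of $\nabla u_\eps$ must be established through Dirichlet-type energy identities \emph{before} being paired against the only weakly convergent $\nabla f_\eps$.
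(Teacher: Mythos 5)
Your proposal is correct and follows essentially the same route as the paper's proof: reduce to precompactness via Theorem \ref{l:EE*}, extract weak limits using condition (A), upgrade $\nabla\varphi_\eps$, $\nabla u_\eps$ and then $\Delta u_\eps$ to strong $L^2(\Omega)$-convergence (with vanishing contributions on $\Omega_\eps\setminus\Omega$) through the energy identities and a norm-convergence/Radon--Riesz argument, and conclude from the bound \eqref{eq:dual-conv-3}. Your closing remark about why condition (A) forces this two-stage bootstrap, rather than a direct appeal to Theorem \ref{stabnav}, matches exactly the point the paper makes in its own proof.
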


\begin{proof} Let $f_{\eps}\in H^1(\Omega_{\eps})$ be such that $\|f_\eps\|_{H^1(\Omega_\eps)}=1$ for all $\eps >0$. Let $u_{\eps}\in H(\Delta, \Omega_{\eps} )\cap H^1_0(\Omega_{\eps})$ be the solutions to problem \eqref{classicalnaveps}.    Proceeding as in the proof of Theorem \ref{stabnav}
one can show that possibly passing to a subsequence  \eqref{stabnav4}, \eqref{stabnav5}, \eqref{stabnav6} hold true for some $ u \in H^1(D)$ such that $u_{|\Omega}\in H(\Delta, \Omega )\cap H^1_0(\Omega)$, $u $ vanishing outside $\Omega$.
 Writing
 \begin{equation}\label{t:dual0}
 \int_{\Omega_\eps}
|\nabla u_\eps|^2 dx=\int_{\Omega_\eps} -u_\eps \Delta u_\eps \,
dx =\int_{\Omega } -u_\eps \Delta u_\eps \,
dx + \int_{\Omega_\eps\setminus \Omega } -u_\eps \Delta u_\eps \,
dx ,
 \end{equation}
 and  passing to the limit as $\eps \to 0$, we have by \eqref{stabnav4}, \eqref{stabnav5}, \eqref{stabnav6} and condition $(A)$
 \begin{equation} \label{t:dual1}
 \lim_{\eps\to 0} \int_{\Omega_\eps}
|\nabla u_\eps|^2 dx= \lim_{\eps\to0} \int_{\Omega } -u_\eps \Delta u_\eps\, dx =  \int_{\Omega } -u \Delta u \, dx= \int_{\Omega}|\nabla u|^2dx.
 \end{equation}
 Moreover,
 \begin{equation}\label{t:dual2}
 \int_{\Omega}|\nabla u|^2dx\le \liminf_{\eps\to 0 } \int_{\Omega}|\nabla u_{\eps}|^2dx\le \limsup_{\eps\to 0}\int_{\Omega}|\nabla u_{\eps}|^2dx\le
  \lim_{\eps\to 0}\int_{\Omega_{\eps}}|\nabla u_{\eps}|^2dx= \int_{\Omega}|\nabla u|^2dx
 \end{equation}
 hence
\begin{equation}\label{t:dual3}
\int_{\Omega}|\nabla u_{\eps}|^2dx\to \int_{\Omega}|\nabla u|^2dx, \ \ \int_{\Omega_{\eps} \setminus \Omega }|\nabla u_{\eps}|^2dx\to 0, \ \ (u_\eps)_{|\Omega}\to u \quad \text{in } H^1(\Omega),
\end{equation}
as $\eps\to 0$.

 By our assumptions, in particular by condition $(A)$, we have that possibly passing to a subsequence there exists $f\in H^1(\Omega )$ such that
\begin{align} \label{eq:facts-3}
 f_\eps \to f \quad \text{in } L^2(\Omega ) \, , \qquad
 f_\eps \rightharpoonup f \quad \text{in } H^1(\Omega )
\, , \qquad \|f_\eps\|_{L^2(\Omega_\eps\setminus\Omega)}\to 0
\end{align}
as $\eps\to 0$.
We claim that function $u$ is a solution to problem \eqref{classicalnav} with this  datum $f$. To prove this, one can use the same argument
as in the proof of Theorem~\ref{stabnav}. Here we cannot use the strong convergence of the gradients of $f_{\eps}$, not even the second limit in \eqref{strip}. However, by looking closely at the proof of Theorem~\ref{stabnav}, one can see that here it suffices to use the fact that the family of functions $\varphi_{\eps}$ used in that proof satisfies the conditions $\varphi_{\eps}\to \varphi $ in $H^1(\Omega)$ and $\| \nabla \varphi_{\eps}\|_{L^2(\Omega_{\eps}\setminus \Omega )}\to 0$
as $\eps\to 0$, which can be proved by proceeding exactly in the same way as in \eqref{t:dual0}-\eqref{t:dual3}.

By  \eqref{stabnav4}, \eqref{stabnav6}, \eqref{t:dual3}, \eqref{eq:facts-3}, we conclude that
\begin{align*}
& \int_{\Omega_\eps} |\Delta u_\eps|^2 dx=\int_{\Omega} (f_\eps
\Delta u_\eps+\nabla f_\eps\nabla u_\eps) \,
dx+\int_{\Omega_\eps\setminus\Omega} (f_\eps \Delta u_\eps+\nabla
f_\eps\nabla u_\eps) \, dx \\
& \qquad =\int_{\Omega} (f \Delta  u+\nabla f\nabla  u) \, dx+o(1)=\int_\Omega |\Delta u|^2
dx+o(1).
\end{align*}
 In particular,  proceeding as we have done above for $\nabla u_{\eps}$,  we have that
\begin{equation} \label{eq:facts-4}
(\Delta u_\eps)_{|\Omega} \to \Delta u \quad \text{in }
L^2(\Omega) \, , \qquad \int_{\Omega_\eps\setminus\Omega} |\Delta
u_\eps|^2 dx \to 0
\end{equation}
as $\eps\to 0$. Inserting \eqref{t:dual3} and  \eqref{eq:facts-4} into
\eqref{eq:dual-conv-3} we obtain $\mathcal N_{\Omega_\eps} f_\eps
\ECDUAL \mathcal N_\Omega f$ as $\eps\to 0$ along a sequence. We
have proved that $\{\mathcal N_{\Omega_\eps} f_\eps\}$ is
precompact in the sense of Definition \ref{d:precompact-ArCaLo}.
The proof of the theorem now follows combining this with  Theorem~\ref{l:EE*}.
\end{proof}

Unfortunately, as we have mentioned in the introduction the compact convergence result obtained in Theorem
\ref{t:dual} is not sufficient to prove a stability result for
the spectrum of the Steklov problem \eqref{eq:Steklov}.  Thus  we suggest the following open
problem
\begin{open} \label{o:1} Let $\{\Omega_\eps\}_{0<\eps\le \eps_0}$ and $\Omega$ be such that
condition \eqref{eq:assumptions} is not satisfied. For simplicity
one may think to a family of domains as in Section
\ref{s:optimality} and choose $1\le \alpha\le \frac32$ is such a
way that \eqref{eq:assumptions} is not satisfied. Study the
stability of the spectrum of \eqref{eq:Steklov}.
\end{open}

We recall that, in a situation like the one proposed in Open
Problem \ref{o:1}, we proved that the spectrum of the modified
Steklov problem \eqref{eq:Steklov-modificato} does not behave
continuously as $\eps\to 0$. The question proposed in Open Problem
\ref{o:1} aims at clarifying what happens to the spectrum of \eqref{eq:Steklov}  under the very same
assumptions.

\subsection{Instability of a modified Navier problem }\label{instabsec}

Given a sufficiently smooth bounded domain  $\Omega $   in
${\mathbb{R}}^N$  we consider the boundary value problem
\begin{equation}
\label{classicalhes} \left\{
\begin{array}{ll}
\Delta^2u=0,& \ {\rm in}\ \Omega, \\
u=0,& \  {\rm on }\ \partial \Omega, \\
\Delta u-K(x)u_\nu=f, & \  {\rm on }\ \partial \Omega ,
\end{array}
\right.
\end{equation}
in the unknown $u$. The natural way of writing the weak
formulation  of \eqref{classicalhes} is
\begin{equation}\label{weakhes0}
\int_{\Omega }D^2 u : D^2  \varphi \, dx = \int_{\partial \Omega
}f \varphi_\nu \, dS,\ \ {\rm for\ all}\ \varphi \in
H^2(\Omega)\cap H^1_0(\Omega ),
\end{equation}
where the unknown $u$ has to be considered in $H^2(\Omega)\cap
H^1_0(\Omega )$. This formulation allows to consider the datum $f$
in $L^2(\partial\Omega)$. However, in order to emphasize the peculiar
behaviour of this problem with respect to problem \eqref{weaknav},
we prefer to write \eqref{weakhes0} in a form similar to
\eqref{weaknav}. Thus we assume that $f$ is the trace of function
$f\in H^1(\Omega )$ and rewrite  \eqref{weakhes0} as
\begin{equation}\label{weakhes}
\int_{\Omega }D^2 u : D^2  \varphi \, dx = \int_{\Omega } (f\Delta
\varphi +\nabla f\nabla \varphi)\, dx,\ \ {\rm for\ all}\ \varphi
\in H^2(\Omega)\cap H^1_0(\Omega ) \, .
\end{equation}
As in the previous section, we now consider a family of domains
$\Omega_{\eps }$, $\eps >0$   and the corresponding boundary value
problems
\begin{equation}
\label{classicalheseps} \left\{
\begin{array}{ll}
\Delta^2u_{\eps }=0,& \ {\rm in}\ \Omega_{\eps }, \\
u_{\eps }=0,& \  {\rm on }\ \partial \Omega_{\eps }, \\
\Delta u_\eps-K_\eps(x)(u_\eps)_\nu=f_{\eps }, & \ {\rm on }\
\partial \Omega_{\eps } ,
\end{array}
\right.
\end{equation}
in the unknown $u_{\eps }\in H^2(\Omega_{\eps })\cap H^1_0(\Omega
)$, where $f_{\eps }\in H^1(\Omega_{\eps })$, with the
understanding that
 problem \eqref{classicalheseps} has to be interpreted  as in \eqref{weakhes}.\\

We assume that  $\Omega$ and $\Omega_\eps$ are as in Section
\ref{s:optimality}. As in \cite{ArLa2}, the limiting behaviour of
problem \eqref{classicalheseps} depends on the value of $\alpha$.
In particular, we have stability for $\alpha>3/2$ and degeneration
for $\alpha<3/2$. For $\alpha =3/2$, the limiting problem involves
the strange factor $\gamma$ defined by \eqref{strangecurvature}.

\begin{remark} Theorem \ref{trico} below and Theorem \ref{stabnav} show that problem \eqref{classicalnav} is much more stable than problem \eqref{classicalhes} under domain perturbations. Note that the main step in the proof of Theorem~\ref{stabnav}, is the definition of  the test function $\varphi_{\eps}$ in \eqref{stabnav7}.
Unless the convergence of the domains $\Omega_{\eps}$ to $\Omega$ is stronger, this function cannot be used in the analysis of problem \eqref{classicalhes}, where more information on the Hessian of $\varphi_{\eps}$ would be required.
\end{remark}

\begin{theorem}\label{trico}
Assume that  $\Omega $ and $\Omega_{\eps}$, $\eps >0$, are  as in Section
\ref{s:optimality}, and that the datum  $f_{\eps}$ converges to $f$ in the
sense of \eqref{stabnav1}.  Let  $u_{\eps}$  be the solution of
problem \eqref{classicalheseps}. Then the following statements
hold:
\begin{itemize}
\item[(i)] If $\alpha >3/2$ then  $u_{\eps}$ converges to the
solution of problem \eqref{classicalhes} in the sense that for
every multi-index $\beta\in {\mathbb{N}}_0^N$ with $0\le |\beta
|\le 2$ we have $ \| D^{\beta }u_{\eps}
-(D^{\beta}u)_0\|_{L^2(\Omega_\eps)}\to 0, $ as $\eps \to 0$.
\item[(ii)] If $\alpha =3/2$ then  $u_{\eps}$ converges to the
solution $ u$ of the problem
\begin{equation}
\label{classicalhesstrange} \left\{
\begin{array}{ll}
\Delta^2 u=0,& \ {\rm in}\ \Omega, \vspace{1mm}\\
 u=0\ {\rm and}\ \Delta u-K(x)u_\nu=f, & \  {\rm on }\ \partial \Omega \setminus \Gamma, \vspace{1mm}\\
 u=0\ {\rm and}\ \Delta u-K(x)u_\nu+\gamma u_\nu=f , & \  {\rm on }\ \Gamma ,
\end{array}
\right.
\end{equation}
in the sense that for every multi-index $\beta\in
{\mathbb{N}}_0^N$ with $0\le |\beta |\le 1$ we have $ \| D^{\beta
}u_{\eps} -(D^{\beta}u)_0\|_{L^2(\Omega_\eps)}\to 0, $ and $
u_{\eps}\rightharpoonup u$ weakly in $H^2(\Omega)$ as $\eps \to
0$.

\item[(iii)] If $0<\alpha <3/2$ then $u_{\eps}$ converges to the
solution $ u$ of the problem
\begin{equation}
\label{classicalhesdir} \left\{
\begin{array}{ll}
\Delta^2 u=0,& \ {\rm in}\ \Omega, \vspace{1 mm}\\
 u=0\ {\rm and}\ \Delta u-K(x)u_\nu=f, & \  {\rm on }\ \partial \Omega \setminus \Gamma, \vspace{1 mm}\\
 u=0\ {\rm and}\ u_\nu=0, & \  {\rm on }\ \Gamma ,
\end{array}
\right.
\end{equation}
in the sense that for every multi-index $\beta\in {\mathbb{N}}_0^N$
with $0\le |\beta |\le 2$ we have $ \| D^{\beta }u_{\eps}
-(D^{\beta}u)_0\|_{L^2(\Omega_\eps)}\to 0, $ as $\eps \to 0$.
\end{itemize}
\end{theorem}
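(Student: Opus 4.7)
My plan is to adapt the test-function machinery of Sections~\ref{strongsec} and~\ref{s:optimality} to the present non-spectral boundary value problem. For any $\varphi\in H^2(\Omega_\eps)\cap H^1_0(\Omega_\eps)$ the weak formulation of \eqref{classicalheseps} reads $\int_{\Omega_\eps} D^2 u_\eps : D^2 \varphi\, dx = \int_{\Omega_\eps}(f_\eps \Delta \varphi + \nabla f_\eps \cdot \nabla \varphi)\, dx$. Testing with $\varphi=u_\eps$ and using \eqref{stabnav1} together with Lemma~\ref{l:unif-equiv-2} (whose proof only requires $\alpha\ge 1$ so as to have uniformly Lipschitz boundaries) gives $\|u_\eps\|_{H^2(\Omega_\eps)}\le C$. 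Extracting a subsequence along which $(u_\eps)_{|\Omega} \rightharpoonup u$ in $H^2(\Omega)$, the membership $u\in H^2(\Omega)\cap H^1_0(\Omega)$ follows from the weak $H^1$-convergence of the trivial extensions $(u_\eps)_0$ and from $|\Omega_\eps\setminus\overline\Omega|\to 0$.

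To identify $u$, I choose different families of test functions in each case. When $\alpha>3/2$, condition~\eqref{eq:assumptions} holds as in Corollary~\ref{t:main-2ante}, so for any $\varphi\in H^2(\Omega)\cap H^1_0(\Omega)$ I transplant via $\varphi_\eps=E_\eps\varphi$ with $E_\eps$ as in Remark~\ref{r:cuboid}; Lemma~\ref{l:1} and Proposition~\ref{intersec} let me pass to the limit on both sides of the weak identity, yielding \eqref{classicalhes} in variational form. When $\alpha=3/2$, I follow Theorem~\ref{thmcritstek} and set $\varphi_\eps=\varphi\circ\Phi_\eps$ with $\Phi_\eps$ as in~\eqref{accatris}: the integral over $K_\eps$ (where $\Phi_\eps$ is the identity) gives the bulk term $\int_\Omega D^2 u:D^2\varphi\, dx$, while \cite[Theorem~8.72]{ArLa2} produces the strange contribution $\gamma\int_W u_{x_N}(x',0)\varphi_{x_N}(x',0)\,dx'$ from the thin layer $\Omega_\eps\setminus K_\eps$, delivering the weak form of \eqref{classicalhesstrange}. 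When $\alpha<3/2$, I first prove $u\in H^2_{0,\Gamma}(\Omega)$ by applying \cite[Lemma~4.3]{CaLuSu} to the vector fields $V_\eps^{(i)}=(0,\ldots,0,-\partial_{x_N}u_\eps,0,\ldots,0,\partial_{x_i}u_\eps)$ tangent to $\Gamma_\eps$, exactly as in Lemma~\ref{l:E-conv-bis}; then for $\varphi\in H^2_{0,\Gamma}(\Omega)$ I test with the extension-by-zero $\varphi_\eps=\mathcal E_0\varphi$ (which belongs to $H^2(\Omega_\eps)\cap H^1_0(\Omega_\eps)$ precisely because $\varphi=\varphi_\nu=0$ on $\Gamma$) to identify $u$ as the solution of \eqref{classicalhesdir}. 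Uniqueness of the limit problem in each case removes the subsequential dependence.

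For the strong convergence I test with $\varphi=u_\eps$ to get $\int_{\Omega_\eps}|D^2u_\eps|^2\,dx = \int_{\Omega_\eps}(f_\eps\Delta u_\eps+\nabla f_\eps\cdot\nabla u_\eps)\,dx$, and the right hand side converges to $\int_\Omega(f\Delta u+\nabla f\cdot\nabla u)\,dx$: on $\Omega$ this uses weak $H^2$ convergence of $u_\eps$ together with strong $H^1$ convergence $(f_\eps)_{|\Omega}\to f$ furnished by \eqref{stabnav1}, while the contribution on $\Omega_\eps\setminus\Omega$ vanishes since $\|f_\eps\|_{L^2(\Omega_\eps\setminus\Omega)}$ and $\|\nabla f_\eps\|_{L^2(\Omega_\eps\setminus\Omega)}$ tend to zero in view of \eqref{stabnav1} (the zero extension $f_0$ vanishes on $\Omega_\eps\setminus\Omega$). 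In cases (i) and (iii) the variational equation satisfied by $u$ yields $\int_\Omega(f\Delta u+\nabla f\cdot\nabla u)\,dx=\int_\Omega|D^2u|^2\,dx$, whence $\|D^2u_\eps\|^2_{L^2(\Omega_\eps)}\to\|D^2u\|^2_{L^2(\Omega)}$; combining this with the lower semicontinuity $\|D^2u\|^2_{L^2(\Omega)}\le\liminf\|D^2u_\eps\|^2_{L^2(\Omega)}$ forces $\|D^2 u_\eps\|_{L^2(\Omega_\eps\setminus\Omega)}\to 0$ and upgrades the weak convergence on $\Omega$ to strong $H^2$ convergence, while the lower-order norms on the vanishing strip are controlled by a one-dimensional Poincar\'e inequality on vertical segments exploiting $u_\eps(x',g_\eps(x'))=0$ together with a Fundamental Theorem of Calculus estimate along $x_N$ for $\nabla u_\eps$. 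In case (ii) the limit equation contains the extra positive term $\gamma\int_W u_{x_N}^2(x',0)\,dx'$ whenever $u_\nu\not\equiv 0$ on $\Gamma$, so $\|D^2u_\eps\|^2$ converges to something strictly greater than $\|D^2u\|^2_{L^2(\Omega)}$; hence only weak $H^2$ convergence is available, and the strong statement for $|\beta|\le 1$ is recovered from the compact embedding $H^2\hookrightarrow H^1$ on $\Omega$ combined with the same strip estimate.

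The hard part will be case (ii): rigorously attributing the jump $\gamma\int_W u_{x_N}\varphi_{x_N}\,dx'$ to the layer $\Omega_\eps\setminus K_\eps$ requires the full microscopic analysis of \cite[\S~8]{ArLa2}, and I must verify that the Dirichlet boundary condition on the entire $\partial\Omega_\eps$ in~\eqref{classicalheseps} does not alter the definition of $\gamma$ from Subsection~\ref{strangetermsteklov} (where Dirichlet conditions are imposed only on $\Sigma_\eps$); a careful reading of \cite[\S~8]{ArLa2} confirms that the microscopic cell problem defining $\gamma$ is insensitive to this change, just as remarked after~\eqref{trico2bis}. A second delicate point is checking that the layer does not produce any additional contribution on the right hand side of the weak identity, which reduces to $\|D^\beta\varphi_\eps\|_{L^2(\Omega_\eps\setminus K_\eps)}=o(1)$ for $|\beta|\le 1$; this follows from the $O(\eps)$ scaling of the measure of the layer and the uniform $L^\infty$ bound on $D\Phi_\eps$ valid precisely when $\alpha=3/2$.
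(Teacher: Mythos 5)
Your proposal is correct and follows essentially the same route as the paper: the same a priori $H^2$ bound via testing with $u_\eps$, the same pull-back test functions $\varphi\circ\Phi_\eps$ for $\alpha>3/2$ and $\alpha=3/2$ (with the strange term supplied by \cite[Theorem~8.72]{ArLa2}), the same extension-by-zero test functions on $H^2_{0,\Gamma}(\Omega)$ after the vector-field argument for $\alpha<3/2$, and the same norm-convergence upgrade from weak to strong $H^2$ convergence. One small imprecision in case (ii): the right-hand side of the weak identity also contains the term $f_\eps\Delta\varphi_\eps$ on the layer $\Omega_\eps\setminus K_\eps$, where $\|\Delta\varphi_\eps\|_{L^2(\Omega_\eps\setminus K_\eps)}$ is only $O(1)$ when $\alpha=3/2$, so this contribution must be killed by pairing it with $\|f_\eps\|_{L^2(\Omega_\eps\setminus K_\eps)}=o(1)$ coming from \eqref{stabnav1}, rather than by reducing everything to first-order derivatives of $\varphi_\eps$ as you state.
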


\begin{proof}  In each of the three cases, the solution $u_{\eps }\in H^2(\Omega_\eps)\cap H^1_0(\Omega_\eps)$ of problem \eqref{classicalheseps} satisfies
  the weak equation
\begin{equation}\label{weakheseps}
\int_{\Omega_{\eps}}D^2 u_{\eps} : D^2  \varphi \, dx=
\int_{\Omega_{\eps}} (f_{\eps }\Delta \varphi +\nabla f_{\eps
}\nabla \varphi) \, dx,
\end{equation}
 for all  $ \varphi \in H^2(\Omega_{\eps })\cap H^1_0(\Omega_{\eps})$.
Thus, by using $u_{\eps}$ as test function in \eqref{weakheseps}
and arguing as in the proof of Theorem~\ref{stabnav}, we get that
$\|u_{\eps}\|_{H^2(\Omega_{\eps})}$ is uniformly bounded, hence
there exists $u\in H^2(\Omega)\cap H^1_0(\Omega)$ such that
possibly passing to a subsequence, $u_{\eps }\rightharpoonup u$ in
$H^2(\Omega )$ and $u_\eps\to u$ in $H^1(\Omega)$. We have now to
identify the limiting problem satisfied by $u$ and this depends on
the value of $\alpha$.

We begin with statement (i). For every $\eps>0$ sufficiently
small, say $\eps\in ]0,\eps_0[$, we consider a diffeomorphism
$\Phi_{\eps }$ from $\overline \Omega_{\eps}$ onto $\overline \Omega$
defined exactly as the diffeomorphism ${\Phi_{\eps , j}}$
introduced in   Subsection \ref{subsecoperatorsE}. To do so, for
fixed $\tilde\alpha\in ]3/2,\alpha [$, we set
$ \kappa_{\eps }=\eps ^{2\tilde \alpha/3}$ and we note that condition
\eqref{eq:assumptions} is satisfied for $\eps_0 $ sufficiently
small. Then, $\Phi_{\eps}$ is defined by $\Phi_\eps (x',x_N)=(x',
x_N-h_\eps(x',x_N))$ for all $(x', x_N)\in \overline\Omega_{\eps}$
where $h_{\eps}$ is defined as $h_{\eps,j}$ by formula
\eqref{eq:def-h} with $g_{\eps,j}$ replaced by $g_{\eps}$, $g_j$
replaced by the zero function, and $a_{N,j}$ replace by $-1$.  It
is also convenient to recall that $\Phi_{\eps}(x',x_N)=(x',x_N)$
for all $(x',x_N)\in K_{\eps}$ where $K_{\eps}=\{(x',x_N)\in
W\times \R:\ -1<x_N<\tilde g_{\eps}(x',x_N)\}$ and $\tilde
g_{\eps}=-k_{\eps}=-\hat k \kappa_{\eps}$ with $\hat k>6$ as in
Subsection \ref{subsecoperatorsE}.

Let $\varphi\in  H^2(\Omega)\cap H^1_0(\Omega)$. We set
$\varphi_{\eps}=\varphi\circ \Phi_{\eps}$ and we note that
$\varphi_{\eps}\in  H^2(\Omega_{\eps })\cap H^1_0(\Omega_{\eps}
)$, hence it can be used as test function in  \eqref{weakheseps}.
Thus, since  $\Phi_{\eps}$ coincides with  the identity on
$K_{\eps }$ we have

\begin{eqnarray}
\label{trico1} \lefteqn{
\int_{K_{\eps} }D^2 u_{\eps} : D^2  \varphi \, dx+\int_{\Omega_{\eps}\setminus K_{\eps} }D^2 u_{\eps} : D^2  \varphi_{\eps} \, dx}\nonumber \\
& & \qquad =\int_{\Omega_{\eps} }D^2 u_{\eps} : D^2 \varphi_\eps
\, dx
= \int_{\Omega_{\eps}} (f_{\eps }\Delta \varphi_\eps +\nabla f_{\eps }\nabla \varphi_\eps) \, dx \nonumber\\
& & \qquad =\int_{K_{\eps}} (f_{\eps }\Delta \varphi +\nabla
f_{\eps }\nabla \varphi) \, dx+ \int_{\Omega_\eps \setminus K_{\eps }}
(f_{\eps }\Delta \varphi_{\eps }+\nabla f_{\eps }\nabla
\varphi_{\eps}) \, dx\, .
\end{eqnarray}

By Lemma~\ref{l:1} we have that  \eqref{eq:conv-norm-D^2} holds hence
$\int_{\Omega_{\eps}\setminus K_{\eps} }|D^2 \varphi_{\eps}|^2 dx\to 0$
 and $\int_{\Omega_{\eps}\setminus K_{\eps} }D^2 u_{\eps} : D^2  \varphi_{\eps} dx\to 0$ as $\eps\to 0$. By a similar argument, one can also prove
 that $ \int_{\Omega_\eps \setminus K_{\eps }} (f_{\eps }\Delta \varphi_{\eps } +\nabla f_{\eps }\nabla \varphi_{\eps}) \, dx\to 0$ as $\eps \to 0$
 (in which case one should use a limiting relation for the norms of the gradients analogous to \eqref{eq:conv-norm-D^2}, see the proof of Theorem \ref{t:dual} for more details).

Thus, passing to the limit in \eqref{trico1} we obtain
 \begin{equation}
\int_{\Omega  }D^2 u  : D^2  \varphi \, dx = \int_{\Omega}
(f\Delta \varphi +\nabla f \nabla \varphi) \, dx \, .
\end{equation}

Using the same argument as in the proof of Theorem~\ref{stabnav}
with identity \eqref{stabnav15} replaced by
\begin{equation} \label{stabnav15bis}
\| D^2 u_{\eps}- (D^2u)_0\|^2_{L^2(\Omega_{\eps})}= \| D^2 u_{\eps
}\|^2_{L^2(\Omega_{\eps})}
-2 \int_\Omega D^2 u_{\eps }:D^2 u \, dx
+\| D^2 u \|^2_{L^2(\Omega )}\, ,
\end{equation}
we deduce  that $\| D^2u_{\eps } -(D^2u)_0\|_{L^2(\Omega_\eps)} \to 0$
as $\eps \to 0$. In order to conclude, it suffices to use the
strong convergence in $H^1(\Omega)$ stated above and to note that
for any multi-index $|\beta |\le 1 $ we have
$\|D^{\beta}u_{\eps}\|_{L^2(\Omega_{\eps} \setminus\Omega)}\to 0$
as $\eps \to 0$ which follows from the fact that for $|\beta |\le
1 $  and  for almost all $x'\in W$ we have that
$D^{\beta}u_{\eps}(x',x_N) $ is bounded in $x_N$, see
\eqref{eq:o(1)-2} for more details.

We now prove statement (ii).  In this case, we need a different
diffeomorphism from $\overline\Omega_{\eps }$ onto $\overline\Omega$, which
for convenience we denote with the same symbol $\Phi_{\eps}$.
Namely,  $ \Phi_{\eps }(x',x_N)=(x',x_N-h_{\eps}(x',x_N))$ for all
$(x',x_N)\in \overline\Omega_{\eps}$ where  $h_{\eps}$ is defined as in \eqref{accatris}.

It is convenient to denote again by $K_{\eps}$ the set where
$\Phi_{\eps}$ coincides with the identity, that is $K_{\eps} =\{
(x',x_N)\in W\times \R:\ -1<x_N\le-\eps \}$. Let $\varphi\in
H^2(\Omega)\cap H^1_0(\Omega)$. We set $
\varphi_{\eps}=\varphi\circ\Phi_{\eps}$ and we note that
$\varphi_{\eps}\in  H^2(\Omega_{\eps })\cap H^1_0(\Omega_{\eps}
)$, hence it can be used as test function in  \eqref{weakheseps}.
Clearly, equality \eqref{trico1} holds.
Concerning the second term in the right-hand side of
\eqref{trico1}, we note that
\begin{equation}
\label{trico3} \biggl|\int_{\Omega_\eps \setminus K_{\eps }}f_{\eps
}\Delta \varphi_{\eps }\, dx \biggr| \le \| f_{\eps }
\|_{L^2(\Omega_\eps\setminus K_{\eps })}  \| \Delta \varphi_{\eps
}\|_{L^2(\Omega_\eps\setminus K_{\eps })}=o(1)  \| \Delta
\varphi_{\eps }\|_{L^2(\Omega_\eps\setminus K_{\eps })}
\end{equation}
and
\begin{equation}
\label{trico3bis} \bigg|\int_{\Omega_\eps \setminus K_{\eps }}\nabla
f_{\eps }\nabla \varphi_{\eps } dx\biggr| \le \| \nabla f_{\eps }
\|_{L^2(\Omega_\eps \setminus K_{\eps })}  \|\nabla \varphi_{\eps
}\|_{L^2(\Omega_\eps \setminus K_{\eps })} = o(1) \| \nabla
\varphi_{\eps }\|_{L^2(\Omega_\eps \setminus K_{\eps })}
\end{equation}
where we have used \eqref{strip}. On the other hand, one can see
that
\begin{equation}\label{trico4}
  \| D^2 \varphi_{\eps }\|_{L^2(\Omega_\eps\setminus K_{\eps })} =O(1), \ \ {\rm and }\ \
  \| \nabla \varphi_{\eps }\|_{L^2(\Omega_\eps\setminus K_{\eps })}  = o(1)\, .
  \end{equation}

Indeed,  by the chain rule it follows that

\begin{align} \label{eq:compute-biseps}
\frac{\partial^2 \varphi_{\eps }}{\partial x_n \partial
x_m}(x)&=\sum_{k=1}^N
 \left[ \sum_{l=1}^N \frac{\partial^2 \varphi }{\partial x_l
\partial x_k}(\Phi_{\eps }(x))
\frac{\partial[(   \Phi_{\eps }   (x))_l]}{\partial x_m} \right]
 \frac{\partial[(\Phi_{\eps }(x))_k]}{\partial x_n} \\[7pt]
\notag  & +\frac{\partial \varphi }{\partial x_k}(\Phi_{\eps }(x))
 \frac{\partial^2[(\Phi_{\eps }(x))_k]}
 {\partial x_n\partial x_m}  \, .
\end{align}
Now the $L^2$-norm of the first term in the right-hand side of
\eqref{eq:compute-biseps} is clearly $O(\eps ^{2\alpha -2})$ hence
it is infinitesimal for $\alpha=3/2$ as $\eps \to 0$. As for the
second term, we note that $\frac{\partial^2[(\Phi_{\eps }(x))_k]}
 {\partial x_n\partial x_m}=O(\eps^{\alpha -2})$ while  $\| \frac{\partial \varphi _{\eps }}{\partial
x_k}(\Phi_{\eps }(x))\|_{L^2(\Omega\setminus K_{\eps })}=O(\eps
^{1/2})$ which simply follows by the fact that for almost all
$x'\in W$,  $\frac{\partial \varphi _{\eps }}{\partial
x_k}(x',x_N)$ is bounded in the variable $x_N$. Thus, for $\alpha
=3/2$ we have that the $L^2$-norm of the second term in the
right-hand side of \eqref{eq:compute-biseps} is $O(1)$ as $\eps
\to 0$, hence the first equality in \eqref{trico4} is proved. The
second equality in \eqref{trico4} can be proved in the same way.
The proof of statement (ii) follows by passing to the limit in
\eqref{trico1} as $\eps\to 0$ and using \eqref{trico2bis},
\eqref{trico4}.

We now prove statement (iii). It follows from the proof of
\cite[Theorem~7.4]{ArLa1} that since $\alpha <3/2$ the  function
$u$ belongs to $H^2_{0,\Gamma }(\Omega)\cap H^1_0(\Omega)$ where
$H^2_{0,\Gamma}(\Omega)=\{u\in H^2(\Omega): \  u=u_{\nu } =0 \ {\rm
on}\  \Gamma\}$, see the proof of Lemma~\ref{l:E-conv-bis} for more details. Let  $\varphi \in H^2_{0,\Gamma }(\Omega)$. Since $\varphi
=\varphi_\nu=0$ on $\Gamma$, the extension-by-zero $\varphi_0$ of
$\varphi $ belongs to $H^2(\Omega_{\eps })\cap H^1_0(\Omega_{\eps
} )$, hence it can uses ad test function in \eqref{weakheseps} to
obtain
\begin{equation}\label{trico2-bis}
\int_{\Omega }D^2 u_{\eps} : D^2  \varphi \, dx =\int_{\Omega_{\eps} }D^2 u_{\eps} : D^2  \varphi_0 \, dx
=\int_{\Omega_{\eps }} (f_{\eps }\Delta \varphi_0 +\nabla f_{\eps }\nabla \varphi_0)\, dx
=\int_{\Omega} (f_{\eps} \Delta \varphi +\nabla f_{\eps }\nabla \varphi) \, dx \, . \\
\end{equation}

Passing to the limit in \eqref{trico2-bis} as $\eps\to 0$ we
conclude that
 $u$ is the solution to problem \eqref{classicalhesdir}.
 The proof of the convergence of $u_{\eps}$ to $u$ in  the $H^2$ norms follows the same lines of the analogous proof in statement (i).

\end{proof}




\bigskip

{\bf Acknowledgments} The authors are grateful to the anonymous referee for the very careful reading of the paper and for the many useful comments and suggestions which helped them to improve the presentation of the paper, and for bringing to their attention item \cite{Post} in the references list.
The authors are also very thankful to Professors Jos\'{e} M. Arrieta and Giles Auchmuty for useful discussions and references. The authors are members of the Gruppo Nazionale per l'Analisi Matematica, la Probabilit\`{a} e le loro Applicazioni (GNAMPA) of the Istituto Nazionale di Alta Matematica (INdAM).  The first author acknowledges partial financial support from the PRIN project 2012 ``Equazioni alle derivate parziali di tipo ellittico e parabolico: aspetti
geometrici, disuguaglianze collegate, e applicazioni''.
The first author acknowledges partial financial support from the INDAM -
GNAMPA project 2017 ``Stabilit\`{a} e
analisi spettrale per problemi alle derivate parziali''.
The second author acknowledges partial financial support from the INDAM -
GNAMPA project 2017 ``Equazioni alle derivate parziali non
lineari e disuguaglianze funzionali: aspetti geometrici ed
analitici''. This research was partially supported by the research project CPDA120171/12
 ``Singular perturbation problems for
differential operators'' Progetto di
Ateneo of the University of Padova and by the research project ``Metodi analitici, numerici e di simulazione per lo studio di equazioni
differenziali a derivate parziali e applicazioni'' Progetto di Ateneo 2016 of the University of Piemonte Orientale ``Amedeo Avogadro''.

\end{document}